\newdimen\bibspace
\renewenvironment{thebibliography}[1]{%
 \section*{\refname 
       \@mkboth{\MakeUppercase\refname}{\MakeUppercase\refname}}%
     \list{\@biblabel{\@arabic\c@enumiv}}%
          {\settowidth\labelwidth{\@biblabel{#1}}%
           \leftmargin\labelwidth
           \advance\leftmargin\labelsep
           \itemsep\bibspace
           \parsep\z@skip     %
           \@openbib@code
           \usecounter{enumiv}%
           \let\p@enumiv\@empty
           \renewcommand\theenumiv{\@arabic\c@enumiv}}%
     \sloppy\clubpenalty4000\widowpenalty4000%
     \sfcode`\.\@m}
    {\def\@noitemerr
      {\@latex@warning{Empty `thebibliography' environment}}%
     \endlist}
\newtheorem{thm}{Theorem}[section]
\newtheorem{lem}{Lemma}[section]
\newtheorem{prop}{Proposition}[section]
\newtheorem{defn}{Definition}[section]
\newtheorem{cor}{Corollary}[section]
\newtheorem{rem}{Remark}[section]
\numberwithin{equation}{section}
\def\XXint#1#2#3{{\setbox0=\hbox{$#1{#2#3}{\int}$}
  \vcenter{\hbox{$#2#3$}}\kern-.5\wd0}}
\newcommand{\al}{\alpha}                \newcommand{\lda}{\lambda}
\newcommand{\om}{\Omega}                \newcommand{\pa}{\partial}
\newcommand{\va}{\varepsilon}           \newcommand{\ud}{\mathrm{d}}
\newcommand{\be}{\begin{equation}}      \newcommand{\ee}{\end{equation}}
                 \newcommand{\X}{\overline{X}}
              \newcommand{\B}{\mathcal{B}}
\newcommand{\R}{\mathbb{R}}              \newcommand{\Sn}{\mathbb{S}^n}
\newcommand{\D}{\dot{H}^\sigma(\R^n)}  
\newcommand{\dlim}{\displaystyle\lim}
\newcommand{\dsup}{\displaystyle\sup}
\newcommand{\dmin}{\displaystyle\min}
\newcommand{\dmax}{\displaystyle\max}
\newcommand{\dinf}{\displaystyle\inf}
\newcommand{\dsum}{\displaystyle\sum}
\begin{document}

\title{\textbf{On a fractional Nirenberg problem, part I: blow up analysis and compactness of solutions}
\bigskip}

\author{\medskip Tianling Jin\footnote{Supported in part by a University and Louis Bevier Dissertation Fellowship at Rutgers University and by Rutgers University School of Art and Science Excellence Fellowship.}, \ \
YanYan Li\footnote{Supported in part by NSF grant DMS-0701545 and DMS-1065971 and by Program for Changjiang Scholars and Innovative Research Team in University in China.}, \ \
Jingang Xiong\footnote{Supported in part by
CSC project for visiting Rutgers University and NSFC No. 11071020.}}

\date{\today}

\maketitle

\tableofcontents
\section{Introduction} 

The Nirenberg problem concerns the following: For which positive function $K$ on the standard sphere $(\Sn, g_{\Sn}), n\geq 2$, there
exists a function $w$ on $\Sn$ such that the scalar curvature (Gauss curvature in dimension $n=2$) $R_g$ of the conformal metric $g=e^{w}g_{\Sn}$ is equal to
$K$ on $\Sn$? The problem is equivalent to solving
\[
-\Delta_{g_{\Sn}}w+1=Ke^{2w}, \quad\mbox{on } \mathbb{S}^2,
\]
and
\[
-\Delta_{g_{\Sn}}v+c(n)R_0v=c(n)Kv^{\frac{n+2}{n-2}}, \quad\mbox{on } \Sn\mbox{ for }n\geq 3,
\]
where $c(n)=(n-2)/(4(n-1)), R_0=n(n-1)$ is the scalar curvature of $(\Sn, g_{\Sn})$ and $v=e^{\frac{n-2}{4}w}$.

The first work on the problem is by D. Koutroufiotis \cite{K}, where the solvability on $\mathbb{S}^2$ is established
when $K$ is assumed to be an antipodally symmetric function which is close to $1$. Moser \cite{Ms}
established the solvability on $\mathbb{S}^2$ for all antipodally symmetric functions $K$ which is positive somewhere. Without assuming any symmetry assumption on $K$, sufficient conditions were given in dimension
$n=2$ by Chang and Yang \cite{CY87} and \cite{CY88}, and in dimension $n=3$ by Bahri and Coron \cite{BC}.
Compactness of all solutions in dimensions $n=2,3$ can be found in work of Chang, Gursky and Yang \cite{CGY}, Han \cite{H1} and Schoen and Zhang \cite{SZ}. In these dimensions, a sequence of solutions
can not blow up at more than one point. Compactness and existence of solutions in higher dimensions were studied by Li in \cite{Li95} and \cite{Li96}.
The situation is very different, as far as the compactness issues are concerned: In dimension $n\geq 4$,
a sequence of solutions can blow up at more than one point, as shown in \cite{Li96}. There have been many papers on the problem and related ones, see, e.g.,
\cite{AB1, AB2, AH, BLR, B1, BA, BrN, By, CNY, CZ, CGY, CY87, CY88, CY91, CL, CD, ChLi, CL2, CX, D, EM, H1, HL1, HL2, J1, J2, Li93, LL, M, S1, S2, WY, Zhang, Zhu}.

In \cite{GJMS}, Graham, Jenne, Mason and Sparling constructed a sequence of conformally covariant elliptic operators, $\{P_{k}^g\}$, on Riemannian manifolds for all positive integers $k$ if $n$ is odd, and for $k\in \{1,\cdots, n/2\}$ if $n$ is even.
Moreover, $P_1^g$ is the conformal Laplacian $-L_g:=-\Delta_{g}+c(n)R_g$ and $P_2^g$ is the Paneitz operator. The construction in \cite{GJMS} is based on the ambient metric construction of \cite{FG}. Up to positive constants
$P_1^g(1)$ is the scalar curvature of $g$ and $P_2^g(1)$ is the $Q$-curvature. Prescribing $Q$-curvature problem on $\Sn$ was studied extensively, see, e.g., \cite{Be, DM, D1, D2, Felli, WX, WX2}.

Making use of a generalized Dirichlet to Neumann map, Graham and Zworski \cite{GZ} introduced a meromorphic family of conformally invariant operators on the conformal infinity of asymptotically hyperbolic manifolds. Recently,  Chang and Gonz\'alez \cite{CG} reconciled the way of Graham and Zworski to define conformally invariant operators $P_{\sigma}^g$ of non-integer order $\sigma\in (0,\frac{n}{2})$ and the localization method of  Caffarelli and Silvestre \cite{CS2} for factional Laplacian $(-\Delta)^{\sigma}$ on the Euclidean space $\R^n$. These lead naturally to a fractional order curvature $R^g_{\sigma}:=P_{\sigma}^g(1)$, which will be called $\sigma$-curvature in this paper. A typical example
is that standard conformal spheres $(\Sn, [g_{\Sn}])$ are the conformal infinity of Poincar\'e
disks $(\mathbb{B}^{n+1}, g_{\mathbb{B}^{n+1}})$. In this case, $\sigma$-curvature can be expressed in the following explicit way. Let $g$ be a representative in the conformal class $[g_{\Sn}]$ and write $g=v^{\frac{4}{n-2\sigma}}g_{\Sn}$,
where $v$ is positive and smooth on $\Sn$. Then the $\sigma$-curvature for $(\Sn, g)$ can be computed as
\be\label{def of sig cur}
R^{\sigma}_g=v^{-\frac{n+2\sigma}{n-2\sigma}}P_{\sigma}(v),
\ee
where $P_\sigma$ is an \emph{intertwining operator} and
\be\label{P sigma}
 P_\sigma=\frac{\Gamma(B+\frac{1}{2}+\sigma)}{\Gamma(B+\frac{1}{2}-\sigma)},\quad B=\sqrt{-\Delta_{g_{\Sn}}+\left(\frac{n-1}{2}\right)^2},
\ee
$\Gamma$ is the Gamma function and $\Delta_{g_{\Sn}}$ is the Laplace-Beltrami operator on $(\Sn, g_{\Sn})$. The operator $P_{\sigma}$ can be seen more concretely on $\R^n$ using stereographic projection. The stereographic projection from $\Sn\backslash \{N\}$ to $\R^n$ is the inverse of
\[
F: \R^n\to \Sn\setminus\{N\}, \quad x\mapsto \left(\frac{2x}{1+|x|^2}, \frac{|x|^2-1}{|x|^2+1}\right),
\]
where $N$ is the north pole of $\Sn$. Then
\be\label{eq:equ of projection}
(P_\sigma(\phi))\circ F=  |J_F|^{-\frac{n+2\sigma}{2n}}(-\Delta)^\sigma(|J_F|^{\frac{n-2\sigma}{2n}}(\phi\circ F)),\quad \mbox{for }\phi\in C^{\infty}(\Sn)
\ee
where
\[
|J_F|=\left(\frac{2}{1+|x|^2}\right)^n,
\]
and $(-\Delta)^\sigma$ is the fractional Laplacian operator (see, e.g., page 117 of \cite{S}).
When $\sigma\in (0,1)$, Pavlov and Samko \cite{PS} showed that
\be\label{description of P sigma}
P_{\sigma}(v)(\xi)=P_{\sigma}(1)v(\xi)+c_{n,-\sigma}\int_{\Sn}\frac{v(\xi)-v(\zeta)}{|\xi-\zeta|^{n+2\sigma}}\,\ud vol_{g_{\Sn}}(\zeta)
\ee
 for $v\in C^{2}(\Sn)$, where $c_{n,-\sigma}=\frac{2^{2\sigma}\sigma\Gamma(\frac{n+2\sigma}{2})}{\pi^{\frac{n}{2}}\Gamma(1-\sigma)}$ and $\int_{\Sn}$ is understood as $\lim\limits_{\va\to 0}\int_{|x-y|>\va}$.

 For the $\sigma$-curvatures
on general manifolds we refer to  \cite{GZ}, \cite{CG}, \cite{GQ} and references therein.
Corresponding to the Yamabe problem,
 fractional Yamabe problems for $\sigma$-curvatures are studied in \cite{GMS}, \cite{GQ} and \cite{QR}, and fractional Yamabe flows
on $\Sn$ are studied in \cite{JX}.

From \eqref{def of sig cur}, we consider
\be\label{main equ}
P_\sigma(v)=c(n,\sigma)K v^{\frac{n+2\sigma}{n-2\sigma}},\quad \mbox{on } \Sn,
\ee
where $c(n,\sigma)=P_\sigma(1)$, and $K>0$ is a continuous function on $\Sn$. When $K=1$,
\eqref{main equ} is the
Euler-Lagrange equation for a functional associated to the fractional Sobolev  inequality on $\Sn$ (see \cite{Be}), and all positive solutions must be of the form
\be \label{standard bubble on sn}
 v_{\xi_0, \lda}(\xi)=\left(\frac{2\lda}{2+(\lda^2-1)(1-\cos dist_{g_{\Sn}}(\xi, \xi_0))}\right)^{\frac{n-2\sigma}{2}}, \quad   \xi\in \Sn
\ee
for some $\xi_0\in \Sn$ and positive constant $\lda$. This classification can be found in \cite{Lie83}, \cite{CLO} and \cite{Li04}.
In general, \eqref{main equ} may have no positive solution, since if $v$ is a positive solution of \eqref{main equ} with $K\in C^1(\Sn)$
then it has to satisfy the Kazdan-Warner type condition
\be\label{K-W condition}
 \int_{\Sn}\langle \nabla_{g_{\Sn}} K, \nabla_{g_{\Sn}} \xi\rangle v^{\frac{2n}{n-2\sigma}}\,\ud \xi=0.
\ee
Consequently if $K(\xi)=\xi_{n+1}+2$, \eqref{main equ} has no solutions.
The proof of \eqref{K-W condition} is provided in Appendix \ref{A Kazdan-Warner identity}.

In this and a subsequent paper \cite{JLX2}, we study \eqref{main equ} with $\sigma\in (0,1)$, a fractional Nirenberg problem. Throughout the paper, we assume that $\sigma\in (0,1)$ without otherwise stated.

\begin{defn}
For $d>0$, we say that $K\in C(\Sn)$ has flatness order greater than $d$ at $\xi$ if, in some
local coordinate system $\{y_1,\cdots, y_n\}$ centered at $\xi$,
there exists a neighborhood $\mathscr{O}$ of $0$ such that $K(y)=K(0)+o(|y|^{d})$ in $\mathscr{O}$.
\end{defn}

\begin{thm}\label{K-M-E-S}
Let $n\ge 2$, and $K\in C^{1,1}(\Sn)$ be an antipodally symmetric function, i.e., $K(\xi)=K(-\xi)$ $\forall~\xi\in \Sn$, and be positive somewhere on $\Sn$.
If there exists a maximum point $\xi_0$ of $K$ at which $K$ has flatness order greater than $n-2\sigma$,
then \eqref{main equ} has at least one positive $C^{2}$ solution.
\end{thm}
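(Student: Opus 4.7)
The plan is to prove existence by a constrained minimization restricted to the antipodally symmetric class, following Moser's scheme on $\mathbb{S}^2$ and its higher-dimensional extensions. Let $H^\sigma_A(\Sn)=\{v\in H^\sigma(\Sn) : v(\xi)=v(-\xi)\}$ and set
\[
m=\inf\left\{\int_{\Sn} vP_\sigma v\,\ud vol_{g_{\Sn}}\ :\ v\in H^\sigma_A,\ \int_{\Sn}K|v|^{\frac{2n}{n-2\sigma}}\,\ud vol_{g_{\Sn}}=1\right\},
\]
which is finite and positive since $K$ is positive somewhere and $\int vP_\sigma v$ is equivalent on $H^\sigma(\Sn)$ to the standard norm. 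A positive minimizer solves \eqref{main equ} up to rescaling; $C^2$ regularity follows from the Caffarelli-Silvestre extension realization of $P_\sigma$ together with $K\in C^{1,1}$, and positivity comes from the strong maximum principle, after noting that replacing $v$ by $|v|$ does not raise the quotient.

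The missing ingredient is compactness at the critical exponent. By a concentration-compactness analysis for $P_\sigma$, most easily performed on the Caffarelli-Silvestre extension that realizes $P_\sigma$ as a Dirichlet-to-Neumann map of a local degenerate elliptic problem, a minimizing sequence either converges strongly or concentrates at finitely many points. Antipodal symmetry forbids concentration at a single point, so concentration must occur simultaneously at a pair $\{\xi_*,-\xi_*\}$. Using the classification \eqref{standard bubble on sn} of extremals of the sharp fractional Sobolev inequality, the corresponding energy threshold has the form
\[
\Lambda=2^{\frac{2\sigma}{n}}\,S_\sigma\,\bigl(\max_{\Sn} K\bigr)^{-\frac{n-2\sigma}{n}},
\]
where $S_\sigma$ is the sharp Sobolev constant associated with $P_\sigma$ on $\Sn$. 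It therefore suffices to produce a symmetric test function $\varphi$ with $J[\varphi]<\Lambda$, and this is where the flatness hypothesis at the maximum $\xi_0$ enters.

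For $\lambda$ large I would take $\varphi_\lambda=v_{\xi_0,\lambda}+v_{-\xi_0,\lambda}$ with $v_{\cdot,\lambda}$ from \eqref{standard bubble on sn} and expand $J[\varphi_\lambda]$ as $\lambda\to\infty$. Three competing contributions arise: the bubble self-energy, which matches $\Lambda$ exactly because $\xi_0$ is a maximum of $K$; the antipodal interaction $\int v_{\xi_0,\lambda}\,P_\sigma v_{-\xi_0,\lambda}$, which is strictly negative and of the sharp order $\lambda^{-(n-2\sigma)}$; and the $K$-correction $\int(K(\xi_0)-K(\xi))v_{\xi_0,\lambda}^{2n/(n-2\sigma)}$. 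The assumption $K(\xi)-K(\xi_0)=o(|\xi-\xi_0|^{n-2\sigma})$ is precisely tuned to the scaling of the bubble so that the $K$-correction is $o(\lambda^{-(n-2\sigma)})$; the negative interaction then dominates and $J[\varphi_\lambda]<\Lambda$ for all sufficiently large $\lambda$. The main technical obstacle I foresee is this sharp asymptotic analysis of the nonlocal interaction and correction terms, since $P_\sigma$ is given by the singular integral \eqref{description of P sigma} rather than by a differential operator; it is convenient to transfer the computation to $\R^n$ via \eqref{eq:equ of projection}, so that the bubbles become the Euclidean extremals of $(-\Delta)^\sigma$ and the interaction reduces to a Riesz-type integral whose sharp order is computable by an explicit change of variables, the flatness exponent $n-2\sigma$ being sharp in this computation. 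Once the test-function estimate is secured, the compactness dichotomy delivers a minimizer of $m$, hence a positive $C^2$ solution of \eqref{main equ}.
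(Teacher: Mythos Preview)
The paper does not actually prove this theorem: it explicitly defers the proof of Theorem~\ref{K-M-E-S} to the sequel \cite{JLX2}. Your outline is the natural fractional adaptation of the Escobar--Schoen argument the paper cites, and it is almost certainly the route taken there; the overall architecture (symmetric minimization, two–bubble threshold $\Lambda=2^{2\sigma/n}S_\sigma(\max K)^{-(n-2\sigma)/n}$, test function $\varphi_\lambda=v_{\xi_0,\lambda}+v_{-\xi_0,\lambda}$, and the flatness hypothesis to kill the $K$–correction at order $\lambda^{-(n-2\sigma)}$) is correct.

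There is, however, a genuine sign error in your key mechanism. The cross term $\int_{\Sn} v_{\xi_0,\lambda}\,P_\sigma v_{-\xi_0,\lambda}$ is \emph{strictly positive}, not negative: since $P_\sigma v_{-\xi_0,\lambda}=c(n,\sigma)\,v_{-\xi_0,\lambda}^{(n+2\sigma)/(n-2\sigma)}>0$ and $v_{\xi_0,\lambda}>0$, the integral is a positive multiple of $\varepsilon:=\int v_{\xi_0,\lambda}\,v_{-\xi_0,\lambda}^{(n+2\sigma)/(n-2\sigma)}\sim\lambda^{-(n-2\sigma)}$. The inequality $J[\varphi_\lambda]<\Lambda$ does hold, but for a different reason: the numerator of the Rayleigh quotient picks up a cross term of size $2c(n,\sigma)\varepsilon$, while the denominator $\int K\varphi_\lambda^{2n/(n-2\sigma)}$ picks up a cross term of size $\frac{2n}{n-2\sigma}\cdot 2\,K(\xi_0)\varepsilon$ to leading order. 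After taking the $(n-2\sigma)/n$ power and expanding, the ratio behaves like
\[
J[\varphi_\lambda]\;=\;\Lambda\left(1-\frac{2\sigma}{n-2\sigma}\cdot\frac{\varepsilon}{A}+o(\lambda^{-(n-2\sigma)})\right),
\]
where $A=\int v_{\xi_0,\lambda}^{2n/(n-2\sigma)}$; the gain comes from the fact that the exponent $2n/(n-2\sigma)$ in the denominator exceeds the exponent $2$ in the numerator, so the denominator interaction dominates. Your flatness argument that the $K$--correction is $o(\lambda^{-(n-2\sigma)})$ is exactly what is needed to preserve this sign, but you should rewrite the expansion so that the favorable term is identified correctly.
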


For $2\leq n<2+2\sigma$, $K\in C^{1,1}(\Sn)$ has flatness order greater than $n-2\sigma$ at every maximum point.
When $\sigma=1$, the above theorem was proved by Escobar and Schoen \cite{ES} for $n\ge 3$.

\begin{thm}
 \label{main thm A} Let $n \geq 2$. Suppose that $K\in C^{1,1}(\Sn)$ is a positive function satisfying that for any critical point
$\xi_0$ of $K$, in some geodesic normal coordinates $\{y_1, \cdots, y_n\}$ centered at $\xi_0$, there exist
some small neighborhood $\mathscr{O}$ of $0$ and positive constants $\beta=\beta(\xi_0)\in (n-2\sigma,n)$, $\gamma\in (n-2\sigma, \beta]$
such that $K\in C^{[\gamma],\gamma-[\gamma]}(\mathscr{O})$ (where $[\gamma]$ is the integer part of $\gamma$) and
\[
 K(y)=K(0)+\sum_{j=1}^{n}a_{j}|y_j|^{\beta}+R(y), \quad \mbox{in } \mathscr{O},
\]
where $a_j=a_j(\xi_0)\neq 0$, $\sum_{j=1}^n a_j\neq 0$,
$R(y)\in C^{[\beta]-1,1}(\mathscr{O})$ satisfies\\ $\sum_{s=0}^{[\beta]}|\nabla^sR(y)||y|^{-\beta+s} \to 0$ as $y\to 0$.
If
\[
\sum_{\xi\in \Sn\mbox{ such that }\nabla_{g_{\Sn}}K(\xi)=0,\  \sum_{j=1}^na_j(\xi)<0}(-1)^{i(\xi)}\neq (-1)^n,
\]
where
\[
 i(\xi)=\#\{a_j(\xi): \nabla_{g_{\Sn}}K(\xi)=0,a_j(\xi)<0,1\leq j\leq n\},
\]
then \eqref{main equ} has at least one $C^2$ positive solution. Moreover, there exists a positive constant $C$ depending only on $n, \sigma$ and $K$ such that for all positive $C^2$ solutions $v$ of \eqref{main equ},
\[
1/C\leq v\leq C\quad\mbox{and}\quad\|v\|_{C^2(\Sn)}\leq C.
\]
\end{thm}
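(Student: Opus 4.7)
My approach combines a blow-up analysis (yielding the \emph{a priori} $C^2$ bounds stated in the theorem) with a Leray--Schauder degree computation (producing a solution from the topological hypothesis on the critical points), following the template of \cite{Li95,Li96,SZ} for the classical Nirenberg problem and carrying it out in the nonlocal setting via the Chang--Gonz\'alez extension \cite{CG}. The two halves of the theorem are coupled: Step~1 furnishes the uniform estimates that make the degree argument in Steps~2--3 available.

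\emph{Step 1 (compactness).} Suppose, to the contrary, that there are positive $C^2$ solutions $v_i$ of \eqref{main equ} with $\max_{\Sn} v_i\to\infty$. Reformulating \eqref{main equ} through the extension as a degenerate elliptic Neumann-type problem on $\mathbb{B}^{n+1}$, rescaling at the maxima of $v_i$, and invoking the classification \eqref{standard bubble on sn} of entire positive solutions produces a finite blow-up set $\mathcal{S}\subset\Sn$. The technical heart is to show that each point of $\mathcal{S}$ is an \emph{isolated simple} blow-up; I would obtain this by iterating local Harnack and integral estimates on the extension, adapting Schoen's scheme to the fractional setting. A localized Pohozaev identity on small half-balls then forces $\nabla_{g_{\Sn}}K(\xi_0)=0$ at each $\xi_0\in\mathcal{S}$, and a sharper Pohozaev computation, tracking the leading correction $\sum_j a_j|y_j|^\beta$ with $\beta\in(n-2\sigma,n)$ (the exponent range is exactly what is needed for this correction to survive the $L^{2n/(n-2\sigma)}$-scaling of the bubble \eqref{standard bubble on sn}), produces a definite algebraic constraint on $\sum_j a_j(\xi_0)$ that is incompatible with the hypothesis $\sum_j a_j\ne 0$. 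Non-critical blow-up is ruled out directly by the nonlocal Kazdan--Warner identity \eqref{K-W condition}. Hence no blow-up occurs; elliptic regularity for the extension gives the upper bound and $\|v\|_{C^2(\Sn)}\le C$, and a Harnack inequality gives $v\ge 1/C$.

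\emph{Steps 2--3 (degree argument).} Rewrite \eqref{main equ} as a fixed-point equation $v=T_K(v)$ with $T_K(v)=c(n,\sigma)P_\sigma^{-1}(K v^{(n+2\sigma)/(n-2\sigma)})$, a compact operator on the positive cone of $C^0(\Sn)$. By Step~1 the positive solution set lies in a bounded open set $\mathcal{O}$ bounded away from zero, with bounds stable under small $C^{1,1}$-perturbations of $K$, so the Leray--Schauder degree $\deg(I-T_K,\mathcal{O},0)$ is well defined and homotopy invariant within this class. After a small perturbation making each critical point of $K$ nondegenerate in the appropriate sense, the degree decomposes into local contributions attached to the critical points of $K$. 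A linearization computation at each approximate solution concentrating near a critical point $\xi$ with $\sum_j a_j(\xi)<0$ gives the local degree $(-1)^{i(\xi)}$, while critical points with $\sum_j a_j(\xi)>0$ contribute nothing (no true solution can concentrate there, by the Step~1 obstruction). This yields
\[
\deg(I-T_K,\mathcal{O},0)=\sum_{\substack{\nabla_{g_{\Sn}}K(\xi)=0\\ \sum_j a_j(\xi)<0}}(-1)^{i(\xi)}.
\]
A separate homotopy, deforming $K$ toward a positive constant (and using the explicit Jacobian of the conformal group acting on the bubbles \eqref{standard bubble on sn}), identifies the total degree with $(-1)^n$. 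The assumption $\sum(-1)^{i(\xi)}\ne(-1)^n$ therefore forces $\mathcal{O}$ to contain at least one positive $C^2$ solution of \eqref{main equ}.

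\emph{Main obstacle.} The decisive difficulty is the isolated-simple-blow-up analysis in Step~1. The Pohozaev identity is nonlocal and must be carried out on the extension with a degenerate weight, the boundary terms must be tracked with care, and Schoen's iteration scheme must be adapted to the weaker singularity of the Green function of $P_\sigma$; the flatness exponent $\beta<n$ is used precisely to ensure that the correction $\sum_j a_j|y_j|^\beta$ produces the leading surviving contribution in the Pohozaev limit, so that the vanishing or sign of $\sum_j a_j$ becomes the effective obstruction to blow-up. Once this is in place, Steps~2--3 are the standard degree-theoretic machinery of the classical Nirenberg problem transplanted to the fractional setting via the spectral representation \eqref{P sigma} and the conformal covariance of $P_\sigma$.
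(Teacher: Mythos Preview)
Your Step~1 tracks the paper's strategy closely until the last move, where there is a real gap. The local Pohozaev identity on the extension (Proposition~\ref{thm3.3}) is indeed what the paper uses to upgrade isolated blow-up to isolated simple (Proposition~\ref{prop5.1}) and to rule out two or more blow-up points (Theorem~\ref{thm6.2}): in those situations the limit profile $u_i(y_i)U_i\to A|Y|^{2\sigma-n}+H(Y)$ has $H>0$ (the second singularity supplies it), so the Pohozaev boundary term is strictly negative and a contradiction follows. But for a \emph{single} blow-up point on $\Sn$ the regular part $H$ vanishes (stereographic decay forces $u_i(y_i)u_i\to A|y|^{2\sigma-n}$ exactly), the Pohozaev boundary term is zero at leading order, and no obstruction involving $\sum_j a_j$ comes out of it. The paper therefore switches to the \emph{global} Kazdan--Warner identity at this step (Theorem~\ref{thm6.3}): it forces the pair of integrals in \eqref{6.14} to vanish at some shifted center $y_0$, which is impossible precisely because each $a_j\neq 0$ and $\sum_j a_j\neq 0$. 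You have the roles of the two identities reversed, and here the reversal is not cosmetic; likewise, $|\nabla K_i(y_i)|\to 0$ is obtained locally (Proposition~\ref{prop5.2}), not from Kazdan--Warner.

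Your Steps~2--3 also contain a gap in the degree setup. Deforming $K$ to a positive constant does not give a well-defined Leray--Schauder degree: for constant $K$ the solution set of \eqref{main equ} is the noncompact conformal orbit \eqref{standard bubble on sn}, so there is no bounded $\mathcal{O}$ on which to compute. The scheme the paper invokes (deferred to \cite{JLX2}, following \cite{Li95}) uses the subcritical exponents $p_\tau=\frac{n+2\sigma}{n-2\sigma}-\tau$ as homotopy parameter: for each $\tau>0$ the problem is genuinely compact and its total degree is a fixed constant independent of $K$; as $\tau\to 0^+$ the blow-up analysis of Section~\ref{Local analysis near isolated blow up points} shows that the solutions which escape concentrate exactly at the critical points of $K$ with $\sum_j a_j<0$, each contributing $(-1)^{i(\xi)}$ via a finite-dimensional reduction. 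The degree attached to the genuine solutions of \eqref{main equ} is the \emph{difference} between the fixed constant and the sum of blow-up contributions, and the hypothesis $\sum_\xi(-1)^{i(\xi)}\neq(-1)^n$ is exactly what makes that difference nonzero. Your write-up presents it as two independent computations of $\deg(I-T_K,\mathcal{O},0)$ that disagree, which cannot be the mechanism.
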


For $n=3, \sigma=1$, the existence part of the above theorem was established by Bahri and Coron \cite{BC}, and the compactness part were given in Chang, Gursky and Yang \cite{CGY} and Schoen and Zhang \cite{SZ}. For $n\geq 4, \sigma=1$, the above theorem was proved by Li \cite{Li95}.

We now consider a class of functions $K$ more general than that in Theorem \ref{main thm A}, which is modified from \cite{Li95}.

\begin{defn}\label{def*} For any real number $\beta> 1$, we say that a sequence of functions
$\{K_i\}$ satisfies condition $(*)'_{\beta}$ for some sequence of constants
$L(\beta,i)$ in some region $\om_i$, if $\{K_i\}\in C^{[\beta],\beta-[\beta]}(\om_i)$ satisfies
\[
[\nabla^{[\beta]} K_i]_{C^{\beta-[\beta]}(\om_i)}\leq L(\beta,i),
\]
and, if $\beta\geq 2$, that
\[
|\nabla^sK_i(y)|\leq L(\beta,i)|\nabla K_i(y)|^{(\beta-s)/(\beta-1)},
\]
for all $2\leq s\leq [\beta]$, $y\in \om_i$, $\nabla K_i(y)\neq 0$.
\end{defn}

Note that the function $K$ in Theorem \ref{main thm A} satisfies $(*)_{\beta}'$ condition.

\begin{rem}
For $1\leq \beta_1\leq \beta_2$,  if $\{K_i\}$ satisfies $(*)'_{\beta_2}$ for some sequences of constants $\{L(\beta_2,i)\}$ in some regions $\om_i$,
then $\{K_i\}$ satisfies $(*)'_{\beta_1}$ for $\{L(\beta_1,i)\}$, where
\[
L(\beta_1,i)=
\begin{cases}
L(\beta_2,i)\max\left(\max\limits_{2\leq s\leq [\beta_1]}\|\nabla K_i\|^{\frac{\beta_2-s}{\beta_2-1}-\frac{\beta_1-s}{\beta_1-1}}_{L^\infty(\om_i)},\mathrm{diam}(\Omega_i)^{\beta_2-\beta_1}\right), \mbox{ if }[\beta_2]=[\beta_1]\\
L(\beta_2,i)\max\left(\max\limits_{2\leq s\leq [\beta_1]}\|\nabla K_i\|^{\frac{\beta_2-s}{\beta_2-1}-\frac{\beta_1-s}{\beta_1-1}}_{L^\infty(\om_i)},\|\nabla K_i\|^{\frac{\beta_2-[\beta_1]-1}{\beta_2-1}}_{L^\infty(\om_i)}\mathrm{diam}(\Omega_i)^{1+[\beta_1]-\beta_1}\right), \\
\hspace{9cm}\mbox{if }[\beta_2]>[\beta_1]
\end{cases}
\] in the corresponding regions.
\end{rem}

The following theorem gives a priori bounds of solutions in $L^{\frac{2n}{n-2\sigma}}$ norm.

\begin{thm}
 \label{main thm B} Let $n \geq 2$, and $K\in C^{1,1}(\Sn)$ be a positive function.
If there exists some constant $d>0$ such that $K$ satisfies $(*)'_{(n-2\sigma)}$ for some constant $L>0$ in $\om_{d}:=\{\xi\in \Sn:|\nabla_{g_0}K(\xi)|<d\}$, then for any positive solution $v\in C^2(\Sn)$
of \eqref{main equ},
\be \label{finite energy}
 \|v\|_{L^{\frac{2n}{n-2\sigma}}(\Sn)}\leq C,
\ee
where $C$ depends only on $n,\sigma, \inf_{\Sn}K>0, \|K\|_{C^{1,1}(\Sn)}, L$, and $d$.
\end{thm}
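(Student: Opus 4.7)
I argue by contradiction via blow-up analysis and a Pohozaev identity adapted to the Caffarelli--Silvestre extension. Assume a sequence $\{v_i\}\subset C^2(\Sn)$ of positive solutions of \eqref{main equ} with the given $K$ satisfies $\|v_i\|_{L^q(\Sn)}\to\infty$, where $q=2n/(n-2\sigma)$. Via stereographic projection and \eqref{eq:equ of projection}, transfer the problem to the Euclidean equation $(-\Delta)^\sigma u_i = c(n,\sigma)\tilde K_i u_i^{q-1}$ on $\R^n$, and lift each $u_i$ to its extension $U_i$ on $\R^{n+1}_+$, which solves the divergence-form degenerate equation $\mathrm{div}(t^{1-2\sigma}\nabla U_i)=0$ with Neumann boundary datum $-\lim_{t\to 0^+}t^{1-2\sigma}\partial_t U_i = c'_\sigma \tilde K_i u_i^{q-1}$ on $\{t=0\}$. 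This reduces everything to a local boundary problem amenable to classical PDE tools.

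Since the $L^q$ norms blow up, a Schoen-type selection procedure (using boundary regularity for the $A_2$-weighted equation) produces concentration points $\xi_i\in\Sn$ and scales $\lda_i\to\infty$ such that the rescaled solutions $w_i(z):=\lda_i^{-(n-2\sigma)/2}u_i(\xi_i+\lda_i^{-1}z)$ converge in $C^2_{\mathrm{loc}}$ to a standard Euclidean bubble $U_0(z)=c_0(1+|z|^2)^{-(n-2\sigma)/2}$, via the Liouville-type classification cited after \eqref{standard bubble on sn}. Testing the extension equation on a fixed Euclidean ball around $\xi_i$ against $\nabla K$ yields a localized version of \eqref{K-W condition}; combined with the convergence to $U_0$ this forces the limit $\bar\xi := \lim \xi_i$ to be a critical point of $K$.

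Next apply the Pohozaev multiplier $\langle x-\xi_i,\nabla U_i\rangle + \tfrac{n-2\sigma}{2}U_i$ to the extension equation on the upper half-ball $B^+_\rho\subset\R^{n+1}_+$; integration by parts and the weighted equation yield an identity of the schematic form
\[
\frac{n-2\sigma}{2n}\int_{B_\rho(\xi_i)}\langle x-\xi_i,\nabla \tilde K_i\rangle u_i^q\,\ud x \;=\; \mathrm{Bdry}_i,
\]
where $\mathrm{Bdry}_i$ is supported on the hemisphere $\partial B^+_\rho\cap\{t>0\}$. Since $u_i$ stays uniformly bounded away from $\xi_i$ (a byproduct of the selection), $\mathrm{Bdry}_i \to 0$. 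Condition $(*)'_{n-2\sigma}$ is the scale-invariant hypothesis under which, after rescaling by $\lda_i$, the derivatives of $\tilde K_i$ of order $s\le [n-2\sigma]$ remain controlled by $L|\nabla \tilde K_i|^{(n-2\sigma-s)/(n-2\sigma-1)}$; this lets me Taylor expand $\tilde K_i(\xi_i+\lda_i^{-1}z)$ inside the interior integral, change variables, and extract a leading term of definite sign comparable to $|\nabla\tilde K_i(\xi_i)|\,\lda_i^{-1}\int_{\R^n}|z|\,U_0^q\,\ud z$. This lower bound contradicts $\mathrm{Bdry}_i\to 0$ and yields the a priori bound \eqref{finite energy}.

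The principal obstacle is the last step: the exponent $\beta=n-2\sigma$ is precisely the borderline at which the interior Pohozaev integral neither trivially dominates nor cancels. One must run a dichotomy on the rate at which $|\nabla\tilde K_i(\xi_i)|$ decays relative to $\lda_i$, and in each case couple $(*)'_{n-2\sigma}$ with the polynomial decay of $U_0$ at infinity to ensure the interior integral is nontrivial. The Caffarelli--Silvestre framework and uniform boundary Schauder theory for the weighted equation are what make the blow-up convergence $C^2_{\mathrm{loc}}$ and the boundary flux estimate rigorous.
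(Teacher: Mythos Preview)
Your outline has two substantial gaps, and they sit exactly where the real work in the paper lies.

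First, the claim that ``$u_i$ stays uniformly bounded away from $\xi_i$'' is not a byproduct of the Schoen-type selection. The selection produces a (possibly large) finite set $\wp(v_i)$ of local maxima at which bubbles form, with pairwise separation at least of order $R\,v_i(P)^{-(p-1)/2\sigma}$; it does \emph{not} prevent two such points from approaching each other as $i\to\infty$. Ruling out clustering is precisely the content of Proposition~\ref{prop6.2}: one assumes two points $P_{1i},P_{2i}\in\wp(v_i)$ with $|P_{1i}-P_{2i}|=\vartheta_i\to 0$, rescales by $\vartheta_i$, and only then does the rescaled potential $\tilde K_i(y)=K_i(\vartheta_i y)$ satisfy $(*)'_{n-2\sigma}$ with constant $o(1)$, which is what feeds into Corollary~\ref{cor4.1} and makes the interior Pohozaev term vanish. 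The limiting boundary term is then strictly negative because the limit of $w_i(0)W_i$ is a Green-type function with \emph{two} singularities (at $0$ and at $\overline P$), and this is where the contradiction comes from. Your argument, working around a single point with no control on nearby bubbles, cannot access either mechanism.

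Second, even granting a single isolated simple blow-up point, the Pohozaev balance at the borderline $\beta=n-2\sigma$ with a \emph{fixed} constant $L$ does not produce a contradiction. Look at Lemma~\ref{lem4.8}: it yields only $\tau_i\le C u_i(y_i)^{-2}+C(L+L^{\beta-1})u_i(y_i)^{-2\beta/(n-2\sigma)}$, which for $\tau_i=0$ and $\beta=n-2\sigma$ is vacuous. Your ``leading term of definite sign comparable to $|\nabla\tilde K_i(\xi_i)|\,\lda_i^{-1}\int |z|U_0^q$'' is not definite: $\nabla K(\xi_i)$ can (and by Proposition~\ref{prop5.2} does) tend to zero, and the $(*)'_{n-2\sigma}$ remainder is exactly the same order. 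The correct route to \eqref{finite energy} is indirect: once Proposition~\ref{prop6.2} gives a uniform lower bound $\delta^*$ on pairwise distances in $\wp(v_i)$, compactness of $\Sn$ bounds $\#\wp(v_i)$, and then Proposition~\ref{prop4.1} plus Lemma~\ref{lem4.5} bound the $L^{2n/(n-2\sigma)}$ mass of each bubble; this is how Theorem~\ref{thm6.1} assembles the estimate.
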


The above theorem was proved by Schoen and Zhang \cite{SZ} for $n=3$ and $\sigma=1$, and by Li \cite{Li95} for $n\ge 4$ and $\sigma=1$.

Denote $H^\sigma(\Sn)$ by the closure of $C^\infty(\Sn)$ under the norm
\[
 \int_{\Sn} vP_\sigma(v)\,\ud vol_{g_0}.
\]
The estimate \eqref{finite energy} for the solution $v$ is equivalent to
\[
 \|v\|_{H^\sigma(\Sn)}\leq C.
\]

However, the estimate \eqref{finite energy} is not sufficient to imply $L^\infty$ bound for $v$ on $\Sn$. For instance,
\[
 \int_{\Sn}v_{\xi_0,\lda}^{\frac{2n}{n-2\sigma}}(\xi)\,\ud vol_{g_0}=\int_{\Sn}\,\ud vol_{g_0},
\]
but $v_{\xi_0,\lda}(\xi_0)=\lda^{\frac{n-2\sigma}{2}} \to \infty$ as $\lda \to \infty$.
Furthermore, a sequence of solutions $v_i$ may blow up at more than one point, and it is the case when $\sigma=1$ (see \cite{Li96}).
The following theorem shows that the latter situation does not happen when $K$ satisfies a little stronger condition.

\begin{thm}
 \label{main thm C}  Let $n \geq 2$. Suppose that $\{K_i\}\in C^{1,1}(\Sn)$ is a sequence of positive functions with uniform $C^{1,1}$ norm and $1/A_1\leq K_i\leq A_1$ on $\Sn$ for some $A_1>0$ independent of $i$. Suppose also that $\{K_i\}$ satisfying
$(*)'_{\beta}$ condition for some constants $\beta>n-2\sigma$,  $L, d>0$ in $\om_{d}$.
Let $\{v_i\}\in C^2(\Sn)$ be a sequence of corresponding positive solutions of \eqref{main equ} and $v_i(\xi_i)=\max_{\Sn}v_i$ for some $\xi_i$.
Then, after passing to a subsequence, $\{v_i\}$ is either bounded
in $L^\infty(\Sn)$ or blows up at exactly one point in the strong sense: There exists a sequence of M\"obius diffeomorphisms $\{\varphi_i\}$ from
$\Sn$ to $\Sn$ satisfying $\varphi_i(\xi_i)=\xi_i$ and $|\det \ud \varphi_i(\xi_i)|^{\frac{n-2\sigma}{2n}}=v_i^{-1}(\xi_i)$ such that
\[
 \|T_{\varphi_i}v_i -1\|_{C^0(\Sn)}\to 0,\quad \text{as }i \to \infty,
\] where $T_{\varphi_i}v_i:=(v\circ\varphi_i)|\det \ud \varphi_i|^{\frac{n-2\sigma}{2n}}$.
\end{thm}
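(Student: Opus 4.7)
The plan is to combine the uniform $L^{2n/(n-2\sigma)}(\Sn)$ bound from Theorem \ref{main thm B} with a blow-up analysis for the nonlocal operator $P_\sigma$, realized via the Caffarelli--Silvestre extension, and to rule out multiple blow-up points by a Pohozaev-type obstruction. Passing to a subsequence, I may assume $\xi_i \to \xi_\infty \in \Sn$ and $K_i \to K_\infty$ in $C^{1,\alpha}(\Sn)$. If $\sup_i v_i(\xi_i) < \infty$, then the Harnack inequality and Schauder estimates for $P_\sigma$ (applied to the degenerate elliptic extension of $v_i$ on $\R^{n+1}_+$) yield a uniform $L^\infty(\Sn)$ bound, placing us in the first alternative. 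Hence I would assume $v_i(\xi_i) \to \infty$.

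Stereographic projection based at a point antipodal to $\xi_\infty$ converts the equation to one for $u_i$ on $\R^n$ of the form $(-\Delta)^\sigma u_i = c(n,\sigma)\widetilde K_i u_i^{(n+2\sigma)/(n-2\sigma)}$, with extension $U_i$ on $\R^{n+1}_+$. Following Schoen--Zhang \cite{SZ} and Li \cite{Li95}, I would introduce the notions of \emph{isolated} and \emph{isolated simple} blow-up points for $\{u_i\}$ and adapt them to the nonlocal setting via $U_i$. Using the Liouville-type classification \eqref{standard bubble on sn} (every positive entire solution of the limit equation on $\R^n$ is a standard bubble), combined with Harnack inequalities and the $L^{2n/(n-2\sigma)}$-energy bound (conformally invariant, so transferring to $u_i$), I would show, after a further subsequence, that $\{u_i\}$ has a finite blow-up set $\{\xi_i^{(1)} = \xi_i, \dots, \xi_i^{(k)}\}$, each isolated simple, with the appropriately rescaled solution near each point converging to a standard bubble.

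The principal obstacle is to show $k = 1$. For this I would apply a Pohozaev-type identity to the extension $U_i$ on a half-ball $B_r^+\subset\R^{n+1}_+$ centered at an isolated simple blow-up point, with radius less than half the distance to any other blow-up point. After normalization by the bubble parameter $\mu_i\to\infty$, the boundary contribution on $\partial B_r^+$ tends to a strictly positive universal constant coming from the standard bubble, whereas the interior contribution, of the schematic form $\int \widetilde K_i\,\partial_\ell(u_i^{2n/(n-2\sigma)})\,dx$, is expanded using Taylor's formula for $\widetilde K_i$ together with the condition $(*)'_\beta$ with $\beta>n-2\sigma$; this condition is exactly sharp to force the interior term to be of strictly lower order than the boundary one, yielding a contradiction unless $k=1$. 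This is the fractional analogue of the argument in \cite{Li95}, and the main technical difficulty is to execute the Pohozaev machinery in the weighted degenerate setting on $\R^{n+1}_+$ while controlling all error terms generated by $K_i$ and its derivatives.

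Once $k=1$ is established, the M\"obius map $\varphi_i$ prescribed in the statement produces $\tilde v_i := T_{\varphi_i} v_i$ with $\tilde v_i(\xi_i)=1$. Since $T_{\varphi_i}$ preserves both the equation (with $K_i\circ\varphi_i$ in place of $K_i$) and the $L^{2n/(n-2\sigma)}$ norm, and the sole blow-up point of $v_i$ has been rescaled away, the sequence $\{\tilde v_i\}$ is uniformly bounded in $L^\infty(\Sn)$ and precompact in $C^0(\Sn)$ by the extension's Schauder estimates. Any subsequential $C^0$ limit $U_\infty$ is a positive continuous solution of the limiting equation, centered at $\xi_\infty$ with $U_\infty(\xi_\infty)=1$; the classification \eqref{standard bubble on sn} combined with this normalization forces $U_\infty\equiv 1$, and uniqueness of the limit gives $\tilde v_i\to 1$ in $C^0(\Sn)$.
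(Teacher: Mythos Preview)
Your proposal is essentially correct and follows the same route as the paper: reduce to isolated simple blow-up via the Schoen--Zhang/Li scheme adapted to the Caffarelli--Silvestre extension, exclude multiple blow-up points by a Pohozaev identity together with the $(*)'_\beta$ condition with $\beta>n-2\sigma$, and then use the prescribed M\"obius dilation to renormalize and pass to the limit.

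One point deserves sharpening. In your Pohozaev step you say the boundary term on $\partial B_r^+$ ``tends to a strictly positive universal constant coming from the standard bubble.'' In the paper's argument (and in the correct execution), after multiplying by $u_i(y_i)^2$ the extensions $U_i$ converge, away from the blow-up set, to a \emph{Green's-function-type} limit $G(Y)=N_1|Y|^{2\sigma-n}+N_2|Y-\bar P|^{2\sigma-n}+H(Y)$ with at least two poles; it is this structure, not the bubble itself, that forces $\int_{\partial''\B_\delta^+} t^{1-2\sigma}B''\,\ud S$ to have a definite (negative) sign in the limit. The $(*)'_\beta$ condition is then used, via the expansions you describe, to show the complementary term $\int_{\partial'\B_\delta^+} B'$ is $\le 0$ after normalization, contradicting the Pohozaev identity $B'+B''=0$. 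Similarly, in the final step the limit equation for $\tilde v_i=T_{\varphi_i}v_i$ has coefficient $K_i\circ\varphi_i$, which converges to the constant $K_\infty(\xi_\infty)$ only on $\Sn\setminus\{N\}$; the paper handles the north pole by a separate uniform-boundedness plus H\"older-compactness argument rather than by directly invoking the classification on all of $\Sn$. These are refinements of presentation rather than gaps in your strategy.
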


For $n=3, \sigma=1$, the above theorem was established by Chang, Gursky and Yang in \cite{CGY} and by Schoen and Zhang in \cite{SZ}. For $n\ge 4, \sigma=1$, the above theorem was proved by Li in \cite{Li95}.

M\"obius diffeomorphisms $\varphi$ from $\Sn$ to $\Sn$ are those given by $\varphi=\phi\circ F$ where $\phi$ is a M\"obius transformation from $\R^n\cup\{\infty\}$ to $\R^n\cup\{\infty\}$  generated by translations, multiplications by nonzero constant and the inversion $x\to x/|x|^2$.

Our local analysis of solutions of \eqref{main equ} relies on a localization method introduced by Caffarelli and Silvestre in \cite{CS2} for the factional Laplacian $(-\Delta)^{\sigma}$ on the Euclidean space $\R^n$, through which \eqref{main equ} is connected to a degenerate elliptic differential equation in one dimension higher (see section \ref{LE}).

The proofs of Theorem \ref{main thm B} and Theorem \ref{main thm C} make use of blow up analysis of solutions of \eqref{main equ}, which is an adaptation of the analysis for $\sigma=1$ developed in \cite{SZ} and \cite{Li95}. Our blow up analysis requires a Liouville type theorem. For the definitions of weak solutions and the space $H_{loc}(t^{1-2\sigma},\overline {\R^{n+1}_+})$ in the following Liouville type theorem we refer to Definition \ref{weak solution} and the beginning of section \ref{section of liouville}.

\begin{thm}\label{thm3.1} Let  $U\in H_{loc}(t^{1-2\sigma},\overline {\R^{n+1}_+})$, $U(X)\geq 0$ in $\R^{n+1}_+$ and $U\not\equiv 0$, be a weak solution of
\begin{equation}\label{eq}
\begin{cases}
\mathrm{div}(t^{1-2\sigma}\nabla U(x,t))&=0, \quad \mbox{in }\R^{n+1}_+,\\
-\lim\limits_{t\to 0}t^{1-2\sigma}\partial_t U(x,t)&=U^{\frac{n+2\sigma}{n-2\sigma}}(x,0), \quad x\in\R^n.
\end{cases}
\end{equation}
Then $U(x,0)$ takes the form
\[
\big(N_{\sigma}c_{n,\sigma}2^{2\sigma}\big)^{\frac{n-2\sigma}{4\sigma}}\left(\frac{\lda}{1+\lda^2|x-x_0|^2}\right)^{\frac{n-2\sigma}{2}}
\]
where $\lda>0$, $x_0\in \R^n$, $c_{n,\sigma}$ is the constant in \eqref{main equ} and $N_{\sigma}$ is the constant in \eqref{norms in two spaces are same}.
Moreover,
\[
 U(x,t)=\int_{\mathbb{R}^n} \mathcal{P}_\sigma(x-y,t)U(y,0)\,\ud y
\]for $(x,t)\in \R^{n+1}_+$, where $\mathcal{P}_\sigma(x)$ is the kernel given in \eqref{poisson involution}.
\end{thm}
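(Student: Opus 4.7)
The plan is to reduce \eqref{eq} to a conformally invariant fractional Laplacian equation on $\R^n$ and invoke the known classification of positive solutions. Set $u:=U(\cdot,0)$.

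The first step is to identify $U$ with the Poisson extension of $u$:
\[
U(x,t) = W(x,t) := \int_{\R^n}\mathcal{P}_\sigma(x-y,t)\,u(y)\,\ud y, \qquad (x,t)\in \R^{n+1}_+.
\]
A priori we only know $U\in H_{loc}(t^{1-2\sigma},\overline{\R^{n+1}_+})$ and $U\ge 0$, so both the convergence of the integral defining $W$ and the equality $U=W$ must be justified. Interior De~Giorgi--Nash--Moser / Harnack-type estimates for the degenerate operator $\mathrm{div}(t^{1-2\sigma}\nabla\cdot)$ should furnish local bounds and sufficient integrability of $u$ at infinity to make sense of $W$. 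Once $W$ is well defined, the difference $V := U-W$ is a weak solution of $\mathrm{div}(t^{1-2\sigma}\nabla V)=0$ in $\R^{n+1}_+$ with vanishing Dirichlet trace on $\{t=0\}$; after even reflection across $\{t=0\}$, a Phragm\'en--Lindel\"of / maximum principle argument for the degenerate operator, combined with the pointwise inequalities $0\le W\le U$ obtained from non-negativity of the Neumann data, forces $V\equiv 0$.

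Once $U=W$, the Caffarelli--Silvestre identification (with normalization $N_\sigma$ from \eqref{norms in two spaces are same}) rewrites the boundary condition as $N_\sigma(-\Delta)^\sigma u = u^{(n+2\sigma)/(n-2\sigma)}$ in $\R^n$. Convolving with the Riesz kernel converts this into the integral equation
\[
u(x) = c(n,\sigma,N_\sigma)\int_{\R^n}\frac{u(y)^{(n+2\sigma)/(n-2\sigma)}}{|x-y|^{n-2\sigma}}\,\ud y.
\]
I would then invoke the classification of positive solutions of this equation obtained by the method of moving spheres/planes in \cite{CLO} and \cite{Li04} to conclude
\[
u(x) = \kappa\left(\frac{\lda}{1+\lda^2|x-x_0|^2}\right)^{\frac{n-2\sigma}{2}},
\]
for some $\lda>0$, $x_0\in \R^n$. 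The prefactor $\kappa=(N_\sigma c_{n,\sigma}2^{2\sigma})^{(n-2\sigma)/(4\sigma)}$ is pinned down by substituting the ansatz back into the equation and matching coefficients. The representation of $U$ as a Poisson integral then follows from Step~1.

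The main obstacle is the Poisson representation: since we have no a priori decay on $U$ beyond non-negativity and local finite energy, both the convergence of $W$ at infinity and the vanishing of $V$ are delicate, and rely on carefully chosen test functions together with the precise structure of the degenerate operator. An alternative route is to run the method of moving spheres directly in $\R^{n+1}_+$ via Kelvin transforms with respect to spheres centered on $\{t=0\}$ (which preserve both the interior equation and the critical boundary condition), classifying $U$ globally without ever identifying it as a Poisson extension; this bypasses the convergence issue, at the price of setting up symmetry and maximum principle machinery for the degenerate operator.
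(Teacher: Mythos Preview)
Your primary plan has a genuine gap precisely where you flagged it. The paper explicitly remarks that the classification results in \cite{CLO} and \cite{Li04} yield Theorem~\ref{thm3.1} only under the additional global assumption $U\in H(t^{1-2\sigma},\R^{n+1}_+)$, i.e., $u\in \dot H^\sigma(\R^n)$; with merely $U\in H_{loc}$ and $U\ge 0$, there is no a~priori control on $u$ at infinity, so neither the convergence of the Poisson integral $W$ nor the applicability of those integral-equation results is guaranteed. Your proposed fix for $V=U-W$ via even reflection and Phragm\'en--Lindel\"of is also problematic: the even reflection of $V$ is not a solution across $\{t=0\}$ unless the weighted normal derivative of $V$ vanishes, and you have not established that $-\lim_{t\to 0}t^{1-2\sigma}\partial_t W = u^{(n+2\sigma)/(n-2\sigma)}$ (indeed, the Caffarelli--Silvestre identity \eqref{connection of fractional and extension} is stated for $u\in\dot H^\sigma(\R^n)$). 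The inequality $0\le W\le U$ you invoke likewise needs a global comparison argument that is unavailable without decay.

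The paper in fact takes your ``alternative route'': it runs the moving spheres method directly on $U$ in $\overline{\R^{n+1}_+}$ using Kelvin transforms $U_{\bar X,\lambda}$ centered on the boundary, and this is not merely a stylistic choice. The key payoff is that once one proves $U_{\bar X,\bar\lambda(x)}\equiv U$ for the critical radius (Lemma~\ref{equal}), this identity encodes exact decay of $U$ at infinity. The paper then classifies $u$ via the calculus in \cite{lz}, and \emph{only afterwards} proves the Poisson representation: with $u$ now a standard bubble, $V:=U-\mathcal P_\sigma[u]$ satisfies the homogeneous problem, and the moving-spheres identity guarantees that the Kelvin transform $V_{\bar\lambda}$ extends smoothly across $0$, which is exactly what is needed to justify the integration by parts yielding $\int_{\R^{n+1}_+}t^{1-2\sigma}|\nabla V|^2=0$. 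In other words, the moving-spheres step supplies the missing global information that your Step~1 tried to obtain directly; reversing the order of your two steps, as the paper does, closes the gap.
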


\begin{rem}\label{subcritical nonexistence}
If we replace $U^{\frac{n+2\sigma}{n-2\sigma}}(x,0)$ by $U^{p}(x,0)$ for $0\leq p<\frac{n+2\sigma}{n-2\sigma}$ in \eqref{eq}, then the only nonnegative solution of \eqref{eq} is $U\equiv 0$. Moreover, for $p<0$, \eqref{eq} has no positive solution. These can be seen from the proof of Theorem \ref{thm3.1} with a standard modification (see, e.g., the proof of Theorem 1.2 in \cite{cl}). For $\sigma\in (1/2, 1)$ and $1< p<\frac{n+2\sigma}{n-2\sigma}$, this result has been proved in \cite{dS}.
\end{rem}

\begin{rem}
We do not make any assumption on the behavior of $U$ near $\infty$. If we assume that $U\in H(t^{1-2\sigma},\mathbb{R}^{n+1}_+)$, the theorem in the case of $p=\frac{n+2\sigma}{n-2\sigma}$ follows from \cite{CLO} and \cite{Li04}. When $\sigma=\frac{1}{2}$, the above theorem can be found in \cite{Hu}, \cite{HY}, \cite{lz1}, \cite{Ou} and \cite{lz}.
\end{rem}

Given the pages needed to present the proofs of all the results, we leave the proofs of Theorem \ref{K-M-E-S} and the existence part of Theorem \ref{main thm A} to the subsequent paper \cite{JLX2}. The needed ingredients for a proof of the existence part of Theorem \ref{main thm A} are all developed in this paper. With these ingredients, the existence part of Theorem \ref{main thm A} follows from a perturbation result and a degree argument which are given in \cite{JLX2}.

The present paper is organized as the following. In section \ref{LE} we derive some properties for solutions of fractional Laplacian equations. In particular we prove that local Schauder estimates hold for positive solutions. In section \ref{section of liouville}, using the method of moving spheres, we establish Theorem \ref{thm3.1}. This Liouville type theorem and the local Schauder estimates are used in the blow up analysis of solutions of \eqref{main equ}. In section \ref{Local analysis near isolated blow up points} we establish accurate blow up profiles of solutions of \eqref{main equ} near isolated blow up points. In fact most of the estimates hold also for subcritical approximations to such equations as well including in bounded domains of $\R^n$. In section \ref{Estimates on the sphere and proof of main theorems}, we provide $H^{\sigma}(\Sn)$ norm a priori estimates, at most one isolated simple blow up point, and $L^{\infty}(\Sn)$ norm a priori estimates for solutions of \eqref{main equ} under appropriate hypotheses on $K$. The proofs of Theorem \ref{main thm A}, \ref{main thm B} and \ref{main thm C} are given in this section. In the Appendix we provide a Kazdan-Warner identity, Lemma \ref{thm3.2} that is in the same spirit of the classical B\^ocher theorem, two lemmas on maximum principles and some complementarities.

\section{Preliminaries} 
\label{LE}

\subsection{A weighted Sobolev space}\label{weighted sobolev spaces}
Let $\sigma\in (0,1)$, $X=(x,t)\in \R^{n+1}$ where $x\in \R^n$ and $t\in \R$. Then $|t|^{1-2\sigma}$ belongs to the Muckenhoupt $A_2$ class in $\R^{n+1}$, namely, there exists a positive constant $C$, such that for any ball $B\subset \R^{n+1}$
\[
\left(\frac{1}{|B|}\int_{B}|t|^{1-2\sigma}\, \ud X \right)\left(\frac{1}{|B|}\int_{B}|t|^{2\sigma-1} \,\ud X \right)\le C.
\]
Let $D$ be an open set in $\R^{n+1}$.
Denote $L^2(|t|^{1-2\sigma}, D)$ as the Banach space of all measurable functions $U$, defined on $D$, for which
\[
 \| U\|_{L^{2}(|t|^{1-2\sigma},D)}:=\left(\int_{D}|t|^{1-2\sigma} U^{2}\,\ud X\right)^{\frac{1}{2}}<\infty.
\]
We say that $U\in H(|t|^{1-2\sigma},D)$ if $U\in L^2(|t|^{1-2\sigma}, D)$, and its weak derivatives $\nabla U$ exist and belong to $L^2(|t|^{1-2\sigma}, D)$. The norm of $U$ in $H(|t|^{1-2\sigma},D)$ is given by
\[
\|U\|_{H(|t|^{1-2\sigma},D)}:=\left(\int_{D}|t|^{1-2\sigma}U^2(X)\,\ud X+\int_{D}|t|^{1-2\sigma}|\nabla U(X)|^2\,\ud X\right)^{\frac{1}{2}}.
\]
It is clear that $H(|t|^{1-2\sigma},D)$ is a Hilbert space with the inner product
\[
 \langle U,V\rangle:=\int_{D}|t|^{1-2\sigma}(UV+\nabla U\nabla V)\,\ud X.
\]
Note that the set of smooth functions $C^{\infty}(D)$ is dense in $H(|t|^{1-2\sigma},D)$. Moreover, if $D$ is a domain, i.e. a bounded connected open set, with Lipschitz boundary $\pa D$, then there exists a linear, bounded extension operator from $H(|t|^{1-2\sigma},D)$ to $H(|t|^{1-2\sigma},\R^{n+1})$ (see, e.g., \cite{Chua}).

Let $\Omega$ be an open set in $\R^n$.
Recall that $H^{\sigma}(\Omega)$ is the fractional Sobolev space defined as
\[
H^{\sigma}(\Omega):=\left\{u\in L^2(\Omega):\frac{|u(x)-u(y)|}{|x-y|^{\frac{n}{2}+\sigma}}\in L^2(\Omega\times\Omega)\right\}
\]
with the norm
\[
\|u\|_{H^{\sigma}(\Omega)}:=\left(\int_{\Omega}u^2\,\ud x+\int_{\Omega}\int_{\Omega}\frac{|u(x)-u(y)|^2}{|x-y|^{n+2\sigma}}\,\ud x\,\ud y\right)^{1/2}.
\]
The set of smooth functions $C^{\infty} (\Omega)$ is dense in $H^{\sigma}(\Omega)$. If $\Omega$ is a domain with Lipschitz boundary, then there exists a linear, bounded extension operator from $H^{\sigma}(\Omega)$ to $H^{\sigma}(\R^n)$. Note that $H^{\sigma}(\R^n)$ with the norm $\|\cdot\|_{H^{\sigma}(\R^n)}$ is equivalent to the following space
\[
\left\{u\in L^2(\R^n): |\xi|^{\sigma}\mathscr{F}(u)(\xi)\in L^2(\R^n)\right\}
\]
with the norm
\[
\|\cdot\|_{L^2(\R^n)}+\||\xi|^{\sigma}\mathscr F(\cdot)(\xi)\|_{L^2(\R^n)}
\]
where $\mathscr F$ denotes the Fourier transform operator. It is known that (see, e.g., \cite{LM}) there exists $C>0$ depending only on $n$ and $\sigma$ such that for $U\in H(t^{1-2\sigma},\R^{n+1}_+)\cap C(\overline{\R^{n+1}_+})$, $\|U(\cdot,0)\|_{H^{\sigma}(\R^n)}\leq C\|U\|_{H(t^{1-2\sigma},\R^{n+1}_+)}$. Hence by a standard density argument, every $U\in H(t^{1-2\sigma},\R^{n+1}_+)$ has a well-defined trace $u:=U(\cdot, 0)\in H^{\sigma}(\R^n)$.

We define $\dot{H}^\sigma(\R^n)$ as the closure of the set $C_c^{\infty}(\R^n)$ of compact supported smooth functions under the norm
\[
\|u\|_{\dot{H}^\sigma(\R^n)}=\||\xi|^{\sigma}\mathscr F(u)(\xi)\|_{L^2(\R^n)}.
\]
Then there exists a constant $C$ depending only on $n$ and $\sigma$ such that
\be\label{fractional sobolev inequality}
\|u\|_{L^{\frac{2n}{n-2\sigma}}(\R^n)}\leq C \|u\|_{\dot H^{\sigma}(\R^n)}\quad\text{ for all } u\in C_c^{\infty}(\R^n).
\ee
For any $u\in \dot H^{\sigma}(\R^n)$, set
\be\label{poisson involution}
 U(x,t)=\mathcal P_{\sigma}[u]:=\int_{\mathbb{R}^n} \mathcal{P}_\sigma(x-\xi,t)u(\xi)\,\ud \xi,\quad (x,t)\in \R^{n+1}_+:=\R^n\times (0,+\infty),
\ee
where
\[
 \mathcal{P}_\sigma(x,t)=\beta(n,\sigma)\frac{t^{2\sigma}}{(|x|^2+t^2)^{\frac{n+2\sigma}{2}}}
\] with constant $\beta(n,\sigma)$ such that $\int_{\R^n}\mathcal{P}_\sigma(x,1)\,\ud x=1$.
Then $U\in C^{\infty}(\R^{n+1}_+)$, $U\in L^2(t^{1-2\sigma},K)$ for any compact set $K$ in $\overline{\R^{n+1}_+}$,  and $\nabla U\in L^2 (t^{1-2\sigma},\R^{n+1}_+)$. Moreover, $U$ satisfies (see \cite{CS2})
  \be\label{2.1}
 \mathrm{div}(t^{1-2\sigma}\nabla U)=0\quad \mbox{in }\mathbb{R}^{n+1}_+,
\ee
  \be\label{norms in two spaces are same}
 \|\nabla U\|_{L^2(t^{1-2\sigma},\R^{n+1}_+)}=N_{\sigma}\|u\|_{\dot H^{\sigma}(\R^n)},
  \ee
and
\be\label{connection of fractional and extension}
-\dlim_{t\to 0}t^{1-2\sigma}\pa_t U(x,t)=N_\sigma(-\Delta)^{\sigma} u(x), \quad \mbox{in }\R^n
\ee
in distribution sense, where $N_\sigma=2^{1-2\sigma}\Gamma(1-\sigma)/\Gamma(\sigma)$.
We refer $U=\mathcal P_{\sigma}[u]$ in \eqref{poisson involution} to be the \emph{extension} of $u$ for any $u\in \dot H^{\sigma}(\R^n)$.

For a domain $D\subset \R^{n+1}$ with boundary $\pa D$, we denote $\pa' D$ as the interior of $\overline D\cap \pa \R^{n+1}_+$ in $\R^n=\partial\R^{n+1}_+$ and $\pa''D=\pa D \setminus \pa' D$.
\begin{prop}\label{prop of trace weighted}
Let $D=\Omega\times(0,R)\subset \R^n\times \R_+$, $R>0$ and $\pa\Omega$ be Lipschitz.

(i) If $U\in H(t^{1-2\sigma},D)\cap C(D\cup \pa' D)$, then $u:=U(\cdot, 0)\in H^{\sigma}(\Omega)$, and
\[
\|u\|_{H^{\sigma}(\Omega)}\leq C \|U\|_{H(t^{1-2\sigma},D)}
\]
where $C$ is a positive constant depending only on $n,\sigma,R$ and $\Omega$.
Hence every $U\in H(t^{1-2\sigma},D)$ has a well-defined trace $U(\cdot, 0)\in H^{\sigma}(\Omega)$ on $\pa' D$. Furthermore, there exists $C_{n,\sigma}>0$ depending only on $n$ and $\sigma$ such that
\be\label{weighted trace embedding inequality}
\|U(\cdot,0)\|_{L^{\frac{2n}{n-2\sigma}}(\Omega)}\leq C_{n,\sigma} \|\nabla U\|_{L^2(t^{1-2\sigma}, D)}\quad\text{ for all } U\in C_c^{\infty}(D\cup\pa' D).
\ee

(ii) If $u\in H^{\sigma}(\Omega)$, then there exists $U\in H(t^{1-2\sigma},D)$ such that the trace of $U$ on $\Omega$ equals to $u$ and
\[
\|U\|_{H(t^{1-2\sigma},D)}\leq C \|u\|_{H^{\sigma}(\Omega)}
\]
where $C$ is a positive constant depending only on $n,\sigma,R$ and $\Omega$.
\end{prop}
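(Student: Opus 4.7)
The plan is to reduce both parts to the global statements on $\R^n$ and $\R^{n+1}_+$ recalled in subsection \ref{weighted sobolev spaces}, using the bounded extension operators for $H(t^{1-2\sigma},\cdot)$ and $H^{\sigma}(\cdot)$ together with the Poisson extension $\mathcal P_{\sigma}$. For the first claim of (i), since $\partial \Omega$ is Lipschitz the lateral and top pieces of $\partial D$ are Lipschitz, so the extension result of Chua cited above yields a bounded linear operator $E\colon H(t^{1-2\sigma},D)\to H(t^{1-2\sigma},\R^{n+1}_+)$. Applying the global trace inequality $\|V(\cdot,0)\|_{H^{\sigma}(\R^n)}\le C\|V\|_{H(t^{1-2\sigma},\R^{n+1}_+)}$ to $V=EU$ and restricting to $\Omega$ gives $\|U(\cdot,0)\|_{H^{\sigma}(\Omega)}\le C\|U\|_{H(t^{1-2\sigma},D)}$, first for $U$ continuous up to $\partial'D$ and then, by density of $C^{\infty}(D)$ in $H(t^{1-2\sigma},D)$, for general $U$.

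For the Sobolev-type inequality \eqref{weighted trace embedding inequality}, given $U\in C_c^{\infty}(D\cup\partial'D)$, extending by zero across $\partial''D$ produces a function in $H(t^{1-2\sigma},\R^{n+1}_+)$ whose trace $u:=U(\cdot,0)\in C_c^{\infty}(\R^n)$ is supported in $\Omega$. Let $\tilde U:=\mathcal P_{\sigma}[u]$. Because $\tilde U$ weakly solves \eqref{2.1} with trace $u$, testing against $U-\tilde U$, which has zero trace, yields
\[
\int_{\R^{n+1}_+}t^{1-2\sigma}\nabla\tilde U\cdot\nabla(U-\tilde U)\,\ud X=0,
\]
hence the energy-minimization inequality
\[
\|\nabla\tilde U\|_{L^2(t^{1-2\sigma},\R^{n+1}_+)}\le\|\nabla U\|_{L^2(t^{1-2\sigma},\R^{n+1}_+)}=\|\nabla U\|_{L^2(t^{1-2\sigma},D)}.
\]
Combining this with \eqref{norms in two spaces are same} and \eqref{fractional sobolev inequality} yields the claim, since $u$ is supported in $\Omega$.

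For part (ii), extend $u$ to $\tilde u\in H^{\sigma}(\R^n)$ with $\|\tilde u\|_{H^{\sigma}(\R^n)}\le C\|u\|_{H^{\sigma}(\Omega)}$ via the standard extension operator for $H^{\sigma}$, and set $\tilde U:=\mathcal P_{\sigma}[\tilde u]$. The identity \eqref{norms in two spaces are same} together with the continuous embedding $H^{\sigma}(\R^n)\subset\dot H^{\sigma}(\R^n)$ controls $\|\nabla\tilde U\|_{L^2(t^{1-2\sigma},\R^{n+1}_+)}$ by $\|u\|_{H^{\sigma}(\Omega)}$. Since $\mathcal P_{\sigma}(\cdot,t)$ has unit $L^1$ mass for each $t>0$, Young's inequality gives $\|\tilde U(\cdot,t)\|_{L^2(\R^n)}\le\|\tilde u\|_{L^2(\R^n)}$, and integrating on $(0,R)$ against $t^{1-2\sigma}$ bounds $\|\tilde U\|_{L^2(t^{1-2\sigma},D)}$ by a constant depending on $R$ and $\sigma$ times $\|u\|_{H^{\sigma}(\Omega)}$. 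Restricting $\tilde U$ to $D$ produces the desired extension. The main technical point throughout is the energy-minimization characterization of $\mathcal P_{\sigma}$ invoked in (i); the remaining ingredients are Chua's $A_2$-weight extension theorem, the global trace inequality into $H^{\sigma}(\R^n)$, and the fractional Sobolev inequality \eqref{fractional sobolev inequality}, all recorded in subsection \ref{weighted sobolev spaces}.
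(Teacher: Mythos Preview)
Your approach is essentially the same as the paper's: reduce (i) to the global trace via Chua's extension, prove \eqref{weighted trace embedding inequality} by comparing the zero-extension of $U$ with the Poisson extension of its trace, and prove (ii) by extending $u$ to $\R^n$ and taking $\mathcal P_{\sigma}[\tilde u]$. One point deserves more care: in your energy-minimization step you ``test against $U-\tilde U$,'' but $\tilde U=\mathcal P_{\sigma}[u]$ is not compactly supported, so the orthogonality $\int_{\R^{n+1}_+}t^{1-2\sigma}\nabla\tilde U\cdot\nabla(U-\tilde U)=0$ requires a cutoff argument and decay estimates for $\tilde U$ at infinity---the paper isolates exactly this as a separate lemma (Lemma~\ref{lemma used in weighted trace embedding inequality}) rather than asserting it directly.
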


\begin{proof}
The above results are well-known and here we just sketch the proofs. For (i), by the previously mentioned result on the extension operator, there exists $\tilde U\in H(t^{1-2\sigma},\R^{n+1})$ such that $\tilde U=U$ in $D$ and
\[
\|\tilde U\|_{H(t^{1-2\sigma},\R^{n+1})}\leq C \|U\|_{H(t^{1-2\sigma}, D)}.
\]
Hence by the previously mentioned result on the trace from $H(t^{1-2\sigma},\R^{n+1}_+)$ to $H^{\sigma}(\R^n)$, we have
 \[
 \|u\|_{H^{\sigma}(\Omega)}\leq \|\tilde U(\cdot,0)\|_{H^{\sigma}(\R^n)}\leq C \|\tilde U\|_{H(t^{1-2\sigma},\R^{n+1}_+)}\leq C \|U\|_{H(t^{1-2\sigma}, D)}.
 \]
For \eqref{weighted trace embedding inequality}, we extend $U$ to be zero in the outside of $\overline D$ and let $V$ be the extension of $U(\cdot,0)$ as in \eqref{poisson involution}. The inequality \eqref{weighted trace embedding inequality} follows from \eqref{fractional sobolev inequality}, \eqref{norms in two spaces are same} and
\[
\|\nabla V\|_{L^2(t^{1-2\sigma},\R^{n+1}_+)}\leq \|\nabla U\|_{L^2(t^{1-2\sigma},\R^{n+1}_+)}
\]
where Lemma \ref{lemma used in weighted trace embedding inequality} is used in the above inequality.

For (ii), since $\pa\Omega$ is Lipschitz, there exists $\tilde u\in H^{\sigma}(\R^n)$ such that $\tilde u=u$ in $\Omega$ and $\|\tilde u\|_{H^{\sigma}(\R^n)}\le C \|u\|_{H^{\sigma}(\Omega)}$.
Then $U=\mathcal P_{\sigma}[u]$, the extension of $\tilde u$, satisfies (ii).
\end{proof}

\subsection{Weak solutions of degenerate elliptic equations}
Let $D$ be a domain in $\R^{n+1}_+$ with $\pa' D\neq \emptyset$.
Let $a\in L^{\frac{2n}{n+2\sigma}}_{loc}(\pa' D)$ and $b\in L^1_{loc}(\pa' D)$. Consider
\begin{equation}\label{2.5}
\begin{cases}
\mathrm{div}(t^{1-2\sigma}\nabla U(X))= 0\quad &\mbox{in } D \\
-\dlim_{t\rightarrow 0^+}t^{1-2\sigma}\pa_tU(x,t)=a(x)U(x,0)+b(x)\quad  &\mbox{on } \pa' D.\\
\end{cases}
\end{equation}
\begin{defn}\label{weak solution}
We say that $U\in H(t^{1-2\sigma},D)$ is a weak solution (resp. supersolution, subsolution) of \eqref{2.5} in $D$, if for every nonnegative $\Phi\in C^\infty_c(D\cup \pa'D)$
\be\label{weak solution def parts}
\int_{D} t^{1-2\sigma} \nabla U \nabla \Phi =(resp.~\ge, \le) \int_{\pa' D} a U \Phi + b \Phi.
\ee
\end{defn}

We denote
$
Q_R=B_R\times (0,R)
$ where $B_R\subset \R^n$ is the ball with radius $R$ and centered at $0$.

\begin{prop}\label{energy estimates1-1-1}
Suppose that $a(x)\in L^{\frac{n}{2\sigma}}(B_1)$ and $b(x)\in L^{\frac{2n}{n+2\sigma}}(B_1)$. Let $U\in H(t^{1-2\sigma},Q_1)$ be a weak solution of \eqref{2.5} in $Q_1$. There exists $\delta>0$ depending only on $n$ and $\sigma$ such that if $\|a^+\|_{ L^{\frac{n}{2\sigma}}(B_1)}<\delta$, then there exists
a constant $C$ depending only on $n,\sigma$ and $\delta$ such that
\[
\|U\|_{H(t^{1-2\sigma},Q_{1/2})}\leq C(\|U\|_{L^2(t^{1-2\sigma},Q_1)}+\|b\|_{L^{\frac{2n}{n+2\sigma}}(B_1)}).
\]
Consequently, if $a\in L^p(B_1)$ for $p>\frac{n}{2\sigma}$, then $C$ depends only on $n,\sigma, \|a\|_{L^p(B_1)}$.
\end{prop}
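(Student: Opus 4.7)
The plan is a standard Caccioppoli-type argument: test the weak formulation against $\Phi=\eta^2 U$ for a suitable cutoff and use the trace embedding \eqref{weighted trace embedding inequality} together with the smallness of $\|a^+\|_{L^{n/(2\sigma)}}$ to absorb boundary terms into the left-hand side. Concretely, choose $\eta\in C_c^\infty(Q_1\cup\partial'Q_1)$ with $\eta\equiv 1$ on $Q_{1/2}$, $0\le\eta\le 1$, $\eta\equiv 0$ near $\partial''Q_1$, and $|\nabla\eta|\le C$; then $\eta^2 U$ is admissible in \eqref{weak solution def parts} by a density argument, giving
\[
\int_{Q_1} t^{1-2\sigma}\eta^2|\nabla U|^2 +2\int_{Q_1} t^{1-2\sigma}\eta U\nabla U\cdot\nabla\eta
=\int_{B_1} a\,\eta^2 U^2+\int_{B_1} b\,\eta^2 U.
\]

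Next, apply Young's inequality to the interior cross term to get
\[
\int_{Q_1} t^{1-2\sigma}\eta^2|\nabla U|^2 \le 4\int_{Q_1} t^{1-2\sigma} U^2|\nabla\eta|^2 + 2\int_{B_1} a^+\eta^2 U^2 + 2\int_{B_1} b\,\eta^2 U.
\]
For the $a$ term, H\"older in $L^{n/(2\sigma)}\times L^{n/(n-2\sigma)}$ combined with \eqref{weighted trace embedding inequality} (applied to $\eta U\in C_c^\infty(Q_1\cup\partial' Q_1)$ in a slightly enlarged cylinder, after a standard approximation) yields
\[
\int_{B_1} a^+\eta^2 U^2 \le \|a^+\|_{L^{n/(2\sigma)}(B_1)}\,C_{n,\sigma}^2\,\|\nabla(\eta U)\|_{L^2(t^{1-2\sigma},Q_1)}^2,
\]
and expanding $|\nabla(\eta U)|^2\le 2\eta^2|\nabla U|^2+2U^2|\nabla\eta|^2$. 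For the $b$ term, H\"older in $L^{2n/(n+2\sigma)}\times L^{2n/(n-2\sigma)}$, the same trace embedding, and Young's inequality produce
\[
\int_{B_1} b\,\eta^2 U \le \varepsilon\|\nabla(\eta U)\|_{L^2(t^{1-2\sigma},Q_1)}^2 + C_\varepsilon\|b\|_{L^{2n/(n+2\sigma)}(B_1)}^2.
\]

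Choosing $\delta$ so that $2C_{n,\sigma}^2\delta+\varepsilon<1/2$ and fixing $\varepsilon$, the $\eta^2|\nabla U|^2$ contribution coming from the right-hand side is absorbed into the left, leaving
\[
\int_{Q_{1/2}} t^{1-2\sigma}|\nabla U|^2 \le C\Bigl(\|U\|_{L^2(t^{1-2\sigma},Q_1)}^2+\|b\|_{L^{2n/(n+2\sigma)}(B_1)}^2\Bigr),
\]
which together with the trivial $L^2(t^{1-2\sigma})$ bound on $U$ gives the claimed estimate. For the final ``consequently'' statement with $a\in L^p(B_1)$, $p>n/(2\sigma)$, the smallness of $\|a^+\|_{L^{n/(2\sigma)}}$ need no longer hold on $B_1$, but by H\"older $\|a^+\|_{L^{n/(2\sigma)}(B_r(x_0))}\le |B_r|^{\frac{2\sigma}{n}-\frac{1}{p}}\|a\|_{L^p(B_1)}$, which is smaller than $\delta$ provided $r\le r_0(n,\sigma,\|a\|_{L^p})$; a finite covering of $Q_{1/2}$ by rescaled cylinders $Q_{r_0}(x_0)$ and summation reduces the general case to the first part.

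The main obstacle, and the only step that is more than bookkeeping, is the justification that $\eta^2U$ is an admissible test function in \eqref{weak solution def parts} together with the use of \eqref{weighted trace embedding inequality} for $\eta U$: the embedding is stated for $C_c^\infty(D\cup\partial' D)$ functions, so one must approximate $U$ in $H(t^{1-2\sigma},Q_1)$ by smooth functions and pass to the limit, using the trace continuity from Proposition~\ref{prop of trace weighted}(i) to ensure the boundary integrals are stable under the approximation.
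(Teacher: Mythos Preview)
Your proof is correct and follows essentially the same Caccioppoli argument as the paper: test against $\eta^2 U$, use Cauchy--Schwarz/Young on the cross term, control the $a^+$ boundary term by H\"older plus the trace inequality \eqref{weighted trace embedding inequality}, and absorb. The paper's treatment of the ``consequently'' clause differs only cosmetically: rather than a covering of $Q_{1/2}$ by small cylinders, it rescales once to obtain the estimate on the thin slab $B_{1/2}\times(0,r/2)$ and then invokes uniform ellipticity of \eqref{2.5} in $B_1\times(r/4,1)$ for the remaining region; your covering by cylinders $Q_{r_0}(x_0)$ likewise only reaches the slab $\{0<t<r_0\}$, so you implicitly need the same interior elliptic estimate to finish---worth stating explicitly, but not a real gap.
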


\begin{proof}
Let $\eta\in C_c^{\infty}(Q_1\cup\pa' Q_1)$ be a cut-off function which equals to $1$ in $Q_{1/2}$ and supported in $Q_{3/4}$. By a density argument, we can choose $\eta^2 U$ as a test function in \eqref{weak solution def parts}. Then we have, by Cauchy-Schwarz inequality,
\[
\int_{Q_{1}}t^{1-2\sigma}\eta^2|\nabla U|^2\,\ud X\leq 4 \int_{Q_1}t^{1-2\sigma}|\nabla\eta|^2U^2\,\ud X + 2 \int_{\pa' Q_1} a^+ (\eta U)^2+b\eta^2 U\,\ud x.
\]
By H\"older inequality and Proposition \ref{prop of trace weighted},
\[
\begin{split}
\int_{\pa' Q_1} a^+ (\eta U)^2 \,\ud x &\leq \delta\|\eta U\|^2_{L^{\frac{2n}{n-2\sigma}}(\pa' Q_1)} \leq \delta C(n,\sigma)\|\nabla(\eta U)\|^2_{L^2(t^{1-2\sigma},Q_1)}\\
\end{split}
\]
By Young's inequality $\forall ~\va>0$,
\[
\begin{split}
\int_{\pa' Q_1} b\eta^2 U(\cdot,0 )\,\ud x &\leq \va\|\eta U\|^2_{L^{\frac{2n}{n-2\sigma}}(\pa' Q_1)}+C(\va) \|b\|^2_{L^{\frac{2n}{n+2\sigma}}(\pa' Q_1)}\\
& \leq \va C(n,\sigma)\|\nabla(\eta U)\|^2_{L^2(t^{1-2\sigma},Q_1)}+C(\va) \|b\|^2_{L^{\frac{2n}{n+2\sigma}}(\pa' Q_1)}.
\end{split}
\]
The first conclusion follows immediately if $\delta$ is sufficient small.

If $a\in L^p(B_1)$, we can choose $r$ small such that $\|a\|_{L^{\frac{n}{2\sigma}}(B_r(x_0))}<\delta$ for any ball $B_r(x_0)\subset B_1$. Then $\hat U(x,t)=r^{\frac{n-2\sigma}{2}}U(rx+x_0,rt)$ satisfies \eqref{2.5} with $\hat a(x)=r^{2\sigma}a(rx+x_0)$ and $\hat b(x,t)=r^{\frac{n+2\sigma}{2}}b(rx+x_0)$ in $Q_1$. Since $\|\hat a\|_{L^{\frac{n}{2\sigma}}(B_1)}<\delta$, applying the above result to $\hat U$, we have
\[
\|U\|_{H(t^{1-2\sigma},B_{1/2}\times (0,r/2))}\leq C(\|U\|_{L^2(t^{1-2\sigma},Q_1)}+\|b\|_{L^{\frac{2n}{n+2\sigma}}(B_1)})
\]
where $C$ depends only on $n,\sigma, \|a\|_{L^{\infty}(B_1)}$. This, together with the fact that \eqref{2.5} is uniformly elliptic in $B_1\times (r/4, 1)$, finishes the proof.
\end{proof}

\begin{prop}\label{prop of existence}
Suppose that $a(x)\in L^{\frac{n}{2\sigma}}(B_1)$. There exists $\delta>0$ which depends only on $n$ and $\sigma$ such that if $\|a^+\|_{L^{\frac{n}{2\sigma}}(B_1)}< \delta$, then for any $b(x)\in L^{\frac{2n}{n+2\sigma}}(B_1)$, there exists a unique solution in $H(t^{1-2\sigma}, Q_1)$ to \eqref{2.5} with $U|_{\pa'' Q_1}=0$.
\end{prop}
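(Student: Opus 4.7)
The plan is to set this up as a Lax--Milgram problem on an appropriate closed subspace of $H(t^{1-2\sigma},Q_1)$. Let
\[
H_0 := \overline{C_c^{\infty}(Q_1\cup\partial' Q_1)}^{\,\|\cdot\|_{H(t^{1-2\sigma},Q_1)}},
\]
the space of $H(t^{1-2\sigma},Q_1)$ functions which vanish in the trace sense on $\partial'' Q_1$. Because elements of $H_0$ can be extended by zero across $\partial'' Q_1$ to functions of $H(t^{1-2\sigma},\R^{n+1}_+)$, the trace inequality \eqref{weighted trace embedding inequality} in Proposition \ref{prop of trace weighted} applies to give
\[
\|U(\cdot,0)\|_{L^{\frac{2n}{n-2\sigma}}(B_1)}\le C_{n,\sigma}\|\nabla U\|_{L^2(t^{1-2\sigma},Q_1)}\quad \forall\,U\in H_0.
\]
A weighted Poincar\'e inequality (valid since $t^{1-2\sigma}$ is an $A_2$ weight and elements of $H_0$ vanish on a relatively open positive-measure portion of $\partial Q_1$) then shows that $\|U\|_{H_0}:=\|\nabla U\|_{L^2(t^{1-2\sigma},Q_1)}$ is an equivalent norm on $H_0$, turning $H_0$ into a Hilbert space.

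On $H_0$, I introduce the bilinear form and linear functional
\[
\mathcal{B}(U,V)=\int_{Q_1} t^{1-2\sigma}\nabla U\cdot\nabla V\,\ud X-\int_{B_1} a\,U(\cdot,0)V(\cdot,0)\,\ud x,\quad \mathcal{L}(V)=\int_{B_1} b\,V(\cdot,0)\,\ud x,
\]
so that a function $U\in H_0$ is a weak solution of \eqref{2.5} with $U|_{\partial'' Q_1}=0$ iff $\mathcal{B}(U,V)=\mathcal{L}(V)$ for every $V\in H_0$ (density of $C_c^{\infty}(Q_1\cup\partial' Q_1)$ in $H_0$ lets me test with arbitrary $V\in H_0$). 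Continuity of $\mathcal B$ and $\mathcal L$ follows from Cauchy--Schwarz for the bulk term together with H\"older on $\partial' Q_1$: for $\mathcal B$,
\[
\Bigl|\int_{B_1} a\,UV(\cdot,0)\Bigr|\le \|a\|_{L^{\frac{n}{2\sigma}}(B_1)}\|U(\cdot,0)\|_{L^{\frac{2n}{n-2\sigma}}}\|V(\cdot,0)\|_{L^{\frac{2n}{n-2\sigma}}}\le C_{n,\sigma}^2\|a\|_{L^{n/(2\sigma)}}\|U\|_{H_0}\|V\|_{H_0},
\]
and for $\mathcal L$, $|\mathcal L(V)|\le C_{n,\sigma}\|b\|_{L^{2n/(n+2\sigma)}(B_1)}\|V\|_{H_0}$.

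Coercivity is the key point and is exactly where the smallness hypothesis enters. For $U\in H_0$,
\[
\mathcal{B}(U,U)\ge \|U\|_{H_0}^{2}-\|a^+\|_{L^{\frac{n}{2\sigma}}(B_1)}\|U(\cdot,0)\|_{L^{\frac{2n}{n-2\sigma}}(B_1)}^{2}\ge \bigl(1-C_{n,\sigma}^{2}\|a^+\|_{L^{\frac{n}{2\sigma}}(B_1)}\bigr)\|U\|_{H_0}^{2},
\]
so choosing $\delta:=(2C_{n,\sigma}^{2})^{-1}$ yields $\mathcal B(U,U)\ge \tfrac12\|U\|_{H_0}^{2}$ whenever $\|a^+\|_{L^{n/(2\sigma)}(B_1)}<\delta$. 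By the Lax--Milgram theorem there exists a unique $U\in H_0$ with $\mathcal B(U,\cdot)=\mathcal L(\cdot)$, giving the desired existence and uniqueness. The main obstacle I anticipate is a careful justification of the weighted Poincar\'e inequality on $H_0$ (to make the $H_0$ norm equivalent to the full $H(t^{1-2\sigma},Q_1)$ norm, so that coercivity implies solutions lie in the ambient space claimed), but this is standard for $A_2$-weighted Sobolev spaces once one has a positive-measure vanishing set on $\partial Q_1$.
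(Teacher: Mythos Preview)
Your argument is correct and matches the paper's own proof: set up the bilinear form on the subspace of $H(t^{1-2\sigma},Q_1)$ with zero trace on $\pa'' Q_1$, use the trace inequality of Proposition~\ref{prop of trace weighted} to get boundedness and coercivity when $\|a^+\|_{L^{n/(2\sigma)}}$ is small, and conclude by Lax--Milgram (the paper phrases this as the Riesz representation theorem, which suffices since the form is symmetric). You have in fact supplied more detail than the paper, including the Poincar\'e step it leaves implicit.
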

\begin{proof}
We consider the bilinear form
\[
B[U, V]:=\int_{Q_1} t^{1-2\sigma} \nabla U\nabla V\,\ud X-\int_{\pa' Q_1} a U V \,\ud x,\quad U, V\in \mathcal A
\]
where $\mathcal A:=\{U\in H(t^{1-2\sigma}, Q_1): U|_{\pa'' Q_1}=0 \text{ in trace sense} \}$.
By Proposition \ref{prop of trace weighted}, it is easy to verify that $B[\cdot,\cdot]$ is bounded and coercive provided $\delta$ is sufficiently small. Therefore the proposition follows from the Riesz representation theorem.
\end{proof}

\begin{lem}\label{wmp}
Suppose $U\in H(t^{1-2\sigma}, D)$ is a weak supersolution of \eqref{2.5} in $D$ with $a\equiv b\equiv 0$. If $U(X)\geq 0$ on $\pa'' D$ in trace sense,
then $U\geq 0$ in $D$.
\end{lem}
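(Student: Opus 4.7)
The plan is to run the standard weak maximum principle argument, adapted to the degenerate elliptic setting: test the weak supersolution inequality against the negative part $U^-:=\max(-U,0)$ and force $\nabla U^-\equiv 0$.

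First I would verify the chain rule $U^-\in H(t^{1-2\sigma},D)$ with
\[
\nabla U^- = -\chi_{\{U<0\}}\nabla U \quad \text{a.e.\ in } D,
\]
so that $\nabla U\cdot\nabla U^- = -|\nabla U^-|^2$ almost everywhere. In the unweighted Sobolev setting this is a classical truncation--mollification computation; here it goes through because $t^{1-2\sigma}$ is an $A_2$ weight (as recorded in Section \ref{weighted sobolev spaces}), so $C^\infty(D)$ is dense in $H(t^{1-2\sigma},D)$ and standard convolution approximations commute with truncations at the level $0$.

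Next, since $U\geq 0$ on $\partial''D$ in trace sense, $U^-$ has vanishing trace on $\partial''D$, and therefore $U^-$ lies in the closure of $C_c^\infty(D\cup\partial'D)$ in $H(t^{1-2\sigma},D)$. Thus one can choose nonnegative test functions $\Phi_k\in C_c^\infty(D\cup\partial'D)$ with $\Phi_k\to U^-$ in $H(t^{1-2\sigma},D)$ (for example, by multiplying $U^-$ by cutoffs that exhaust $D\cup\partial'D$ and then mollifying; the resulting functions can be made nonnegative since $U^-\geq 0$). Inserting $\Phi_k$ into the weak supersolution inequality (with $a\equiv b\equiv 0$) gives
\[
\int_D t^{1-2\sigma}\nabla U\cdot\nabla\Phi_k\,\ud X \,\geq\, 0,\quad k=1,2,\ldots,
\]
and passing to the limit yields
\[
-\int_D t^{1-2\sigma}|\nabla U^-|^2\,\ud X = \int_D t^{1-2\sigma}\nabla U\cdot\nabla U^-\,\ud X \,\geq\, 0.
\]

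Hence $\nabla U^-=0$ a.e.\ in $D$, so $U^-$ is constant on each connected component of $D$. Because $D$ is a domain (connected) and $U^-$ has zero trace on $\partial''D$, that constant must be $0$, giving $U^-\equiv 0$ and $U\geq 0$ in $D$. The only step requiring any real care is the approximation of $U^-$ by nonnegative elements of $C_c^\infty(D\cup\partial'D)$ in the weighted norm; this is the standard obstacle and is handled by the $A_2$-density plus truncation argument just described.
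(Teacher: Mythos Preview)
Your proof is correct and is exactly the approach the paper takes: the paper's entire proof is the single sentence ``Use $U^-$ as a test function to conclude that $U^-\equiv 0$,'' and you have simply written out the density and chain-rule justifications behind that sentence.
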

\begin{proof}
Use $U^-$ as a test function to conclude that $U^-\equiv 0$.
\end{proof}

The following result is a refined version of that in
\cite{TX}. Such De Giorgi-Nash-Moser type theorems for degenerated equations with Dirichlet boundary conditions have been established in \cite{FKS}.

\begin{prop}\label{lem harnack}
Suppose $a, b\in L^p(B_1)$ for some $p>\frac{n}{2\sigma}$.

(i) Let $U\in H(t^{1-2\sigma}, Q_1)$ be a weak subsolution of
\eqref{2.5} in $Q_1$. Then $\forall ~\nu>0$
\[
\sup_{Q_{1/2}} U^+\le C(\|U^+\|_{L^{\nu}(t^{1-2\sigma}, Q_1)}+\|b^+\|_{L^p(B_1)})
\]
where $U^+=\max(0,U)$, and $C>0$ depends only on $n,\sigma, p,\nu$ and $\|a^+\|_{L^p(B_1)}$.

 (ii) Let $U\in H(t^{1-2\sigma}, Q_1)$ be a nonnegative weak supersolution of
\eqref{2.5} in $Q_1$. Then for any $0<\mu<\tau<1$, $0<\nu\le\frac{n+1}{n}$ we have
\[
\inf_{Q_{\mu}} U +\|b^-\|_{L^p(B_1)}\ge C\|U\|_{L^{\nu}(t^{1-2\sigma}, Q_{\tau})}
\]
where $C>0$ depends only on $n,\sigma, p,\nu,\mu,\tau$ and $\|a^-\|_{L^p(B_1)}$.

(iii) Let $U\in H(t^{1-2\sigma}, Q_1)$ be a nonnegative weak solution of
\eqref{2.5} in $Q_1$. Then we have the following Harnack inequality
\be\label{Har ineq}
\sup_{Q_{1/2}} U\leq C (\inf_{Q_{1/2}} U+\|b\|_{L^p(B_1)}),
\ee
where $C>0$ depends only on $n,\sigma,p,\|a\|_{L^p(B_1)}$. Consequently,
there exists $\al\in (0,1)$ depending only on  $n,\sigma,p,\|a\|_{L^p(B_1)}$ such that any weak solution $U(X)$ of \eqref{2.5} is of $C^{\al}(\overline{Q_{1/2}})$.  Moreover,
\[
\|U\|_{C^{\al}(\overline{Q_{1/2}})}\leq C(\|U\|_{L^\infty(Q_1)}+\|b\|_{L^p(B_1)})
\]
where $C>0$ depends only on $n,\sigma,p,\|a\|_{L^p(B_1)}$.
\end{prop}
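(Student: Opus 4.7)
The proof will adapt the classical Moser iteration scheme to the degenerate setting, where the main new ingredient is that the boundary data appears as a nonlinear Neumann-type condition rather than as an interior forcing. Throughout, set $\bar U = U^+ + k$ in part (i) and $\bar U = U + k$ in parts (ii)--(iii), where $k := \|b^{\pm}\|_{L^p(B_1)}$; then $\bar U$ satisfies a homogeneous boundary condition with coefficient $\tilde a(x) = a(x) + b(x)/\bar U(x,0)$, still in $L^p(B_1)$ with controlled norm.

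\textbf{Step 1: Caccioppoli inequality.} For $\beta \neq -1$, test \eqref{weak solution def parts} against $\eta^2 \bar U^{\beta}$ (for subsolutions, $\beta \ge 1$; for supersolutions, $\beta \in (-1,0)$ or $\beta < -1$), where $\eta \in C_c^\infty(Q_1 \cup \partial' Q_1)$ is a cutoff. Expanding and absorbing with Cauchy--Schwarz yields
\[
\int_{Q_1} t^{1-2\sigma} \eta^2 |\nabla (\bar U^{(\beta+1)/2})|^2 \,\ud X \le C(\beta)\int_{Q_1} t^{1-2\sigma} |\nabla \eta|^2 \bar U^{\beta+1}\,\ud X + C(\beta)\int_{\partial' Q_1} |\tilde a|\, \eta^2 \bar U^{\beta+1}\,\ud x.
\]
The boundary term is estimated by H\"older's inequality with the exponent pair $(p,\frac{n}{2\sigma})$-type (using $p > \frac{n}{2\sigma}$) together with the weighted trace embedding \eqref{weighted trace embedding inequality} of Proposition \ref{prop of trace weighted} applied to $\eta \bar U^{(\beta+1)/2}$. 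Choosing radii carefully and absorbing gives
\[
\|\nabla(\eta \bar U^{(\beta+1)/2})\|_{L^2(t^{1-2\sigma},Q_1)}^2 \le C(1+|\beta|)^2\bigl(1+\|\tilde a\|_{L^p}^{\kappa}\bigr) \int_{Q_1} t^{1-2\sigma}(|\nabla\eta|^2 + \eta^2) \bar U^{\beta+1}\,\ud X,
\]
for a suitable $\kappa=\kappa(n,\sigma,p)$.

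\textbf{Step 2: Moser iteration.} Combining Step 1 with the weighted Sobolev inequality (an interior version of \eqref{weighted trace embedding inequality} proved analogously, yielding an improved exponent $\chi = \frac{n+1}{n} \cdot \frac{2(n+1)}{n-2\sigma+2}$ say, but in any case a fixed $\chi > 1$), I obtain the reverse H\"older inequality
\[
\|\bar U\|_{L^{\chi(\beta+1)}(t^{1-2\sigma},Q_r)} \le \bigl[C(1+|\beta|)^{a}(R-r)^{-b}\bigr]^{2/(\beta+1)} \|\bar U\|_{L^{\beta+1}(t^{1-2\sigma},Q_R)}
\]
for $\frac{1}{2} \le r < R \le 1$. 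Iterating this through $\beta_j + 1 = 2\chi^j$ and summing the geometric series in the log proves part (i) with $\nu = 2$, then a standard interpolation via Young's inequality (cf.\ Lemma 8.10 of Gilbarg--Trudinger) extends it to all $\nu > 0$. For part (ii), I iterate over negative exponents $\beta + 1 = -\nu_0 \chi^j \to -\infty$ to obtain $\inf_{Q_\mu} \bar U \ge C \|\bar U\|_{L^{-\nu_0}(t^{1-2\sigma},Q_{(\mu+\tau)/2})}$.

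\textbf{Step 3: Bridging negative and positive exponents.} The main obstacle is the step between $L^{-\nu_0}$ and $L^{\nu_0}$. Following Moser's original strategy, I test \eqref{weak solution def parts} against $\eta^2/\bar U$, obtaining an estimate of the form
\[
\int_{Q_1} t^{1-2\sigma}\eta^2 |\nabla \log \bar U|^2 \,\ud X \le C\Bigl(\int_{Q_1} t^{1-2\sigma}|\nabla\eta|^2\,\ud X + \|\tilde a\|_{L^p}\cdot\text{(cutoff term)}\Bigr).
\]
A weighted Poincar\'e inequality on cubes (which holds since $t^{1-2\sigma} \in A_2$, cf.\ Fabes--Kenig--Serapioni) then yields that $\log \bar U$ lies in $\mathrm{BMO}(t^{1-2\sigma}, Q_{3/4})$. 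The weighted John--Nirenberg inequality produces $\nu_0 > 0$ such that
\[
\Bigl(\fint_{Q_\tau} t^{1-2\sigma}\bar U^{\nu_0}\Bigr)\Bigl(\fint_{Q_\tau} t^{1-2\sigma}\bar U^{-\nu_0}\Bigr)\le C.
\]
Chaining this with Step 2 and part (i) (applied in the form $\|\bar U\|_{L^\infty(Q_{1/2})} \le C\|\bar U\|_{L^{\nu_0}(t^{1-2\sigma},Q_\tau)}$) completes parts (ii) and (iii).

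\textbf{Step 4: H\"older continuity.} Once the Harnack inequality \eqref{Har ineq} is established, the oscillation decay argument is standard: apply \eqref{Har ineq} to $M_r - U$ and $U - m_r$ on $Q_r$ where $M_r, m_r$ are $\sup$ and $\inf$, rescale using the fact that $b$ scales with a positive power of $r$ (since $p > n/(2\sigma)$ gives a Morrey-type gain), and iterate over dyadic balls to deduce $\mathrm{osc}_{Q_r} U \le C r^\alpha (\|U\|_{L^\infty(Q_1)} + \|b\|_{L^p(B_1)})$. The main obstacle throughout remains carefully tracking the dependence of constants on $\|a^{\pm}\|_{L^p(B_1)}$ through the boundary Caccioppoli estimates, but this is handled uniformly once $p > n/(2\sigma)$ is exploited via H\"older's inequality on the boundary term.
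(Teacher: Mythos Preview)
Your proposal is correct and follows essentially the same approach as the paper: Moser iteration adapted to the degenerate weight $t^{1-2\sigma}$, with the boundary term absorbed via H\"older's inequality (using $p>n/(2\sigma)$), interpolation, and the weighted trace embedding of Proposition~\ref{prop of trace weighted}. The paper's own proof is terser---it phrases everything as a modification of \cite{TX}, introduces the truncation $\bar U_m$ to justify the test function, and defers the $L^{-\nu_0}\leftrightarrow L^{\nu_0}$ bridging entirely to \cite{TX}---but the underlying mechanism (your Steps~1--4, including the $\log\bar U\in\mathrm{BMO}$ argument via weighted Poincar\'e and John--Nirenberg) is the same.
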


\begin{proof}
The proofs are modifications of those in \cite{TX}, where the method of Moser iteration is used. Here we only point out the changes.
Let $k=\|b^+\|_{L^p(B_1)}$ if $b^+\not\equiv 0$, otherwise let $k>0$ be any number which is eventually sent to $0$. Define $\overline U=U^++k$ and, for $m>0$, let
\[
  \overline U_m=
\begin{cases}
\overline{U}&\quad \mbox{if } U<m,\\
k+m &\quad \mbox{if } U\geq m.
\end{cases}
\]
Consider the test function
\[
 \phi=\eta^2(\overline U_m^\beta\overline U-k^{\beta+1})\in H(t^{1-2\sigma}, Q_1),
\]
for some $\beta\geq 0$ and some nonnegative function $\eta\in C^1_c(Q_1\cup \pa' Q_1)$. Direction calculations yield that, with setting $W=\overline U_m^{\frac{\beta}{2}}\overline U$,
\be\label{moser's key integration}
\frac{1}{1+\beta}\int_{Q_1}t^{1-2\sigma}|\nabla(\eta W)|^2\leq 16\int_{Q_1}t^{1-2\sigma}|\nabla \eta|^2 W^2 +4\int_{\pa' Q_1}(a^++\frac{b^+}{k})\eta^2W^2.
\ee
By H\"older's inequality and the choice of $k$, we have
\[
\int_{\pa' Q_1}(a^++\frac{b^+}{k})\eta^2W^2\leq (\|a^+\|_{L^p(B_1)}+1)\|\eta^2W^2\|_{L^{p'}(B_1)}
\]
where ${p'}=\frac{p}{p-1}<\frac{n}{n-2\sigma}$. Choose $0<\theta<1$ such that $\frac{1}{p'}=\theta+\frac{(1-\theta)(n-2\sigma)}{n}$. The interpolation inequality gives that, for any $\va>0$,
\[
\|\eta^2W^2\|_{L^{p'}(B_1)}\leq \va \|\eta W\|^2_{L^{\frac{2n}{n-2\sigma}}(B_1)}+ \va^{-\frac{1-\theta}{\theta}}\|\eta^2W^2\|_{L^1(B_1)}.
\]
By the trace embedding inequality in Proposition \ref{prop of trace weighted}, there exists $C>0$ depending only on $n,\sigma$ such that
\[
\|\eta W\|^2_{L^{\frac{2n}{n-2\sigma}}(B_1)}\le C\int_{Q_1}t^{1-2\sigma}|\nabla(\eta W)|^2.
\]
By Lemma 2.3 in \cite{TX}, there exist $\delta>0$ and $C>0$ both of which depend only on $n,\sigma$ such that
\[
\|\eta^2W^2\|_{L^1(B_1)}\le \va^{\frac{1}{\theta}}\int_{Q_1}t^{1-2\sigma}|\nabla(\eta W)|^2+\va^{-\frac{\delta}{\theta}}\int_{Q_1}t^{1-2\sigma}\eta^2W^2.
\]
By choosing $\va$ small, the above inequalities give that
\[
\int_{Q_1}t^{1-2\sigma}|\nabla(\eta W)|^2\le C(1+\beta)^{\delta/\theta}\int_{Q_1}t^{1-2\sigma}(\eta^2+|\nabla\eta|^2)W^2
\]
where $C$ depends only on $n,\sigma$ and $\|a^+\|_{L^p(B_1)}$. Then the proof of Proposition 3.1 in \cite{TX} goes through without any change. This finishes the proof of (i) for $\nu=2$. Then (i) also holds for any $\nu>0$ which follows from standard arguments. For part (ii)  we choose $k=\|b^-\|_{L^p(B_1)}$ if $b^-\not\equiv 0$, otherwise let $k>0$ be any number which is eventually sent to $0$.  Then we can show that there exists some $\nu_0>0$ for which (ii) holds, by exactly the same proof of Proposition 3.2 in \cite{TX}. Finally use the test function $\phi=\overline U^{-\beta}\eta^2$ with $\beta\in (0,1)$ to repeat the proof in (i) to conclude (ii) for $0<\nu\le\frac{n+1}{n}$. Part (iii) follows from (i), (ii) and standard elliptic equation theory.
\end{proof}

\begin{rem}
Harnack inequality \eqref{Har ineq}, without lower order term $b$, has been obtained earlier in \cite{CaS} using a different method.
\end{rem}

The above proofs can be improved to yield the following result.
\begin{lem}\label{b-k}
Suppose
$a\in L^{\frac{n}{2\sigma}}(B_1), b\in L^p(B_1)$ with $p>\frac{n}{2\sigma}$ and $U\in H(t^{1-2\sigma}, Q_1)$ is a weak subsolution of \eqref{2.5} in $Q_1$. There exists $\delta>0$ which depends only on $n$ and $\sigma$ such that if $\|a^+\|_{L^{\frac{n}{2\sigma}}(B_1)}<\delta$, then
\[
\|U^+(\cdot, 0)\|_{L^q(\partial'Q_{1/2})}\leq C(\|U^+\|_{H(t^{1-2\sigma},Q_1)}+\|b^+\|_{L^p(B_1)}).
\]
where  $C>0$ depends only on $n,p,\sigma,\delta$, and $q=\min\left(\frac{2(n+1)}{n-2\sigma},\ \frac{n(p-1)}{(n-2\sigma)p}\cdot \frac{2n}{n-2\sigma}\right)$.
\end{lem}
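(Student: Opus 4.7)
The proof is a single-step refinement of the Moser iteration used in the proof of Proposition \ref{lem harnack}(i): rather than iterating $\beta$ to obtain an $L^\infty$ bound, I would stop at one well-chosen value of $\beta$ that lands exactly at the target trace exponent. Set $k = \|b^+\|_{L^p(B_1)}$ if $b^+ \not\equiv 0$, and $k>0$ arbitrary (eventually sent to $0$) otherwise; put $\bar U = U^+ + k$; let $\bar U_m$ be the truncation at level $k+m$; and use the test function $\phi = \eta^2(\bar U_m^\beta \bar U - k^{\beta+1})$, where $\eta \in C_c^\infty(Q_1 \cup \partial' Q_1)$ is a nonnegative cutoff equal to $1$ on $Q_{1/2}$.

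Setting $W = \bar U_m^{\beta/2}\bar U$ and plugging $\phi$ into the subsolution inequality exactly as in the proof of Proposition \ref{lem harnack}(i) produces
\[
 \frac{1}{1+\beta}\int_{Q_1} t^{1-2\sigma}|\nabla(\eta W)|^2 \le 16\int_{Q_1} t^{1-2\sigma}|\nabla \eta|^2 W^2 + 4\int_{\partial' Q_1}(a^+ + b^+/k)\eta^2 W^2.
\]
The $a^+$ boundary term is absorbed by combining the small-norm hypothesis $\|a^+\|_{L^{n/(2\sigma)}(B_1)} < \delta$ with the trace embedding \eqref{weighted trace embedding inequality}. The $b^+/k$ term is handled, as in the proof of Proposition \ref{lem harnack}(i), by $\|b^+/k\|_{L^p(B_1)} = 1$ followed by H\"older (since $p' < n/(n-2\sigma)$), interpolation between $L^1$ and $L^{n/(n-2\sigma)}(B_1)$, Young's inequality, and Lemma~2.3 of \cite{TX}. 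The outcome is the Caccioppoli-type estimate $\int_{Q_1} t^{1-2\sigma}|\nabla(\eta W)|^2 \le C\int_{Q_1} t^{1-2\sigma}(\eta^2 + |\nabla\eta|^2) W^2$, with $C$ depending on $n, p, \sigma, \delta$, and $\beta$.

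Applying the trace embedding of Proposition \ref{prop of trace weighted} to $\eta W$ and letting $m \to \infty$ (so $W = \bar U^{(\beta+2)/2}$) yields
\[
 \|\bar U(\cdot,0)\|_{L^{(\beta+2)n/(n-2\sigma)}(\partial' Q_{1/2})}^{\beta+2} \le C \int_{Q_1} t^{1-2\sigma}\bar U^{\beta+2},
\]
and whenever $\beta + 2$ does not exceed the weighted Sobolev exponent $2^*_\sigma := 2(n+2-2\sigma)/(n-2\sigma)$, the right side is controlled by $C\bigl(\|U^+\|_{H(t^{1-2\sigma}, Q_1)} + k\bigr)^{\beta+2}$ via the Sobolev embedding for $H(t^{1-2\sigma}, Q_1)$. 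The two exponents in $q = \min(q_1, q_2)$ correspond to the two admissible choices $\beta = 2/n$ and $\beta = 2(2\sigma p - n)/(p(n-2\sigma))$, respectively: both are nonnegative (the second uses $p > n/(2\sigma)$), both satisfy $\beta + 2 \le 2^*_\sigma$, and a direct computation gives trace exponent $(\beta+2)n/(n-2\sigma) = q_1 = 2(n+1)/(n-2\sigma)$ in the first case and $= q_2 = (n(p-1)/((n-2\sigma)p)) \cdot 2n/(n-2\sigma)$ in the second. Either choice produces the desired estimate, so taking whichever gives the smaller exponent completes the proof.

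The main technical obstacle is ensuring that the threshold $\delta$ depends only on $n$ and $\sigma$ while the final constant $C$ depends only on $n, p, \sigma, \delta$. Since the two values of $\beta$ above are explicit functions of $n, \sigma, p$ alone, the $(1+\beta)$-type factors produced by the Caccioppoli step do not spoil this dependence, and a careful tracking of constants through the interpolation and absorption steps gives the stated form. A secondary care point is the lower-bound step: one must verify that $\int_{Q_1} t^{1-2\sigma}\bar U^{\beta+2} \le C(\|U^+\|_{H(t^{1-2\sigma},Q_1)}+k)^{\beta+2}$ via the weighted Sobolev inequality for the precise values of $\beta+2$ used above, which is straightforward once $\beta + 2 \le 2^*_\sigma$ is verified as indicated.
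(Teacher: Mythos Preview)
Your approach is correct and is essentially the paper's: a single Moser step with the test function $\phi=\eta^2(\bar U_m^\beta\bar U-k^{\beta+1})$ and a specific $\beta$ chosen so that the trace exponent $(\beta+2)\frac{n}{n-2\sigma}$ lands on $q$. The paper takes the single value $\beta=\min\bigl(\tfrac{2}{n},\tfrac{2(2\sigma p-n)}{(n-2\sigma)p}\bigr)$ and handles the $b^+/k$ boundary term more directly than you do: since this choice forces $(\beta+2)p'\le \tfrac{2n}{n-2\sigma}$, H\"older plus the trace embedding give $\|\eta^2W^2\|_{L^{p'}(B_1)}\le C\|\bar U\|_{H(t^{1-2\sigma},Q_1)}^{\beta+2}$ immediately, without the interpolation--Young--Lemma~2.3 detour from Proposition~\ref{lem harnack}(i). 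For the bulk term $\int t^{1-2\sigma}|\nabla\eta|^2W^2$ the paper invokes the Poincar\'e/Sobolev inequality of \cite{FKS}, which requires only $\beta\le 2/n$; your route instead appeals to the sharper weighted exponent $2^*_\sigma=\tfrac{2(n+2-2\sigma)}{n-2\sigma}$, which is true for this specific weight but is not what the cited references provide. The paper's organization also makes transparent why $\delta$ depends only on $n,\sigma$: with $\beta=\min(\cdot)\le 2/n$ the factor $1+\beta$ in the absorption step is bounded independently of $p$, whereas your ``either choice works'' phrasing leaves this point a little implicit (it is still fine, since in any case $\beta\le\max(2/n,4\sigma/(n-2\sigma))$ uniformly in $p$).
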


\begin{rem}
Analogues estimates were established for $-\Delta u=a(x)u$ in \cite{BK} (see Theorem 2.3 there) and for $-\mathrm{div}(|\nabla u|^{p-2}\nabla u)=a(x)|u|^{p-2}u$ in \cite{al} (see Lemma 3.1 there).
\end{rem}

\begin{proof}[Proof of Lemma \ref{b-k}]
We start from \eqref{moser's key integration}, where we choose $\beta=\min\left(\frac{2}{n}, \frac{2(2\sigma p-n)}{(n-2\sigma)p}\right)$. By H\"older inequality and Proposition \ref{prop of trace weighted},
\[
\begin{split}
\int_{\pa' Q_1}(a^++\frac{b^+}{k})\eta^2W^2 & \leq \delta \|\eta^2W^2\|_{L^{\frac{n}{n-2\sigma}}(B_1)} +\|\eta^2W^2\|_{L^{p'}(B_1)}\\
&\leq C(n,\sigma)\delta \int_{Q_1}t^{1-2\sigma}|\nabla(\eta W)|^2+C_{n,\sigma,p}\|\overline U\|_{H(t^{1-2\sigma},Q_1)}.
\end{split}
\]
By Poincare's inequality in \cite{FKS}, we have
\[
\int_{Q_1}t^{1-2\sigma}|\nabla \eta|^2 W^2\leq C_{n,\sigma,p}\|\overline U\|_{H(t^{1-2\sigma},Q_1)}.
\]
If $\delta$ is sufficiently small, the the above together with \eqref{moser's key integration} imply that
\[
\int_{Q_1}t^{1-2\sigma}|\nabla(\eta W)|^2\leq C_{n,\sigma,p}\|\overline U\|_{H(t^{1-2\sigma},Q_1)}.
\]
Hence it follows from H\"older inequality and Proposition \ref{prop of trace weighted} that, by sending $m\to\infty$,
\[
\|\overline U(\cdot, 0)\|_{L^q(\partial'Q_{1/2})}\leq C_{n,\sigma,p}\int_{Q_1}t^{1-2\sigma}|\nabla(\eta W)|^2\leq C_{n,\sigma,p}\|\overline U\|_{H(t^{1-2\sigma},Q_1)}.
\]
This finishes the proof.
\end{proof}

\begin{cor}\label{thm2.3}
Suppose that $K\in L^\infty(B_1)$,  $U\in H(t^{1-2\sigma},Q_1)$ and $U\geq 0$ in $Q_1$ satisfies, for some $1\leq p\leq (n+2\sigma)(n-2\sigma)$,
\[
\begin{cases}
\mathrm{div}(t^{1-2\sigma}\nabla U(X))= 0\quad &\mbox{in } Q_1 \\
-\dlim_{t\rightarrow 0^+}t^{1-2\sigma}\pa_tU(x,t)=K(x)U(x,0)^p\quad  &\mbox{on } \pa' Q_1.\\
\end{cases}
\]
Then
(i) $U \in L^{\infty}_{loc}(Q_1\cup\pa' Q_1)$, and hence $U(\cdot, 0) \in L^{\infty}_{loc}(B_1)$.

(ii) There exist $C>0$ and $\al\in(0,1)$ depending only on $n,\sigma,  p,\|u\|_{L^\infty(B_{3/4})}$, $\|K\|_{L^\infty(B_{3/4})}$ such that $U\in C^\al(\overline {Q_{1/2}})$
and
\[
\|U\|_{H(t^{1-2\sigma},Q_{1/2})}+\|U\|_{C^{\al}(\overline {Q_{1/2}})}\leq C.
\]
\end{cor}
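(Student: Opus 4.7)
The plan is to first obtain $U\in L^\infty_{\mathrm{loc}}(Q_1\cup\partial'Q_1)$ by a Brezis--Kato style bootstrap on the trace $u:=U(\cdot,0)$, and then feed this boundedness back into the linear theory developed in Propositions \ref{energy estimates1-1-1} and \ref{lem harnack} and Lemma \ref{b-k} to get the H\"older and energy estimates in (ii). The main obstacle is the critical exponent $p=(n+2\sigma)/(n-2\sigma)$: the naive coefficient $a(x):=K(x)U(x,0)^{p-1}$ then sits only in the borderline space $L^{n/(2\sigma)}_{\mathrm{loc}}$, so the standard Harnack estimate of Proposition \ref{lem harnack} cannot be applied directly and one must strictly beat the exponent $n/(2\sigma)$ before concluding.

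For (i), I rewrite the boundary condition linearly as
\[
-\lim_{t\to 0^+}t^{1-2\sigma}\partial_tU=a(x)\,U(x,0),\qquad a:=K\,U(\cdot,0)^{p-1}.
\]
By Proposition \ref{prop of trace weighted}(i), $u\in H^\sigma_{\mathrm{loc}}(B_1)$ and hence $u\in L^{2n/(n-2\sigma)}_{\mathrm{loc}}(B_1)$, so $a\in L^{s_0}_{\mathrm{loc}}$ with $s_0=\tfrac{2n}{(n-2\sigma)(p-1)}\ge \tfrac{n}{2\sigma}$, with equality precisely in the critical case. When $s_0>n/(2\sigma)$ (strictly subcritical), Proposition \ref{lem harnack}(i) with $b\equiv 0$ gives $U\in L^\infty_{\mathrm{loc}}$ at once. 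When $s_0=n/(2\sigma)$, I split $a=a_1+a_2$ with $a_1:=a\chi_{\{U(\cdot,0)\le M\}}\in L^\infty$ and $a_2:=a\chi_{\{U(\cdot,0)>M\}}$; since $a\in L^{n/(2\sigma)}_{\mathrm{loc}}$, absolute continuity of the integral plus localization to a small ball $B_r(x_0)\subset B_{3/4}$ ensures $\|a_2\|_{L^{n/(2\sigma)}(B_r(x_0))}<\delta$ once $M$ is large, where $\delta$ is the threshold of Lemma \ref{b-k}. After rescaling to $Q_1$ (the $L^{n/(2\sigma)}$ norm of $a$ is scale invariant, while the bounded source $a_1U\in L^\infty$ plays the role of $b$), Lemma \ref{b-k} promotes $u$ to $L^q(B_{r/2}(x_0))$ for some $q$ strictly greater than $2n/(n-2\sigma)$. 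Then the new integrability of $a=Ku^{p-1}$ strictly exceeds $n/(2\sigma)$, and a second application of Proposition \ref{lem harnack}(i) on a finite subcover of $B_{3/4}$ yields $U\in L^\infty_{\mathrm{loc}}(Q_1\cup\partial'Q_1)$, which is (i).

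For (ii), once $U$ is bounded on $\overline{Q_{3/4}}$, the coefficient $a=Ku^{p-1}$ lies in $L^\infty(B_{3/4})$ with norm controlled by $\|K\|_{L^\infty(B_{3/4})}$ and $\|u\|_{L^\infty(B_{3/4})}$. Proposition \ref{lem harnack}(iii) (with $b\equiv 0$) then yields $\alpha\in(0,1)$ and the H\"older estimate $\|U\|_{C^\alpha(\overline{Q_{1/2}})}\le C$, while the $H(t^{1-2\sigma},Q_{1/2})$ bound follows from Proposition \ref{energy estimates1-1-1} (the second clause, which applies since $a\in L^\infty\subset L^p$ for any finite $p>n/(2\sigma)$). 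Summing the two estimates produces the claimed bound and finishes (ii).
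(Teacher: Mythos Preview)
Your argument is correct and follows essentially the same route as the paper: the paper's proof simply cites Proposition \ref{prop of trace weighted} to place $U(\cdot,0)^{p-1}$ in $L^{n/(2\sigma)}$, then invokes Lemma \ref{b-k} and Proposition \ref{lem harnack} for (i), and Proposition \ref{energy estimates1-1-1} with Proposition \ref{lem harnack} for (ii). You have spelled out the Brezis--Kato splitting $a=a_1+a_2$ that is implicitly needed to meet the smallness hypothesis $\|a^+\|_{L^{n/(2\sigma)}}<\delta$ of Lemma \ref{b-k}; the key point, which you state correctly, is that $a_1U=K\,U^{p}\chi_{\{U\le M\}}\in L^\infty$ so it qualifies as the inhomogeneous term $b$ with any exponent $>n/(2\sigma)$.
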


Note that the regularity of solution of $-\Delta u=u^{\frac{n+2}{n-2}}$ was proved by Trudinger in \cite{Tu}.

\begin{proof}[Proof of Corollary \ref{thm2.3}]
By Proposition \ref{prop of trace weighted}, $U(\cdot,0)\in H^{\sigma}(B_1)\subset L^{\frac{2n}{n-2\sigma}}(B_1)$. Thus $U(\cdot,0)^{p-1}\in L^{\frac{n}{2\sigma}}(B_1)$. Then part (i) follows from Lemma \ref{b-k} and Proposition \ref{lem harnack}. Part (ii) follows from  Proposition \ref{energy estimates1-1-1} and Proposition \ref{lem harnack}.
\end{proof}

\subsection{Local Schauder estimates}

Let $\Omega$ be a domain in $\R^n$, $a\in L^{\frac{2n}{n+2\sigma}}_{loc}(\Omega)$ and $b\in L^1_{loc}(\Omega)$. We say $u\in \dot H^{\sigma}(\R^n)$ is a weak solution of
\[
(-\Delta)^{\sigma}u=a(x)u+b(x)\quad \mbox{in } \Omega
\]
if for any $\phi\in C^{\infty}(\R^n)$ supported in $\Omega$,
\[
\int_{\R^n}(-\Delta)^{\frac{\sigma}{2}}u(-\Delta)^{\frac{\sigma}{2}}\phi=\int_{\Omega}a(x)u\phi+b(x)\phi.
\]
Then by \eqref{connection of fractional and extension}, $u\in \dot H^{\sigma}(\R^n)$ is a weak solution of
\[
(-\Delta)^{\sigma}u=\frac{1}{N_{\sigma}}\Big(a(x)u+b(x)\Big)\quad \mbox{in } B_1
\]
if and only if $U=\mathcal P_{\sigma}[u]$, the extension of $u$ defined in \eqref{poisson involution}, is a weak solution of \eqref{2.5} in $Q_1$.

For $\al\in (0,1)$,  $C^\al(\om)$ denotes the standard H\"older space over domain $\om$.
For simplicity, we use $C^{\al}(\om)$ to denote $C^{[\al],\al-[\al]}(\om)$ when $1<\al\notin \mathbb{N}$ (the set of positive integers).

In this part, we shall prove the following local Schauder estimates for
nonnegative solutions of fractional Laplace equation.

\begin{thm}\label{thm2.2} Suppose $a(x), b(x)\in C^{\al}(B_1)$ with $0<\al\not\in \mathbb{N}$.
Let $u\in \D$ and $u\geq 0$ in $\R^n$ be a weak solution of
\[
(-\Delta)^{\sigma}u=a(x)u+b(x), \quad \mbox{in } B_1.
\]
Suppose that $2\sigma+\alpha$ is \emph{not} an integer. Then $u\in C^{2\sigma+\al}(B_{1/2})$. Moreover,
\be\label{schauder estimate}
\|u\|_{C^{2\sigma+\al}(B_{1/2})}\leq C(\dinf_{B_{3/4}}u+\|b\|_{C^{\al}(B_{3/4})})
\ee
where $C>0$ depends only on $n,\sigma,\al,\|a\|_{C^{\al}(B_{3/4})}$.
\end{thm}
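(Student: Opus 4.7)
The plan is to pass to the Caffarelli--Silvestre extension, use the results of Section~\ref{LE} to obtain an initial H\"older bound, and then run a Campanato iteration based on a frozen-data Schauder estimate for the extension problem.

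First I would set $U=\mathcal P_\sigma[u]$ from \eqref{poisson involution}. By \eqref{connection of fractional and extension} and the weak formulation, $U$ is a nonnegative weak solution of
\[
\mathrm{div}(t^{1-2\sigma}\nabla U)=0 \text{ in } Q_1,\qquad -\lim_{t\to 0^+}t^{1-2\sigma}\pa_tU=\tfrac{1}{N_\sigma}\bigl(a(x)U(x,0)+b(x)\bigr) \text{ on } \pa'Q_1.
\]
Since $a\in C^\al(B_1)\subset L^\infty(B_1)$ and $b\in C^\al(B_1)\subset L^p(B_1)$ for any $p>n/(2\sigma)$, Proposition~\ref{lem harnack}(iii) gives both the H\"older estimate $U\in C^{\gamma_0}(\overline{Q_{5/8}})$ for some $\gamma_0\in(0,\min(1,\al))$ and the Harnack inequality $\sup_{Q_{3/4}}U\le C\bigl(\inf_{Q_{3/4}}U+\|b\|_{C^\al}\bigr)$. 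Nonnegativity and $U(\cdot,0)=u$ give $\inf_{Q_{3/4}}U\le \inf_{B_{3/4}}u$, so the right-hand side is bounded by $C(\inf_{B_{3/4}}u+\|b\|_{C^\al(B_{3/4})})=:CM_0$, and hence $\|U\|_{C^{\gamma_0}(\overline{Q_{5/8}})}\le CM_0$.

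The main new ingredient is the following frozen-data Schauder estimate for the extension: if $W\in H(t^{1-2\sigma},Q_1)$ satisfies $\mathrm{div}(t^{1-2\sigma}\nabla W)=0$ in $Q_1$ and $-\lim_{t\to 0^+}t^{1-2\sigma}\pa_tW=F$ on $\pa'Q_1$ with $F\in C^\beta(B_1)$ and $2\sigma+\beta\notin\mathbb N$, then
\[
\|W(\cdot,0)\|_{C^{2\sigma+\beta}(B_{1/2})}\le C\bigl(\|W\|_{L^\infty(Q_1)}+\|F\|_{C^\beta(B_1)}\bigr).
\]
I would prove this by the standard perturbation-plus-Campanato scheme. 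Fix $x_0\in B_{1/2}$ and small $r>0$, and let $V_r$ solve the extension equation on $Q_r(x_0)$ with the constant Neumann datum $F(x_0)$ and $V_r=W$ on $\pa''Q_r(x_0)$; direct computation shows $-\tfrac{1}{2\sigma}F(x_0)t^{2\sigma}$ is a particular solution, so $V_r=-\tfrac{F(x_0)}{2\sigma}t^{2\sigma}+H_r$ where $H_r$ has zero Neumann data. Even reflection in $t$ together with the interior regularity for the resulting degenerate equation with $A_2$ weight yields that $H_r$, and hence $V_r(\cdot,0)$, is smooth in $x$ with a Taylor polynomial $P_{x_0,r}(x)$ of degree $[2\sigma+\beta]$ satisfying $|V_r(x,0)-P_{x_0,r}(x)|\le C|x-x_0|^{[2\sigma+\beta]+1}\sup_{Q_r(x_0)}|V_r|$ for $x\in B_{r/2}(x_0)$. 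The difference $W-V_r$ solves the extension equation with Neumann data $F-F(x_0)$ of size $Cr^\beta\|F\|_{C^\beta}$ and vanishes on $\pa''Q_r(x_0)$, so a scaled application of Propositions~\ref{energy estimates1-1-1} and \ref{lem harnack}(i) gives $\sup_{Q_{r/2}(x_0)}|W-V_r|\le Cr^{2\sigma+\beta}\|F\|_{C^\beta}$. Combining these yields
\[
\sup_{B_{r/2}(x_0)}|W(\cdot,0)-P_{x_0,r}|\le Cr^{2\sigma+\beta}\bigl(\|F\|_{C^\beta}+\|W\|_{L^\infty(Q_1)}\bigr),
\]
and the standard Campanato iteration in $r$ then produces the claimed $C^{2\sigma+\beta}$ control of $W(\cdot,0)$ at $x_0$.

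Finally I would iterate: starting from $u\in C^{\gamma_0}$, the right-hand side $F:=\tfrac{1}{N_\sigma}(au+b)$ lies in $C^{\gamma_0}$, so the frozen-data estimate improves $u$ to $C^{2\sigma+\gamma_0}$; this in turn improves $F$ to $C^{\min(\al,\,2\sigma+\gamma_0)}$, and finitely many repetitions (at each step selecting an intermediate $\beta$ so that $2\sigma+\beta\notin\mathbb N$) reach $u\in C^{2\sigma+\al}(B_{1/2})$ with the quantitative bound \eqref{schauder estimate}. The main obstacle is the frozen-data Schauder estimate itself: one must build the correct ``model'' $V_r$ that captures the $t^{2\sigma}$-singularity at $t=0$ without spoiling the $x$-smoothness of the boundary trace $V_r(\cdot,0)$, and the need to avoid the resonance $2\sigma+\al\in\mathbb N$ (which would create logarithmic terms in the boundary expansion) is precisely the non-integer hypothesis in the statement.
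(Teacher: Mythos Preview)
Your proposal is correct and follows essentially the same route as the paper. The paper also passes to the extension $U=\mathcal P_\sigma[u]$, uses Proposition~\ref{lem harnack} (Harnack) to convert $\|U\|_{L^\infty}$ into $\inf_{B_{3/4}}u+\|b\|_{C^\al}$, proves a Schauder estimate for the pure extension problem with Neumann datum $g$ (Theorem~\ref{thm2.1-poisson}), and then bootstraps with $g=aU(\cdot,0)+b$ (Theorem~\ref{thm2.1}). Your ``frozen-data'' estimate is precisely Theorem~\ref{thm2.1-poisson}; your auxiliary $V_r$ with constant Neumann datum corresponds to the paper's $U_k=U+W_k$, and $W-V_r$ to the paper's $-W_k$.

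Two minor differences in implementation: (i) for the bound $\sup|W-V_r|\le Cr^{2\sigma+\beta}\|F\|_{C^\beta}$, the cleanest tool is a direct barrier $\pm C[F]_{C^\beta}r^\beta(t^{2\sigma}-3r^{2\sigma})$ together with Lemma~\ref{wmp}, which is what the paper does for~\eqref{bound of W}; your route via Propositions~\ref{energy estimates1-1-1} and~\ref{lem harnack}(i) also works but requires an intermediate energy/Poincar\'e step since those propositions need an $L^\nu$ input; (ii) for the tangential smoothness of $V_r(\cdot,0)$, the paper uses difference quotients (Proposition~\ref{lem2.2}) rather than even reflection, though the two are closely related since both exploit the $x$-translation invariance of the homogeneous Neumann problem.
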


\begin{rem}
Replacing the assumption $u\geq 0$ in $\R^n$ by  $u\geq 0$ in $B_1$, estimate \eqref{schauder estimate} may fail (see \cite{Ka}).
Without the sign assumption of $u$, \eqref{schauder estimate} with $\inf_{B_{3/4}}u$ substituted by $\|u\|_{L^\infty(\R^n)}$ holds,
which is proved in \cite{CS}, \cite{CS10} and \cite{CSpre} in a much more general setting of fully nonlinear nonlocal equations.
\end{rem}

The following proposition will be used in the proof of Theorem \ref{thm2.2}.
\begin{prop}\label{lem2.2} Let $a(x), b(x)\in C^{k}(B_1), \ U(X)\in H(t^{1-2\sigma}, Q_1)$ be a weak solution of \eqref{2.5} in $Q_1$,
where $k$ is a positive integer. Then we have
\[
\dsum_{i=0}^{k}\|\nabla_x^iU\|_{L^\infty(Q_{1/2})}\leq C(\|U\|_{L^2(t^{1-2\sigma},Q_{1})}+\|b\|_{C^{k}(B_1)}),
\]
where $C>0$ depends only on $n,\sigma, k, \|a\|_{C^k(B_1)}$.
\end{prop}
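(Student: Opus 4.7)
The strategy is to exploit the translation invariance of the equation \eqref{2.5} in the $x$-variable. Because the principal part $\mathrm{div}(t^{1-2\sigma}\nabla\,\cdot\,)$ has coefficients depending only on $t$, differentiating $U$ in any tangential direction preserves the interior equation and only modifies the boundary data in a controlled way. Combined with the local $L^\infty$ bound coming from Proposition \ref{lem harnack}(i), this will allow a straightforward induction on the number of $x$-derivatives.

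\textbf{Step 1: an $L^\infty$ bound on $U$ itself.} Since $a,b\in C^k(B_1)\subset L^\infty(B_1)$, both $U$ and $-U$ are weak subsolutions of equations of the form \eqref{2.5} (with coefficient $a$ and forcing $\pm b$). Applying Proposition \ref{lem harnack}(i) with $\nu=2$ and a large exponent $p$ to each gives
\[
\|U\|_{L^\infty(Q_{3/4})}\leq C\bigl(\|U\|_{L^2(t^{1-2\sigma},Q_1)}+\|b\|_{L^\infty(B_1)}\bigr),
\]
with $C$ depending on $n,\sigma,\|a\|_{L^\infty(B_1)}$.

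\textbf{Step 2: one tangential derivative.} For small $h>0$ and $j=1,\dots,n$, consider the difference quotient $D_h^j U(x,t)=h^{-1}\bigl(U(x+he_j,t)-U(x,t)\bigr)$. Because the equation is invariant under translation in $x_j$, $D_h^j U\in H(t^{1-2\sigma},Q_{r})$ for $r<1$ sufficiently close to $1$ is a weak solution of
\[
\mathrm{div}(t^{1-2\sigma}\nabla D_h^j U)=0 \ \text{in } Q_r,\qquad
-\lim_{t\to 0^+}t^{1-2\sigma}\partial_t D_h^j U = a(x)D_h^j U + (D_h^j a)(x)\,U(\cdot+he_j,0)+D_h^j b\ \text{on } \partial' Q_r.
\]
Since $a,b\in C^1$, the new forcing term $b_h:=(D_h^j a)\,U(\cdot+he_j,0)+D_h^j b$ is bounded in $L^\infty(B_{r'})$ uniformly in $h$, with a bound controlled by $\|a\|_{C^1}\|U\|_{L^\infty(Q_{3/4})}+\|b\|_{C^1}$. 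Applying the $L^\infty$ estimate of Step 1 to $\pm D_h^j U$ and letting $h\to 0$, we conclude that $\partial_{x_j}U$ exists as an $L^\infty_{loc}$ function on $Q_{5/8}\cup\partial'Q_{5/8}$, solves the analogous equation, and satisfies
\[
\|\partial_{x_j}U\|_{L^\infty(Q_{5/8})}\leq C\bigl(\|U\|_{L^2(t^{1-2\sigma},Q_1)}+\|b\|_{C^1(B_1)}\bigr),
\]
with $C$ also depending on $\|a\|_{C^1(B_1)}$.

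\textbf{Step 3: induction and shrinking of the domain.} Suppose inductively that for every multi-index $\beta$ with $|\beta|\leq \ell<k$, $\partial_x^\beta U$ exists in $L^\infty(Q_{r_\ell})$ for some $\tfrac12<r_\ell<1$ and is bounded by the desired quantity. Taking a further tangential difference quotient of $\partial_x^\beta U$ and using the Leibniz rule yields an equation of the form
\[
\mathrm{div}(t^{1-2\sigma}\nabla\partial_x^{\beta+e_j}U)=0,\qquad -\lim_{t\to 0^+}t^{1-2\sigma}\partial_t\partial_x^{\beta+e_j}U = a\,\partial_x^{\beta+e_j}U + \sum_{0<\gamma\leq \beta+e_j}\binom{\beta+e_j}{\gamma}(\partial_x^\gamma a)\partial_x^{\beta+e_j-\gamma}U+\partial_x^{\beta+e_j}b.
\]
Because $|\gamma|\leq \ell+1\leq k$ and $|\beta+e_j-\gamma|\leq \ell$, each term in the sum, together with $\partial_x^{\beta+e_j}b$, is controlled in $L^\infty$ by the induction hypothesis and by $\|a\|_{C^k(B_1)},\|b\|_{C^k(B_1)}$. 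Applying Step 1 to this equation on a slightly smaller cylinder $Q_{r_{\ell+1}}$ closes the induction. After $k$ steps, choosing the radii so that $r_k=1/2$ produces the stated estimate.

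The only genuinely delicate point is the justification of the difference-quotient argument in the weighted space $H(t^{1-2\sigma},\,\cdot\,)$ and the passage to the limit $h\to 0$ to identify the distributional derivative $\partial_{x_j}U$. This is standard because the weight depends only on $t$, so $x$-translations are isometries of the relevant function spaces; the Caffarelli--Silvestre/De Giorgi--Nash--Moser machinery recorded in Propositions \ref{energy estimates1-1-1} and \ref{lem harnack} applies verbatim to difference quotients. Beyond that point, the proof is purely mechanical bookkeeping of the Leibniz rule.
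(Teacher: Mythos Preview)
Your proof is correct and follows essentially the same approach as the paper: tangential difference quotients, the local $L^\infty$ bound from Proposition~\ref{lem harnack}, and induction via the Leibniz rule. The only cosmetic difference is that the paper makes the role of Proposition~\ref{energy estimates1-1-1} explicit in the main chain of inequalities (to control $\|D_h^jU\|_{L^2(t^{1-2\sigma})}$ by $\|\nabla U\|_{L^2(t^{1-2\sigma})}$ and then by $\|U\|_{L^2(t^{1-2\sigma})}$), whereas you defer that ingredient to your closing paragraph.
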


\begin{proof}
We know from Proposition \ref{lem harnack} that $U$ is H\"older continuous in $\overline{Q_{8/9}}$. Let $h\in \R^n$ with $|h|$ sufficiently small. Denote $U^h(x,t)=\frac{U(x+h,t)-U(x,t)}{|h|}$. Then $U^h$ is a weak solution of
\begin{equation}\label{eq:difference quotient1-1}
\begin{cases}
\mathrm{div}(t^{1-2\sigma}\nabla U^h(X))= 0\quad &\mbox{in } Q_{8/9} \\
-\dlim_{t\rightarrow 0^+}t^{1-2\sigma}\pa_tU^h(x,t)=a(x+h) U^h+a^h U+ b^h\quad  &\mbox{on } \pa' Q_{8/9}.\\
\end{cases}
\end{equation}
By Proposition \ref{energy estimates1-1-1} and Proposition \ref{lem harnack},
\[
\begin{split}
\|U^h\|_{H(t^{1-2\sigma}, Q_{2/3})}+\|U^h\|_{C^{\al}(\overline{Q_{2/3}})}
&\leq C(\|U^h\|_{L^2(t^{1-2\sigma},Q_{3/4})}+\|b\|_{C^{1}(B_1)})\\
&\leq C (\|\nabla U\|_{L^2(t^{1-2\sigma},Q_{4/5})}+\|b\|_{C^{1}(B_1)})\\
&\leq C (\|U\|_{L^2(t^{1-2\sigma},Q_{1})}+\|b\|_{C^{1}(B_1)})\\
\end{split}
\]
for some $\al\in (0,1)$ and positive constant $C>0$ depending only on  $n,\sigma, \|a\|_{C^1(B_1)}$. Hence $\nabla_x U\in H(t^{1-2\sigma}, Q_{2/3})\cap C^{\al}(\overline {Q_{2/3}})$, and it is a weak solution of
\begin{equation*}
\begin{cases}
\mathrm{div}(t^{1-2\sigma}\nabla (\nabla_x U)= 0\quad &\mbox{in } Q_{2/3} \\
-\dlim_{t\rightarrow 0^+}t^{1-2\sigma}\pa_t (\nabla_x U)=a \nabla_x U+U \nabla_x a + \nabla_x b\quad  &\mbox{on } \pa' Q_{2/3}.\\
\end{cases}
\end{equation*}
Then this Proposition follows immediately from Proposition \ref{energy estimates1-1-1} and Proposition \ref{lem harnack} for $k=1$. We can continue this procedure for $k=2,3,\cdots$ (by induction).
\end{proof}

To prove Theorem \ref{thm2.2} we first obtain Schauder estimates for solutions of the equation
\begin{equation}\label{ge 2.5}
\begin{cases}
\mathrm{div}(t^{1-2\sigma}\nabla U(X))= 0\quad &\mbox{in } Q_R \\
-\dlim_{t\rightarrow 0^+}t^{1-2\sigma}\pa_tU(x,t)=g(x)\quad  &\mbox{on } \pa' Q_R.\\
\end{cases}
\end{equation}

\begin{thm}\label{thm2.1-poisson}
Let $U(X)\in H(t^{1-2\sigma},Q_2)$ be a weak solution of (\ref{ge 2.5}) with $R=2$
and $g(x)\in C^\al(B_2)$ for some $0<\al\not\in\mathbb{N}$.
If $2\sigma+\alpha$ is not an integer, then $U(\cdot,0)$ is of $C^{2\sigma+\al}(B_{1/2})$. Moreover, we have
\[
 \|U(\cdot,0)\|_{C^{2\sigma+\al}(B_{1/2})}\leq C(\|U\|_{L^\infty(Q_2)}+\|g\|_{C^\al(B_2)}),
\]
where $C>0$ depends only on $n,\sigma,\al$.
\end{thm}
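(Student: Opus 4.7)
The plan is to decompose $U = V + W$, where $V$ is an explicit particular solution that carries the inhomogeneous Neumann datum $g$ and $W$ satisfies the \emph{homogeneous} Neumann condition; the first piece is controlled by classical Riesz-potential theory, and the second by Proposition~\ref{lem2.2}.

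First I would localize the data: choose $\eta \in C_c^\infty(B_{3/2})$ with $\eta \equiv 1$ on $B_1$ and set $\tilde g := \eta g$, so that $\tilde g \in C^\alpha(\R^n)$ is compactly supported with $\|\tilde g\|_{C^\alpha(\R^n)} \le C \|g\|_{C^\alpha(B_2)}$. Next I would take $v$ to be the Riesz potential $v := N_\sigma^{-1} I_{2\sigma} \ast \tilde g$, so that $(-\Delta)^\sigma v = N_\sigma^{-1} \tilde g$ on $\R^n$, and define
\[
V(x,t) := \int_{\R^n} \mathcal{P}_\sigma(x-y,t)\, v(y)\, \ud y.
\]
From the material in Section~\ref{LE}, $V \in C^\infty(\R^{n+1}_+)$, $\mathrm{div}(t^{1-2\sigma}\nabla V) = 0$ in $\R^{n+1}_+$, and $-\lim_{t\to 0} t^{1-2\sigma} \partial_t V = N_\sigma (-\Delta)^\sigma v = \tilde g$ on $\R^n$ by \eqref{connection of fractional and extension}. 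Classical estimates on $I_{2\sigma}$ applied to compactly supported $C^\alpha$ functions give $v \in C^{2\sigma + \alpha}_{loc}(\R^n)$ with
\[
\|v\|_{C^{2\sigma + \alpha}(B_1)} + \|V\|_{L^\infty(Q_2)} \le C \|g\|_{C^\alpha(B_2)},
\]
valid whenever $2\sigma + \alpha \notin \mathbb{N}$.

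Then set $W := U - V \in H(t^{1-2\sigma}, Q_1)$. Since $\eta \equiv 1$ on $B_1$, the Neumann datum of $W$ on $\partial' Q_1$ is $g - \tilde g = 0$, and $\|W\|_{L^\infty(Q_1)} \le C(\|U\|_{L^\infty(Q_2)} + \|g\|_{C^\alpha(B_2)})$. Applying Proposition~\ref{lem2.2} to $W$ with $a \equiv b \equiv 0$ for each integer $k$ gives
\[
\sum_{i=0}^{k} \|\nabla_x^i W\|_{L^\infty(Q_{1/2})} \le C_k \|W\|_{L^2(t^{1-2\sigma}, Q_1)} \le C_k \|W\|_{L^\infty(Q_1)},
\]
and iterating Proposition~\ref{lem harnack}(iii) on difference quotients in $x$ upgrades each $\nabla_x^i W$ to be H\"older continuous up to $\{t=0\}$. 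Hence $W(\cdot, 0) \in C^\infty(B_{1/2})$ with bounds controlled by $\|W\|_{L^\infty(Q_1)}$. Writing $u = U(\cdot, 0) = v + W(\cdot, 0)$ on $B_{1/2}$ and combining with the previous paragraph yields the desired $C^{2\sigma + \alpha}$ estimate.

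The main obstacle is the Riesz-potential regularity statement used above: that $I_{2\sigma} \ast \tilde g \in C^{2\sigma + \alpha}_{loc}$ with the quantitative bound, especially across the thresholds $2\sigma + \alpha = 1$ and $2\sigma + \alpha = 2$. Differentiating under the integral produces principal-value expressions of the form $\int (\tilde g(y) - \tilde g(x)) K(x-y)\, \ud y$, which must be split into local and global parts and estimated using the H\"older modulus of $\tilde g$; the hypothesis $2\sigma + \alpha \notin \mathbb{N}$ is precisely what keeps the standard H\"older scale valid and rules out the logarithmic corrections that would otherwise appear at integer regularity.
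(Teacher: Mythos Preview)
Your argument is correct and is essentially the alternative route the paper itself sketches in the Remark following Theorem~\ref{thm2.2}: localize $g$, take the Riesz potential $v=I_{2\sigma}*\tilde g$ and its Caffarelli--Silvestre extension $V$, and observe that $W=U-V$ has vanishing Neumann datum on $B_1$ so that Proposition~\ref{lem2.2} makes $W(\cdot,0)$ smooth there. The only nontrivial external input is the classical mapping property $I_{2\sigma}:C^{\al}_c\to C^{2\sigma+\al}_{loc}$ (Stein~\cite{S}), which you correctly flag as needing the non-integer hypothesis.

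By contrast, the paper's own proof of Theorem~\ref{thm2.1-poisson} is a self-contained Campanato-type iteration in the spirit of \cite{Ca,LN}: at each dyadic scale $Q_{\rho^k}$ one solves an auxiliary Dirichlet problem $W_k$ with Neumann datum $g(0)-g(x)$, uses the barrier $t^{2\sigma}-3$ together with Lemma~\ref{wmp} to get $\|W_k\|_{L^\infty}\le CM\rho^{(2\sigma+\al)k}$, and then controls the telescoping differences $h_{k+1}=U_{k+1}-U_k$ (which have zero Neumann data) via Proposition~\ref{lem2.2}. Summing the geometric series gives the H\"older modulus at the origin, with three separate cases according to whether $2\sigma+\al$ lies in $(0,1)$, $(1,2)$, or $(2,3)$. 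The trade-off is clear: your decomposition is shorter and conceptually cleaner but imports the Riesz-potential Schauder theory as a black box, while the paper's iteration is longer but stays entirely within the weighted-Sobolev framework already set up in Section~\ref{LE} and does not appeal to any outside regularity result.
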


This theorem together with Proposition \ref{lem harnack} implies the following

\begin{thm}\label{thm2.1}
Let $U(X)\in H(t^{1-2\sigma},Q_1)$ be a weak solution of (\ref{2.5}) with $D=Q_1$
and $a(x),b(x)\in C^\al(B_1)$ for some $0<\al\not\in\mathbb{N}$.
If $2\sigma+\alpha$ is not an integer, then $U(\cdot,0)$ is of $C^{2\sigma+\al}(B_{1/2})$. Moreover, we have
\[
 \|U(\cdot,0)\|_{C^{2\sigma+\al}(B_{1/2})}\leq C(\|U\|_{L^\infty(Q_1)}+\|b\|_{C^\al(B_1)}),
\]
where $C>0$ depends only on $n,\sigma,\al, \|a\|_{C^{\al}(B_1)}$.
\end{thm}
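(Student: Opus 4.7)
The plan is a standard bootstrap that combines the De Giorgi--Nash--Moser type regularity from Proposition \ref{lem harnack} (iii) with the pure-Neumann Schauder estimate of Theorem \ref{thm2.1-poisson}. I rewrite \eqref{2.5} in the form of \eqref{ge 2.5} with
\[
g(x) := a(x)\,U(x,0) + b(x),
\]
and iteratively upgrade the Hölder regularity of $u := U(\cdot,0)$, gaining $2\sigma$ each round.

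For the initial step, Proposition \ref{lem harnack} (iii) immediately yields $U \in C^{\alpha_0}(\overline{Q_{7/8}})$ for some $\alpha_0 \in (0,1)$ depending on $n,\sigma,\|a\|_{C^{\alpha}(B_1)}$, together with the bound
\[
\|U\|_{C^{\alpha_0}(\overline{Q_{7/8}})} \le C\bigl(\|U\|_{L^\infty(Q_1)} + \|b\|_{L^\infty(B_1)}\bigr).
\]
Since Hölder spaces are nested, I may shrink $\alpha_0$ slightly so that $\alpha_0 + 2k\sigma$ avoids the integers for all $k = 0, 1, \dots, N$, where $N$ is a fixed finite number depending only on $\alpha$ and $\sigma$. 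In particular $u \in C^{\alpha_0}(B_{7/8})$.

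For the iterative step, suppose $u \in C^{\gamma}(B_r)$ with $\gamma$ and $2\sigma + \gamma$ both non-integer and $1/2 < r \le 7/8$. Then $g = au + b \in C^{\beta}(B_r)$ with $\beta = \min(\alpha, \gamma)$ and
\[
\|g\|_{C^{\beta}(B_r)} \le \|a\|_{C^{\alpha}(B_1)}\|u\|_{C^{\gamma}(B_r)} + \|b\|_{C^{\alpha}(B_1)}.
\]
For any $r' \in (1/2, r)$ and any $x_0 \in B_{r'}$, translating and rescaling Theorem \ref{thm2.1-poisson} to a small cylinder $Q_{\rho}(x_0) \subset Q_r$ with $\rho = (r-r')/3$ produces $u \in C^{2\sigma + \beta}(B_{\rho/4}(x_0))$, with norm controlled by $\|U\|_{L^\infty(Q_1)} + \|b\|_{C^{\alpha}(B_1)} + \|u\|_{C^{\gamma}(B_r)}$. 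Covering $B_{r'}$ by such balls and using the associated patching estimate gives $u \in C^{2\sigma + \beta}(B_{r'})$; thus the exponent advances from $\gamma$ to $2\sigma + \min(\alpha,\gamma)$ while the ball shrinks only from $B_r$ to $B_{r'}$.

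After finitely many such iterations (at most $N$) the exponent exceeds $\alpha$, so $\beta$ locks to $\alpha$ and one final application of Theorem \ref{thm2.1-poisson} delivers $u \in C^{2\sigma + \alpha}(B_{1/2})$ with the stated estimate. The only technical nuisance is the bookkeeping: choosing a decreasing sequence $7/8 = r_0 > r_1 > \cdots > r_N = 1/2$ of radii and arranging $\alpha_0$ so that no intermediate exponent is an integer. Neither is deep given that the number of iterations is bounded a priori by a constant depending on $\alpha$ and $\sigma$, and no new analytic ingredient beyond Proposition \ref{lem harnack} and Theorem \ref{thm2.1-poisson} is required.
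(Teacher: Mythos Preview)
Your proof is correct and follows exactly the approach the paper takes: first invoke Proposition \ref{lem harnack} to obtain initial H\"older continuity of $U$, then bootstrap by repeatedly applying Theorem \ref{thm2.1-poisson} with $g(x)=a(x)U(x,0)+b(x)$, gaining $2\sigma$ in regularity at each pass until reaching $C^{2\sigma+\al}$. The paper's own proof is a two-line sketch that simply says ``bootstrap arguments''; you have fleshed out the bookkeeping (shrinking radii, adjusting $\alpha_0$ to dodge integer exponents, patching local estimates), but nothing beyond what the paper intends.
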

\begin{proof}
From Proposition \ref{lem harnack}, $U$ is H\"older continuous in $\overline{Q_{3/4}}$. Theorem \ref{thm2.1} follows from bootstrap arguments by applying Theorem \ref{thm2.1-poisson} with $g(x):=a(x)U(x,0)+b(x)$.
\end{proof}

\begin{proof}[Proof of Theorem \ref{thm2.1-poisson}]
Our arguments are in the spirit of those in \cite{Ca} and \cite{LN}. Denote $C$ as various constants that depend only on $n$ and $\sigma$.
Let $\rho=\frac{1}{2}$, $Q_k=Q_{\rho^k}(0),\pa'Q_k=B_k, k=0,1,2,\cdots$. (Note that we have abused notations a little bit. Only in this proof we refer $Q_k, B_k$ as $Q_{\rho^k},B_{\rho^k}$.)
We also denote $M=\|g\|_{C^\al(B_2)}$. From Proposition \ref{lem harnack} we have already known that $U$ is H\"older continuous in $\overline{Q_0}$. First we assume that $\al\in (0,1)$

\emph{Step 1:} We consider the case of $2\sigma+\alpha<1$. Let $W_k$ be the unique weak solution of (which is guaranteed by Proposition \ref{prop of existence})
\begin{equation}
\begin{cases}
\mathrm{div}(t^{1-2\sigma}\nabla W_k(X))= 0\quad &\mbox{in } Q_k \\
-\dlim_{t\rightarrow 0^+}t^{1-2\sigma}\pa_tW_k(x,t)=g(0)-g(x)\quad  &\mbox{on } \pa' Q_k\\
W_k(X)=0\quad &\mbox{on }\pa'' Q_k
\end{cases}
\end{equation}
Let $U_k=W_k+U$ in $Q_k$ and $h_{k+1}=U_{k+1}-U_{k}$ in $Q_{k+1}$, then
\be\label{bound of W}
\|W_k\|_{L^{\infty}(Q_k)}\leq C M \rho^{(2\sigma+\al) k}.
\ee
Indeed \eqref{bound of W} follows by applying Lemma \ref{wmp} to the equation of $\rho^{-2\sigma k}W_k(\rho^k x)\pm (t^{2\sigma}-3)M\rho^{\al k}$ in $Q_0$. Hence by weak maximum principle again we have
\[
\|h_{k+1}\|_{L^{\infty}(Q_k)}\leq C M \rho^{(2\sigma+\al) k}.
\]
By Proposition \ref{lem2.2}, we have, for $i=0,1,2,3$
\begin{equation}\label{higher bound of h}
\|\nabla_x^i h_{k+1}\|_{L^{\infty}(Q_{k+2})}\leq CM\rho^{(2\sigma+\al-i)k}.
\end{equation}
Similarly apply Proposition \ref{lem2.2} to $U_0$, we have
\begin{equation}\label{higher bound of u0}
\begin{split}
\|\nabla_x^i U_0\|_{L^{\infty}(Q_{2})}&\leq C(\|U_0\|_{L^{\infty}(Q_1)}+M)\leq  C(\|U\|_{L^{\infty}(Q_0)}+M)\\
\end{split}
\end{equation}
For any given point $z$ near $0$, we have
\[
\begin{split}
&|U(z,0)-U(0,0)|\\
&\leq|U_k(0,0)-U(0,0)|+|U(z,0)-U_k(z,0)| + |U_{k}(z,0)-U_k(0,0)|\\
&=I_1+I_2+I_3
\end{split}
\]
Let $k$ be such that $\rho^{k+4}\leq |z|\leq \rho^{k+3}$. By \eqref{bound of W},
\[
I_1+I_2\leq C M \rho^{(2\sigma+\al) k}\leq C M |z|^{2\sigma+\alpha}.
\]
For $I_3$, by \eqref{higher bound of h} and \eqref{higher bound of u0},
\[
\begin{split}
I_3&\leq |U_0(z,0)-U_0(0,0)|+\sum_{j=1}^k|h_j(z,0)-h_j(0,0)|\\
&\leq C|z|\Big(\|\nabla_x U_0\|_{L^{\infty}(Q_{k+3})}+\sum_{j=1}^k\|\nabla_x h_j\|_{L^{\infty}(Q_{k+3})}\Big)\\
&\leq C|z|\Big(\|U\|_{L^{\infty}(Q_{0})}+M+M\sum_{j=1}^k\rho^{(2\sigma+\al-1)j}\Big)\\
&\leq C|z|\Big(\|U\|_{L^{\infty}(Q_{0})}+M(1+|z|^{2\sigma+\alpha-1})\Big).
\end{split}
\]
Thus, for $2\sigma+\al<1$, we have
\[
|U(z,0)-U(0,0)|\leq C \big(M+\|U\|_{L^{\infty}(Q_{0})}\big) |z|^{2\sigma+\alpha}.
\]
which finishes the proof of Step 1.

\medskip

\emph{Step 2:} For $1<2\sigma+\alpha<2$, the arguments in Step 1 imply that
\be\label{bound of first}
\|\nabla_x U(\cdot,0)\|_{L^{\infty}(B_1)}\leq C\Big(\|U\|_{L^{\infty}(Q_0)}+M\Big).
\ee
Apply \eqref{bound of first} to the equation of $W_k$ we have, together with \eqref{bound of W},
\[
\|\nabla_x W_k(\cdot, 0)\|_{L^{\infty}(B_{k+1})}\leq CM\rho^{(2\sigma+\alpha-1)k}
\]
By \eqref{higher bound of h} and \eqref{higher bound of u0},
\[
\begin{split}
& |\nabla_x U_{k}(z,0)-\nabla_x U_k(0,0)|\\
&\leq |\nabla_x U_{0}(z,0)-\nabla_x U_0(0,0)|+\sum_{j=1}^k|\nabla_x h_{j}(z,0)-\nabla_x h_j(0,0)|\\
&\leq C|z|\Big(\|\nabla_x^2 U_0\|_{L^{\infty}(Q_{k+3})}+\sum_{j=1}^k\|\nabla_x ^2h_j\|_{L^{\infty}(Q_{k+3})}\Big)\\
&\leq C|z|\Big(\|U\|_{L^{\infty}(Q_{0})}+M+M\sum_{j=1}^k\rho^{(2\sigma-2+\al)j}\Big)\\
&\leq C|z|\Big(\|U\|_{L^{\infty}(Q_{0})}+M(1+|z|^{2\sigma+\alpha-2})\Big).
\end{split}
\]
Hence
\[
\begin{split}
&|\nabla_x U(z,0)-\nabla_x U(0,0)|\\
&\leq|\nabla_x W_k(0,0)|+|\nabla_x W_k(z,0)| + |\nabla_x U_{k}(z,0)-\nabla_x U_k(0,0)|\\
&\leq  CM\rho^{(2\sigma+\alpha-1)k} + C|z|\Big(\|U\|_{L^{\infty}(Q_{0})}+M(1+|z|^{2\sigma+\alpha-2})\Big)\\
&\leq C \big(M+\|U\|_{L^{\infty}(Q_{0})}\big) |z|^{2\sigma+\alpha-1}.
\end{split}
\]
which finishes the proof of Step 2.

\medskip

\emph{Step 3: } For $2\sigma+\al>2$, the arguments in Step 2 imply that
\be\label{bound of second}
\|\nabla_x^2 U(\cdot,0)\|_{L^{\infty}(B_1)}\leq C\Big(\|U\|_{L^{\infty}(Q_0)}+M\Big),
\ee
Apply \eqref{bound of second} to the equation of $W_k$ we have, together with \eqref{bound of W},
\[
\|\nabla_x^2 W_k(\cdot, 0)\|_{L^{\infty}(B_{k+1})}\leq CM\rho^{(2\sigma+\alpha-2)k}
\]
By \eqref{higher bound of h} and \eqref{higher bound of u0},
\[
\begin{split}
& |\nabla_x^2 U_{k}(z,0)-\nabla_x^2 U_k(0,0)|\\
&\leq |\nabla_x^2 U_{0}(z,0)-\nabla_x^2 U_0(0,0)|+\sum_{j=1}^k|\nabla_x^2 h_{j}(z,0)-\nabla_x^2 h_j(0,0)|\\
&\leq C|z|\Big(\|\nabla_x^3 U_0\|_{L^{\infty}(Q_{k+3})}+\sum_{j=1}^k\|\nabla_x ^3h_j\|_{L^{\infty}(Q_{k+3})}\Big)\\
&\leq C|z|\Big(\|U\|_{L^{\infty}(Q_{0})}+M+M\sum_{j=1}^k\rho^{(2\sigma+\alpha-3)k}\Big)\\
&\leq C|z|\Big(\|U\|_{L^{\infty}(Q_{0})}+M(1+|z|^{2\sigma+\al-3})\Big).
\end{split}
\]
Hence
\[
\begin{split}
&|\nabla_x^2 U(z,0)-\nabla_x^2 U(0,0)|\\
&\leq|\nabla_x^2 W_k(0,0)|+|\nabla_x^2 W_k(z,0)| + |\nabla_x^2 U_{k}(z,0)-\nabla_x U_k(0,0)|\\
&\leq CM\rho^{(2\sigma+\alpha-2)k} + C|z|\Big(\|U\|_{L^{\infty}(Q_{0})}+M(1+|z|^{2\sigma+\al-3})\Big)\\
&\leq C \big(M+\|U\|_{L^{\infty}(Q_{0})}\big) |z|^{2\sigma+\alpha-2}.
\end{split}
\]
which finishes the proof of Step 3. This finishes the proof of Theorem \ref{thm2.1-poisson} for $\al\in (0,1)$.

For the case that $\al>1$, we may apply $\nabla_x$ to \eqref{ge 2.5} $[\al]$ times, as in the proof of Proposition \ref{lem2.2}, and repeat the above three steps. Theorem \ref{thm2.1-poisson} is proved.
\end{proof}

\begin{proof}[Proof of Theorem \ref{thm2.2}]
Since $u\in \dot{H}^\sigma(\R^n)$ is nonnegative, its extension $U\geq 0$ in $\R^{n+1}_+$ and $U\in H(t^{1-2\sigma},Q_1)$ is a weak solution of \eqref{2.5} in $Q_1$.
The theorem follows immediately from Theorem \ref{thm2.1} and Proposition \ref{lem harnack}.
\end{proof}

\begin{rem}
Another way to show Theorem \ref{thm2.2} is the following. Let $u\in\dot {H}^{\sigma}(\R^n)$ and $u\geq 0$ in $\R^n$ be a solution of
\[
(-\Delta)^{\sigma} u=g(x),\quad\mbox{in }B_1
\]
where $g\in C^{\al}(B_1)$.
 Let $\eta$ be a nonnegative smooth cut-off function supported in $B_{1}$ and equal to $1$ in $B_{7/8}$. Let $v\in \dot H^{\sigma}(\R^n)$ be the solution of
\[
(-\Delta)^{\sigma} v=\eta (x)g(x),\quad\mbox{in }\R^n
\]
where $\eta g$ is considered as a function defined in $\R^n$ and supported in $B_1$, i.e., $v$ is a Riesz potential of $\eta g$
\[
v(x)=\frac{\Gamma(\frac{n-2\sigma}{2})}{2^{2\sigma}\pi^{n/2}\Gamma (\sigma)}\int_{\R^n}\frac{\eta(y)g(y)}{|x-y|^{n-2\sigma}}\ud y.
\]
 Then if $2\sigma+\al$ and $\al$ are not integers, we have (see, e.g., \cite{S})
\[
\|v\|_{C^{2\sigma+\al}(B_{1/2})}\leq C(\|v\|_{L^{\infty}(\R^n)}+\|\eta g\|_{C^{\al}(\R^n)})\leq C\|g\|_{C^{\al}(B_1)}.
\]
Let $w=u-v$ which belongs to $\dot H^{\sigma}(\R^n)$ and satisfies
\[
(-\Delta)^{\sigma} w=0,\quad\mbox{in }B_{7/8}.
\]
Let $W=\mathcal P_{\sigma}[w]$ be the extension of $w$, and $\tilde W=W+\|v\|_{L^{\infty}(\R^n)}\geq 0$ in $\R^{n+1}_+$. Notice that $\tilde W$ is a nonnegative weak solution of \eqref{2.5} with $a\equiv b\equiv 0$ and $D=Q_1$. By Proposition \ref{lem2.2} and Proposition \ref{lem harnack}, we have
\[
\begin{split}
&\|w+\|v\|_{L^{\infty}(\R^n)}\|_{C^{2\sigma+\al}(B_{1/2})}\\
&\leq  C\|\tilde W\|_{L^2(t^{1-2\sigma},Q_{7/8})}\leq C\inf_{Q_{3/4}}\tilde W\leq C(\inf_{Q_{3/4}}u +\|v\|_{L^{\infty}(\R^n)}).
\end{split}
\]
Hence
\[
\begin{split}
\|u\|_{C^{2\sigma+\al}(B_{1/2})}&\leq \|v\|_{C^{2\sigma+\al}(B_{1/2})}+\|w\|_{C^{2\sigma+\al}(B_{1/2})}\\
&\leq C(\inf_{B_{3/4}}u+\|g\|_{C^{\al}(B_1)}).
\end{split}
\]
Using bootstrap arguments as that in the proof of Theorem \ref{thm2.1}, we conclude Theorem \ref{thm2.2}.
\end{rem}

\begin{rem}
Indeed, our proofs also lead to the following. If we only assume that $a(x), b(x), g(x)\in L^{\infty}(B_1)$, and let $U$, $u$ be those in Theorem \ref{thm2.1-poisson} and in Theorem \ref{thm2.2} respectively, then the estimates
\[
 \|U(\cdot,0)\|_{C^{2\sigma}(B_{1/2})}\leq C_1(\|U\|_{L^\infty(Q_1)}+\|g\|_{L^{\infty}(B_1)})
\]
\[
\|u\|_{C^{2\sigma}(B_{1/2})}\leq C_2(\dinf_{B_{3/4}}u+\|b\|_{L^{\infty}(B_{3/4})})
\]
hold provided $\sigma\neq 1/2$ , where $C_1>0$ depends only on $n,\sigma,\al$ and $C_2>0$ depends only on $n,\sigma,\al,\|a\|_{L^{\infty}(B_{3/4})}$.
For $\sigma=\frac{1}{2}$,  we have the following log-Lipschitz property: for any $y_1,y_2\in B_{1/4}, y_1\neq y_2$,
\[
\frac{|U(y_1,0)-U(y_2,0)|}{|y_1-y_2|}\leq C_1(\|U\|_{L^\infty(Q_1)}-\|g\|_{L^{\infty}(B_1)}\log |y_1-y_2|),
\]
\[
\frac{|u(y_1)-u(y_2)|}{|y_1-y_2|}\leq -C_2\log |y_1-y_2|(\dinf_{B_{3/4}}u+\|b\|_{L^{\infty}(B_{3/4})})
\]
where $C_1>0$ depends only on $n,\sigma$ and $C_2>0$ depends only on $n,\sigma, \|a\|_{L^{\infty}(B_{3/4})}$.

\end{rem}

Next we have

\begin{lem} (Lemma 4.5 in \cite{CaS})\label{vertical weighted holder}
Let $g\in C^{\alpha}(B_1)$ for some $\al\in (0,1)$ and $U\in L^{\infty}(Q_1)\cap H(t^{1-2\sigma}, Q_1)$ be a weak solution of \eqref{ge 2.5}. Then there exists $\beta\in (0,1)$ depending only on $n,\sigma,\al$ such that $t^{1-2\sigma}\pa_t U\in C^{\beta}(\overline{Q_{1/2}})$. Moreover, there exists a positive constant $C>0$ depending only on $n,\sigma$ and $\beta$ such that
\[
\|t^{1-2\sigma}\pa_t U\|_{C^{\beta}(\overline{Q_{1/2}})}\leq C(\|U\|_{L^{\infty}(Q_1)}+\|g\|_{C^{\al}(B_1)}).
\]
\end{lem}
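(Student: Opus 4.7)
The plan is to exploit the standard conjugation for degenerate equations of this type. Define
\[
V(x,t) := t^{1-2\sigma}\partial_t U(x,t).
\]
I would first verify that $V$ is a weak solution of the conjugate equation
\[
\mathrm{div}(t^{2\sigma-1}\nabla V) = 0 \quad \text{in } Q_1, \qquad V(x,0) = -g(x) \quad \text{on } \pa' Q_1.
\]
Formally this is immediate: writing the $U$-equation as $\partial_t V + t^{1-2\sigma}\Delta_x U = 0$ and observing $\nabla_x V = t^{1-2\sigma}\nabla_x \partial_t U$, one gets $t^{2\sigma-1}\partial_t V = -\Delta_x U$ and $t^{2\sigma-1}\nabla_x V = \nabla_x\partial_t U$, so summing yields zero. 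The boundary condition is simply the Neumann condition for $U$, now interpreted as a Dirichlet condition for $V$. The main care here is to justify this weakly: by the interior De Giorgi--Nash--Moser regularity for $U$ (Proposition \ref{lem harnack} applied with $a=0$, $b=g$), $U$ is smooth in $t>0$, and one can integrate by parts against test functions $\Phi\in C^\infty_c(Q_1\cup\pa' Q_1)$ to pass to the conjugate weak formulation, noting that $t^{2\sigma-1}$ lies in the Muckenhoupt $A_2$ class on $\R^{n+1}$ (playing the symmetric role of $t^{1-2\sigma}$).

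Next I would remove the nonhomogeneous Dirichlet data. Let $\chi\in C_c^\infty(B_1)$ with $\chi\equiv 1$ on $B_{3/4}$, and let
\[
\tilde V(x,t) := -\int_{\R^n} \mathcal P_{1-\sigma}(x-y,t)\,\chi(y)g(y)\,\ud y
\]
be the Poisson extension for the conjugate operator. Then $\tilde V$ satisfies $\mathrm{div}(t^{2\sigma-1}\nabla \tilde V) = 0$ in $\R^{n+1}_+$ with $\tilde V(x,0) = -\chi(x) g(x)$; standard kernel estimates for $\mathcal P_{1-\sigma}$ (cf.\ \eqref{poisson involution}) show $\tilde V\in L^\infty(\R^{n+1}_+)\cap C^\alpha(\overline{\R^{n+1}_+})$ with a bound linear in $\|g\|_{C^\alpha(B_1)}$.

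Finally, set $W := V - \tilde V$ on $Q_{3/4}$. Then $W$ is a bounded weak solution of the conjugate equation with $W(x,0)=0$ on $B_{3/4}$. Applying the boundary Hölder estimates of Fabes--Kenig--Serapioni \cite{FKS} for $A_2$-weighted divergence equations with homogeneous Dirichlet data yields $W\in C^{\beta_0}(\overline{Q_{1/2}})$ for some $\beta_0=\beta_0(n,\sigma)\in (0,1)$, with the quantitative bound
\[
\|W\|_{C^{\beta_0}(\overline{Q_{1/2}})} \le C\bigl(\|W\|_{L^\infty(Q_{3/4})}\bigr)\le C\bigl(\|U\|_{L^\infty(Q_1)}+\|g\|_{C^\alpha(B_1)}\bigr).
\]
Combining with the $C^\alpha$ bound for $\tilde V$ and taking $\beta := \min(\alpha,\beta_0)$ gives the desired estimate $\|V\|_{C^\beta(\overline{Q_{1/2}})} \le C(\|U\|_{L^\infty(Q_1)}+\|g\|_{C^\alpha(B_1)})$.

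The main obstacle is the last step: the FKS boundary Hölder regularity is for Dirichlet problems with one-sided weighted equations, and one must either quote it directly or derive it by an even/odd reflection argument combined with interior regularity for the reflected equation. An alternative, which avoids citing the Dirichlet theory, is a Campanato-type iteration at boundary balls $Q_{r}$ using the approximation by $\tilde V_r$, the Poisson extension of the trace of $V$ restricted to $B_r$, together with the uniform Hölder bound on $V(\cdot,0)=-g$. Either route delivers the result; the conjugation step at the very beginning is the clean conceptual move that makes everything work.
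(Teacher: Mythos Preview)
The paper does not give its own proof of this lemma: it is simply quoted as Lemma~4.5 of Cabre--Sire \cite{CaS}. Your conjugation argument is exactly the standard route (and is the one taken in \cite{CaS}): set $V=t^{1-2\sigma}\partial_tU$, observe that $V$ solves $\mathrm{div}(t^{2\sigma-1}\nabla V)=0$ with Dirichlet datum $V(\cdot,0)=-g$, subtract a Poisson extension of $-g$ for the conjugate weight, and apply the $A_2$ boundary H\"older theory of \cite{FKS} (equivalently, odd reflection plus interior FKS). So your approach matches the cited proof.

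One point deserves a sentence more of care. You assert $\|W\|_{L^\infty(Q_{3/4})}\le C(\|U\|_{L^\infty(Q_1)}+\|g\|_{C^\alpha(B_1)})$ and then feed this into FKS. A cleaner path is to note that FKS already controls the H\"older norm by the weighted $L^2$ norm, and
\[
\|V\|_{L^2(t^{2\sigma-1},Q_{3/4})}^2=\int_{Q_{3/4}} t^{1-2\sigma}|\partial_tU|^2\le \|\nabla U\|_{L^2(t^{1-2\sigma},Q_{3/4})}^2,
\]
which by the Caccioppoli estimate (Proposition~\ref{energy estimates1-1-1} with $a\equiv0$, $b=g$) is bounded by $C(\|U\|_{L^\infty(Q_1)}+\|g\|_{L^{2n/(n+2\sigma)}(B_1)})^2$. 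This closes the loop without ever needing an a~priori $L^\infty$ bound on $V$, and justifies the membership $V\in L^2(t^{2\sigma-1},Q_{3/4})$ needed to make the weak conjugate formulation and the odd reflection rigorous.
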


\begin{prop}\label{thm2.4-1}
Suppose that $K\in C^1(B_1)$, $ U\in H(t^{1-2\sigma},Q_1)$ and $U\geq 0$ in $Q_1$ is a weak solution of
\be\label{eq:fn1-1-1}
\begin{cases}
\mathrm{div}(t^{1-2\sigma}\nabla U)=0,&\quad \mbox{in }Q_1\\
-\dlim_{t\rightarrow 0}t^{1-2\sigma}\pa_t U(x,t)=K(x)U^p(x,0),&\quad \mbox{on }\pa' Q_1,
\end{cases}
\ee
where $1\leq p\leq \frac{n+2\sigma}{n-2\sigma}$.
Then there exist $C>0$ and $\al\in(0,1)$ both of which depend only on $n,\sigma,  p,\|U\|_{L^\infty(Q_1)}, \|K\|_{C^1(Q_1)}$ such that
\[
\nabla_x U \quad\text{and}\quad t^{1-2\sigma}\pa_t U \quad\text{are of}\quad  C^\al(\overline {Q_{1/2}})
\]
and
\[
\|\nabla_x U\|_{C^\al(\overline {Q_{1/2}})}+\|t^{1-2\sigma}\pa_t U\|_{C^{\al}(\overline {Q_{1/2}})}\leq C.
\]
\end{prop}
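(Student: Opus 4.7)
The plan is to bootstrap from the H\"older regularity of $U$ itself (already supplied by Corollary~\ref{thm2.3}) to H\"older continuity of the tangential gradient $\nabla_x U$ via a tangential difference-quotient argument, and then to invoke Lemma~\ref{vertical weighted holder} to handle the weighted normal derivative $t^{1-2\sigma}\partial_t U$. First, Corollary~\ref{thm2.3} yields $\alpha_0 \in (0,1)$ and a bound $\|U\|_{C^{\alpha_0}(\overline{Q_{3/4}})} \leq C$ depending only on the admissible data; in particular $U(\cdot,0) \in C^{\alpha_0}(B_{3/4})$, and $U \in H(t^{1-2\sigma}, Q_1)$ already provides weak tangential derivatives $\partial_{x_i}U \in L^2(t^{1-2\sigma}, Q_1)$.

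For the tangential directions, fix $i \in \{1,\ldots,n\}$ and, for small $h\neq 0$, set $U^h(x,t):=(U(x+he_i,t)-U(x,t))/h$. Splitting $K(x+he_i)U^p(x+he_i,0) - K(x)U^p(x,0)$ in the standard way and using the mean-value identity for $s\mapsto s^p$ shows that $U^h$ weakly satisfies
\begin{equation*}
\mathrm{div}(t^{1-2\sigma}\nabla U^h)=0 \ \text{in}\ Q_{3/4},\qquad -\lim_{t\to 0} t^{1-2\sigma}\partial_t U^h = a^h(x)\,U^h(x,0) + b^h(x) \ \text{on}\ \partial'Q_{3/4},
\end{equation*}
with $\|a^h\|_{L^\infty} + \|b^h\|_{L^\infty} \leq C$ uniformly in $h$ (using $\|U\|_{L^\infty}$, $\|K\|_{C^1}$, and $p\geq 1$). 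The uniform translation estimate $\|U^h\|_{L^2(t^{1-2\sigma}, Q_{3/4})} \leq \|\partial_{x_i}U\|_{L^2(t^{1-2\sigma}, Q_1)}$ combined with Proposition~\ref{energy estimates1-1-1} then yields $\|U^h\|_{H(t^{1-2\sigma}, Q_{2/3})} \leq C$ uniformly in $h$, and a weak subsequential limit $V:=\partial_{x_i}U \in H(t^{1-2\sigma}, Q_{2/3})$ exists and weakly solves
\begin{equation*}
\mathrm{div}(t^{1-2\sigma}\nabla V)=0,\qquad -\lim_{t\to 0} t^{1-2\sigma}\partial_t V = pKU^{p-1}V + (\partial_{x_i}K)\,U^p
\end{equation*}
on $Q_{2/3}$, with both coefficients in $L^\infty$. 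Applying Proposition~\ref{lem harnack}(i) to $\pm V$ gives $V \in L^\infty(Q_{1/2})$, and the $C^\alpha$ assertion of Proposition~\ref{lem harnack}(iii) (valid for sign-changing weak solutions via standard oscillation decay) then delivers $V \in C^{\alpha_1}(\overline{Q_{1/2}})$ for some $\alpha_1 \in (0,1)$ with a bound of the stated form.

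For the normal direction, view $U$ as a weak solution of \eqref{ge 2.5} on $Q_{3/4}$ with $g(x) := K(x)U^p(x,0)$. Since $K\in C^1$, $U(\cdot,0) \in C^{\alpha_0}$, and $p\geq 1$, we have $g \in C^{\alpha_0}(B_{3/4})$, and Lemma~\ref{vertical weighted holder} yields $t^{1-2\sigma}\partial_t U \in C^{\beta}(\overline{Q_{1/2}})$ for some $\beta \in (0,1)$. Setting $\alpha := \min(\alpha_1, \beta)$ finishes the argument.

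The main technical hurdle is the uniform-in-$h$ control of $\|U^h\|_{H(t^{1-2\sigma})}$: without a prior Lipschitz estimate on $U$ one cannot bound $U^h$ in $L^\infty$ by pure translation, so the $L^2(t^{1-2\sigma})$ input required by Proposition~\ref{energy estimates1-1-1} must come from the built-in fact $U \in H(t^{1-2\sigma}, Q_1)$. The remaining point is to pass carefully to the weak limit in the boundary integrals, for which $a^h \to pKU^{p-1}$ and $b^h \to (\partial_{x_i}K)U^p$ uniformly on $B_{3/4}$ while $U^h|_{\partial'} \to V|_{\partial'}$ strongly in $L^2$ by the compact trace embedding on bounded Lipschitz domains.
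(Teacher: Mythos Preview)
Your proof is correct and follows essentially the same strategy as the paper: H\"older regularity of $U$ from Corollary~\ref{thm2.3}, tangential difference quotients combined with the energy estimate (Proposition~\ref{energy estimates1-1-1}) and De~Giorgi--Nash--Moser (Proposition~\ref{lem harnack}) to control $\nabla_x U$, and finally Lemma~\ref{vertical weighted holder} for $t^{1-2\sigma}\partial_t U$. The only difference is that the paper inserts an intermediate appeal to the Schauder estimate Theorem~\ref{thm2.1} to obtain $U(\cdot,0)\in C^{1,\sigma}$ before differentiating, whereas you observe (correctly) that the difference-quotient argument requires only $L^\infty$ coefficients and so this extra step can be bypassed.
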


\begin{proof}
We use $C$ and $\al$ to denote various positive constants with dependence specified as in the proposition, which may vary from line to line. By Corollary \ref{thm2.3}, $U\in L^{\infty}_{loc}(Q_1\cup \partial' Q_1)$ and
\[
\|U\|_{C^{\al}(\overline{Q_{8/9}})}\leq C.
\]
With the above, we may apply Theorem \ref{thm2.1} to obtain $U(\cdot,0)\in C^{1,\sigma}(\overline {B_{7/8}})$ and
\[
\|U(\cdot, 0)\|_{C^{1,\sigma}(\overline {B_{7/8}})}\leq C.
\]
Hence we may differentiate \eqref{eq:fn1-1-1} with respect to $x$ (which can be justified from the proof of Proposition \ref{lem2.2}) and apply Proposition \ref{lem harnack} to $\nabla_x U$ to obtain
\[
\|\nabla_x U\|_{C^{\al}(\overline{Q_{1/2}})}\leq C.
\]
Finally we can apply Lemma \ref{vertical weighted holder} to obtain
\[
\|t^{1-2\sigma}\pa_t U\|_{C^{\al}(\overline {Q_{1/2}})}\leq C.
\]
\end{proof}

\section{Proof of Theorem \ref{thm3.1}}
\label{section of liouville}

We first introduce some notations. We say that $U\in L^\infty_{loc}(\overline {\R^{n+1}_+})$
if $U\in L^\infty(\overline {Q_R})$ for any $R>0$. Similarly we say
$U\in H_{loc}(t^{1-2\sigma},\overline {\R^{n+1}_+})$ if $U\in H(t^{1-2\sigma},\overline {Q_R})$ for any $R>0$. 


%

In the following $\B_R(X)$ is denoted as the ball in $\R^{n+1}$ with radius $R$ and center $X$, and $\B^+_R(X)$ as $\B_R(X)\cap \R^{n+1}_+$. We also write $\B_R(0), \B^+_R(0)$ as $\B_R, \B_R^+$ for short respectively. We start with a Lemma, which is a version of the strong maximum principle.

\begin{prop}\label{liminf}
Suppose $U(X)\in H(t^{1-2\sigma},D_{\va})\cap C(\B_1^+\cup B_1 \setminus \{0\})$ and $U>0$ in $\B_1^+\cup B_1 \setminus \{0\}$ is a weak supersolution of \eqref{2.5} with $a\equiv b\equiv 0$ and $D=D_{\va}:=\B_1^+\setminus\overline{\B_{\va}^+}$ for any $0<\va<1$,
then
\[
\liminf_{(x,t)\to 0}U(x,t)>0.
\]
\end{prop}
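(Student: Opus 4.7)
My plan is to prove this via an explicit radial barrier combined with the weak maximum principle (Lemma \ref{wmp}) applied in the annular region $D_\va=\B_1^+\setminus\overline{\B_\va^+}$ for $\va\to 0$. The key observation is that $V(X)=|X|^{-(n-2\sigma)}$ is a smooth positive radial function on $\R^{n+1}_+\setminus\{0\}$ satisfying $\mathrm{div}(t^{1-2\sigma}\nabla V)=0$ and $\lim_{t\to 0^+}t^{1-2\sigma}\partial_t V=0$ away from the origin; this is checked by a direct radial computation, namely $f(r)=r^{-(n-2\sigma)}$ solves $f''(r)+\tfrac{n+1-2\sigma}{r}f'(r)=0$.

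First I would fix a convenient inner shell by working in $\mathcal{B}_{1/2}^+$ rather than $\mathcal{B}_1^+$. Since $\partial''\mathcal{B}_{1/2}^+$ is a compact subset of $\mathcal{B}_1^+\cup B_1\setminus\{0\}$ on which $U$ is continuous and strictly positive, the quantity
\[
m:=\min_{\partial''\mathcal{B}_{1/2}^+} U
\]
is strictly positive. For each $\va\in(0,1/2)$, I would then define the barrier
\[
\phi_\va(X)=m\cdot\frac{\va^{-(n-2\sigma)}-|X|^{-(n-2\sigma)}}{\va^{-(n-2\sigma)}-2^{n-2\sigma}}
\]
in the annulus $\mathcal{B}_{1/2}^+\setminus\overline{\mathcal{B}_\va^+}$. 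By construction $\phi_\va$ is a weak (in fact smooth) solution of the degenerate equation with $a\equiv b\equiv 0$, vanishes on $\partial''\mathcal{B}_\va^+$, equals $m$ on $\partial''\mathcal{B}_{1/2}^+$, is bounded and hence lies in $H(t^{1-2\sigma},\mathcal{B}_{1/2}^+\setminus\overline{\mathcal{B}_\va^+})$.

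Next I would compare: the function $U-\phi_\va$ is a weak supersolution of the same zero-data equation on the annulus, and in the trace sense it is nonnegative on $\partial''\mathcal{B}_\va^+$ (where $\phi_\va=0\le U$) and on $\partial''\mathcal{B}_{1/2}^+$ (where $\phi_\va=m\le U$ by the choice of $m$). Applying Lemma \ref{wmp} yields $U\ge\phi_\va$ throughout the annulus. Finally, for any fixed $X\in\mathcal{B}_{1/2}^+\setminus\{0\}$, one has
\[
\phi_\va(X)=m\cdot\frac{1-(|X|/\va^{-1})^{-(n-2\sigma)}\va^{n-2\sigma}}{1-(2\va)^{n-2\sigma}}\longrightarrow m\quad\text{as }\va\to 0,
\]
so $U(X)\ge m$ for every such $X$, which immediately gives $\liminf_{(x,t)\to 0}U(x,t)\ge m>0$.

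No step here is serious; the only thing that requires any care is checking that $\phi_\va$ qualifies as an admissible test function and that the trace comparison on $\partial''D_\va$ is valid, both of which follow from the smoothness and boundedness of $\phi_\va$ on the closed annulus together with the hypothesis $U\in C(\mathcal{B}_1^+\cup B_1\setminus\{0\})$. The main conceptual point is simply recognizing $|X|^{-(n-2\sigma)}$ as the correct fundamental radial solution for the operator $\mathrm{div}(t^{1-2\sigma}\nabla\,\cdot\,)$ with natural Neumann condition on $\{t=0\}$.
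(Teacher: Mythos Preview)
Your proof is correct and takes essentially the same approach as the paper: both exploit that $|X|^{-(n-2\sigma)}$ solves $\mathrm{div}(t^{1-2\sigma}\nabla\,\cdot)=0$ with vanishing weighted normal derivative on $\{t=0\}$, build a radial barrier in a shrinking annulus, apply Lemma \ref{wmp}, and pass to the limit. The only cosmetic difference is that the paper perturbs $U$ by $\delta|X|^{-(n-2\sigma)}$ and compares with the constant $\min_{\pa''\B_{0.8}^+}U$, whereas you construct the harmonic interpolant $\phi_\va$ explicitly; the underlying idea is identical.
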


\begin{proof}
For any $\delta>0$, let
\[
V_{\delta}=U+\frac{\delta}{|(x,t)|^{n-2\sigma}}- \dmin_{\pa''\B^+_{0.8}}U.
\]
Then $V$ is also a weak supersolution in $D_{\delta^{\frac{2}{n-2\sigma}}}$. Applying Lemma \ref{wmp} to $V_{\delta}$ in $D_{\delta^{\frac{2}{n-2\sigma}}}$ for sufficiently small $\delta$, we have $V_{\delta}\geq 0$ in $D_{\delta^{\frac{2}{n-2\sigma}}}$.  For any $(x,t)\in \B^+_{0.8}\backslash\{0\}$, we have $\lim_{\delta\to 0}V_{\delta}(x,t)\geq 0$, i.e., $U(x,t)\geq \min_{\pa''\B^+_{0.8}}U$ .
\end{proof}

The proof of Theorem \ref{thm3.1} uses the method of moving spheres and is inspired by \cite{lz1}, \cite{lz} and \cite{cl}. For each $x\in\R^n$ and $\lda>0$, we define, $\X=(x,0)$, and
\be\label{kelvin}
U_{\X, \lda}(\xi):=\left(\frac{\lda}{|\xi-\X|}\right)^{n-2\sigma}U\left(\X+\frac{\lda^2(\xi-\X)}{|\xi-\X|^2}\right),
\quad \xi\in\overline{\R^{n+1}_+}\backslash\{\X\},
\ee
the Kelvin transformation of $U$ with respect to the ball $\B_{\lda}(\X)$. We point out that if $U$ is a solution of \eqref{eq}, then $U_{\bar x,\lda}$ is a solution of \eqref{eq} in $\R^{n+1}_+\setminus\overline{\B_{\va}^+}$, for every $\bar x\in \pa\R^{n+1}_+$, $\lda>0$, and $\va>0$.

By Corollary \ref{thm2.3} any nonnegative weak solution $U$ of \eqref{eq} belongs to $L^\infty_{loc}(\overline {\R^{n+1}_+})$, and hence by Proposition \ref{lem harnack}, $U$ is H\"older continuous and positive in $\overline{\R^{n+1}_+}$. By Theorem \ref{thm2.1-poisson}, $U(\cdot,0)$ is smooth in $\R^n$. From classical elliptic equations theory, $U$ is smooth in $\R^{n+1}_+$.

\begin{lem}\label{get started}
For any $x\in\R^n$, there exists a positive constant $\lda_0(x)$ such that for any $0<\lda<\lda_0(x)$,
\be
U_{\X, \lda}(\xi)\leq U(\xi), \quad \text{in } \R^{n+1}_+\backslash \B^+_{\lda}(\X).
\ee
\end{lem}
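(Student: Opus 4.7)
The plan is to translate horizontally so that $x=0$ (which preserves equation \eqref{eq}), and then to rewrite the desired inequality in polar coordinates about the origin. Setting $f(r,\theta):=r^{(n-2\sigma)/2}U(r\theta)$ for $r>0$ and $\theta$ in the closed upper hemisphere of $\mathbb{S}^n$, a direct substitution shows that $U_{\X,\lambda}(\xi)\le U(\xi)$ for all $|\xi|\ge\lambda$ is equivalent to $f(\lambda^2/r,\theta)\le f(r,\theta)$ for all $r\ge\lambda$ and all admissible $\theta$. By Corollary \ref{thm2.3} and Proposition \ref{thm2.4-1}, together with strict positivity from Proposition \ref{liminf}, $U$ has the regularity needed to differentiate $f$ in $r$ up to the boundary, so this equivalent form is a legitimate object.

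For the near-field I would compute
\[
\partial_r f(r,\theta)=r^{\frac{n-2\sigma}{2}-1}\Bigl[\tfrac{n-2\sigma}{2}U(r\theta)+r\,\theta\cdot\nabla U(r\theta)\Bigr],
\]
and observe that the bracket equals $\tfrac{n-2\sigma}{2}U(0)>0$ at $r=0$. By continuity there exists $r_1>0$ with $\partial_r f>0$ on $(0,r_1]$ uniformly in $\theta$. Consequently for any $\lambda\le r_1$ and any $r\in[\lambda,r_1]$, both $r$ and $\lambda^2/r$ lie in $(0,r_1]$ with $\lambda^2/r\le r$, so $f(\lambda^2/r,\theta)\le f(r,\theta)$ follows from this monotonicity.

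For the far range $r\ge r_1$ one has $U_{\X,\lambda}(\xi)\le C_1(\lambda/r)^{n-2\sigma}$, because the point $\lambda^2\xi/r^2$ lies in $\overline{\B^+_\lambda}$ where $U$ is uniformly bounded above (for $\lambda\le 1$). On the compact annulus $\{r_1\le|\xi|\le R_0\}$, $U$ is bounded below by a positive constant, so the comparison holds for $\lambda$ small. For the outer region $|\xi|\ge R_0$ I would invoke conformal invariance of \eqref{eq} at the critical exponent: the Kelvin transform $U^*(\xi):=|\xi|^{-(n-2\sigma)}U(\xi/|\xi|^2)$ is again a positive weak solution of \eqref{eq} on $\R^{n+1}_+\setminus\{0\}$, and hence a weak supersolution of the linear problem with $a\equiv b\equiv 0$. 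Proposition \ref{liminf} applied to $U^*$ at $0$ then yields $\liminf_{\xi\to 0}U^*(\xi)>0$, which pulls back to $U(\eta)\ge c_0|\eta|^{-(n-2\sigma)}$ for $|\eta|\ge R_0$. Choosing $\lambda_0$ small enough therefore produces the pointwise comparison uniformly on $\{|\xi|\ge r_1\}$.

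The principal obstacle is the far-field step: a priori $U$ could decay faster than $|\xi|^{-(n-2\sigma)}$ at infinity, in which case the bound $U_{\X,\lambda}(\xi)\lesssim\lambda^{n-2\sigma}|\xi|^{-(n-2\sigma)}$ could not be absorbed into $U(\xi)$. Conformal invariance of \eqref{eq} at the critical exponent is the crucial input that reduces the required lower bound at infinity to the already-established strong-maximum-principle statement of Proposition \ref{liminf}.
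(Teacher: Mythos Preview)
Your argument is correct and follows a genuinely different route from the paper, particularly in the near field. The paper's Step~1 uses the narrow-domain technique of Berestycki--Nirenberg: it checks $U_\lambda\le U$ on $\pa''(\B_{\lambda_2}^+\setminus\B_\lambda^+)$ for small $\lambda$, then tests the equation for $U_\lambda-U$ against $(U_\lambda-U)^+$ and absorbs the boundary term using the smallness of $\int_{B_{\lambda_2}}U^{2n/(n-2\sigma)}$ together with the trace inequality. Your near-field step instead exploits the pointwise regularity furnished by Proposition~\ref{thm2.4-1}: writing $f(r,\theta)=r^{(n-2\sigma)/2}U(r\theta)$, the bound on $t^{1-2\sigma}\pa_tU$ gives $r\,\theta\cdot\nabla U(r\theta)=O(r^{2\sigma})$ uniformly in $\theta$, so the bracket tends to $\tfrac{n-2\sigma}{2}U(0)>0$ and radial monotonicity follows. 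This is the classical Li--Zhu style start for moving spheres; it is more elementary (no energy estimate) but leans on the $C^\alpha$ control of $t^{1-2\sigma}\pa_t U$, whereas the paper's variational argument works directly at the $H(t^{1-2\sigma})$ level.

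For the far field the two arguments coincide in substance. The paper compares $U$ with the explicit barrier $\phi(\xi)=(\lambda_2/|\xi|)^{n-2\sigma}\inf_{\pa''\B_{\lambda_2}^+}U$ via Lemma~\ref{wmp} to obtain $U(\xi)\ge c|\xi|^{2\sigma-n}$ outside $\B_{\lambda_2}^+$, exactly the decay lower bound you extract from Proposition~\ref{liminf} applied to the Kelvin transform. Either way one then chooses $\lambda_0$ so that $(\lambda_0/|\xi|)^{n-2\sigma}\sup_{\B_{\lambda_2}^+}U\le c|\xi|^{2\sigma-n}$ for $|\xi|\ge\lambda_2$. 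Your identification of the far-field obstacle and its resolution via conformal invariance is precisely the point; the paper's barrier comparison is just a slightly more direct phrasing of the same mechanism.
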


\begin{proof}
Without loss of generality we may assume that $x=0$ and write $U_{\lda}=U_{0,\lda}$.
\medskip

\textit{Step 1.} We show that there exist $0<\lda_1<\lda_2$ which may depend on $x$, such that
\[
U_{\lda}(\xi)\leq U(\xi), ~\forall~0<\lda<\lda_1,~\lda<|\xi|<\lda_2.
\]
For every $0<\lda<\lda_1<\lda_2$, $\xi\in\pa'' \B_{\lda_2}$, we have $\frac{\lda^2\xi}{|\xi|^2}\in \B^+_{\lda_2}$. Thus we can choose $\lda_1=\lda_1(\lda_2)$ small such that
\begin{equation*}
\begin{split}
U_{\lda}(\xi)&=\left(\frac{\lambda}{|\xi|}\right)^{n-2\sigma}U\left(\frac{\lda^2\xi}{|\xi|^2}\right)\\
&\leq\left(\frac{\lda_1}{\lda_2}\right)^{n-2\sigma}\sup\limits_{\overline{\B_{\lda_2}^+}}U\leq \inf_{\partial ''{\B_{\lda_2}^+}}U\leq U(\xi)
\end{split}
\end{equation*}
Hence
\[
U_{\lda}\leq U\quad \mbox{on } \partial ''(\B^+_{\lda_2}\backslash \B^+_{\lda})
\]
  for all $\lda_2>0$ and $0<\lda<\lda_1(\lda_2)$.

We will show that $U_{\lda}\leq U$ on $(\B^+_{\lda_2}\backslash \B^+_{\lda})$ if $\lda_2$ is small and $0<\lda<\lda_1(\lda_2)$.
Since $U_{\lda}$ satisfies \eqref{eq} in $\B_{\lda_2}^+\setminus\overline{\B_{\lda_1}^+}$, we have
\begin{equation}\label{diff}
\begin{cases}
\mathrm{div}(t^{1-2\sigma}\nabla (U_{\lda}-U))&=0, \quad \text{in}\quad \B_{\lda_2}^+\backslash \overline{\B_{\lda}^+};\\
\lim\limits_{t\to 0}t^{1-2\sigma}\partial_t (U_{\lda}-U)&=
U^{\frac{n+2\sigma}{n-2\sigma}}(x,0)-U_{\lda}^{\frac{n+2\sigma}{n-2\sigma}}(x,0),
\quad \text{on}\quad \partial '(\B_{\lda_2}^+\backslash \overline{\B_{\lda}^+}).
\end{cases}
\end{equation}
Let $(U_{\lda}-U)^+:=\max(0,U_{\lda}-U)$ which equals to $0$ on $\pa''(\B^+_{\lda_2}\backslash \B^+_{\lda})$. Hence, by a density argument, we can use $(U_{\lda}-U)^+$ as a test function in the definition of weak solution of \eqref{diff}. We will make use of the narrow domain technique
from \cite{BN}. With the help of the mean value theorem, we have
\begin{equation*}
\begin{split}
&\int_{\B_{\lda_2}^+\backslash \B_{\lda}^+} t^{1-2\sigma}|\nabla(U_{\lda}-U)^+|^2\\
&=\int_{B_{\lda_2}\backslash B_{\lda}}(U_{\lda}^{\frac{n+2\sigma}{n-2\sigma}}(x,0)-U^{\frac{n+2\sigma}{n-2\sigma}}(x,0))(U_{\lda}-U)^+\\
&\leq C \int_{B_{\lda_2}\backslash B_{\lda}}((U_{\lda}-U)^+)^2 U_{\lda}^{\frac{4\sigma}{n-2\sigma}}\\
&\leq C\left(\int_{B_{\lda_2}\backslash B_{\lda}}((U_{\lda}-U)^+)^\frac{2n}{n-2\sigma}\right)^{\frac{n-2\sigma}{n}}\left(\int_{B_{\lda_2}\backslash B_{\lda}} U_{\lda}^{\frac{2n}{n-2\sigma}}\right)^{\frac{2\sigma}{n}}\\
& \leq C\left(\int_{\B_{\lda_2}^+\backslash \B_{\lda}^+} t^{1-2\sigma}|\nabla(U_{\lda}-U)^+|^2\right)
\left(\int_{ B_{\lda_2}} U^{\frac{2n}{n-2\sigma}}\right)^{\frac{2\sigma}{n}}
\end{split}
\end{equation*}
where Proposition \ref{prop of trace weighted} is used in the last inequality and $C$ is a positive constant depending only on $n$ and $\sigma$.
We fix $\lda_2$ small such that
\[
C\left(\int_{B_{\lda_2}} U^{\frac{2n}{n-2\sigma}}\right)^{\frac{2\sigma}{n}}<1/2.
\]
Then $\nabla(U_{\lda}-U)^+=0$ in $\B_{\lda_2}^+\backslash \B_{\lda}^+$. Since $(U_{\lda}-U)^+=0$ on
 $\partial ''(\B^+_{\lda_2}\backslash \B^+_{\lda})$, $(U_{\lda}-U)^+=0$ in $\B_{\lda_2}^+\backslash \B_{\lda}^+$.
 We conclude that $U_{\lda}\leq U$ on $(\B^+_{\lda_2}\backslash \B^+_{\lda})$ for $0<\lda<\lda_1:=\lda_1(\lda_2)$.
\medskip

\textit{Step 2.}  We show that there exists $\lda_0\in (0,\lda_1)$ such that $\forall~0<\lda<\lda_0$
\[
U_{\lda}(\xi)\leq U(\xi),~|\xi|>\lda_2,~\xi\in \R^{n+1}_+.
\]
Let $\phi(\xi)=\left(\frac{\lda_2}{|\xi|}\right)^{n-2\sigma}\inf\limits_{\partial'' \B_{\lda_2}} U$,
which satisfies
\[
\begin{cases}
  \mathrm{div}(t^{1-2\sigma}\nabla \phi)=0,&\quad \mbox{in } \R^{n+1}_+\setminus \B_{\lda_2}^+\\
   -\lim_{t\to 0}t^{1-2\sigma}\pa_t \phi(x,t)=0,&\quad x\in \R^n\setminus \overline{B_{\lda_2}},
\end{cases}
\]
 and $\phi(\xi)\leq U(\xi)$ on $\partial'' \B_{\lda_2}$. By the weak maximum principle Lemma \ref{wmp},
\[
U(\xi)\geq \left(\frac{\lda_2}{|\xi|}\right)^{n-2\sigma}\inf\limits_{\partial'' \B_{\lda_2}} U,~\forall~|\xi|>\lda_2,~\xi\in \R^{n+1}_+.
\]
Let $\lda_0=\min(\lda_1, \lda_2(\inf\limits_{\partial'' \B_{\lda_2}} U/\sup\limits_{ \B_{\lda_2}} U)^{\frac{1}{n-2\sigma}})$.
Then for any $0<\lda<\lda_0,~|\xi|\geq \lda_2$, we have
\[
U_{\lda}(\xi)\leq (\frac{\lda}{|\xi|})^{n-2\sigma}U(\frac{\lda^2\xi}{|\xi|^2})
\leq (\frac{\lda_0}{|\xi|})^{n-2\sigma}\sup\limits_{\B_{\lda_2}}U\leq (\frac{\lda_2}{|\xi|})^{n-2\sigma}
\inf\limits_{\partial ''\B_{\lda_2}}U\leq U(\xi).
\]
Lemma \ref{get started} is proved.
\end{proof}

With Lemma \ref{get started}, we can define for all $x\in R^n$,
\[
\bar\lda(x)=\sup\{\mu>0: U_{\X,\lda}\leq U \text{ in }\R^{n+1}_+\backslash \B_{\lda}^+,~\forall~ 0<\lda<\mu\}.
\]
By Lemma \ref{get started}, $\bar\lda(x)\geq \lda_0(x).$

\begin{lem}\label{equal}
If $\bar\lda(x)<\infty$ for some $x\in\R^n$, then
\[
U_{\X,\bar\lda(x)}\equiv U.
\]
\end{lem}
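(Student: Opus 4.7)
We argue by contradiction. Suppose $U_{\X,\bar\lda}\le U$ in $\R^{n+1}_+\setminus\overline{\B^+_{\bar\lda}(\X)}$ but $U_{\X,\bar\lda}\not\equiv U$. After translating we may take $\X=0$, and we write $U_\lda:=U_{0,\lda}$, $\bar\lda:=\bar\lda(0)$. The plan is to exhibit some $\lda>\bar\lda$ for which $U_\lda\le U$ still holds in $\R^{n+1}_+\setminus\overline{\B^+_\lda}$, contradicting the supremum definition of $\bar\lda$.

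\textbf{Strict positivity and decay lower bound for $W_0:=U-U_{\bar\lda}$.} By the mean value theorem applied to $s\mapsto s^{(n+2\sigma)/(n-2\sigma)}$, $W_0\ge 0$ is a weak solution of
\[
\mathrm{div}(t^{1-2\sigma}\nabla W_0)=0\ \text{in}\ \R^{n+1}_+\setminus\overline{\B^+_{\bar\lda}},\quad
-\lim_{t\to 0^+}t^{1-2\sigma}\pa_t W_0=c_0(x)W_0\ \text{on}\ \R^n\setminus\overline{B_{\bar\lda}},
\]
with $c_0\ge 0$ and $c_0\in L^\infty_{loc}$. The Harnack inequality of Proposition~\ref{lem harnack} applied on connected compact subdomains forces $W_0>0$ strictly in $\R^{n+1}_+\setminus\overline{\B^+_{\bar\lda}}$, including on the boundary trace $\{t=0\}\setminus\overline{B_{\bar\lda}}$. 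Set $R_0:=2\bar\lda$ and $c_1:=\inf\{W_0(\xi):|\xi|=R_0,\,\xi\in\overline{\R^{n+1}_+}\}>0$. Conformal covariance of the extension equation makes the explicit barrier $\tilde V(\xi):=c_1(R_0/|\xi|)^{n-2\sigma}$, which is the Kelvin transform of the constant $c_1$, a $\sigma$-harmonic function in $\R^{n+1}_+\setminus\{0\}$ satisfying $-\lim_{t\to 0^+}t^{1-2\sigma}\pa_t\tilde V=0$. On the truncations $\Omega_M:=\{R_0<|\xi|<M\}\cap\R^{n+1}_+$, the difference $W_0-\tilde V$ is a weak supersolution of the homogeneous extension equation that is nonnegative on $\{|\xi|=R_0\}$ by the choice of $c_1$ and at least $-c_1(R_0/M)^{n-2\sigma}$ on $\{|\xi|=M\}$; applying Lemma~\ref{wmp} to $W_0-\tilde V+c_1(R_0/M)^{n-2\sigma}$ in $\Omega_M$ and letting $M\to\infty$ yields the far-field lower bound
\[
W_0(\xi)\ge c_1\bigl(R_0/|\xi|\bigr)^{n-2\sigma}\quad\forall\ |\xi|\ge R_0.
\]

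\textbf{Perturbation and conclusion.} Fix small $\delta>0$ and, for $\epsilon>0$ small, set $\lda=\bar\lda+\epsilon$. Decompose $\R^{n+1}_+\setminus\overline{\B^+_\lda}=S_\delta\cup A_\delta\cup E$ with
\[
S_\delta:=\{\lda\le|\xi|\le\bar\lda+\delta\},\quad A_\delta:=\{\bar\lda+\delta\le|\xi|\le R_0\},\quad E:=\{|\xi|\ge R_0\},
\]
each intersected with $\overline{\R^{n+1}_+}$. On the compact set $A_\delta$, joint continuity of $(\lda,\xi)\mapsto U_\lda(\xi)$ together with $W_0\ge c(\delta,R_0)>0$ on $A_\delta$ yields $U>U_\lda$ once $\epsilon$ is small. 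On $S_\delta$, I would rerun the narrow-domain energy argument of Step~1 in the proof of Lemma~\ref{get started} verbatim on the thin shell, testing against $(U_\lda-U)^+$ in the analogue of \eqref{diff}; the Sobolev-trace and H\"older absorption constant scales like $\big(\int_{B_{\bar\lda+\delta}\setminus B_{\bar\lda}}U^{2n/(n-2\sigma)}\big)^{2\sigma/n}$, which is $<1/2$ once $\delta$ is small, forcing $(U_\lda-U)^+\equiv 0$ on $S_\delta$. On $E$, differentiating the Kelvin formula \eqref{kelvin} in $\lda$ gives $|U_\lda-U_{\bar\lda}|(\xi)\le C\epsilon|\xi|^{-(n-2\sigma)}$ uniformly for $|\xi|\ge R_0$ and $\lda\in(\bar\lda,3\bar\lda/2)$, because $\lda^2\xi/|\xi|^2$ stays in the fixed compact ball $\overline{B_{2\bar\lda}}$ where $U\in C^1$; combined with the lower bound on $W_0$ this produces
\[
(U-U_\lda)(\xi)\ge\bigl(c_1 R_0^{n-2\sigma}-C\epsilon\bigr)|\xi|^{-(n-2\sigma)}>0,\quad |\xi|\ge R_0,
\]
for $\epsilon$ small. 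Assembling the three estimates gives $U_\lda\le U$ throughout $\R^{n+1}_+\setminus\overline{\B^+_\lda}$ for every $\lda\in(\bar\lda,\bar\lda+\epsilon_0)$, contradicting the maximality in the definition of $\bar\lda$.

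\textbf{Main obstacle.} The delicate step is the far-field lower bound on $W_0$: compactness/continuity arguments break down as $|\xi|\to\infty$, and knowing only $W_0\ge 0$ is not enough to dominate the Kelvin-rate perturbation $U_\lda-U_{\bar\lda}\sim\epsilon|\xi|^{-(n-2\sigma)}$. What makes the proof close is that the exponent $n-2\sigma$ of this perturbation is exactly the exponent of the $\sigma$-harmonic Kelvin barrier $c_1(R_0/|\xi|)^{n-2\sigma}$; hence an unbounded-domain variant of the weak maximum principle (Lemma~\ref{wmp} via annular exhaustion) converts strict positivity at $|\xi|=R_0$ into the required asymptotic decay lower bound.
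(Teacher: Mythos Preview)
Your proof is correct but takes a genuinely different route from the paper's. The paper argues \emph{inside} the ball $\B^+_{\bar\lda}$, where $U_{\bar\lda}\ge U$, and the three regions it must control are a small punctured neighborhood of the origin, a compact annulus, and a thin shell near $\pa''\B^+_{\bar\lda}$. The singular behavior at the origin is handled by invoking Proposition~\ref{liminf}, which shows $\liminf_{\xi\to 0}(U_{\bar\lda}-U)>0$; the compact annulus and narrow shell are treated exactly as you treat $A_\delta$ and $S_\delta$. You instead work \emph{outside} the ball, where $W_0=U-U_{\bar\lda}\ge 0$, and replace the origin-singularity step by an explicit Kelvin barrier comparison at infinity: $W_0(\xi)\ge c_1(R_0/|\xi|)^{n-2\sigma}$, obtained from Lemma~\ref{wmp} on exhausting annuli. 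The two arguments are Kelvin-dual to one another: under inversion about $\pa\B_{\bar\lda}$, your far-field region $E$ becomes the paper's punctured neighborhood of $0$, and your barrier comparison is precisely the computation hidden inside the proof of Proposition~\ref{liminf}. The paper's version is marginally shorter because Proposition~\ref{liminf} has already been isolated as a lemma; your version has the virtue of making the matching of decay rates (the perturbation $U_\lda-U_{\bar\lda}$ and the positivity gap $W_0$ both of order $|\xi|^{-(n-2\sigma)}$) completely explicit, which is exactly the mechanism you identify as the main obstacle.
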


\begin{proof}
Without loss of generality we assume that $x=0$ and write $U_{\lda}=U_{0,\lda}$ and $\bar\lda=\bar\lda(0)$. By the definition of $\bar\lda$,
\[
U_{\bar\lda}\geq U \mbox{ in } \overline{\B_{\bar\lda}^+}\backslash\{0\},
\]
and therefore, for all $0<\va<\bar\lda$,
\begin{equation}\label{eq:diff-super}
\begin{cases}
\mathrm{div}(t^{1-2\sigma}\nabla (U_{\lda}-U))&=0, \quad \text{in}\quad \B_{\lda}^+\backslash \overline{\B_{\va}^+};\\
-\lim\limits_{t\to 0}t^{1-2\sigma}\partial_t (U_{\lda}-U)&\geq 0
\quad \text{on}\quad \partial '(\B_{\lda}^+\backslash \overline{\B_{\va}^+}).
\end{cases}
\end{equation}
We argue by contradiction. If $U_{\bar\lda}$ is not identically equal to $U$,
applying the Harnack inequality Proposition \ref{lem harnack} to \eqref{eq:diff-super}, we have
\[
U_{\bar\lda}>U \text{ in }\overline{\B_{\bar\lda}}\backslash\{\{0\}\cup\partial '' \B_{\bar\lda}\},
\]
and in view of Proposition \ref{liminf},
\[
\liminf\limits_{\xi\to 0}(U_{\bar\lda}(\xi)-U(\xi))>0.
\]
So there exist $\va_1>0$ and $c>0$ such that $U_{\bar\lda}(\xi)>U(0)+c, ~\forall ~0<|\xi|<\va_1$. Choose $\va_2$ small such that
\[
\left(\frac{\bar\lda}{\bar\lda+\va_2}\right)^{n-2\sigma}(U(0)+c)>U(0)+\frac{c}{2}.
\]
Thus for all $0<|\xi|<\va_1$ and $\bar\lda<\lda<\bar\lda+\va_2$,
\[
U_{\lda}(\xi)=\left(\frac{\bar\lda}{\lda}\right)^{n-2\sigma}U_{\bar\lda}\left(\frac{\bar\lda^2 \xi}{\lda^2}\right)\geq
\left(\frac{\bar\lda}{\bar\lda+\va_2}\right)^{n-2\sigma}(U(0)+c)\geq U(0)+c/2.
\]
Choose $\va_3$ small such that for all $0<|\xi|<\va_3$, $U(0)>U(\xi)-c/4$. Hence for all $0<|\xi|<\va_3$ and $\bar\lda<\lda<\bar\lda+\va_2$,
\[
U_{\lda}(\xi)>U(\xi)+c/4.
\]
For $\delta$ small, which will be fixed later, denote $K_{\delta}=\{\xi\in\R^{n+1}_+: \va_3\leq|\xi|\leq\bar\lda-\delta\}$.
Then there exists $c_2=c_2(\delta)$ such that
\[
U_{\bar\lda}(X)-U(X)>c_2  \ \text{ in }\  K_{\delta}.
\]
By the uniform continuous of $U$ on compact sets, there exists $\va_4\leq\va_2$ such that for all $\bar\lda<\lda<\bar\lda+\va_4$
\[
U_{\lda}-U_{\bar\lda}>-c_2/2 \ \text{ in }\  K_{\delta}.
\]
Hence
\[
U_{\lda}-U>c_2/2 \ \text{ in } \ K_{\delta}.
\]
Now let us focus on the region $\{\xi\in\R^{n+1}_+: \bar\lda-\delta\leq|\xi|\leq\lda\}$.
Using the narrow domain technique as that in Lemma \ref{get started}, we can choose $\delta$
small (notice that we can choose $\va_4$ as small as we want) such that
\[
U_{\lda}\geq U \ \text{ in } \ \{\xi\in\R^{n+1}_+: \bar\lda-\delta\leq|\xi|\leq\lda\}.
\]
In conclusion there exists $\va_4$ such that for all $\bar\lda<\lda<\bar\lda+\va_4$
\[
U_{\lda}\geq U \ \text{ in }\ \{\xi\in\R^{n+1}_+: 0<|\xi|\leq\lda\}
\]
which contradicts with the definition of $\bar\lda$.
\end{proof}

\begin{proof}[Proof of Theorem \ref{thm3.1}] It follows from the same arguments in \cite{lz}, with the help of Lemma \ref{equal}, that:\\
\noindent (i)  Either $\bar\lda(x)=\infty$  for all $x\in\R^n$  or $\bar\lda(x)<\infty$  for all $x\in\R^n$; (Lemma 2.3 in \cite{lz})\\
\noindent (ii) If for all $x\in\R^n$ , $\bar\lda(x)=\infty$ then $U(x,t)=U(0,t)$, $~\forall~(x,t)\in\R^{n+1}_+$; (Lemma 11.3 in \cite{lz})\\
\noindent (iii) If  $\bar\lda(x)<\infty$ for all $x\in\R^n$, then by Lemma 11.1 in \cite{lz}
\begin{equation}\label{eq:u-bubble}
u(x):=U(x,0)=a\left(\frac{\lda}{1+\lda^2|x-x_0|^2}\right)^{\frac{n-2\sigma}{2}}
\end{equation}
where $\lda>0$, $a>0$ and $x_0\in \R^n$.

We claim that (ii) never happens, since this would imply, using \eqref{eq}, that
\[
U(x,t)=U(0)-U(0)^{\frac{n+2\sigma}{n-2\sigma}}\frac{t^{2\sigma}}{2\sigma}
\]
which contradicts to the positivity of $U$. Then (iii) holds.

We are only left to show that $V:=U-\mathcal P_{\sigma}[u]\equiv 0$ where $u(x)$ is given in \eqref{eq:u-bubble} and belongs to $\dot H^{\sigma}(\R^n)$. Hence, $V$ satisfies
\[
\begin{cases}
\mathrm{div}(t^{1-2\sigma}\nabla V)&=0,\quad\mbox{in }\R^{n+1}_+\\
\hspace{1.7cm}V&=0\quad\mbox{on }\partial\R^{n+1}_+.
\end{cases}
\]
By Lemma \ref{equal}, we know that $V_{\bar\lda}$ can be extended to a smooth function near $0$. Multiplying the above equation by $V$ and integrating by parts, it leads to
$
\int_{\R^{n+1}_+}t^{1-2\sigma}|\nabla V|^2=0.
$
Hence we have $V\equiv 0$.

Finally $a=\big(N_{\sigma}c_{n,\sigma}2^{2\sigma}\big)^{\frac{n-2\sigma}{4\sigma}}$ follows from \eqref{eq:equ of projection} with $\phi=1$ and \eqref{connection of fractional and extension}.

\end{proof}

\section{Local analysis near isolated blow up points}
\label{Local analysis near isolated blow up points}

The analysis in this and next section adapts the blow up analysis developed in \cite{SZ} and \cite{Li95} to give accurate blow up profiles for solutions of degenerate elliptic equations. For $\sigma=\frac 12$, similar results have been proved in \cite{HL99} and \cite{EG}, where equations are elliptic.

Let $\om\subset \mathbb{R}^n$ $(n\geq 2)$ be a domain, $\tau_i\geq 0$ satisfy
$\lim_{i\rightarrow \infty}\tau_i=0$, $p_i=(n+2\sigma)/(n-2\sigma)-\tau_i$, and $K_i\in C^{1,1}(\om)$ satisfy, for
some constants $A_1,A_2>0$, that
 \be\label{4.1}
\begin{split}
  1/A_1\leq K_i(x)&\leq A_1 \quad \mbox{for all } x\in \om,\\
  \|K_i\|_{C^{1,1}(\om)}&\leq A_2 .
\end{split}
 \ee
Let $u_i\geq 0$ in $\R^n$ and $u_i\in L^\infty(\om)\cap \dot{H}^\sigma(\R^n)$ satisfying
\be\label{4.2}
(-\Delta)^{\sigma}u_i=c(n,\sigma)K_i(x)u_i^{p_i}, \quad \mbox{in } \om.
\ee

We say that $\{u_i\}$ blows up if $\|u_i\|_{L^\infty(\om)}\to \infty$ as $i\to \infty$.

\begin{defn}\label{def4.1}
Suppose that $\{K_i\}$ satisfies (\ref{4.1}) and $\{u_i\}$ satisfies (\ref{4.2}).
We say a point $\overline y\in \om$ is an isolated blow up point of $\{u_i\}$ if there exist
$0<\overline r<\mbox{dist}(\overline y,\om)$, $\overline C>0$, and a sequence $y_i$ tending to $\overline y$, such that,
$y_i$ is a local maximum of $u_i$, $u_i(y_i)\rightarrow \infty$ and
\[
u_i(y)\leq \overline C |\overline y-y_i|^{-2\sigma/(p_i-1)} \quad \mbox{for all } y\in B_{\overline r}(y_i).
\]
\end{defn}

Let $y_i\rightarrow \overline y$ be an isolated blow up of $u_i$, define
\be\label{def:average}
\overline u_i(r)=\frac{1}{|\pa B_r|} \int_{\pa B_r(y_i)}u_i,\quad r>0,
\ee
and
\[
\overline w_j(r)=r^{2\sigma/(p_i-1)}\overline u_i(r), \quad r>0.
\]

\begin{defn}\label{def4.2}
We say $y_i \to \overline y\in \om$ is an isolated simple blow up point, if $y_i \to \overline y$ is an isolated blow up point, such that, for some
$\rho>0$ (independent of $i$) $\overline w_i$ has precisely one critical point in $(0,\rho)$ for large $i$.
\end{defn}

In this section, we are mainly concerned with the
profile of blow up of $\{u_i\}$. And under certain conditions, we can show that
isolated blow up points have to be isolated simple blow up points.

Let $u_i\in C^2(\om)\cap \D$ and $u_i\geq 0$ in $\R^n$ satisfy \eqref{4.2} with $K_i$ satisfying \eqref{4.1}.
Without loss of generality, we assume throughout this section that $B_2\subset \om$ and
$y_i\to 0$ as $i\to \infty$ is an isolated blow up point of
$\{u_i\}$ in $\om$. Let $U_i=\mathcal P_{\sigma}[u_i]$ be the extension of $u_i$ (see \eqref{poisson involution}). Then we have
\be\label{4.4}
\begin{cases}
\mathrm{div}(t^{1-2\sigma}\nabla U_i)=0,&\quad \mbox{in } \R^{n+1}_+,\\
-\dlim_{t\rightarrow 0}t^{1-2\sigma}\frac{\pa U_i(x,t)}{\pa t}=c_0 K_i(x)U_i(x,0)^{p_i},&\quad \mbox{for any } x\in \om,
\end{cases}
\ee
where $c_0=N_{\sigma}c(n,\sigma)$ with $N_\sigma=2^{1-2\sigma}\Gamma(1-\sigma)/\Gamma(\sigma)$.

\begin{lem}\label{lem4.1} Suppose that $u_i\in C^2(\om)\cap\D$
 and $u_i\geq 0$ in $\R^n$ satisfies (\ref{4.2}) with $\{K_i\}$ satisfying (\ref{4.1}),
and $y_i\rightarrow 0$
is an isolated blow up point of $\{u_i\}$, i.e., for some positive constants $A_3$ and $\bar r$ independent of $i$,
\be\label{4.7}
|y-y_i|^{2\sigma/(p_i-1)}u_i(y)\leq A_3,\quad \mbox{for all } y\in B_{\bar r}\subset\om.
\ee Denote $U_i=\mathcal P_{\sigma}[u_i]$, and  $Y_i=(y_i,0)$.
Then for any $0<r<\frac13 \overline r$, we have the following Harnack inequality
\[
\sup_{\mathcal{B}^+_{2r}(Y_i)\setminus\overline{\mathcal{B}^+_{r/2}(Y_i)}} U_i\leq C \inf_{\mathcal{B}^+_{2r}(Y_i)\setminus\overline{\mathcal{B}^+_{r/2}(Y_i)}} U_i,
\]
where $C$ is a positive constant depending only on $n, \sigma, A_3, \bar r$ and $\dsup_i\|K_i\|_{L^\infty(B_{\overline r}(y_i))}$.
\end{lem}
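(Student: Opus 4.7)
The plan is to reduce the claim to the local Harnack inequality of Proposition \ref{lem harnack} by a standard rescaling argument around $Y_i$, and then chain finitely many Harnack inequalities to cover the spherical annulus.

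First, for $0 < r < \bar r/3$, introduce the rescaling
\[
V_i(X) := r^{2\sigma/(p_i-1)}\, U_i(Y_i + rX), \qquad X = (x,t) \in \overline{\R^{n+1}_+}.
\]
A direct change of variables shows that $V_i$ again satisfies the extension equation
\[
\begin{cases}
\mathrm{div}(t^{1-2\sigma}\nabla V_i) = 0 & \text{in } \mathcal{B}_{\bar r/r}^+,\\
-\lim_{t\to 0} t^{1-2\sigma}\partial_t V_i(x,t) = c_0 \tilde K_i(x)\, V_i(x,0)^{p_i} & \text{on } \partial'\mathcal{B}_{\bar r/r}^+,
\end{cases}
\]
with $\tilde K_i(x) := K_i(y_i + rx)$, which is uniformly bounded by $\sup_i\|K_i\|_{L^\infty(B_{\bar r}(y_i))}$. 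The scaling factor $r^{2\sigma/(p_i-1)}$ is chosen so that the critical nonlinearity is preserved.

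Next, translate the isolated blow up bound \eqref{4.7}: for $|x| \le \bar r / r$,
\[
V_i(x,0) = r^{2\sigma/(p_i-1)} u_i(y_i + rx) \le A_3\, |x|^{-2\sigma/(p_i-1)}.
\]
Since $2\sigma/(p_i-1) \le (n-2\sigma)/2 + o(1)$ is uniformly bounded, this yields a uniform $L^\infty$ bound for $V_i(\cdot,0)$ on the flat annulus $\{1/4 \le |x| \le 3\}$. Consequently, the linear potential $a_i(x) := c_0 \tilde K_i(x) V_i(x,0)^{p_i-1}$ is uniformly bounded in $L^\infty(\{1/4 \le |x| \le 3\})$, so that $V_i$ is a nonnegative weak solution in that neighborhood of the equation
\[
\mathrm{div}(t^{1-2\sigma}\nabla V_i) = 0, \qquad -\lim_{t\to 0} t^{1-2\sigma}\partial_t V_i = a_i(x) V_i(\cdot,0),
\]
with $\|a_i\|_{L^p}$ controlled independently of $i$.

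Now cover $\overline{\mathcal{B}_2^+ \setminus \mathcal{B}_{1/2}^+}$ by a finite collection (of cardinality depending only on $n$) of half-cylinders $Q_k$ of fixed radius such that each $Q_k$ is either contained in $\R^{n+1}_+$ (strictly interior) or centered on $\partial \R^{n+1}_+$, and with each $Q_k$ contained in the enlarged annulus $\mathcal{B}_{5/2}^+ \setminus \mathcal{B}_{1/4}^+$. On interior $Q_k$ apply the classical De Giorgi--Nash--Moser Harnack inequality for the $A_2$-weighted uniformly elliptic operator $\mathrm{div}(t^{1-2\sigma}\nabla \cdot)$ (as in \cite{FKS}); on boundary $Q_k$ apply Proposition \ref{lem harnack}(iii), whose constants depend only on $n,\sigma$ and $\|a_i\|_{L^p}$, hence are uniform in $i$. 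Chaining these finitely many inequalities through the connected annulus gives
\[
\sup_{\mathcal{B}_2^+ \setminus \mathcal{B}_{1/2}^+} V_i \le C \inf_{\mathcal{B}_2^+ \setminus \mathcal{B}_{1/2}^+} V_i,
\]
with $C$ depending only on $n, \sigma, A_3, \bar r$, and $\sup_i \|K_i\|_{L^\infty(B_{\bar r}(y_i))}$. Scaling back to $U_i$ via the definition of $V_i$ gives the desired Harnack inequality on $\mathcal{B}_{2r}^+(Y_i) \setminus \overline{\mathcal{B}_{r/2}^+(Y_i)}$.

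The only point requiring care is that the constant in the Harnack inequality of Proposition \ref{lem harnack}(iii) is uniform in $i$; this is precisely why one rescales first so that the sole $i$-dependence in the rescaled problem is confined to the potential $a_i$, whose $L^p$ norm on the annulus is controlled uniformly thanks to \eqref{4.7}.
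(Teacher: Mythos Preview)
Your proof is correct and follows essentially the same route as the paper: rescale by $r^{2\sigma/(p_i-1)}$ around $Y_i$, use the isolated blow up bound \eqref{4.7} to control $V_i(\cdot,0)$ (hence the linearized potential) uniformly on a fixed annulus, and then apply Proposition~\ref{lem harnack} together with the interior Harnack inequality for the $A_2$-weighted operator. The paper states this last step more tersely (``applying Proposition~\ref{lem harnack} and the standard Harnack inequality for uniform elliptic equation together''), while you spell out the covering and chaining explicitly, but the argument is the same.
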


\begin{proof}
For $0<r<\frac{\bar r}{3}$, set
\[
V_i(Y)=r^{2\sigma/(p_i-1)}U_i(Y_i+rY),\quad \mbox{in } Y\in \mathcal{B}_3^+.
\]
It is easy to see that
\[
\mathrm{div}(s^{1-2\sigma}\nabla V_i)=0,\quad \mbox{in } \mathcal{B}^+_3,
\]
and
\[
-\dlim_{s\rightarrow 0} s^{1-2\sigma} \pa_s V_i(y,s)=c_0K(y_i+ry)V_i(y,0)^{p_i},\quad \mbox{on } \pa'\mathcal{B}^+_3.
\]
Since $y_i\to 0$ is an isolated blow up point of $u_i$,
\[
V_i(y,0)\leq A_3 |y|^{-2\sigma/(p_i-1)},\quad \mbox{for all } y\in B_3.
\]
Lemma \ref{lem4.1} follows after applying Proposition \ref{lem harnack} and the standard Harnack inequality for uniform elliptic equation together to $V_i$ in the domain
$Q_2\setminus \overline Q_{1/2}$.
\end{proof}

\begin{prop}\label{prop4.1}
Suppose  that $u_i\in C^2(\om)\cap\D$ and $u_i\geq 0$ in $\R^n$
satisfies (\ref{4.2}) with $K_i\in C^{1,1}(\om)$ satisfying (\ref{4.1}).
Suppose also that $y_i\to 0$ be an isolated blow up point of $\{u_i\}$ with \eqref{4.7}.
Then for any $R_i\rightarrow \infty$, $\va_i\rightarrow 0^+$, we have,
after passing to a subsequence (still denoted as $\{u_i\}$,
$\{y_i\}$, etc. ...), that
\[
\|m_i^{-1}u_i(m_i^{-(p_i-1)/2\sigma}\cdot+y_i)-(1+k_i|\cdot|^2)^{(2\sigma-n)/2}\|_{C^2(B_{2R_i}(0))}\leq \va_i,
\]
\[
R_im_i^{-(p_i-1)/2\sigma}\rightarrow 0\quad \mbox{as}\quad i\rightarrow \infty,
\]
where $m_i=u_i(y_i)$ and $k_i=K_i(y_i)^{1/\sigma}/4$.
\end{prop}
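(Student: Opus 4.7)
The strategy is the standard blow-up rescaling argument adapted to the fractional setting. Let $m_i=u_i(y_i)\to\infty$ and $\mu_i:=m_i^{-(p_i-1)/(2\sigma)}$; define the rescaled solutions
\[
v_i(x):=m_i^{-1}u_i(y_i+\mu_i x),\qquad \widetilde K_i(x):=K_i(y_i+\mu_i x).
\]
A direct computation (using the conformal scaling of $(-\Delta)^\sigma$) shows that $v_i\in \D$ is nonnegative, $v_i(0)=1$ is a local maximum, and
\[
(-\Delta)^\sigma v_i=c(n,\sigma)\widetilde K_i(x)\,v_i^{p_i}\qquad\text{in }B_{\bar r/\mu_i},
\]
with $\bar r/\mu_i\to\infty$. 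The isolated blow-up bound \eqref{4.7} translates into $|x|^{2\sigma/(p_i-1)}v_i(x)\le A_3$ on the same ball.

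Next I would establish uniform bounds on compact subsets of $\R^n$ for $\{v_i\}$. From $v_i(0)=1$ and the decay bound above, $v_i$ is bounded on every fixed annulus $\{1\le|x|\le R\}$. Applying Lemma~\ref{lem4.1} to the extensions $V_i=\mathcal P_\sigma[v_i]$ and chaining Harnack across finitely many overlapping annular regions transfers this bound to all of $\mathcal B_R^+$ for each fixed $R>0$, with a constant independent of $i$. With this $L^\infty$ bound in hand, Corollary~\ref{thm2.3} provides uniform $C^\alpha$ bounds for $V_i$ on $\overline{\mathcal B_R^+}$, and then Theorem~\ref{thm2.2} and Proposition~\ref{lem2.2} upgrade these to uniform $C^2_{\mathrm{loc}}(\R^n)$ estimates for $v_i$.

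Extracting a diagonal subsequence, $v_i\to v$ in $C^2_{\mathrm{loc}}(\R^n)$ and $V_i\to V$ in $C^\alpha_{\mathrm{loc}}(\overline{\R^{n+1}_+})$, with $v\ge 0$, $v(0)=1$, $\nabla v(0)=0$, and $V\in H_{\mathrm{loc}}(t^{1-2\sigma},\overline{\R^{n+1}_+})$. Since $p_i\to(n+2\sigma)/(n-2\sigma)$ and $\widetilde K_i(x)\to K_\ast:=\lim_i K_i(y_i)\in[1/A_1,A_1]$ uniformly on compacts, $V$ is a nonzero weak solution of
\[
\begin{cases}
\mathrm{div}(t^{1-2\sigma}\nabla V)=0&\text{in }\R^{n+1}_+,\\
-\lim_{t\to 0}t^{1-2\sigma}\partial_t V=N_\sigma c(n,\sigma)K_\ast\, V^{(n+2\sigma)/(n-2\sigma)}&\text{on }\R^n.
\end{cases}
\]
Absorbing the constant $N_\sigma c(n,\sigma)K_\ast$ by the substitution $\widetilde V=(N_\sigma c(n,\sigma)K_\ast)^{(n-2\sigma)/(4\sigma)}V$ brings the equation into the form \eqref{eq}, and Theorem~\ref{thm3.1} forces $V(x,0)$ to be a standard bubble. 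Using $v(0)=1$ and $\nabla v(0)=0$ to pin down the center and scale, a direct computation with the normalization constant in Theorem~\ref{thm3.1} gives
\[
v(x)=(1+k_\ast|x|^2)^{(2\sigma-n)/2},\qquad k_\ast=K_\ast^{1/\sigma}/4.
\]

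Finally, since $k_i=K_i(y_i)^{1/\sigma}/4\to k_\ast$, combining the $C^2_{\mathrm{loc}}$ convergence with a standard diagonal argument yields the estimate on $B_{2R_i}$ for suitably chosen $R_i\to\infty$ and $\varepsilon_i\to 0^+$; the constraint $R_i\mu_i\to 0$ is satisfied automatically because $\mu_i\to 0$ at a polynomial rate in $m_i$ so one may let $R_i\to\infty$ arbitrarily slowly. I expect the main technical obstacle to be Step~2 (the passage from the $v_i(0)=1$ ceiling on an annulus to a uniform bound near the origin): it must be carried out through the extension $V_i$ via Lemma~\ref{lem4.1}, because $(-\Delta)^\sigma$ is nonlocal and a direct Harnack chaining on $v_i$ alone is not available.
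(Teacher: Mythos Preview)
Your outline is essentially the paper's argument, and everything except the uniform $L^\infty$ bound near the origin (your Step~2) is correct. However, the mechanism you describe for Step~2 does not work as written, and the fix is exactly what you suspected was the obstacle.

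The issue is this: ``chaining Harnack across finitely many overlapping annular regions'' cannot propagate an upper bound from the annulus $\{1\le|x|\le R\}$ inward to a neighborhood of the origin. Lemma~\ref{lem4.1} gives $\sup\le C\inf$ on each annulus $\mathcal B_{2r}^+\setminus\overline{\mathcal B_{r/2}^+}$, but knowing $V_i$ is bounded above somewhere on an outer annulus tells you nothing about $\inf$ there, and hence nothing about $\sup$ on the next annulus in. Iterating toward $0$ gives no control. The information $v_i(0)=1$ must enter in a different way than through Harnack.

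The paper's argument (and what you need) is a single application of the weak maximum principle followed by a single Harnack step, not a chain. Since $K_i\ge 0$ and $V_i\ge 0$, the extension $V_i$ is a weak \emph{supersolution} of the homogeneous Neumann problem on each half-ball $\mathcal B_r^+$, $0<r<1$. By Lemma~\ref{wmp} the minimum of $V_i$ over $\overline{\mathcal B_r^+}$ is attained on $\partial''\mathcal B_r^+$, so
\[
\min_{\partial''\mathcal B_r^+}V_i\le V_i(0,0)=v_i(0)=1.
\]
Now a single application of Lemma~\ref{lem4.1} on the annulus of scale $r$ gives $\max_{\partial''\mathcal B_r^+}V_i\le C\min_{\partial''\mathcal B_r^+}V_i\le C$, with $C$ independent of $r$ and $i$. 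Since this holds for every $0<r<1$, you get $V_i\le C$ on $\overline{\mathcal B_1^+}$, hence $v_i\le C$ on $\overline{B_1}$. For $|x|\ge 1$ the isolated blow-up bound $|x|^{2\sigma/(p_i-1)}v_i(x)\le A_3$ already gives $v_i\le A_3$, and the rest of your proof goes through.
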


\begin{proof}
Let
\[
\phi_i(x)=m_i^{-1}u_i(m_i^{-(p_i-1)/2\sigma}x+y_i),\quad \mbox{for }x\in\mathbb{R}^n.
\]
It follows that
\[
(-\Delta)^\sigma \phi_i(x)=c(n,\sigma)K_i(m_i^{-(p_i-1)/2\sigma}x+y_i)\phi_i^{p_i},
\]
\be\label{eq:prop4.1-123}
0<\phi_i\leq A_3 |x|^{-2\sigma/(p_i-1)},\quad |x|<\overline r m_i^{(p_i-1)/2\sigma},
\ee
and
\[
\phi_i(0)=1,\quad \nabla \phi_i(0)=0.
\]

Let $\Phi_i=\mathcal P_{\sigma}[\phi_i]$ be the extension of $\phi_i$ (see \eqref{poisson involution}). Then $\Phi$ satisfies
\[
\begin{cases}
\mathrm{div}(t^{1-2\sigma}\nabla \Phi_i(x,t))&=0,\quad |x,t|<\bar rm_i^{\frac{p_i-1}{2\sigma}},\\
-\lim\limits_{t\to 0}t^{1-2\sigma}\partial_t\Phi_i(x,t)&=N_{\sigma}c(n,\sigma)K_i(m_i^{-\frac{p_i-1}{2\sigma}}x+y_i)\Phi_i(x,0)^{p_i},\quad |x|<\bar rm_i^{\frac{p_i-1}{2\sigma}}.
\end{cases}
\]
 By the weak maximum principle we have, for any
$0<r<1$, $1=\phi_i(0)=\Phi_i(0,0)\geq \dmin_{\pa''\mathcal{B}_r}\Phi_i$.
It follows from Lemma \ref{lem4.1} that
\[
\dmax_{\pa B_r}\phi_i\leq\dmax_{\pa'' \B_r}\Phi_i\leq C\min_{\pa'' \B_r}\Phi_i\leq C.
\]
Namely,
\[
\dmax_{\overline{B_1}}\phi_i\leq C
\]
for some $C>0$ depending on $n,\sigma, A_1, A_2, A_3$. This and \eqref{eq:prop4.1-123} implies that for any $R>1$
\[
\dmax_{\overline{B_R}}\phi_i\leq C(R)
\]
for some $C(R)>0$ depending on $n,\sigma, A_1, A_2, A_3$ and $R$.
 Then by Corollary \ref{thm2.3} there exists some $\al\in (0,1)$ such that for every $R>1$,
\[
\|\Phi_i\|_{H(t^{1-2\sigma},Q_R)}+\|\Phi_i\|_{C^{\al}(\overline{Q_R})}\leq C_1(R),
\]
where $\al$ and $C_1(R)$ are independent of $i$.
Bootstrap using Theorem \ref{thm2.2}, we have, for every $0<\beta<2$ with $2\sigma+\beta\not\in \mathbb{N}$,
\[
\|\phi_i\|_{C^{2\sigma+\beta}(\overline{B_R})}\leq C_2(R,\beta)
\]
where $C_2(R,\beta)$ is independent of $i$.
Thus, after passing to a subsequence, we have, for some nonnegative functions
$\Phi(X)\in H_{loc}(t^{1-2\sigma},\overline{\mathbb{R}^{n+1}})\cap C^{\al}_{loc}(\overline{\mathbb{R}^{n+1}})$ and $\phi\in C^2(\R^n)$,
\[
\begin{cases}
\Phi_i&\rightharpoonup\Phi\quad\mbox{weakly in }H_{loc}(t^{1-2\sigma},\R^{n+1}_+),\\
\Phi_i&\rightarrow\Phi\quad\mbox{in }C^{\al/2}_{loc}(\overline{\R^{n+1}_+}),\\
\phi_i&\rightarrow\phi\quad\mbox{in }C^2_{loc}(\R^n).
\end{cases}
\]
It follows that
\[
\Phi(\cdot,0)\equiv\phi,\quad \phi(0)=1, \quad \nabla \phi(0)=0,
\]
and $\Phi$ satisfies
\[
\begin{cases}
\mathrm{div}(t^{1-2\sigma}\nabla \Phi)=0\quad &\mbox{in } \mathbb{R}^{n+1},\\
-\dlim_{t\rightarrow 0} t^{1-2\sigma}\pa_t \Phi(x,t)=c_0K\Phi(x,0)^{(n+2\sigma)/(n-2\sigma)}\quad &\mbox{on } \pa'\mathbb{R}^{n+1},
\end{cases}
\]
with $K=\lim\limits_{i\to\infty}K_i(y_i)$.
By Theorem \ref{thm3.1}, we have
\[
\phi(x)=(1+\lim_{i\rightarrow \infty}k_i|x|^2)^{(2\sigma-n)/2},
\]
where $k_i=K_i(y_i)^{1/\sigma}/4$. Proposition \ref{prop4.1} follows immediately.
\end{proof}

Note that since passing to subsequences does not affect our proofs, we will always choose $R_i\to\infty$ first, and then $\va_i\to 0^+$ as small as we wish (depending on $R_i$) and then choose our subsequence $\{u_i\}$ to work with.

\begin{prop}\label{prop4.2} Under the hypotheses of Proposition \ref{prop4.1}, there exists some positive constant $C=C(n,\sigma, A_1,A_2,A_3)$ such that,
\[
u_i(y)\geq C^{-1}m_i(1+k_im_i^{(p_i-1)/\sigma}|y-y_i|^2)^{(2\sigma-n)/2}, \quad |y-y_i|\leq 1.
\]
In particular, for any $e\in \mathbb{R}^n$, $|e|=1$, we have
\[
u_i(y_i+e)\geq C^{-1}m_i^{-1+((n-2\sigma)/2\sigma)\tau_i}.
\]
where $\tau_i=(n+2\sigma)/(n-2\sigma)-p_i$.
\end{prop}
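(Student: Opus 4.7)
The plan is to combine the bubble profile from Proposition \ref{prop4.1}, which pins down $u_i$ on the inner scale $|y-y_i|\le R_ir_i$ with $r_i:=m_i^{-(p_i-1)/(2\sigma)}$, with a maximum-principle comparison argument that propagates the lower bound out to $|y-y_i|\le 1$. On the inner scale, Proposition \ref{prop4.1} gives matching upper \emph{and} lower bounds by the bubble, since we are free to take $\va_i\ll R_i^{2\sigma-n}$ after choosing $R_i$. Passing to the extension $U_i=\mathcal{P}_\sigma[u_i]$ via the companion convergence of the rescaled extensions $\Phi_i$ to $\bar\Phi:=\mathcal{P}_\sigma\bigl[(1+k_i|\cdot|^2)^{(2\sigma-n)/2}\bigr]$ on compact subsets of $\overline{\R^{n+1}_+}$ (which is part of the proof of Proposition \ref{prop4.1}) and using the asymptotics $\bar\Phi(Y)\sim c_{n,\sigma}|Y|^{2\sigma-n}$ at infinity, I obtain a uniform lower bound of the form
\[
U_i(Y)\ge c_0\, m_i R_i^{2\sigma-n}\qquad \mbox{for every } Y\in \pa''\B^+_{R_ir_i}(Y_i),
\]
with $c_0>0$ depending only on $n,\sigma$ and $A_1$.

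Next I introduce the comparison function
\[
\psi_i(Y):=A_i\bigl(|Y-Y_i|^{2\sigma-n}-1\bigr),\qquad A_i:=\tfrac{c_0}{2}\, m_i\, r_i^{n-2\sigma},
\]
on the annular domain $D:=\B^+_1(Y_i)\setminus \overline{\B^+_{R_ir_i}(Y_i)}$. A direct computation shows that $\psi_i$ is a classical (hence weak) solution of $\mathrm{div}(t^{1-2\sigma}\nabla \psi_i)=0$ in $D$ with $-\lim_{t\to 0}t^{1-2\sigma}\pa_t\psi_i=0$ on $\pa' D$: the extra $t$-factor produced by differentiating $|Y-Y_i|^{2\sigma-n}$ kills the singular weight in the limit. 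Hence $U_i-\psi_i$ is a weak supersolution of \eqref{2.5} in $D$ with $a\equiv b\equiv 0$. On $\pa'' D$ it is non-negative: on $\pa''\B^+_1(Y_i)$ we have $\psi_i=0\le U_i$, and on $\pa''\B^+_{R_ir_i}(Y_i)$ the inequality $|Y-Y_i|^{2\sigma-n}-1\le (R_ir_i)^{2\sigma-n}$ together with the choice of $A_i$ and the previous paragraph gives $\psi_i\le U_i$. Lemma \ref{wmp} yields $U_i\ge \psi_i$ in $D$, whence
\[
u_i(y)\ge A_i\bigl(|y-y_i|^{2\sigma-n}-1\bigr)\qquad \mbox{for all } R_ir_i\le |y-y_i|\le 1.
\]

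Using the identity $r_i^{n-2\sigma}=m_i^{-2+\tau_i(n-2\sigma)/(2\sigma)}$ one finds $A_i=\tfrac{c_0}{2}m_i^{-1+\tau_i(n-2\sigma)/(2\sigma)}$, which is exactly the right-hand-side amplitude in the statement. For $R_ir_i\le |y-y_i|\le 1/2$, the elementary bound $|y-y_i|^{2\sigma-n}-1\ge (1-2^{-(n-2\sigma)})|y-y_i|^{2\sigma-n}$ together with $m_i(1+k_im_i^{(p_i-1)/\sigma}|y-y_i|^2)^{(2\sigma-n)/2}\le C\, m_i^{-1+\tau_i(n-2\sigma)/(2\sigma)}|y-y_i|^{2\sigma-n}$ on that range delivers the stated lower bound. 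The inner range $|y-y_i|\le R_ir_i$ is an immediate consequence of Proposition \ref{prop4.1}, while the outer range $|y-y_i|\in [1/2,1]$ follows by applying Lemma \ref{lem4.1} on the shell $|Y-Y_i|\in[1/4,1]$ to transport the bound already obtained at $|y-y_i|=1/2$. Taking $|y-y_i|=1$ yields the ``in particular'' inequality.

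The main obstacle I anticipate is the passage from the $C^2$-convergence of $u_i$ on $\R^n$ in Proposition \ref{prop4.1} to the pointwise bound for the rescaled extension $\Phi_i$ on the hemisphere of radius $R_i$, uniformly as $R_i\to\infty$; this has to be carefully combined with the asymptotic $\bar\Phi(Y)\sim c_{n,\sigma}|Y|^{2\sigma-n}$ so that $U_i\ge c_0 m_i R_i^{2\sigma-n}$ holds with $c_0$ independent of $i$. Once that step is in place, the remainder is a clean application of the weak maximum principle on an annulus, with the amplitude $A_i\asymp m_i r_i^{n-2\sigma}$ designed so that $\psi_i$ captures precisely the $|y-y_i|^{2\sigma-n}$ decay of the bubble tail at the critical scale $r_i$.
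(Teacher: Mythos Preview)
Your argument is correct and follows essentially the same route as the paper: a lower bound for $U_i$ on the inner hemisphere $\pa''\B^+_{R_ir_i}(Y_i)$, a comparison with a function of the form $A(|Y-Y_i|^{2\sigma-n}-\mathrm{const})$ on an annulus, and the weak maximum principle. The only substantive difference is how you obtain the inner-boundary bound $U_i(Y)\ge c_0\,m_iR_i^{2\sigma-n}$ on $\pa''\B^+_{R_ir_i}(Y_i)$, and this is precisely the step you flag as the ``main obstacle''.

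You propose to get it from the $C^{\al}_{loc}$-convergence of the rescaled extensions $\Phi_i\to\bar\Phi$ together with the asymptotics $\bar\Phi(Y)\sim c|Y|^{2\sigma-n}$. This is delicate because the convergence is on compact sets while you need the bound at $|Y|=R_i\to\infty$; making it uniform in $i$ requires an extra argument. The paper avoids this entirely: from Proposition \ref{prop4.1} one already has $u_i(y)\ge C^{-1}m_iR_i^{2\sigma-n}$ on the sphere $|y-y_i|=R_ir_i$ (your notation), and then a single application of the Harnack inequality Lemma \ref{lem4.1} on the annulus $\B^+_{2R_ir_i}(Y_i)\setminus\overline{\B^+_{R_ir_i/2}(Y_i)}$ immediately transfers this to $U_i(Y)\ge C^{-1}m_iR_i^{2\sigma-n}$ on the full hemisphere $\pa''\B^+_{R_ir_i}(Y_i)$, with a constant depending only on $n,\sigma,A_1,A_3,\bar r$. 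This removes your obstacle in one line. The rest of your proof (comparison function, Lemma \ref{wmp}, and the Harnack step on $[1/2,1]$) matches the paper's, up to cosmetic choices of constants and outer radius.
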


\begin{proof}
Denote $r_i=R_i m_i^{-(p_i-1)/2\sigma}$.  It follows from Proposition \ref{prop4.1} that $r_i\to 0$ and
\[
u_i(y)\geq C^{-1}m_iR_i^{2\sigma-n}, \quad \mbox{for all } |y-y_i|=r_i.
\]
By the Harnack inequality Lemma \ref{lem4.1}, we have
\[
U_i(Y)\geq C^{-1}m_iR_i^{2\sigma-n}, \quad \mbox{for all } |Y-Y_i|=r_i,
\]
where $U_i=\mathcal P_{\sigma}[u_i]$ is the extension of $u_i$, $Y=(y,s)$ with $s\geq 0$, and $Y_i=(y_i,0)$.

Set
\[
\Psi_i(Y)=C^{-1}R_i^{2\sigma-n}r_i^{n-2\sigma}m_i(|Y-Y_i|^{2\sigma-n}-(\frac{3}{2})^{2\sigma-n}),\quad r_i\leq |Y-Y_i|\leq \frac{3}{2}.
\]
Clearly, $\Psi_i$ satisfies
\[
\mathrm{div}(s^{1-2\sigma}\nabla\Psi_i )=0=\mathrm{div}(s^{1-2\sigma}\nabla U_i), \quad r_i\leq |Y-Y_i|\leq \frac{3}{2},
\]
\[
\Psi_i(Y)\leq U_i(Y),\quad \mbox{on } \pa''\mathcal{B}_{r_i}\cup \pa''\mathcal{B}_{3/2},
\]
\[
-\dlim_{s\rightarrow 0^+}s^{1-2\sigma}\pa_s\Psi_i(y,s)
=0\leq -\dlim_{s\rightarrow 0^+}s^{1-2\sigma}\pa_sU_i(y,s),\quad \quad r_i\leq |y-y_i|\leq \frac{3}{2}.
\]
By the weak maximum principle Lemma \ref{wmp} applied to $U_i-\Psi_i$, we have
\[
U_i(Y)\geq \Psi_i(Y)\quad \mbox{for all } r_i\leq |Y-Y_i|\leq \frac32.
\]
Therefore, Proposition \ref{prop4.2} follows immediately from Proposition \ref{prop4.1}.
\end{proof}

\begin{lem}\label{lem4.3} Under the hypotheses of Proposition \ref{prop4.1},
and in addition that $y_i\to 0$ is also an isolated simple blow up point with the constant $\rho$, there exist $\delta_i>0$, $\delta_i=O(R_i^{-2\sigma+o(1)})$,
such that
\[
u_i(y)\leq C_1 u_i(y_i)^{-\lda_i}|y-y_i|^{2\sigma-n+\delta_i},\quad \mbox{for all }r_i\leq |y-y_i|\leq 1,
\]
where $\lda_i=(n-2\sigma-\delta_i)(p_i-1)/2\sigma-1$ and $C_1$ is some positive constant depending only on $n,\sigma, A_1,A_3$ and $\rho$.
\end{lem}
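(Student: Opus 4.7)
The plan is to prove the decay estimate via maximum-principle comparison against a supersolution barrier in the half-shell $\mathcal{A}=\mathcal{B}^+_1(Y_i)\setminus\overline{\mathcal{B}^+_{r_i}(Y_i)}$, working throughout with the extension $U_i=\mathcal{P}_\sigma[u_i]$ so that the equation becomes the degenerate-elliptic one \eqref{4.4}.

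First I would quantify the smallness of the nonlinear potential $c_0K_iu_i^{p_i-1}$ in the shell. By Proposition \ref{prop4.1}, the critical point of $\bar w_i$ sits near $r\sim m_i^{-(p_i-1)/2\sigma}\ll r_i$, so the isolated simple hypothesis forces $\bar w_i$ to be monotone decreasing on $[r_i,\rho]$. A direct computation at $r=r_i$ using the bubble profile yields $\bar w_i(r_i)\le CR_i^{-(n-2\sigma)/2+o(1)}$, and combining with the Harnack inequality Lemma \ref{lem4.1} upgrades this to the pointwise estimate
\[
c_0K_i(y)u_i(y)^{p_i-1}|y-y_i|^{2\sigma}\le CR_i^{-2\sigma+o(1)},\qquad r_i\le|y-y_i|\le\rho.
\]

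Second, I would choose $\delta_i=C_0R_i^{-2\sigma+o(1)}$ with a large constant $C_0$ to be fixed, and take as barrier $\Phi(X):=C_1u_i(y_i)^{-\lambda_i}H(X-Y_i)$, where $H=\mathcal{P}_\sigma[|\cdot|^{2\sigma-n+\delta_i}]$. From the classical formula $(-\Delta)^\sigma|y|^{-(n-2\sigma-\delta_i)}=\gamma_{\delta_i}|y|^{-(n-\delta_i)}$, with $\gamma_{\delta_i}/\delta_i$ tending to a positive constant as $\delta_i\to 0^+$ (since $1/\Gamma(\delta_i/2)\sim\delta_i/2$), together with \eqref{connection of fractional and extension}, one has $\mathrm{div}(t^{1-2\sigma}\nabla H)=0$ in $\R^{n+1}_+$ and $-\lim_{t\to 0^+}t^{1-2\sigma}\partial_tH=N_\sigma\gamma_{\delta_i}|y|^{-(n-\delta_i)}$. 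The supersolution condition $-\lim_{t\to 0^+}t^{1-2\sigma}\partial_t\Phi\ge c_0K_iu_i^{p_i-1}\Phi$ on $\partial'\mathcal{A}$ then reduces to $N_\sigma\gamma_{\delta_i}\ge c_0K_iu_i^{p_i-1}|y-y_i|^{2\sigma}$, which is precisely the estimate from the first step, holding for $C_0$ large.

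Third, I would verify boundary comparison and invoke Lemma \ref{wmp}. On the inner hemisphere $\{|X-Y_i|=r_i\}$, Proposition \ref{prop4.1} and Lemma \ref{lem4.1} give $U_i\le Cm_iR_i^{2\sigma-n}$, while by homogeneity $\Phi\gtrsim cC_1u_i(y_i)^{-\lambda_i}r_i^{2\sigma-n+\delta_i}=cC_1R_i^{\delta_i}m_iR_i^{2\sigma-n}$; taking $C_1$ large forces $\Phi\ge U_i$ there. Applying the weak maximum principle of Lemma \ref{wmp} to $W:=\Phi-U_i$ in $\mathcal{A}$: a strict interior minimum is ruled out by $\mathrm{div}(t^{1-2\sigma}\nabla W)=0$, and a negative minimum on $\partial'\mathcal{A}$ is ruled out by a Hopf-type computation where the supersolution inequality is compared with the first-step bound, giving a contradiction when $C_0$ exceeds the first-step constant. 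Once $\Phi\ge U_i$ in $\mathcal{A}$, restriction to $\partial'\mathcal{A}$ gives $u_i(y)\le C_1u_i(y_i)^{-\lambda_i}|y-y_i|^{2\sigma-n+\delta_i}$ for $r_i\le|y-y_i|\le 1$.

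The hard part will be the outer spherical boundary $\partial''\mathcal{B}^+_1(Y_i)$: the monotonicity of $\bar w_i$ alone only yields $U_i\le CR_i^{-(n-2\sigma)/2+o(1)}$, which is not automatically dominated by $\Phi\sim C_1u_i(y_i)^{-\lambda_i}$ on that sphere. Following the $\sigma=1$ arguments of \cite{SZ} and \cite{Li95}, I would handle this either by running the barrier argument over a dyadic sequence of shells and iterating (each step sharpening the effective outer bound), or by augmenting $\Phi$ with a small secondary supersolution term that dominates $U_i$ on the outer sphere while contributing negligibly to the supersolution inequality on $\partial'\mathcal{A}$ (so as not to spoil the form of the final pointwise bound).
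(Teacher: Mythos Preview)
Your skeleton is the paper's: quantify $c_0K_iu_i^{p_i-1}|y-y_i|^{2\sigma}\le CR_i^{-2\sigma+o(1)}$ on the shell via monotonicity of $\bar w_i$ and Harnack, then compare against a barrier of order $|Y-Y_i|^{2\sigma-n+\delta_i}$. Two points, however, are not yet a proof.

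\textbf{The outer boundary.} You correctly flag this as the obstacle, and your ``secondary term'' option is exactly what the paper does---but the crucial closing step is missing from your description. The paper takes
\[
\Phi_i = M_i\rho^{\delta_i}\bigl(|Y-Y_i|^{-\delta_i}-\va_i s^{2\sigma}|Y-Y_i|^{-\delta_i-2\sigma}\bigr) + 2Au_i(y_i)^{-\lda_i}\bigl(|Y-Y_i|^{2\sigma-n+\delta_i}-\va_i s^{2\sigma}|Y-Y_i|^{-n+\delta_i}\bigr),
\]
with $M_i=2\max_{\pa''\B^+_\rho}U_i$, so that $\Phi_i\ge U_i$ on $\pa''\B^+_\rho$ holds \emph{by definition}. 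The resulting inequality $U_i\le\Phi_i$ then contains the unknown $M_i$, and this is not harmless: the claimed bound is $u_i\le C_1u_i(y_i)^{-\lda_i}|y-y_i|^{2\sigma-n+\delta_i}$ with no $M_i$. The paper eliminates $M_i$ by evaluating $U_i\le\Phi_i$ (averaged) at a fixed small radius $\theta\in(r_i,\rho)$ and invoking the isolated-simple monotonicity $\rho^{2\sigma/(p_i-1)}\bar u_i(\rho)\le C\theta^{2\sigma/(p_i-1)}\bar u_i(\theta)$; choosing $\theta$ so that $C\theta^{2\sigma/(p_i-1)}\rho^{\delta_i}\theta^{-\delta_i}\le\tfrac12\rho^{2\sigma/(p_i-1)}$ absorbs the $M_i$-term and yields $M_i\le Cu_i(y_i)^{-\lda_i}$. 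Without this absorption step your argument does not close; neither a dyadic iteration nor a vague ``small secondary term'' substitutes for it.

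\textbf{The maximum principle.} Lemma~\ref{wmp} is stated for $a\equiv b\equiv 0$, so it does not apply to $W=\Phi-U_i$, which satisfies $-\lim_{t\to0}t^{1-2\sigma}\pa_tW\ge aW$ with $a=c_0K_iu_i^{p_i-1}$. Your Hopf-type sketch is not a proof either. The paper invokes the maximum principles of Appendix~\ref{section of two maximum principle}; the first of those is tailored to exactly this situation (it allows $|a(x)|\le\va|x|^{-2\sigma}$, which is precisely the smallness you established in your first step).

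A minor difference: the paper uses the explicit test functions $|Y|^{-\mu}-\va s^{2\sigma}|Y|^{-\mu-2\sigma}$ rather than Poisson extensions. These make both $\mathfrak{L}(\cdot)\le 0$ in the interior and the boundary operator inequality directly computable, avoiding any issue about whether $|\cdot|^{2\sigma-n+\delta_i}\in\dot H^\sigma$ and whether \eqref{connection of fractional and extension} is justified for it.
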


\begin{proof}
From Proposition \ref{prop4.1}, we see that
\be\label{4.8}
u_i(y)\leq C u_i(y_i)R_i^{2\sigma-n} \quad \mbox{for all } |y-y_i|=r_i.
\ee
Let $\overline u_i(r)$ be the average of $u_i$ over the sphere of radius $r$ centered at $y_i$.
It follows from the assumption of isolated simple
blow up and Proposition \ref{prop4.1} that
\be\label{4.9}
r^{2\sigma/(p_i-1)}\overline u_i(r) \quad \mbox{is strictly decreasing for $r_i<r<\rho$}.
\ee
By Lemma \ref{lem4.1},  \eqref{4.9} and \eqref{4.8},  we have, for all $r_i<|y-y_i|<\rho$,
\[
\begin{split}
|y-y_i|^{2\sigma/(p_i-1)}u_i(y)&\leq C|y-y_i|^{2\sigma/(p_i-1)}\overline u_i(|y-y_i|)\\&
\leq r_i^{2\sigma/(p_i-1)}\overline u_i(r_i)\\&
\leq CR_i^{\frac{2\sigma-n}{2}+o(1)},
\end{split}
\]
where $o(1)$ denotes some quantity tending to $0$ as $i\to \infty$.
Applying Lemma \ref{lem4.1} again, we obtain
\be\label{4.10}
U_i(Y)^{p_i-1}\leq O(R_i^{-2\sigma+o(1)})|Y-Y_i|^{-2\sigma}\quad \mbox{for all }  r_i\leq |Y-Y_i|\leq \rho.
\ee
Consider operators
\[
\begin{cases}\mathfrak{L}(\Phi)=\mathrm{div}(s^{1-2\sigma}\nabla \Phi(Y)), \quad &\mbox{in } \mathcal{B}^+_2,\\
L_i(\Phi)=-\dlim_{s\rightarrow 0^+}s^{1-2\sigma}\pa_s \Phi(y,s)-c_0K_iu_i^{p_i-1}(y)\Phi(y,0),\quad &\mbox{on } \pa'\mathcal{B}^+_2.
\end{cases}
\]
Clearly, $U_i>0$ satisfies $\mathfrak{L}(U_i)=0$ in $\mathcal{B}^+_2$ and $L_i(U_i)=0$ on $\pa'\mathcal{B}^+_2$.

For $0\leq \mu\leq n-2\sigma$, a direct computation yields
\[
\begin{split}
&\mathfrak{L}(|Y-Y_i|^{-\mu}-\va s^{2\sigma}|Y-Y_i|^{-(\mu+2\sigma)})\\&
=s^{1-2\sigma}|Y-Y_i|^{-(\mu+2)}\Big\{-\mu(n-2\sigma-\mu)+\frac{\va(\mu+2\sigma)(n-\mu)s^{2\sigma}}{|Y-Y_i|^{2\sigma}}\Big\}
\end{split}
\]
and
\[
L_i(|Y-Y_i|^{-\mu}-\va s^{2\sigma}|Y-Y_i|^{-(\mu+2\sigma)})=|Y-Y_i|^{-(u+2\sigma)}(2\va\sigma-c_0K_iu_i^{p_i-1}|Y-Y_i|^{2\sigma}).
\]

It follows from \eqref{4.10} that we can choose $\va_i=O(R_i^{-2\sigma +o(1)}) >0$, and then choose $\delta_i=O(R_i^{-2\sigma +o(1)})>0$
such that for $r_i<|y-y_i|<\rho$,
\[
\begin{split}
&L_i(|Y-Y_i|^{-\delta_i}-\va_i s^{2\sigma}|Y-Y_i|^{-(\delta_i+2\sigma)})\geq 0,\\
&L_i(|Y-Y_i|^{2\sigma-n+\delta_i}-\va_i s^{2\sigma}|Y-Y_i|^{-n+\delta_i})\geq 0
\end{split}
\]
and for $r_i<|Y-Y_i|<\rho$,
\[
\begin{split}
&\mathfrak{L}(|Y-Y_i|^{-\delta_i}-\va_i s^{2\sigma}|Y-Y_i|^{-(\delta_i+2\sigma)})\leq 0,\\
&\mathfrak{L}(|Y-Y_i|^{2\sigma-n+\delta_i}-\va_i s^{2\sigma}|Y-Y_i|^{-n+\delta_i})\leq 0.
\end{split}
\]

Set $M_i=2\max_{\pa'' \mathcal{B}^+_\rho} U_i$, $\lda_i=(n-2\sigma-\delta_i)(p_i-1)/2\sigma-1$ and
\[
\begin{split}
\Phi_i=&M_i\rho^{\delta_i}(|Y-Y_i|^{-\delta_i}-\va_i s^{2\sigma}|Y-Y_i|^{-(\delta_i+2\sigma)})\\
&+2Au_i(y_i)^{-\lda_i}(|Y-Y_i|^{2\sigma-n+\delta_i}-\va_i s^{2\sigma}|Y-Y_i|^{-n+\delta_i}),
\end{split}
\]
where $A>1$ will be chosen later.
By the choice of $M_i$ and  $\lda_i$, we immediately have
\[
\Phi_i(Y)\geq M_i\geq U_i(Y) \quad \mbox{for all } |Y-Y_i|=\rho.
\]
\[
\Phi_i\geq AU_i(Y_i)R_i^{2\sigma-n+\delta_i}\geq AU_i(Y_i)R_i^{2\sigma-n}\quad \mbox{for all }|Y-Y_i|=r_i.
\]
Due to (\ref{4.10}), we can choose $A$ to be sufficiently large such that
\[
\Phi_i\geq U_i\quad \mbox{for all }|Y-Y_i|=r_i.
\]
Therefore, applying maximum principles in section \ref{section of two maximum principle}  to $\Phi_i-U_i$ in $\B_{\rho}\backslash \overline{\B_{r_i}}$, it yields
\[
U_i\leq \Phi_i\quad \mbox{for all }r_i\leq |Y-Y_i|\leq \rho.
\]
For $r_i<\theta<\rho$, the same arguments as that in (\ref{4.10}) yield
\[
\begin{split}
\rho^{2\sigma/(p_i-1)}M_i&\leq C \rho^{2\sigma/(p_i-1)}\overline u_i(\rho)\\
&\leq C\theta^{2\sigma/(p_i-1)}\overline u_i(\theta)\\
&\leq C\theta^{2\sigma/(p_i-1)}\{M_i\rho^{\delta_i}\theta^{-\delta_i}+Au_i(y_i)^{-\lda_i}\theta^{2\sigma-n+\delta_i}\}.
\end{split}
\]
Choose $\theta=\theta(n,\sigma,\rho,A_1,A_2, A_3)$ sufficiently small so that
\[
C\theta^{2\sigma/(p_i-1)}\rho^{\delta_i}\theta^{-\delta_i}\leq \frac12 \rho^{2\sigma/(p_i-1)}.
\]
It follows that
\[
M_i\leq Cu_i(y_i)^{-\lda_i}.
\]
Then Lemma \ref{lem4.3} follows from the above and the Harnack inequality. 
\end{proof}

Below we are going to improve the estimate in Lemma \ref{lem4.3}. First, we prove a Pohozaev type identity.

\begin{prop}\label{thm3.3}
Suppose that $K\in C^{1}(B_{2R})$. Let $U\in H(t^{1-2\sigma},\B_{2R}^+)$ and $U\geq 0$ in $\B_{2R}^+$ be a weak solution of
\be\label{poh}
\begin{cases}
\mathrm{div}(t^{1-2\sigma}\nabla U)=0,&\quad \mbox{in }\mathcal{B}_{2R}^+\\
-\dlim_{t\rightarrow 0}t^{1-2\sigma}\pa_t U(x,t)=K(x)U^p(x,0),&\quad \mbox{on }\pa'\B_{2R}^+,
\end{cases}
\ee
where $p>0$. Then
\be\label{eq:poh-id}
\int_{\pa' \mathcal{B}^+_R} B'(X,U,\nabla U,R,\sigma)+\int_{\pa'' \mathcal{B}^+_R}t^{1-2\sigma} B''(X,U,\nabla U,R,\sigma)=0,
\ee
where
\[
B'(X,U,\nabla U,R,\sigma)=\frac{n-2\sigma}{2}KU^{p+1}+\langle X,\nabla U\rangle  KU^p
\]
and
\[
B''(X,U,\nabla U,R,\sigma)=\frac{n-2\sigma}{2}U\frac{\pa U}{\pa \nu}-\frac{R}{2}|\nabla U|^2+R|\frac{\pa U}{\pa \nu}|^2.
\]
\end{prop}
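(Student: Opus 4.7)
The plan is the standard Pohozaev strategy adapted to the degenerate weight $t^{1-2\sigma}$: multiply $\mathrm{div}(t^{1-2\sigma}\nabla U)=0$ by the test function
\[
\Phi(X)=\langle X,\nabla U(X)\rangle+\tfrac{n-2\sigma}{2}U(X),
\]
integrate over $\mathcal B_R^+$, and integrate by parts to produce the two boundary integrals appearing in \eqref{eq:poh-id}. Formally, since $U$ solves the equation, the left-hand side is zero, and the identity $0=\int_{\partial\mathcal B_R^+}t^{1-2\sigma}(\nabla U\cdot\nu)\Phi\,dS-\int_{\mathcal B_R^+}t^{1-2\sigma}\nabla U\cdot\nabla\Phi\,dX$ will yield the claim after the algebraic manipulations below.

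For the gradient part $\nabla U\cdot\nabla\Phi$, I would use the pointwise identity
\[
\nabla U\cdot\nabla\langle X,\nabla U\rangle=|\nabla U|^2+\tfrac{1}{2}\langle X,\nabla|\nabla U|^2\rangle,
\]
and then integrate by parts once more on the last term, exploiting
\[
\mathrm{div}(t^{1-2\sigma}X)=(n+2-2\sigma)t^{1-2\sigma}.
\]
This collapses the volume contribution coming from $\langle X,\nabla U\rangle$ to $-\tfrac{n-2\sigma}{2}\int t^{1-2\sigma}|\nabla U|^2$ plus a boundary term $\tfrac{R}{2}\int_{\partial''\mathcal B_R^+}t^{1-2\sigma}|\nabla U|^2$ (since $\langle X,\nu\rangle=R$ on $\partial''\mathcal B_R^+$ and $\langle X,\nu\rangle=-t$ vanishes on $\partial'\mathcal B_R^+$). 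The contribution from the $\tfrac{n-2\sigma}{2}U$ piece of $\Phi$ produces exactly $\tfrac{n-2\sigma}{2}\int t^{1-2\sigma}|\nabla U|^2$ on the volume side, which cancels the volume remainder, and $\tfrac{n-2\sigma}{2}\int_{\partial''\mathcal B_R^+}t^{1-2\sigma}U\partial_\nu U+\tfrac{n-2\sigma}{2}\int_{\partial'\mathcal B_R^+}KU^{p+1}$ on the boundary side. On $\partial''\mathcal B_R^+$ the term $\langle X,\nabla U\rangle=R\partial_\nu U$, giving the $R|\partial_\nu U|^2$ summand. On $\partial'\mathcal B_R^+$, replacing $-\lim_{t\to0}t^{1-2\sigma}\partial_tU$ by $KU^p$ via the boundary condition produces $\int_{\partial'\mathcal B_R^+}KU^p\langle x,\nabla_xU\rangle$. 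Collecting all terms gives \eqref{eq:poh-id}.

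The main obstacle is the rigorous justification of these integrations by parts, since $\partial_tU$ need not be bounded as $t\to 0^+$; only the combination $t^{1-2\sigma}\partial_tU$ is (by Proposition \ref{thm2.4-1} and interior elliptic regularity). My remedy is to work on the truncated region $\mathcal B_R^+\cap\{t>\varepsilon\}$, where the operator is uniformly elliptic and $U$ is smooth, carry out the above computation there with no difficulty, and then let $\varepsilon\to 0^+$. On the extra flat face $\{t=\varepsilon\}\cap\{|X|<R\}$, the outward normal is $-e_{n+1}$ and each new boundary term carries a factor $\varepsilon^{1-2\sigma}$ or $\varepsilon^{2-2\sigma}$. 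Using the Hölder bound on $t^{1-2\sigma}\partial_tU$, one has $|\partial_t U|\lesssim t^{2\sigma-1}$ near $t=0$, so that $\varepsilon^{2-2\sigma}\int|\nabla U|^2\,dx\lesssim\varepsilon^{2\sigma}+\varepsilon^{2-2\sigma}\to 0$ and the stray term $\varepsilon^{2-2\sigma}\int|\partial_tU|^2\,dx\lesssim\varepsilon^{2\sigma}\to 0$. Meanwhile $\varepsilon^{1-2\sigma}(-\partial_tU)\to KU^p$ uniformly on $\{|x|\le R\}$ (by the boundary condition and Proposition \ref{thm2.4-1}), so the flat face integrals involving $\Phi$ converge to the desired $\partial'\mathcal B_R^+$ integrals. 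Passing to the limit and combining everything gives \eqref{eq:poh-id}. (The $\partial''\mathcal B_R^+$ integrals present no analogous difficulty since $t$ only vanishes on the lower-dimensional equator there, which has zero weighted measure.)
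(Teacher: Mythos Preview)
Your proposal is correct and follows essentially the same approach as the paper: truncate at $\{t>\varepsilon\}$, carry out the Rellich--Pohozaev integration by parts on the uniformly elliptic region, and pass to the limit using the regularity of $\nabla_xU$ and $t^{1-2\sigma}\partial_tU$ from Proposition~\ref{thm2.4-1}. The only cosmetic difference is that the paper multiplies separately by $\langle X,\nabla U\rangle$ and by $U$ (obtaining \eqref{eq:ph1} and \eqref{eq:ph2}) and then combines, whereas you multiply directly by the combined test function $\Phi=\langle X,\nabla U\rangle+\tfrac{n-2\sigma}{2}U$; the algebra and the limiting argument are identical.
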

\begin{proof} Let $\Omega_{\va}=\B_R^+\cap\{t>\va\}$ for small $\va>0$. Multiplying \eqref{poh} by $\langle X,\nabla U\rangle$ and integrating by parts in $\Omega_{\va}$, we have, with notations $\pa'\om_{\va}=\mbox{interior of }\overline{\om_{\va}}\cap\{t=\va\}$, $\pa''\om_{\va}=\pa\om_{\va}\setminus\pa'\om_{\va}$ and $\nu=$ unit outer normal of $\pa\om_{\va}$,
\be\label{eq:ph1}
\begin{split}
&-\int_{\pa' \Omega_{\va}}t^{1-2\sigma}\pa_t U\langle X,\nabla U\rangle  + \int_{\pa'' \Omega_{\va}}t^{1-2\sigma}R|\frac{\pa U}{\pa \nu}|^2\\
&=\quad \int_{\Omega_{\va}} t^{1-2\sigma}|\nabla U|^2+\frac{1}{2}\int_{\Omega_{\va}} t^{1-2\sigma}X\cdot \nabla(|\nabla U|^2)\\
&=\quad -\frac{n-2\sigma}{2}\int_{\Omega_{\va}} t^{1-2\sigma}|\nabla U|^2+\frac{1}{2}\int_{\pa'' \Omega_{\va}}t^{1-2\sigma}R|\nabla U|^2\\
&\quad\quad -\frac{1}{2}\int_{\pa' \Omega_{\va}}t^{2-2\sigma}|\nabla U|^2.
\end{split}
\ee
Multiplying \eqref{poh} by $U$ and integrating by parts in $\Omega_{\va}$, we have
\be\label{eq:ph2}
\begin{split}
\int_{\Omega_{\va}} t^{1-2\sigma}|\nabla U|^2=-\int_{\pa' \Omega_{\va}} t^{1-2\sigma}U \pa_t U  + \int_{\pa'' \Omega_{\va}}t^{1-2\sigma}\frac{\pa U}{\pa \nu}U.
\end{split}
\ee
By Corollary \ref{thm2.3} and Proposition \ref{thm2.4-1}, there exists some $\al\in (0,1)$ such that $U$, $\nabla_x U$, and $t^{1-2\sigma}\pa_t U$ belong to $C^{\al}(\overline{\B_r^+})$ for all $r<2R$.
With this we can send $\va\to 0$ as follows. By \eqref{poh},
\[
-t^{1-2\sigma}\pa_t U(x,t)\to K(x)U^p(x,0)\quad \text{uniformly in }\ \B_{3R/2}\text{ as }\ t \to 0.
\]
Hence \eqref{eq:poh-id} follows by sending $\va\to 0$ in \eqref{eq:ph1} and \eqref{eq:ph2}.
\end{proof}

\begin{lem}\label{lem4.4}
Under the assumptions in Lemma \ref{lem4.3}, we have
\[
 \tau_i=O(u_{i}(y_i)^{-2/(n-2\sigma)+o(1)}),
\]
and thus
\[
 u_i(y_i)^{\tau_i}=1+o(1).
\]
\end{lem}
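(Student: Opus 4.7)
The plan is to apply the Pohozaev identity of Proposition \ref{thm3.3} to $U_i:=\mathcal{P}_\sigma[u_i]$ on the half-ball $\mathcal{B}^+_R(Y_i)$ for some fixed small $R\in(0,\bar r)$, with $Y_i=(y_i,0)$, and then squeeze $\tau_i$ between a uniformly positive bulk quantity and error terms decaying at rate $u_i(y_i)^{-2/(n-2\sigma)+o(1)}$.

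Translating Proposition \ref{thm3.3} so that it is centered at $Y_i$ (i.e.\ replacing the vector field $X$ by $X-Y_i$), absorbing the constant $c_0$, and integrating by parts in $x$ the term $\langle x-y_i,\nabla_x u_i\rangle K_i u_i^{p_i}$ on $\pa'\mathcal{B}^+_R(Y_i)$, the identity rearranges into
\[
\tau_i\int_{B_R(y_i)}K_i u_i^{p_i+1}\,\ud x=C\int_{B_R(y_i)}\langle x-y_i,\nabla K_i\rangle u_i^{p_i+1}\,\ud x+CR\int_{\pa B_R(y_i)}K_i u_i^{p_i+1}\,\ud S+C\int_{\pa''\mathcal{B}^+_R(Y_i)}t^{1-2\sigma}B''\,\ud S,
\]
after using the key algebraic identity $\frac{n-2\sigma}{2}-\frac{n}{p_i+1}=-\frac{\tau_i(n-2\sigma)}{2(p_i+1)}$, with $C$ denoting bounded constants. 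The $C^2_{loc}$ convergence from Proposition \ref{prop4.1}, applied to $\phi_i(y):=m_i^{-1}u_i(m_i^{-(p_i-1)/(2\sigma)}y+y_i)$, yields $\int_{B_R(y_i)}K_i u_i^{p_i+1}\ge c>0$ uniformly in $i$, so it suffices to bound each of the three right hand side terms by $O(m_i^{-2/(n-2\sigma)+o(1)})$.

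For the first term, $|\nabla K_i|\le A_2$ reduces matters to estimating $I_i:=\int_{B_R(y_i)}|x-y_i|u_i^{p_i+1}$, which I split at $r_i:=R_i m_i^{-(p_i-1)/(2\sigma)}$. On $\{|x-y_i|\le r_i\}$, the change of variable $y=m_i^{(p_i-1)/(2\sigma)}(x-y_i)$ gives
\[
\int_{|x-y_i|\le r_i}|x-y_i|u_i^{p_i+1}\,\ud x=m_i^{\,p_i+1-(n+1)(p_i-1)/(2\sigma)}\int_{|y|\le R_i}|y|\phi_i^{p_i+1}\,\ud y,
\]
where a direct computation shows the exponent of $m_i$ equals $-2/(n-2\sigma)+o(1)$, and the $y$-integral is bounded in $i$ because $|y|(1+k|y|^2)^{-n}$ is integrable on $\R^n$ for $n\ge 2$. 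On the outer annulus $\{r_i\le|x-y_i|\le R\}$, the decay bound of Lemma \ref{lem4.3} gives an integrand of order $m_i^{-\lambda_i(p_i+1)}|x-y_i|^{1+(2\sigma-n)(p_i+1)+o(1)}$; integrating in polar coordinates, the dominant contribution comes from the lower endpoint and equals $m_i^{-\lambda_i(p_i+1)}r_i^{-(n-1)+o(1)}=m_i^{-2/(n-2\sigma)+o(1)}$, since $\lambda_i(p_i+1)\to 2n/(n-2\sigma)$ and $r_i^{-(n-1)}\approx m_i^{2(n-1)/(n-2\sigma)+o(1)}$. For the second term, Lemma \ref{lem4.3} bounds $u_i$ on $\pa B_R(y_i)$ by $Cm_i^{-\lambda_i}$, giving a contribution $\le m_i^{-\lambda_i(p_i+1)+o(1)}=m_i^{-2n/(n-2\sigma)+o(1)}$. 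For the third term, the Harnack inequality Lemma \ref{lem4.1} extends the bound on $u_i$ to all of $\pa''\mathcal{B}^+_R(Y_i)$, and Schauder-type estimates (Propositions \ref{energy estimates1-1-1} and \ref{lem harnack} applied on a shell of definite relative width, where $u_i^{p_i-1}$ is small) yield $|\nabla U_i|\le Cm_i^{-\lambda_i}R^{2\sigma-n-1+o(1)}$ on the hemisphere, whence $\int_{\pa''\mathcal{B}^+_R(Y_i)}t^{1-2\sigma}|B''|\,\ud S\lesssim m_i^{-2\lambda_i}R^{2\sigma-n+o(1)}=O(m_i^{-2+o(1)})$.

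Combining these three estimates proves $\tau_i=O(m_i^{-2/(n-2\sigma)+o(1)})$, and the conclusion $u_i(y_i)^{\tau_i}=1+o(1)$ follows immediately from $\tau_i\log m_i\to 0$. The main technical obstacle is the careful bookkeeping of exponents in the outer part of $I_i$: one must verify that $\lambda_i(p_i+1)-(n-1)(p_i-1)/(2\sigma)\to 2/(n-2\sigma)$, so that the inner bubble contribution and the outer tail contribution balance at precisely the asserted rate $m_i^{-2/(n-2\sigma)+o(1)}$.
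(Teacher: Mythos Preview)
Your proposal is correct and follows essentially the same route as the paper: apply the translated Pohozaev identity of Proposition~\ref{thm3.3} at $Y_i$, integrate by parts to isolate the factor $\tau_i$ in front of the bulk integral, bound the bulk integral uniformly from below via the bubble profile, and split the $|x-y_i|$-weighted integral at $r_i$ using Proposition~\ref{prop4.1} inside and Lemma~\ref{lem4.3} outside. Two small remarks: for the lower bound on $\int_{B_R(y_i)}K_i u_i^{p_i+1}$ the paper invokes Proposition~\ref{prop4.2} rather than Proposition~\ref{prop4.1} (either works, since a uniform positive lower bound is all that is needed); and for the gradient control in the $B''$ term on $\pa''\mathcal{B}^+_R(Y_i)$ you should cite Proposition~\ref{thm2.4-1} (or standard interior elliptic estimates, since $t$ is bounded away from $0$ there) rather than Propositions~\ref{energy estimates1-1-1} and~\ref{lem harnack}, which do not by themselves give the pointwise gradient bound $|\nabla U_i|\le C m_i^{-\lambda_i}$.
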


\begin{proof} Since $U_i$ satisfies \eqref{4.4}
and $\mathrm{div}(y-y_i)=n$, we have, using integration by part,
\[
\begin{split}
 &\frac{1}{c_0}\int_{\pa'\mathcal{B}^+_1(Y_i)} B'(Y,U_i,\nabla U_i,1,\sigma)\\&=\frac{n-2\sigma}{2n}\int_{\pa'\mathcal{B}^+_1(Y_i)}
\mathrm{div}(y-y_i)K_iU^{p_i+1}\\&\quad +\frac{1}{p_i+1}\int_{\pa'\mathcal{B}^+_1(Y_i)}\langle y-y_i,\nabla_y U^{p_i+1}_i\rangle K_i\\&
=-\frac{n-2\sigma}{2n}\int_{\pa'\mathcal{B}^+_1(Y_i)}\left[\langle y-y_i,\nabla_y K_i\rangle U_i^{p_i+1}+\langle y-y_i,\nabla_y U^{p_i+1}_i\rangle K_i\right]\\&
\quad +\frac{n-2\sigma}{2n}\int_{\pa B_1(y_i)}K_i U_i^{p_i+1} +\frac{1}{p_i+1}\int_{\pa'\mathcal{B}^+_1(Y_i)}\langle y-y_i,\nabla_y U^{p_i+1}_i\rangle K_i\\&
=\frac{\tau_i(n-2\sigma)^2}{2n(2n-\tau_i(n-2\sigma))}\int_{\pa'\mathcal{B}^+_1(Y_i)}\langle y-y_i,\nabla_y U^{p_i+1}_i\rangle K_i\\&
\quad -\frac{n-2\sigma}{2n}\int_{\pa'\mathcal{B}^+_1(Y_i)}\langle y-y_i,\nabla_y K_i\rangle U_i^{p_i+1}+\frac{n-2\sigma}{2n}\int_{\pa B_1(y_i)}K_i U_i^{p_i+1}
\end{split}
\]
and
\[
 \begin{split}
  &\int_{\pa'\mathcal{B}^+_1(Y_i)}\langle y-y_i,\nabla_y U^{p_i+1}_i\rangle K_i\\&=-n\int_{\pa'\mathcal{B}^+_1(Y_i)}K_iU^{p_i+1}_i
-\int_{\pa'\mathcal{B}^+_1(Y_i)}\langle y-y_i,\nabla_y K_i\rangle U_i^{p_i+1}+\int_{\pa B_1(y_i)}K_i U_i^{p_i+1}.
 \end{split}
\]
Combining the above two, together with Proposition \ref{thm3.3},  we conclude that
\be\label{4.11}
\begin{split}
 \tau_i\int_{\pa'\mathcal{B}^+_1(Y_i)}U_i^{p_i+1}\leq &C(n,\sigma,A_1,A_2)\Big\{\int_{\pa'\mathcal{B}^+_1(Y_i)}|y-y_i|U_i^{p_i+1}
\\&+\int_{\pa B_1(y_i)} U_i^{p_i+1}+ \int_{\pa''\mathcal{B}^+_1(Y_i)}t^{1-2\sigma}|B''(Y,U_i,\nabla U_i,1,\sigma)|\Big\}.
\end{split}
\ee
Since $U_i=u_i$ on $\pa'\B_1(Y_i)=B_1(y_i)\times \{0\}$, it follows from Proposition \ref{prop4.2} that
\be\label{4.12}
\begin{split}
\int_{\pa'\B_1(Y_i)}U_i^{p_i+1}&=\int_{B_1(y_i)}u_i^{p_i+1}\\&
\geq C^{-1}\int_{B_1(y_i)}\frac{m_i^{p_i+1}}{(1+|m_i^{(p_i-1)/2\sigma}(y-y_i)|^2)^{(n-2\sigma)(p_i+1)/2}}\\&
\geq C^{-1} m_i^{\tau_i(n/2\sigma-1)}\int_{B_{m_i^{(p_i-1)/2\sigma}}}\frac{1}{(1+|z|^2)^{(n-2\sigma)(p_i+1)/2}}\\&
\geq C^{-1}m_i^{\tau_i(n/2\sigma-1)},
\end{split}
\ee
where we used change of variables $z=m_i^{(p_i-1)/2\sigma}(y-y_i)$ in the second inequality.

By Proposition \ref{thm2.4-1} and Lemma \ref{lem4.3}, it is easy to see that the last two integral terms of right-handed side of \eqref{4.11} are in $O(m_i^{-2+o(1)})$.
By Proposition \ref{prop4.1}, we have
\be\label{4.13}
\begin{split}
\int_{\pa'\B_{r_i}(Y_i)}|Y-Y_i|U_i^{p_i+1}&=\int_{B_{r_i}(y_i)}|y-y_i|u_i^{p_i+1}\\&
\leq C\int_{B_{r_i}(y_i)}\frac{|y-y_i|m_i^{p_i+1}}{(1+|m_i^{(p_i-1)/2\sigma}(y-y_i)|^2)^{(n-2\sigma)(p_i+1)/2}}\\&
\leq C m_i^{-2/(n-2\sigma)+o(1)}\int_{B_{R_i}}\frac{|z|}{(1+|z|^2)^{n+o(1)}}\\&
\leq C m_i^{-2/(n-2\sigma)+o(1)}.
\end{split}
\ee
By Lemma \ref{lem4.3} and that $R_i\to\infty$, we have
\be\label{4.14}
\begin{split}
\int_{\pa'\B_1(Y_i)\setminus \pa'\B_{r_i}(Y_i)}|Y-Y_i|U_i^{p_i+1}&=\int_{B_1(y_i)\setminus B_{r_i}(y_i)}|y-y_i|u_i^{p_i+1}\\&
\leq m_i^{-\lda_i(p_i+1)}r_i^{n+1+(2\sigma -n+\delta_i)(p_i+1)}\\&
= o(m_i^{-2/(n-2\sigma)+o(1)}).
\end{split}
\ee
Combining \eqref{4.11}, \eqref{4.12}, \eqref{4.13}, \eqref{4.14} and that $\tau_i=o(1)$, we complete the proof.
\end{proof}

\begin{prop}\label{prop4.3} Under the assumptions in Lemma \ref{lem4.3}, we have
\[
u_i(y)\leq Cu_i^{-1}(y_i)|y-y_i|^{2\sigma-n},\quad \mbox{for all } |y-y_i|\leq 1.
\]
\end{prop}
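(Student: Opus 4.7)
The plan is to establish the sharp pointwise bound by a barrier/maximum principle argument in the extension, combining the inner-scale bubble estimate (Proposition \ref{prop4.1}) with an outer-scale decay estimate that sharpens Lemma \ref{lem4.3} via Lemma \ref{lem4.4}. The key observation is that $\psi(Y):=|Y-Y_i|^{2\sigma-n}$ is itself a weak solution of $\mathrm{div}(s^{1-2\sigma}\nabla\psi)=0$ in $\R^{n+1}_+\setminus\{Y_i\}$ with zero weighted Neumann trace away from $y_i$, so it is a natural comparison function scaled to the correct blow-up rate.

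Following the barrier construction in the proof of Lemma \ref{lem4.3}, I would work with
\[
\Phi_i(Y)=Am_i^{-1}\bigl(|Y-Y_i|^{2\sigma-n}-\va_i s^{2\sigma}|Y-Y_i|^{-n}\bigr)
\]
on the annulus $r_i<|Y-Y_i|<\rho$, choosing $\va_i=O(R_i^{-2\sigma+o(1)})$ so that (using \eqref{4.10}) one has $\mathfrak{L}(\Phi_i)\le 0$ in the interior and $L_i(\Phi_i)\ge 0$ on $\partial'$. The boundary comparison splits into two pieces: on $\{|Y-Y_i|=r_i\}$ Proposition \ref{prop4.1} gives $U_i\le Cm_iR_i^{2\sigma-n}$, while a direct computation and Lemma \ref{lem4.4} show $\Phi_i\ge\tfrac{A}{2}(1+o(1))m_iR_i^{2\sigma-n}$ there, so $A$ large (independent of $i$) suffices; on $\{|Y-Y_i|=\rho\}$ Lemma \ref{lem4.3} together with the identity $1-\lambda_i=\tau_i(n-2\sigma)/(2\sigma)+\delta_i(p_i-1)/(2\sigma)$ yields $U_i\le Cm_i^{-1}\rho^{2\sigma-n}(1+o(1))$ after invoking Lemma \ref{lem4.4} to absorb the $\tau_i$-factor. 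The maximum principle in the style of the proof of Lemma \ref{lem4.3} (or Lemma \ref{wmp} combined with the smallness of $\|u_i^{p_i-1}\|_{L^{n/2\sigma}}$ on the annulus from \eqref{4.10}) then gives $U_i\le\Phi_i$ in the entire annulus, which in particular yields $u_i(y)\le Am_i^{-1}|y-y_i|^{2\sigma-n}$ for $r_i\le|y-y_i|\le\rho$.

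For $|y-y_i|\le r_i$ the desired bound is extracted directly from Proposition \ref{prop4.1}: if $|y-y_i|\le m_i^{-2/(n-2\sigma)}$ the right-hand side exceeds $m_i\ge u_i(y)$ trivially, and in the transition range $m_i^{-2/(n-2\sigma)}\le|y-y_i|\le r_i$ the explicit bubble $m_i(1+k_im_i^{(p_i-1)/\sigma}|y-y_i|^2)^{(2\sigma-n)/2}$ together with Lemma \ref{lem4.4} is already of the desired form. For $\rho\le|y-y_i|\le 1$ the Harnack inequality of Lemma \ref{lem4.1} propagates the estimate from $|y-y_i|=\rho$.

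The main obstacle is controlling the factor $m_i^{-\lambda_i}\rho^{\delta_i+2\sigma-n}$ coming from Lemma \ref{lem4.3} at the outer boundary, because Lemma \ref{lem4.3} only gives $\delta_i=O(R_i^{-2\sigma+o(1)})$ and not $\delta_i\log m_i\to 0$ a priori. This is resolved by exploiting the freedom in Proposition \ref{prop4.1} to choose $R_i\to\infty$ at any prescribed rate along the subsequence; selecting $R_i$ to grow sufficiently fast compared with $\log m_i$ (e.g.\ $R_i\ge(\log m_i)^{1/\sigma}$, which still keeps $r_i\to 0$) ensures $m_i^{\delta_i}=1+o(1)$, completing the outer boundary comparison. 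Once this is in place the rest of the argument is a routine maximum principle verification.
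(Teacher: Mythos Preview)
Your barrier argument has a genuine gap at the interior supersolution step. For the proposed comparison function $\Phi_i(Y)=Am_i^{-1}\bigl(|Y-Y_i|^{2\sigma-n}-\va_i s^{2\sigma}|Y-Y_i|^{-n}\bigr)$, plug $\mu=n-2\sigma$ into the computation in the proof of Lemma \ref{lem4.3}: the leading term is exactly $\mathfrak{L}$--harmonic (it is the fundamental solution), so the bracket becomes $\{0+\va_i\cdot 2\sigma n\cdot s^{2\sigma}|Y-Y_i|^{-2\sigma}\}>0$ whenever $\va_i>0$. Hence $\mathfrak{L}(\Phi_i)>0$, i.e.\ $\Phi_i$ is a \emph{subsolution} in the interior, and $\Phi_i-U_i$ is not a supersolution of the linearized problem; no maximum principle applies. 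This is precisely why Lemma \ref{lem4.3} needs the perturbed exponent $2\sigma-n+\delta_i$: the strictly negative term $-(n-2\sigma-\delta_i)\delta_i$ is what absorbs the positive $\va_i$--correction, and it vanishes when $\delta_i=0$. Your alternative route via ``Lemma \ref{wmp} plus smallness of $\|u_i^{p_i-1}\|_{L^{n/2\sigma}}$'' runs into the same wall: unwinding the definitions, one needs $-\lim_{s\to 0}s^{1-2\sigma}\pa_s\Phi_i\ge c_0K_iu_i^{p_i-1}\Phi_i$ on $\pa'$ \emph{and} $\mathfrak{L}(\Phi_i)\le 0$ in the interior, and the latter fails.

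The paper's proof is therefore not a sharpening of the barrier in Lemma \ref{lem4.3} but a different mechanism altogether. One normalizes $V_i=U_i(Y_i+e)^{-1}U_i$, passes to a limit $V$ using Lemma \ref{lem4.1} and Proposition \ref{prop4.3}'s own inner estimate, shows (via the isolated simple blow up hypothesis) that $V$ is singular at $0$, and then invokes the B\^ocher--type Lemma \ref{thm3.2} to write $V=A|Y|^{2\sigma-n}+H$ with $A>0$. The bound $U_i(Y_i+e)U_i(Y_i)\le C$ is then obtained by contradiction: integrating $\mathrm{div}(s^{1-2\sigma}\nabla V_i)=0$ over $\B_1^+$ and sending $i\to\infty$ forces the flux $\int_{\pa''\B_1^+}s^{1-2\sigma}\pa_\nu V$ to vanish, while the explicit form of $V$ makes it strictly negative. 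A scaling argument then transfers the bound from $|Y-Y_i|=1$ down to $r_i\le|Y-Y_i|\le\rho$. The key input you are missing is Lemma \ref{thm3.2}; the direct barrier cannot reach the critical exponent.
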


Our proof of this Proposition makes use of the following

\begin{lem}\label{thm3.2}
Let $n\geq 2$. Suppose that for all $\va\in(0,1)$, $U\in H(t^{1-2\sigma}, \mathcal{B}_1^+\setminus\overline{\mathcal{B}_{\va}^+})$ and $U>0$ in $\mathcal{B}_1^+\setminus\overline{\mathcal{B}_{\va}^+}$ be a weak solution of
\be\label{eq:bocher}
\begin{cases}
\mathrm{div}(t^{1-2\sigma}\nabla U)=0\quad& \mbox{in }\mathcal{B}_1^+\setminus\overline{\mathcal{B}_{\va}^+},\\
-\dlim_{t\rightarrow 0}t^{1-2\sigma}\pa_t U(x,t)=0,\quad &\mbox{in } B_1\setminus \overline{B_{\va}^+}.
\end{cases}
\ee
Then
\[
U(X)=A |X|^{2\sigma -n}+H(X),
\]
where $A$ is a nonnegative constant and $H(X)\in H(t^{1-2\sigma}, \mathcal{B}_1^+)$ satisfies
\be\label{local fractional harmonic}
\begin{cases}
\mathrm{div}(t^{1-2\sigma}\nabla H)=0\quad& \mbox{in }\mathcal{B}_1^+,\\
-\dlim_{t\rightarrow 0}t^{1-2\sigma}\pa_t H(x,t)=0,\quad &\mbox{in } B_1.
\end{cases}
\ee
\end{lem}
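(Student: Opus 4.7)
The approach is to reduce to a Bôcher-type statement for the $A_2$-weighted Laplacian on a \emph{full} punctured ball in $\R^{n+1}$, then treat the radial part with an Euler ODE for the weighted spherical average and kill the higher spherical modes using positivity.

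First, even-reflect $U$ across $\{t=0\}$ by setting $\tilde U(x,t):=U(x,|t|)$ on $\B_1\setminus\{0\}$. The hypothesis $-\dlim_{t\to 0}t^{1-2\sigma}\pa_t U=0$ ensures $\tilde U$ is a positive weak solution of $\mathrm{div}(|t|^{1-2\sigma}\nabla \tilde U)=0$ on $\B_1\setminus\{0\}\subset\R^{n+1}$, and $|t|^{1-2\sigma}$ is Muckenhoupt $A_2$. A direct computation verifies that the radial function $|X|^{2\sigma-n}$ is a weak solution of the reflected equation on $\R^{n+1}\setminus\{0\}$ (it is smooth across $\{t=0\}$ away from $0$, so the Neumann condition holds automatically).

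Next, introduce the weighted spherical average
\[
\bar U(r):=\frac{1}{c_n r^{n+1-2\sigma}}\int_{\pa\B_r}|t|^{1-2\sigma}\tilde U\,dS,\qquad c_n:=\int_{\Sn}|\omega_{n+1}|^{1-2\sigma}\,dS_\omega.
\]
Differentiating yields $c_n r^{n+1-2\sigma}\bar U'(r)=\int_{\pa\B_r}|t|^{1-2\sigma}\pa_r\tilde U\,dS$, and the divergence theorem on any annulus $\B_{r_2}\setminus\overline{\B_{r_1}}$ combined with $\mathrm{div}(|t|^{1-2\sigma}\nabla\tilde U)=0$ shows this flux is independent of $r$. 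Integrating the resulting Euler ODE gives $\bar U(r)=A\,r^{2\sigma-n}+B$ for real constants $A,B$; positivity of $U$ forces $A\ge 0$, since otherwise $\bar U(r)\to-\infty$ as $r\to 0^+$.

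The substantive step is to show $H(X):=\tilde U(X)-A|X|^{2\sigma-n}$ extends to a weak solution of \eqref{local fractional harmonic}. I would expand $\tilde U(r\omega)=\sum_{k\ge 0}f_k(r)\phi_k(\omega)$ in the orthonormal eigenbasis $\{\phi_k\}$ of the self-adjoint weighted Laplace--Beltrami on $\Sn$ associated with the measure $|\omega_{n+1}|^{1-2\sigma}dS_\omega$, with eigenvalues $0=\mu_0<\mu_1\le\mu_2\le\cdots$. Separation of variables gives the Euler equation $(r^{n+1-2\sigma}f_k')'=\mu_k r^{n-1-2\sigma}f_k$, so $f_k(r)=a_k r^{\alpha_+^{(k)}}+b_k r^{\alpha_-^{(k)}}$ with $\alpha_\pm^{(k)}$ the roots of $\alpha^2+(n-2\sigma)\alpha-\mu_k=0$. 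For $k=0$ these exponents are $0$ and $-(n-2\sigma)$, accounting for the constant $B$ and the singular $A|X|^{2\sigma-n}$ term. For every $k\ge 1$ we have $\alpha_-^{(k)}<-(n-2\sigma)$ and $\phi_k$ is sign-changing on $\Sn$, so a nonzero $b_k$ would make $\tilde U$ take negative values along some direction as $r\to 0^+$, contradicting $\tilde U>0$. Hence $b_k=0$ for all $k\ge 1$, and $H=\sum_k a_k r^{\alpha_+^{(k)}}\phi_k(\omega)$ is regular at the origin.

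The main obstacle will be making the spherical-harmonic expansion rigorous, in particular the asymptotic-dominance step that rules out the higher $b_k$: this requires the spectral theory of the weighted Laplace--Beltrami with an $A_2$ weight and convergence of the series in $H(|t|^{1-2\sigma},\cdot)$. If one prefers to avoid that machinery, an alternative route is to solve the regular Dirichlet problem on $\B_1^+$ matching $U|_{\pa''\B_1^+}$ via Proposition \ref{prop of existence}, subtract to obtain a positive solution with zero boundary data near $\pa''\B_1^+$, then apply the Kelvin transform \eqref{kelvin} to push the isolated singularity to infinity and use a Harnack/Liouville-type argument to identify the difference as a positive multiple of $|X|^{2\sigma-n}$.
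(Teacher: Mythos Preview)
Your opening steps---the even reflection to $\tilde U$ on $\B_1\setminus\{0\}$ and the weighted spherical average $\bar U(r)$ satisfying an Euler ODE with solution $Ar^{2\sigma-n}+B$---are exactly what the paper does. After that, the paper takes a different route: rather than a spectral expansion, it invokes the Harnack inequality in annuli (as in Lemma~\ref{lem4.1}) and then follows the argument of Axler--Bourdon--Ramey \cite{abr}, which uses Harnack together with the computed average to show $U-A|X|^{2\sigma-n}$ has a removable singularity. No spherical harmonics enter.

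Your spherical-harmonic route is a reasonable alternative, but the positivity step as written has a gap. You cannot argue pointwise that ``a nonzero $b_k$ would make $\tilde U$ negative along some direction'': the $L^2$ expansion is not a priori pointwise, and even if it were, terms $b_jr^{\alpha_-^{(j)}}\phi_j$ with $j>k$ blow up faster and could compensate, so no single term dominates. The correct argument is via the projection itself: since $\tilde U>0$ and $\phi_k$ (for $k\ge 1$) changes sign and is bounded, one has
\[
|f_k(r)|=\Big|\int_{\Sn}\tilde U(r\omega)\phi_k(\omega)|\omega_{n+1}|^{1-2\sigma}\,dS_\omega\Big|\le \|\phi_k\|_\infty\, c_n\,\bar U(r)=O(r^{2\sigma-n}),
\]
and since $\alpha_-^{(k)}<2\sigma-n$ this forces $b_k=0$. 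With this fix the mode-killing works, but your acknowledged obstacle---summing the regular modes to get $H\in H(t^{1-2\sigma},\B_1^+)$---remains genuine. The Harnack route of the paper bypasses this entirely: once $\bar U(r)=Ar^{2\sigma-n}+B$, Harnack gives two-sided control $U(X)\sim \bar U(|X|)$ on each sphere, and the removable-singularity argument from \cite{abr} then applies directly without any series convergence issues. Your Kelvin-transform alternative is also viable but more circuitous than what the paper needs.
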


The proof of Lemma \ref{thm3.2} is provided in Appendix \ref{A Bocher type theorem}.

\begin{proof}[Proof of Proposition \ref{prop4.3}] For $|y-y_i|<r_i$, it follows from Proposition \ref{prop4.1} that
\be\label{4.15}
\begin{split}
u_i(y)&\leq C m_i\left(\frac{1}{1+|m_i^{(p_i-1)/2\sigma}(y-y_i)|^2}\right)^{(n-2\sigma)/2}\\&
\leq Cm_i^{-1-\frac{n-2\sigma}{2\sigma}\tau_i}|y-y_i|^{2\sigma-n}\\&
\leq Cm_i^{-1}|y-y_i|^{2\sigma-n},
\end{split}
\ee
where Lemma \ref{lem4.4} is used the last inequality.

Suppose $|y-y_i|\geq r_i$. Let $e\in \overline{\R}^{n+1}_+$ with $|e|=1$, and set
$V_i(Y)=U_i(Y_i+e)^{-1}U_i(Y)$. Then $V_i$ satisfies
\[
\begin{cases}
\mathrm{div}(s^{1-2\sigma}\nabla V_i)=0,\quad &\mbox{in } \B_2^+,\\
-\dlim_{s\to 0}s^{1-2\sigma}\pa_sV_i(y,s)=c(n,\sigma)KU_i(Y_i+e)^{p_i-1}V_i^{p_i},\quad &\mbox{for }y\in B_2^+.
\end{cases}
\]
Note that $U_i(Y_i+e)\to 0$ by Lemma \ref{lem4.3}, and for any $r>0$
\be\label{uniform bound of Vi}
V_i(Y)\leq C(n,\sigma,A_1,r), \quad \mbox{for all } r<|y-y_i|\leq 1
\ee
which follows from Lemma \ref{lem4.1}.
It follows that $\{V_i\}$ converges to some positive function $V$ in
$ C_{loc}^{\infty}(\B^+_{3/2})\cap C_{loc}^{\al}(\overline \B^+_{3/2}\setminus \{0\})$ for some $\al\in (0,1)$,
and $V$ satisfies
\[
\begin{cases}
\mathrm{div}(s^{1-2\sigma}\nabla V)=0,\quad &\mbox{in } \B_1^+\\
-\dlim_{s\to 0}s^{1-2\sigma}\pa_sV(y,s)=0\quad &\mbox{for }y\in B_1^+\setminus\{0\}.
\end{cases}
\]
Hence  $\dlim_{i\to\infty}r^{2\sigma/(p_i+1)}\bar v_i(r)=r^{n-2\sigma}\bar v(r)$, where $v(y)=V(y,0)$.
Since $r_i\to 0$ and $y_i\to0$ is an isolated simple blow up point of $\{u_i\}$, it follows from Lemma \ref{lem4.1} that $r^{(n-2\sigma)/2}\overline V(r)$
is \emph{almost decreasing} for all $0<r<\rho$, i.e., there exists a positive constant $C$ (which comes from Harnack inequality in Lemma \ref{lem4.1}) such that for any $0<r_1\leq r_2<\rho$,
\[
r_1^{(n-2\sigma)/2}\overline V(r_1)\geq C r_2^{(n-2\sigma)/2}\overline V(r_2).
\]
 Therefore, $V$ has to have a singularity at $Y=0$.
Lemma \ref{thm3.2} implies
\be\label{4.16}
V(Y)=A|Y|^{2\sigma-n}+H(Y),
\ee
where $A>0$ is a constant and $H$ is as in Lemma \ref{thm3.2}.

We first establish
the inequality in Proposition \ref{prop4.3} for
$|Y-Y_i|=1$.  Namely, we prove that
\be\label{4.17}
U_i(Y_i+e)\leq CU_i^{-1}(Y_i)
\ee

Suppose that \eqref{4.17} does not hold, then along a subsequence we have
\be\label{4.18}
\dlim_{i\to\infty}U_i(Y_i+e)U_i(Y_i)=\infty.
\ee
By integration by parts (using $\Omega_{\va}$ and sending $\va\to 0$, as in the proof of Proposition \ref{thm3.3}), we obtain
\be\label{4.19}
\begin{split}
0&=-\int_{\B_1^+}\mathrm{div}(s^{1-2\sigma}\nabla V_i)\\&
=\int_{\pa''\B_1^+}s^{1-2\sigma}\frac{\pa V_i}{\pa \nu}+c(n,\sigma)U_i(Y_i+e)^{-1}\int_{\pa'\B_1^+}KU_i^{p_i}.
\end{split}
\ee
By Lemma \ref{lem4.4} and similar computation in \eqref{4.13} and \eqref{4.14}, we see that
\[
\int_{\pa'\B_1^+}KU_i^{p_i}\leq CU_i(Y_i)^{-1}.
\]
Due to \eqref{4.18},
\[
\dlim_{i\to \infty}U_i(Y_i+e)^{-1}\int_{\pa'\B_1^+}KU_i^{p_i}=0.
\] A direct computation yields with $\eqref{uniform bound of Vi}$ (again using $\Omega_{\va}$ and sending $\va\to 0$)
\[\begin{split}
\dlim_{i\to \infty}\int_{\pa''\B_1^+}s^{1-2\sigma}\frac{\pa V_i}{\pa \nu}&=
\dlim_{i\to \infty}\int_{\pa''\B_1^+}s^{1-2\sigma}\frac{\pa}{\pa \nu}(A|Y|^{2\sigma -n}+H(Y))\\&
=A(2\sigma-n)\int_{\pa''\B_1^+}s^{1-2\sigma}<0,
\end{split}
\]
which contradicts to \eqref{4.19}. Thus we proved \eqref{4.17}. By Lemma \ref{lem4.1}, we have established the inequality in Proposition \ref{prop4.3} for $\rho\leq |Y-Y_i|\leq 1$.

By a standard scaling argument, we can reduce the case of $r_i\leq|Y-Y_i|<\rho$ to $|Y-Y_i|=1$. We refer to \cite{Li95} (page 340) for details.
\end{proof}

Proposition \ref{prop4.2} and \ref{prop4.3} give a clear picture of $u_i$ near the isolated simple blow up point.
By the estimates there, it is easy to see the following result.

\begin{lem}\label{lem4.5} We have
\[
\begin{split}
\int_{|y-y_i|\leq r_i}&|y-y_i|^{s}u_i(y)^{p_i+1}\\
&=\begin{cases}
O(u_i(y_i)^{-2s/(n-2\sigma)}),\quad& -n< s<n,\\
O(u_i(y_i)^{-2n/(n-2\sigma)}\log u_i(y_i)),\quad& s=n,\\
o(u_i(y_i)^{-2n/(n-2\sigma)}),\quad & s>n,
\end{cases}
\end{split}
\]
and
\[
\begin{split}
\int_{ r_i<|y-y_i|\leq1}&|y-y_i|^{s}u_i(y)^{p_i+1}\\&
=\begin{cases}
o(u_i(y_i)^{-2s/(n-2\sigma)}),\quad& -n< s<n,\\
O(u_i(y_i)^{-2n/(n-2\sigma)}\log u_i(y_i)),\quad& s=n,\\
O(u_i(y_i)^{-2n/(n-2\sigma)}),\quad & s>n.
\end{cases}
\end{split}
\]
\end{lem}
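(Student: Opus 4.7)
\medskip

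\noindent\textbf{Proof proposal for Lemma \ref{lem4.5}.}
The plan is to split each integral at radius $r_i = R_i m_i^{-(p_i-1)/(2\sigma)}$ and treat the two regions by the two different estimates now available: on $|y-y_i|\leq r_i$ the bubble approximation from Proposition \ref{prop4.1} (together with the matching lower bound from Proposition \ref{prop4.2}) controls $u_i$ sharply from above and below, while on $r_i<|y-y_i|\leq 1$ the ``one-dimensional'' upper bound $u_i(y)\leq C u_i(y_i)^{-1}|y-y_i|^{2\sigma-n}$ from Proposition \ref{prop4.3} is the right tool. The key normalization throughout is $m_i^{\tau_i}=1+o(1)$ from Lemma \ref{lem4.4}, which allows any factor of the form $m_i^{c\tau_i}$ (with $c$ bounded) to be absorbed into the $O/o$ notation.

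For the inner region, substitute $z=m_i^{(p_i-1)/(2\sigma)}(y-y_i)$, so that $|z|\leq R_i$, $dy=m_i^{-n(p_i-1)/(2\sigma)}\,dz$, and (using Proposition \ref{prop4.1} to dominate $u_i$ by a constant multiple of the standard bubble) one is reduced to
\[
m_i^{(p_i+1)-(s+n)(p_i-1)/(2\sigma)}\int_{|z|\leq R_i}|z|^s\bigl(1+k_i|z|^2\bigr)^{-(n-2\sigma)(p_i+1)/2}\,\ud z.
\]
A direct computation with $p_i=\tfrac{n+2\sigma}{n-2\sigma}-\tau_i$ shows the $m_i$-exponent equals $-\tfrac{2s}{n-2\sigma}+O(\tau_i)$, i.e.\ $m_i^{-2s/(n-2\sigma)}$ up to a factor $1+o(1)$. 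In the $z$-integral, the integrand decays like $|z|^{s-2n+O(\tau_i)}$ at infinity and is integrable near $0$ for $s>-n$. This yields three subcases: bounded for $-n<s<n$, size $O(\log R_i)=O(\log m_i)$ for $s=n$, and size $O(R_i^{s-n})$ for $s>n$. In the last case one checks, using $r_i\to 0$, that $R_i\cdot m_i^{-2/(n-2\sigma)}=O(r_i m_i^{-\tau_i/(2\sigma)})=o(1)$, so $R_i^{s-n}m_i^{-2s/(n-2\sigma)}=o(m_i^{-2n/(n-2\sigma)})$, matching the claim.

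For the outer region, Proposition \ref{prop4.3} gives $u_i^{p_i+1}\leq C m_i^{-(p_i+1)}|y-y_i|^{(2\sigma-n)(p_i+1)}$, and since $(2\sigma-n)(p_i+1)=-2n+(n-2\sigma)\tau_i$, polar integration produces
\[
m_i^{-(p_i+1)}\int_{r_i}^{1}r^{s-n-1+O(\tau_i)}\,\ud r.
\]
The radial integral is controlled by its lower endpoint when $s<n$, is of order $\log(1/r_i)=O(\log m_i)$ when $s=n$, and is $O(1)$ when $s>n$. Using $m_i^{-(p_i+1)}=m_i^{-2n/(n-2\sigma)+O(\tau_i)}$ and $r_i^{s-n}=R_i^{s-n}m_i^{(n-s)(p_i-1)/(2\sigma)}$, one recovers $O(m_i^{-2s/(n-2\sigma)}R_i^{s-n})=o(m_i^{-2s/(n-2\sigma)})$ for $s<n$ (because $R_i\to\infty$ with $s-n<0$), $O(m_i^{-2n/(n-2\sigma)}\log m_i)$ for $s=n$, and $O(m_i^{-2n/(n-2\sigma)})$ for $s>n$. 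Adding the contributions from the two regions and observing that in every case one of the two pieces is dominant while the other is consistent with the asserted bound completes the proof.

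The only delicate point, and thus the main thing to watch, is the repeated conversion between factors of $m_i^{\tau_i\cdot(\mathrm{bounded})}$ and $1+o(1)$: all such factors are absorbed thanks to $\tau_i=O(m_i^{-2/(n-2\sigma)+o(1)})$ from Lemma \ref{lem4.4}, so they never spoil the claimed exponents on $u_i(y_i)$. Once this bookkeeping is carried out carefully, the three cases distinguished in the statement come directly from whether the $z$-integral (respectively the $r$-integral) is convergent, logarithmically divergent, or polynomially divergent in the relevant parameter.
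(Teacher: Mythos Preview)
Your proposal is correct and follows exactly the approach the paper indicates: the inner integral is handled by the bubble profile from Proposition \ref{prop4.1} together with the change of variables $z=m_i^{(p_i-1)/(2\sigma)}(y-y_i)$ and Lemma \ref{lem4.4}, while the outer integral uses the pointwise bound of Proposition \ref{prop4.3} plus Lemma \ref{lem4.4}. The paper's own proof is just a one-sentence citation of these ingredients, so your computation simply fleshes out the details (the mention of Proposition \ref{prop4.2} is harmless but unnecessary, since only upper bounds are asserted).
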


\begin{proof}
The first estimate in the above Lemma follows from Proposition \ref{prop4.1}
and Lemma \ref{lem4.4}, and the second one follows from
Proposition \ref{prop4.3} and Lemma \ref{lem4.4}.
\end{proof}

For later application, we replace $K_i$ by $K_i(x)H_i(x)^{\tau_i}$ in \eqref{4.2} and consider
\be\label{4.20}
(-\Delta)^{\sigma}u_i(x)=c(n,\sigma)K_i(x)H_i(x)^{\tau_i}u^{p_i}_i(x),\quad \mbox{in } B_2,
\ee
where $\{H_i\}\in C^{1,1}(B_2)$ satisfies
\be\label{4.21}
A_4^{-1}\leq H_i(y)\leq A_4, \quad \mbox{for all }y\in B_2,\quad \text{and}\quad \|H_i\|_{C^{1,1}(B_2)}\leq A_5
\ee
 for some positive constants $A_4$ and $A_5$.

\begin{lem}\label{lem4.6}
Suppose that $\{K_i\}$ satisfies \eqref{4.1} and $(*)_{\beta}$ condition with $\beta<n$ for some positive constants $A_1,A_2$, $\{L(\beta,i)\}$,
and that $\{H_i\}$ satisfies \eqref{4.21} with $A_4, A_5$.
Let $u_i\in \D\cap C^2(B_2)$ and $u_i\geq 0$ in $\R^n$ be a solution of \eqref{4.20}. If $y_i\to 0$ is an
isolated simple blow up point of $\{u_i\}$ with \eqref{4.7} for some positive constant $A_3$,
then we have
\[\begin{split}
\tau_i\leq &Cu_i(y_i)^{-2}+C|\nabla K_i(y_i)|u_i(y_i)^{-2/(n-2\sigma)}\\&
\quad +C(L(\beta,i)+L(\beta,i)^{\beta-1})u_i(y_i)^{-2\beta/(n-2\sigma)},
\end{split}
\]
where $C>0$ depends only on $n,\sigma,A_1,A_2,A_3,A_4,A_5,\beta$ and $\rho$.
\end{lem}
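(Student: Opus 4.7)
The plan is to adapt the Pohozaev argument used to prove Lemma~\ref{lem4.4}, but to track more carefully the contribution of $\nabla K_i$ using the $(*)'_\beta$ condition and Young's inequality. Apply the Pohozaev identity Proposition~\ref{thm3.3} to $U_i=\mathcal P_\sigma[u_i]$ on $\mathcal B^+_1(Y_i)$ with $K$ replaced by $\tilde K_i:=K_iH_i^{\tau_i}$ and $p$ by $p_i$. Integrating by parts against $\langle y-y_i,\nabla u_i^{p_i+1}\rangle\tilde K_i$ exactly as in the derivation of \eqref{4.11}, one obtains
\[
-\frac{(n-2\sigma)^2\tau_i}{2(2n-\tau_i(n-2\sigma))}\int_{B_1(y_i)}\tilde K_iu_i^{p_i+1}
=\frac{1}{p_i+1}\int_{B_1(y_i)}\langle y-y_i,\nabla\tilde K_i\rangle u_i^{p_i+1}+\mathcal E_i,
\]
where $\mathcal E_i$ gathers the boundary contributions $-\frac{1}{p_i+1}\int_{\pa B_1(y_i)}\tilde K_iu_i^{p_i+1}$ and $-\int_{\pa''\mathcal B^+_1(Y_i)}t^{1-2\sigma}B''$. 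By Proposition~\ref{prop4.3}, $u_i(y)\leq Cu_i(y_i)^{-1}$ on $\pa B_1(y_i)$, while Proposition~\ref{thm2.4-1} applied to the rescaled problem gives $|\nabla U_i|+|t^{1-2\sigma}\pa_tU_i|\leq Cu_i(y_i)^{-1}$ on $\pa''\mathcal B^+_1(Y_i)$; hence $|\mathcal E_i|\leq Cu_i(y_i)^{-2}$. The denominator $\int\tilde K_iu_i^{p_i+1}$ is bounded below by a positive constant, using \eqref{4.12}, the uniform bound on $\tilde K_i$ from \eqref{4.1} and \eqref{4.21}, and the fact $u_i(y_i)^{\tau_i}=1+o(1)$ from Lemma~\ref{lem4.4}.

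Now I split the gradient term. Write $\nabla\tilde K_i=H_i^{\tau_i}\nabla K_i+\tau_iK_iH_i^{\tau_i-1}\nabla H_i$. The second piece contributes at most $C\tau_i\int|y-y_i|u_i^{p_i+1}\leq C\tau_iu_i(y_i)^{-2/(n-2\sigma)}$ by Lemma~\ref{lem4.5}; since $u_i(y_i)\to\infty$ this is absorbed into the left-hand side for large $i$. For the first piece, decompose $\nabla K_i(y)=\nabla K_i(y_i)+[\nabla K_i(y)-\nabla K_i(y_i)]$. The $\nabla K_i(y_i)$ contribution is bounded by $C|\nabla K_i(y_i)|\int|y-y_i|u_i^{p_i+1}\leq C|\nabla K_i(y_i)|u_i(y_i)^{-2/(n-2\sigma)}$, again by Lemma~\ref{lem4.5}.

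It remains to estimate the integral against $\nabla K_i(y)-\nabla K_i(y_i)$. Taylor's theorem combined with the $(*)'_\beta$ condition (using $|\nabla^sK_i(y_i)|\leq L(\beta,i)|\nabla K_i(y_i)|^{(\beta-s)/(\beta-1)}$ for $2\leq s\leq[\beta]$ and the H\"older bound on $\nabla^{[\beta]}K_i$ for the remainder) yields
\[
|\nabla K_i(y)-\nabla K_i(y_i)|\leq C\!\!\sum_{s=2}^{[\beta]}L(\beta,i)|\nabla K_i(y_i)|^{(\beta-s)/(\beta-1)}|y-y_i|^{s-1}+CL(\beta,i)|y-y_i|^{\beta-1}.
\]
Multiplying by $|y-y_i|$, integrating against $u_i^{p_i+1}$, and applying Lemma~\ref{lem4.5} (which requires $\beta<n$), each $s$-term becomes $CL(\beta,i)|\nabla K_i(y_i)|^{(\beta-s)/(\beta-1)}u_i(y_i)^{-2s/(n-2\sigma)}$ and the remainder becomes $CL(\beta,i)u_i(y_i)^{-2\beta/(n-2\sigma)}$. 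Setting $X=|\nabla K_i(y_i)|u_i(y_i)^{-2/(n-2\sigma)}$, $Y=u_i(y_i)^{-2\beta/(n-2\sigma)}$, and $\theta_s=(\beta-s)/(\beta-1)\in[0,1)$, the $s$-term is exactly $L(\beta,i)X^{\theta_s}Y^{1-\theta_s}$, and Young's inequality gives $L(\beta,i)X^{\theta_s}Y^{1-\theta_s}\leq X+CL(\beta,i)^{(\beta-1)/(s-1)}Y$. The exponents $(\beta-1)/(s-1)$ for $s=2,\ldots,[\beta]$ lie in the interval $[1,\beta-1]$, and for any $\gamma$ in this range one has $L^\gamma\leq L+L^{\beta-1}$ (check $L\leq1$ and $L\geq1$ separately). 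Summing and combining with the remainder term produces $CX+C(L(\beta,i)+L(\beta,i)^{\beta-1})Y$.

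Dividing by the positive lower bound for $\int\tilde K_iu_i^{p_i+1}$ yields the claimed estimate. The main obstacle is the bookkeeping in the final step: one must correctly interpolate all Taylor exponents via Young's inequality so that the $|\nabla K_i(y_i)|u_i(y_i)^{-2/(n-2\sigma)}$ term appears without a factor of $L(\beta,i)$, and so that only the two extremal $L$-powers survive; handling the $H_i^{\tau_i}$ weight and justifying that its $\tau_i$-order correction can be absorbed into the left-hand side is the other subtlety.
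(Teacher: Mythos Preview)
Your proof is correct and follows essentially the same approach as the paper: both derive the Pohozaev-based inequality as in Lemma~\ref{lem4.4}, split $\nabla\tilde K_i$ into the $H_i$-piece (absorbed) and the $K_i$-piece, decompose $\nabla K_i(y)=\nabla K_i(y_i)+[\nabla K_i(y)-\nabla K_i(y_i)]$, expand the difference via Taylor and the $(*)'_\beta$ bounds, and close with Young's inequality and Lemma~\ref{lem4.5}. The only cosmetic difference is that the paper applies Young's inequality pointwise to $L(\beta,i)|\nabla K_i(y_i)|^{(\beta-s)/(\beta-1)}|y-y_i|^s$ \emph{before} integrating (its \eqref{eq:lem4.5-(*)-2}), whereas you integrate first and then apply Young's inequality to $X^{\theta_s}Y^{1-\theta_s}$; these are the same interpolation in a different order.
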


\begin{proof}
Using Lemma \ref{lem4.4} and arguing the same as in the proof of Lemma \ref{lem4.4}, we have
\[
\begin{split}
\tau_i&\leq Cu_i(y_i)^{-2}+C\left|\int_{B_1(y_i)}\langle y-y_i,\nabla_y(K_iH_i^{\tau_i})\rangle u_i^{p_i+1}\right|\\&
\leq Cu_i(y_i)^{-2}+C\tau_i\left|\int_{B_1(y_i)}|y-y_i|u_i^{p_i+1}\right|\\&
\quad +C\left|\int_{B_1(y_i)}\langle y-y_i,\nabla K_i \rangle H_i^{\tau_i}u_i^{p_i+1}\right|.
\end{split}
\]
Making use of Lemma \ref{lem4.5}, we have
\[
\begin{split}
&\left|\int_{B_1(y_i)}\langle y-y_i,\nabla K_i \rangle H_i^{\tau_i}u_i^{p_i+1}\right|\\&
\leq C|\nabla K_i(y_i)|\int_{B_1(y_i)}|y-y_i|u_i^{p_i+1}\\&
\quad +C\int_{B_1(y_i)}|y-y_i||\nabla K_i(y)-\nabla K_i(y_i)|u_i^{p_i+1}\\&
\leq C|\nabla K_i(y_i)|u_i(y_i)^{-2/(n-2\sigma)}\\&
\quad  +C\int_{B_1(y_i)}|y-y_i||\nabla K_i(y)-\nabla K_i(y_i)|u_i^{p_i+1}.
\end{split}
\]
Recalling the definition of $(*)_\beta$, a directly computation yields
\begin{equation}\label{eq:lem4.5-(*)-1}
\begin{split}
&|\nabla K_i(y)-\nabla K_i(y_i)|\\&
\leq \Big\{\sum_{s=2}^{[\beta]}|\nabla ^s K_i(y_i)||y-y_i|^{s-1}+[\nabla ^{[\beta]}K_i]_{C^{\beta-[\beta]}(B_1(y_i))}|y-y_i|^{\beta-1}\Big\}\\&
\leq CL(\beta,i)\Big\{\sum_{s=2}^{[\beta]}|\nabla K_i(y_i)|^{(\beta-s)/(\beta-1)}|y-y_i|^{s-1}+|y-y_i|^{\beta-1}\Big\}.
\end{split}
\end{equation}
By Cauchy-Schwartz inequality, we have
\be\label{eq:lem4.5-(*)-2}
\begin{split}
&L(\beta,i)|\nabla K_i(y_i)|^{(\beta-s)/(\beta-1)}|y-y_i|^{s}\\&
\leq C(|\nabla K_i(y_i)||y-y_i|+(L(\beta,i)+L(\beta,i)^{\beta-1})|y-y_i|^{\beta}).
\end{split}
\ee
Hence, by Lemma \ref{lem4.5} we obtain
\be\label{eq:lem4.5-(*)-3}
\begin{split}
&\int_{B_1(y_i)}|y-y_i||\nabla K_i(y)-\nabla K_i(y_i)|u_i^{p_i+1}\\&
\leq   C|\nabla K_i(y_i)|u_i(y_i)^{-2/(n-2\sigma)}+C(L(\beta,i)+L(\beta,i)^{\beta-1})u_i(y_i)^{-2\beta/(n-2\sigma)}.
\end{split}
\ee
Lemma \ref{lem4.6} follows immediately.
\end{proof}

\begin{lem}\label{lem4.7} Under the hypotheses of Lemma \ref{lem4.6},
\[
|\nabla K_i(y_i)|\leq Cu_i(y_i)^{-2}+C(L(\beta,i)+L(\beta,i)^{\beta-1}) u_i(y_i)^{-2(\beta-1)/(n-2\sigma)},
\]
where $C>0$ depends only on $n,\sigma,A_1,A_2,A_3,A_4,A_5,\beta$ and $\rho$.
\end{lem}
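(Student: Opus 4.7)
The plan is to derive a translation-type Pohozaev identity. Multiplying the equation $\mathrm{div}(t^{1-2\sigma}\nabla U_i)=0$ by $\partial_{y_j}U_i$ and integrating by parts over $\Omega_\varepsilon=\mathcal{B}_1^+(Y_i)\cap\{t>\varepsilon\}$ as in the proof of Proposition \ref{thm3.3}, then sending $\varepsilon\to 0^+$ using the $C^\alpha$ regularity of $\nabla_x U_i$ and $t^{1-2\sigma}\partial_t U_i$ from Proposition \ref{thm2.4-1}, I obtain
\[
\frac{c_0}{p_i+1}\int_{B_1(y_i)}\partial_{y_j}(K_iH_i^{\tau_i})\,u_i^{p_i+1}
\;=\;I_j^{\partial'}+I_j^{\partial''},
\]
where $I_j^{\partial'}$ is a surface integral over $\partial B_1(y_i)$ of $K_iH_i^{\tau_i}u_i^{p_i+1}$ and $I_j^{\partial''}$ is a weighted integral over $\partial''\mathcal{B}_1^+(Y_i)$ involving $t^{1-2\sigma}|\nabla U_i|^2$ and $t^{1-2\sigma}\partial_{y_j}U_i\,\partial_\nu U_i$.

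Next I expand $\partial_{y_j}(K_iH_i^{\tau_i})=H_i^{\tau_i}\partial_{y_j}K_i+\tau_i K_iH_i^{\tau_i-1}\partial_{y_j}H_i$ and write $\partial_{y_j}K_i(y)=\partial_{y_j}K_i(y_i)+(\partial_{y_j}K_i(y)-\partial_{y_j}K_i(y_i))$. The leading contribution is $\partial_{y_j}K_i(y_i)\int_{B_1(y_i)} H_i^{\tau_i}u_i^{p_i+1}$, and the integral here is bounded below by a positive constant because Proposition \ref{prop4.2} yields $\int_{B_{r_i}(y_i)}u_i^{p_i+1}\geq C^{-1}$ and $H_i^{\tau_i}=1+o(1)$ uniformly. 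The remainder $\int|\partial_{y_j}K_i(y)-\partial_{y_j}K_i(y_i)|u_i^{p_i+1}$ is then handled by the $(*)'_\beta$ condition, exactly as in \eqref{eq:lem4.5-(*)-1}, combined with Young's inequality
\[
L|\nabla K_i(y_i)|^{(\beta-s)/(\beta-1)}|y-y_i|^{s-1}\leq \varepsilon|\nabla K_i(y_i)|+C_\varepsilon L^{(\beta-1)/(s-1)}|y-y_i|^{\beta-1},
\]
the elementary inequality $L^{(\beta-1)/(s-1)}\leq L+L^{\beta-1}$ valid for $2\leq s\leq[\beta]$, and Lemma \ref{lem4.5} applied with $\beta-1<n$. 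This yields a contribution of size $\varepsilon|\nabla K_i(y_i)|+O\bigl((L+L^{\beta-1})u_i(y_i)^{-2(\beta-1)/(n-2\sigma)}\bigr)$.

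The $H_i$ piece produces a term of size $O(\tau_i)$, which I dominate using Lemma \ref{lem4.6}: it decomposes as $O(u_i(y_i)^{-2})+O(|\nabla K_i(y_i)|u_i(y_i)^{-2/(n-2\sigma)})+O((L+L^{\beta-1})u_i(y_i)^{-2\beta/(n-2\sigma)})$, and the middle piece is again absorbable into $\varepsilon|\nabla K_i(y_i)|$ since $u_i(y_i)\to\infty$. For the boundary pieces, Proposition \ref{prop4.3} gives $U_i\leq Cu_i(y_i)^{-1}$ on a neighborhood of $\partial''\mathcal{B}_1^+(Y_i)$, and Proposition \ref{thm2.4-1} then upgrades this to $|\nabla_x U_i|+|t^{1-2\sigma}\partial_t U_i|\leq Cu_i(y_i)^{-1}$ there; the weight $t^{1-2\sigma}$ is integrable on $\partial''\mathcal{B}_1^+(Y_i)$ since $\sigma\in(0,1)$, so $I_j^{\partial'}+I_j^{\partial''}=O(u_i(y_i)^{-2})$. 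Combining all contributions and absorbing $\varepsilon|\nabla K_i(y_i)|$ into the left-hand side yields the desired estimate.

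The main technical obstacle is the bookkeeping of the $(*)'_\beta$ condition when $[\beta]\geq 2$: one must check that the hybrid Young's inequality produces powers of $L$ that are uniformly controlled by $L+L^{\beta-1}$, which requires separating the cases $L\geq 1$ (use $L^{(\beta-1)/(s-1)}\leq L^{\beta-1}$ since $(\beta-1)/(s-1)\leq\beta-1$) and $L\leq 1$ (use $L^{(\beta-1)/(s-1)}\leq L$ since $(\beta-1)/(s-1)\geq 1$). Once this is settled, the rest is routine tracking of the estimates already established in Lemmas \ref{lem4.3}--\ref{lem4.6}.
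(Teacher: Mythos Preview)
Your argument is correct and follows essentially the same route as the paper: derive a translation Pohozaev identity by testing the extension equation with $\partial_{y_j}U_i$, isolate $\partial_{y_j}K_i(y_i)\int H_i^{\tau_i}u_i^{p_i+1}$, control the gradient difference via the $(*)'_\beta$ expansion together with Young's inequality and Lemma \ref{lem4.5}, bound the $H_i$ piece by $O(\tau_i)$ and feed in Lemma \ref{lem4.6}, and absorb the small $|\nabla K_i(y_i)|$ term into the left side.

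The only noteworthy difference is in how the ``far'' contribution is handled. The paper localizes with a smooth cutoff $\eta$ supported in $\B_{1/2}$, so no genuine boundary terms on $\partial''\B_1^+(Y_i)$ appear; instead one gets interior terms on the annulus $\B_{1/2}^+\setminus\B_{1/4}^+$ involving $s^{1-2\sigma}|\nabla U_i|^2$, which are $O(u_i(y_i)^{-2})$ directly from the energy estimate and Proposition \ref{prop4.3}. You instead integrate over the full half-ball and estimate the boundary integrals on $\partial''\B_1^+(Y_i)$ pointwise using Proposition \ref{thm2.4-1} (noting that both $t^{1-2\sigma}$ and $t^{2\sigma-1}$ are integrable on the hemisphere since $\sigma\in(0,1)$). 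Both devices give the same $O(u_i(y_i)^{-2})$ bound; the cutoff avoids invoking the pointwise gradient regularity of Proposition \ref{thm2.4-1} at this stage, while your version is slightly more direct once that regularity is available.
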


\begin{proof}
Choose a cutoff function $\eta(Y)\in C^\infty_c(\B_{1/2})$ satisfying
\[
\eta(Y)=1,\quad |Y|\leq \frac14 \mbox{ and } \eta(Y)=0,\quad |Y|\geq \frac12.
\] Let $U_i(Y)$ be the extension of $u_i(y)$, namely,
\be\label{4.22}
\begin{cases}
\mathrm{div}(s^{1-2\sigma}\nabla U_i)=0,\quad &\mbox{in }\R^{n+1}_+\\
-\dlim_{s\to 0}s^{1-2\sigma}\pa_sU(y,s)=c_0K_i(y)H_i^{\tau_i}U_i^{p_i},\quad & y\in B_2.
\end{cases}
\ee
Multiplying \eqref{4.22} by $\eta(Y-Y_i)\pa y_jU_i(y,s)$, $j=1,\cdots,n$, and integrating by parts over $\B_1$,
we obtain
\[
\begin{split}
0&=\int_{\B^+_1}\mathrm{div}(s^{1-2\sigma}\nabla U_i)\eta \pa_{y_j}U_i\\&
=-\int_{\B_1^+}s^{1-2\sigma}\nabla U_i\nabla (\eta \pa_{y_j}U_i)+c_0\int_{\pa'\B_1^+(Y_i)}\eta K_iH_i^{\tau_i}\pa_{y_j}U_iU_i^{p_i}\\&
=\frac12 \int_{\B_{1/2}^+\setminus \B_{1/4}^+}s^{1-2\sigma}(|\nabla U_i|^2\pa_{y_j}\eta-2\nabla U_i\nabla \eta\pa_{y_j}U_i)\\&
\quad -\frac{c_0}{p_i+1}\int_{\pa'\B_1^+}\pa_{y_j}(K_iH_i^{\tau_i}\eta)U_i^{p_i+1}.
\end{split}
\]
By Proposition \ref{prop4.3}, we have
\[
U_i(Y)\leq CU_i(Y_i)^{-1},\quad \mbox{for all }1/2\geq  |Y|\geq 1/4
\]
and
\[
\int_{\B_{1/2}^+\setminus \B_{1/4}^+}s^{1-2\sigma}|\nabla U_i|^2\leq C U_i(Y_i)^{-2}.
\]
Therefore by Lemma \ref{lem4.5} we conclude that
\be\label{4.23}
\left|\int_{B_1}\pa_{y_j}K_i H_i^{\tau_i}u_i^{p_i+1} \right|\leq Cu_{i}(y_i)^{-2}+C\tau_i.
\ee
Hence
\[
\begin{split}
&\left|\partial_j K_i(y_i)\int_{B_{1}}H_i^{\tau_i}u_i^{p_i+1}\right|-Cu_{i}(y_i)^{-2}-C\tau_i\\
&\quad \leq \int_{B_1}|\partial_j K_i(y_i)-\partial_j K_i(y)|H_i^{\tau_i}u_i^{p_i+1}
\end{split}
\]
Summing over $j$, then making use of \eqref{eq:lem4.5-(*)-1}, \eqref{eq:lem4.5-(*)-2} and Lemma \ref{lem4.5}, we have
\[
\begin{split}
|\nabla K_i(y_i)|&\leq Cu_{i}(y_i)^{-2}+C\tau_i +\frac{1}{2}|\nabla K_i(y_i)|\\
&\quad+C(L(\beta,i)+L(\beta,i)^{\beta-1}) u_i(y_i)^{-2(\beta-1)/(n-2\sigma)}.
\end{split}
\]
Then Lemma \ref{lem4.7} follows from Lemma \ref{lem4.6}.
\end{proof}

\begin{lem}\label{lem4.8} Under the assumptions of Lemma \ref{lem4.6} we have
\[
\tau_i\leq Cu_i(y_i)^{-2}+C(L(\beta,i)+L(\beta,i)^{\beta-1})u_i(y_i)^{-2\beta/(n-2\sigma)}.
\]
\end{lem}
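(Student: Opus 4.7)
The proof of Lemma \ref{lem4.8} should be essentially immediate: it is the result of combining Lemma \ref{lem4.6} with the gradient bound in Lemma \ref{lem4.7} to eliminate the intermediate term $C|\nabla K_i(y_i)|u_i(y_i)^{-2/(n-2\sigma)}$ that appears in Lemma \ref{lem4.6}.

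More precisely, the plan is to substitute the bound
\[
|\nabla K_i(y_i)|\leq Cu_i(y_i)^{-2}+C(L(\beta,i)+L(\beta,i)^{\beta-1})u_i(y_i)^{-2(\beta-1)/(n-2\sigma)}
\]
from Lemma \ref{lem4.7} into the middle term of Lemma \ref{lem4.6}, yielding
\[
C|\nabla K_i(y_i)|u_i(y_i)^{-2/(n-2\sigma)}\leq Cu_i(y_i)^{-2-2/(n-2\sigma)}+C(L(\beta,i)+L(\beta,i)^{\beta-1})u_i(y_i)^{-2\beta/(n-2\sigma)}.
\]
Since $u_i(y_i)\to\infty$, the first summand is absorbed into $Cu_i(y_i)^{-2}$, and the second is of exactly the form appearing on the right-hand side of the claim. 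Inserting this back into the conclusion of Lemma \ref{lem4.6} and combining like terms gives precisely
\[
\tau_i\leq Cu_i(y_i)^{-2}+C(L(\beta,i)+L(\beta,i)^{\beta-1})u_i(y_i)^{-2\beta/(n-2\sigma)},
\]
as required.

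I do not expect any genuine obstacle here; the real work has already been done in Lemma \ref{lem4.6} (the Pohozaev identity combined with the pointwise bounds of Propositions \ref{prop4.1}--\ref{prop4.3} and the $(*)_\beta$ decomposition \eqref{eq:lem4.5-(*)-1}--\eqref{eq:lem4.5-(*)-3}) and in Lemma \ref{lem4.7} (testing the degenerate equation against $\eta\,\partial_{y_j}U_i$ and invoking the same ingredients). The only minor point to check is the exponent bookkeeping, namely that $-2(\beta-1)/(n-2\sigma)-2/(n-2\sigma)=-2\beta/(n-2\sigma)$, which is trivially the case.
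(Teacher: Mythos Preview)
Your proposal is correct and follows exactly the paper's approach: the paper's proof reads in its entirety ``It follows immediately from Lemma \ref{lem4.6} and Lemma \ref{lem4.7}.'' Your substitution argument and exponent check are precisely what is meant.
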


\begin{proof}
It follows immediately from Lemma \ref{lem4.6} and Lemma \ref{lem4.7}.
\end{proof}

\begin{cor}\label{cor4.1}
In addition to the assumptions of Lemma \ref{lem4.6}, we further assume that
one of the following two conditions holds:
(i)
 \[
\beta=n-2\sigma \mbox{ and }L(\beta,i)=o(1),
\]
and
(ii)\[
\beta>n-2\sigma \mbox{ and }L(\beta,i)=O(1).
\]
Then for any $0<\delta<1$ we have
\[
\lim_{i\to\infty} u_i(y_i)^{2}\int_{B_{\delta}(y_i)}(y-y_i)\cdot \nabla(K_iH_i^{\tau_i})u_i^{p_i+1}=0.
\]
\end{cor}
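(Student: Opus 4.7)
The plan is to expand
\[
\nabla(K_iH_i^{\tau_i})=H_i^{\tau_i}\nabla K_i+\tau_i K_iH_i^{\tau_i-1}\nabla H_i
\]
and estimate the two resulting integrals separately, then check that each contribution, multiplied by $u_i(y_i)^2$, tends to $0$ under either alternative on $(\beta, L(\beta,i))$. The key ingredients already available are Lemma \ref{lem4.5} (size of $\int |y-y_i|^s u_i^{p_i+1}$), Lemma \ref{lem4.7} (bound on $|\nabla K_i(y_i)|$), and Lemma \ref{lem4.8} (sharp bound on $\tau_i$).

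For the $\tau_i$-piece, the $C^{1,1}$ bound \eqref{4.21} on $H_i$ gives
\[
\Big|\tau_i\int_{B_\delta(y_i)}(y-y_i)\cdot K_iH_i^{\tau_i-1}\nabla H_i\,u_i^{p_i+1}\Big|\le C\tau_i\!\!\int_{B_1(y_i)}\!\!|y-y_i|u_i^{p_i+1}\le C\tau_i u_i(y_i)^{-2/(n-2\sigma)}
\]
by Lemma \ref{lem4.5} with $s=1$. Substituting the Lemma \ref{lem4.8} bound on $\tau_i$ and multiplying by $u_i(y_i)^2$ leaves a term $u_i(y_i)^{-2/(n-2\sigma)}$ and a term $(L(\beta,i)+L(\beta,i)^{\beta-1})u_i(y_i)^{2(n-2\sigma-1-\beta)/(n-2\sigma)}$.

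For the leading piece $\int(y-y_i)\cdot\nabla K_i\,H_i^{\tau_i}u_i^{p_i+1}$, I would decompose $\nabla K_i(y)=\nabla K_i(y_i)+(\nabla K_i(y)-\nabla K_i(y_i))$. The first summand contributes at most $|\nabla K_i(y_i)|\int|y-y_i|u_i^{p_i+1}$; combining Lemma \ref{lem4.5} with $s=1$ and Lemma \ref{lem4.7}, and multiplying by $u_i(y_i)^2$, yields a sum of $u_i(y_i)^{-2/(n-2\sigma)}$ and $(L(\beta,i)+L(\beta,i)^{\beta-1})u_i(y_i)^{2(n-2\sigma-\beta)/(n-2\sigma)}$. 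For the difference $\nabla K_i(y)-\nabla K_i(y_i)$, I would reuse the pointwise estimate \eqref{eq:lem4.5-(*)-1} together with the Cauchy--Schwarz splitting \eqref{eq:lem4.5-(*)-2} from the proof of Lemma \ref{lem4.6}; this reduces matters either to the term already handled, or to $(L(\beta,i)+L(\beta,i)^{\beta-1})\int|y-y_i|^\beta u_i^{p_i+1}$. Since $\beta<n$, Lemma \ref{lem4.5} with $s=\beta$ gives $O(u_i(y_i)^{-2\beta/(n-2\sigma)})$, so after multiplying by $u_i(y_i)^2$ we again arrive at $(L(\beta,i)+L(\beta,i)^{\beta-1})u_i(y_i)^{2(n-2\sigma-\beta)/(n-2\sigma)}$.

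All error terms thus collapse to two archetypes: a negative power $u_i(y_i)^{-c}$ with $c>0$ (automatically $o(1)$), and $(L(\beta,i)+L(\beta,i)^{\beta-1})u_i(y_i)^{2(n-2\sigma-\beta)/(n-2\sigma)}$ (or the slightly smaller exponent $2(n-2\sigma-1-\beta)/(n-2\sigma)$). In case (i) the exponent is $0$ (resp.\ strictly negative) and the prefactor is $o(1)$ since $\beta-1>0$; in case (ii) the exponent is strictly negative while the prefactor is only $O(1)$. In both scenarios the product is $o(1)$, which is the claim. The only obstacle is purely combinatorial: keeping the three distinct exponents aligned and matching the sign of $n-2\sigma-\beta$ to the assumption on $L(\beta,i)$; there is no new analytic difficulty beyond the three lemmas already established.
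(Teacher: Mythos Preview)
Your proposal is correct and follows essentially the same route as the paper: split $\nabla(K_iH_i^{\tau_i})$ via the product rule, further decompose $\nabla K_i(y)$ around $\nabla K_i(y_i)$, and then feed in Lemma~\ref{lem4.5}, Lemma~\ref{lem4.7}, the estimate \eqref{eq:lem4.5-(*)-3}, and Lemma~\ref{lem4.8}. The paper's proof is terser but the logic and the controlling quantity $(L(\beta,i)+L(\beta,i)^{\beta-1})u_i(y_i)^{2(n-2\sigma-\beta)/(n-2\sigma)}$ are the same; your explicit case analysis (i)/(ii) just makes the final $o(1)$ conclusion transparent.
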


\begin{proof}
\[
\begin{split}
&\left|\int_{B_{\delta}(y_i)}(y-y_i)\cdot \nabla(K_iH_i^{\tau_i})u_i^{p_i+1}\right|\\
&\leq\left|\int_{B_{\delta}(y_i)}(y-y_i)\cdot \nabla K_i H_i^{\tau_i}u_i^{p_i+1}\right|+
\tau_i\left|\int_{B_{\delta}(y_i)}(y-y_i)\cdot \nabla H_iH_i^{\tau_i-1}K_i u_i^{p_i+1}\right|\\
&\leq C|\nabla K_i(y_i)|\int_{B_{\delta}(y_i)}|y-y_i|u_i^{p_i+1}\\
&\quad+C\int_{B_{\delta}(y_i)}|y-y_i||\nabla K_i(y)-\nabla K_i(y_i)|u_i^{p_i+1}+\tau_i\int_{B_{\delta}(y_i)}|y-y_i| u_i^{p_i+1}.\\
\end{split}
\]
The corollary follows immediately from Lemma \ref{lem4.7},  \eqref{eq:lem4.5-(*)-3} and  Lemma \ref{lem4.8}.
\end{proof}


\begin{prop}\label{prop5.1}
Suppose that $\{K_i\}$ satisfies \eqref{4.1} and $(*)_{n-2\sigma}$ condition for some positive constants $A_1,A_2$, $L$ independent of $i$,
and that $\{H_i\}$ satisfies \eqref{4.21} with $A_4, A_5$.
Let $u_i\in \D\cap C^2(B_2)$ be a solution of \eqref{4.20}. If $y_i\to 0$ is an
isolated blow up point of $\{u_i\}$ with \eqref{4.7} for some positive constant $A_3$, then $y_i\to 0$ is an
isolated simple blow up point.
\end{prop}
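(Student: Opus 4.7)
The plan is to argue by contradiction. Suppose $y_i\to 0$ is an isolated but not isolated simple blow up point, and set $m_i=u_i(y_i)$, $r_i=R_i m_i^{-(p_i-1)/2\sigma}$. Proposition \ref{prop4.1} shows that on $(0,r_i)$ the averaged quantity $\bar w_i(r)=r^{2\sigma/(p_i-1)}\bar u_i(r)$ has essentially only the bubble maximum as a critical point for large $i$; failure of isolated simple therefore produces, for each $\rho>0$ and infinitely many $i$, a further critical point of $\bar w_i$ in $(r_i,\rho)$. Let $\tilde r_i$ be the smallest such. If $\tilde r_i\geq\rho_0>0$ along a subsequence, picking $\rho<\rho_0$ would give isolated simple, a contradiction; hence $\tilde r_i\to 0$.

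I would then rescale at scale $\tilde r_i$: set $\xi_i(y):=\tilde r_i^{2\sigma/(p_i-1)}u_i(\tilde r_i y+y_i)$, $\tilde K_i(y):=K_i(\tilde r_i y+y_i)$, $\tilde H_i(y):=H_i(\tilde r_i y+y_i)$. Then $\xi_i$ satisfies \eqref{4.20} with coefficients $\tilde K_i,\tilde H_i$, has $\xi_i(0)=\tilde r_i^{2\sigma/(p_i-1)}m_i\ge R_i^{2\sigma/(p_i-1)}\to\infty$, and admits $0$ as an isolated blow up point with the same $A_3$. A direct scaling computation shows $\{\tilde K_i\}$ satisfies $(*)_{n-2\sigma}$ with constants $L(n-2\sigma,i)=o(1)$: each relevant power of $\tilde r_i$ appearing under the scaling has strictly positive exponent (using the implicit $n-2\sigma>1$). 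Because $\bar w_{\xi_i}(r)=\bar w_i(\tilde r_i r)$, the minimality of $\tilde r_i$ forces the only critical point of $\bar w_{\xi_i}$ in $(0,1/2)$ to be the image of the original bubble maximum at $r_i/\tilde r_i\to 0$; hence $0$ is now an isolated \emph{simple} blow up point for $\{\xi_i\}$, and all the results of Section \ref{Local analysis near isolated blow up points} apply to $\xi_i$. Combining Propositions \ref{prop4.2}, \ref{prop4.3} with the B\^ocher-type Lemma \ref{thm3.2}, a subsequence of the renormalizations $\xi_i(0)\,\Xi_i$ converges in $C^2_{\mathrm{loc}}(\overline{\B_1^+}\setminus\{0\})$ to
\[
Z(Y)=A\,|Y|^{2\sigma-n}+H(Y),
\]
with $A>0$ (forced by the lower bound of Proposition \ref{prop4.2}) and $H\in C^\infty(\overline{\B_1^+})$ solving \eqref{local fractional harmonic}.

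Two relations on $(A,H)$ would then drive the contradiction. First, the critical-point condition $\bar w_{\xi_i}'(1)=0$ passes to the limit as
\[
\bar h'(1)+\tfrac{n-2\sigma}{2}\,\bar h(1)=\tfrac{n-2\sigma}{2}\,A\,>\,0,
\]
where $h:=H(\cdot,0)$ and $\bar h$ is the spherical average over $\partial B_r\subset\R^n$. Second, I would apply the Pohozaev identity (Proposition \ref{thm3.3}) to $\Xi_i$ on $\B_1^+$ recentered at $y_i$ and manipulate as in the proof of Lemma \ref{lem4.4}. Using Corollary \ref{cor4.1} (available since $L(n-2\sigma,i)=o(1)$) together with Proposition \ref{prop4.3}, the contributions carrying $\nabla(\tilde K_i\tilde H_i^{\tau_i})$ and the $\partial B_1$-boundary integrals are $o(\xi_i(0)^{-2})$. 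Multiplying by $\xi_i(0)^2$, passing along a subsequence to the limit, and expanding $B''$ along the splitting $Z=A|Y|^{2\sigma-n}+H$: the pure $A|Y|^{2\sigma-n}$ piece vanishes pointwise at $|Y|=1$; the pure $H$ piece integrates to $0$ by testing $\mathrm{div}(t^{1-2\sigma}\nabla H)=0$ against both $H$ and $\langle Y,\nabla H\rangle$; and the surviving cross-terms, after using the conservation law $\int_{\partial''\B_r^+}t^{1-2\sigma}\partial_r H\,dS=0$ (from integrating the equation of $H$ on $\B_r^+$) together with the mean-value property of the $\sigma$-harmonic extension (via even reflection across $t=0$ and the mean-value property for the $A_2$-weighted Laplacian), yield the sign constraint $H(0)\leq 0$.

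The main obstacle is this last step: turning the pair of relations $H(0)\leq 0$ and $\bar h'(1)+\tfrac{n-2\sigma}{2}\bar h(1)=\tfrac{n-2\sigma}{2}A>0$ into a contradiction. In the local case ($\sigma=1$) this is immediate: $h$ is harmonic on $B_1$, so $\bar h(r)\equiv h(0)=H(0)$ and $\bar h'(1)=0$, forcing $H(0)=A>0$ and directly contradicting $H(0)\leq 0$. In the fractional regime $h$ only satisfies the nonlocal $(-\Delta)^\sigma h=0$ in $B_1$ and $\bar h$ need no longer be constant in $r$; one must therefore relate $\bar h(1)$ and $\bar h'(1)$ to $H(0)$, for instance via a spherical-harmonic decomposition of $H$ on $\B_1^+$ and an analysis of the equatorial traces of each angular mode, so as to deduce from $H(0)\leq 0$ the opposite inequality $\bar h'(1)+\tfrac{n-2\sigma}{2}\bar h(1)\leq 0$, which would contradict the strict positivity of $\tfrac{n-2\sigma}{2}A$. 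This spectral/trace step, replacing the trivial mean-value argument available in the local setting, is the key new ingredient required to close the proof.
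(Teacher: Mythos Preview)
Your overall strategy (contradiction, rescale at the second critical radius, isolated simple after rescaling, pass to the limit, contradict Pohozaev) matches the paper's exactly, and your verification that $\{\tilde K_i\}$ satisfies $(*)_{n-2\sigma}$ with $L(n-2\sigma,i)=o(1)$ so that Corollary~\ref{cor4.1} applies is correct. The gap is in the analysis of the limit $Z=A|Y|^{2\sigma-n}+H(Y)$: you treat $H$ as a merely local solution on $\overline{\B_1^+}$, and this is what creates your ``main obstacle''.

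The point you are missing is that the rescaled functions $\xi_i$ are defined on balls of radius comparable to $1/\tilde r_i\to\infty$, so the convergence $\xi_i(0)\Xi_i\to Z$ holds in $C^\alpha_{\mathrm{loc}}(\overline{\R^{n+1}_+}\setminus\{0\})$, and $H$ solves \eqref{local fractional harmonic} on \emph{all} of $\R^{n+1}_+$. Since $Z\ge 0$ and $A|Y|^{2\sigma-n}\to 0$ at infinity, one has $\liminf_{|Y|\to\infty}H(Y)\ge 0$; the weak maximum principle together with Harnack then force $H\equiv\mathrm{const}\ge 0$. With $H$ constant, your critical-point relation collapses to $A=H>0$, and the $B''$ integral for $G=A|Y|^{2\sigma-n}+H$ computes explicitly (only the cross-term survives) to
\[
\int_{\partial''\B_\delta^+}s^{1-2\sigma}B''(Y,G,\nabla G,\delta,\sigma)\;=\;-\frac{(n-2\sigma)^2}{2}\,A\cdot H\int_{\partial''\B_1^+}t^{1-2\sigma}\;<\;0,
\]
which, combined with the $o(\xi_i(0)^{-2})$ control of the $B'$ side that you already derived via Corollary~\ref{cor4.1}, contradicts the Pohozaev identity. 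No spectral or trace analysis is needed; your proposed ``key new ingredient'' is an artifact of having localized $H$ unnecessarily.
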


\begin{proof} Due to Proposition \ref{prop4.1}, $r^{2\sigma/(p_i-1)}\overline u_i(r)$ has precisely
one critical point in the interval $0<r<r_i$,
where $r_i=R_iu_i(y_i)^{-\frac{p_i-1}{2\sigma}}$ as before.
Suppose $y_i\to 0$ is not an isolated simple blow up point and let $\mu_i$ be the second critical point of $r^{2\sigma/(p_i-1)}\overline u_i(r)$.
Then we see that
\be\label{5.2}
\mu_i\geq r_i,\quad \dlim_{i\to \infty}\mu_i=0.
\ee

Without loss of generality, we assume that $y_i=0$. Set
\[
\phi_i(y)=\mu_i^{2\sigma/(p_i-1)}u_i(\mu_i y),\quad y\in \R^n.
\]
Clearly, $\phi_i$ satisfies
\[
\begin{split}
(-\Delta)^\sigma \phi_i(y)&=\tilde K_i(y)\tilde H_i^{\tau_i}(y)\phi_i^{p_i}(y),
\\
|y|^{2\sigma/(p_i-1)}\phi_i(y)&\leq A_3,\quad |y|<1/\mu_i,
\\
\lim_{i\to \infty}\phi_i(0)&=\infty,
\end{split}
\]
\[
r^{2\sigma/(p_i-1)}\overline \phi_i(r)\mbox{ has precisely one critical point in } 0<r<1,
\]
and
\[
\frac{\mathrm{d}}{\mathrm{d}r}\left\{ r^{2\sigma/(p_i-1)}\overline \phi_i(r)\right\}\Big|_{r=1}=0,
\]
where $\tilde K_i(y)=K_i(\mu_i y)$, $\tilde H_i(y)=H_i(\mu_i y)$ and $\overline \phi_i(r)=|\pa B_r|^{-1}\int_{\pa B_r}\phi_i$.

Therefore, $0$ is an isolated simple blow up point of $\phi_i$. Let $\Phi_i(Y)$ be the extension of $\phi_i(y)$ in the upper half space.
Then Lemma \ref{lem4.1}, Proposition \ref{prop4.3}, Lemma \ref{thm3.2} and elliptic equation theory together imply that
\[
\Phi_i(0)\Phi_i(Y)\to G(Y)=A|Y|^{2\sigma-n}+H(Y)
\quad \mbox{in } C^{\al}_{loc}(\overline{\R^{n+1}_+}\setminus \{0\})\cap C^2_{loc}(\R^{n+1}_+).
\]
and
\be\label{5.3}
\phi_i(0)\phi_i(y)\to G(y,0)=A|y|^{2\sigma-n}+H(y,0)
\quad \mbox{in } C^2_{loc}(\R^{n}\backslash\{0\})
\ee
as $i\to \infty$, where $A>0$, $H(Y)$ satisfies
\[
\begin{cases}
\mathrm{div}(s^{1-2\sigma}\nabla H)=0\quad &\mbox{in }\R^{n+1}_+\\
-\dlim_{s\to 0}s^{1-2\sigma}\pa_s H(y,s)=0\quad &\mbox{for } y\in \R^{n}.
\end{cases}
\]

Note that $G(Y)$ is nonnegative, we have $\liminf_{|Y|\to \infty}H(Y)\geq 0$. It follows from the weak maximum principle and the
Harnack inequality that $H(y)\equiv H\geq 0$ is a constant. Since
\[
\frac{\mathrm{d}}{\mathrm{d}r}\left\{ r^{2\sigma/(p_i-1)}\phi_i(0)\overline \phi_i(r)\right\}\Big|_{r=1}=
\phi_i(0)\frac{\mathrm{d}}{\mathrm{d}r}\left\{ r^{2\sigma/(p_i-1)}\overline \phi_i(r)\right\}\Big|_{r=1}=0,
\]
we have, by sending $i$ to $\infty$ and making use of \eqref{5.3}, that
\[A=H>0.\]

We are going to derive a contradiction to the Pohozaev identity  Proposition \ref{thm3.3}, by showing that for small positive $\delta$
\be\label{5.4}
\limsup_{i\to \infty}\Phi_i(0)^2\int_{\pa' \B_\delta^+}B'(Y,\Phi_i,\nabla \Phi_i,\delta,\sigma)\leq 0,
\ee
and
\be\label{5.5}
\limsup_{i\to \infty}\Phi_i(0)^2\int_{\pa'' \B_\delta^+}s^{1-2\sigma}B''(Y,\Phi_i,\nabla \Phi_i,\delta,\sigma)< 0.
\ee And thus Proposition \ref{prop5.1} will be established.

By Proposition \ref{thm2.4-1}, it is easy to verify \eqref{5.5} by that
\[
\begin{split}
&\limsup_{i\to \infty}\Phi_i(0)^2\int_{\pa'' \B_\delta^+}s^{1-2\sigma}B''(Y,\Phi_i,\nabla \Phi_i,\delta,\sigma)\\
&\quad=\int_{\pa'' \B_\delta^+}s^{1-2\sigma}B''(Y,G,\nabla G,\delta,\sigma)=-\frac{(n-2\sigma)^2}{2}A^2\int_{\pa''\B_1^+}t^{1-2\sigma}< 0,
\end{split}
\]
which shows \eqref{5.5}. On the other hand, via integration by parts, we have
\[
\begin{split}
&\int_{\pa' \B_\delta^+}B'(Y,\Phi_i,\nabla \Phi_i,\delta,\sigma)\\&
=\frac{n-2\sigma}{2} \int_{B_\delta}\tilde K_i\tilde H_i^{\tau_i}\phi_i^{p_i+1}
+\int_{B_\delta}\langle y,\nabla \phi_i\rangle \tilde K_i\tilde H_i^{\tau_i}\phi_i^{p_i}\\&
= \frac{n-2\sigma}{2}\int_{B_\delta}\tilde K_i\tilde H_i^{\tau_i}\phi_i^{p_i+1}-
\frac{n}{p_i+1} \int_{B_\delta}\tilde K_i\tilde H_i^{\tau_i}\phi_i^{p_i+1}\\&
\quad -\frac{1}{p_i+1} \int_{B_\delta}\langle y,\nabla(\tilde K_i\tilde H_i^{\tau_i})\rangle \phi_i^{p_i+1}
+\frac{\delta}{p_i+1} \int_{\pa B_\delta}\tilde K_i\tilde H_i^{\tau_i}\phi_i^{p_i+1}\\&
\leq -\frac{1}{p_i+1} \int_{B_\delta}\langle y,\nabla(\tilde K_i\tilde H_i^{\tau_i})\rangle \phi_i^{p_i+1}+C\phi_i(0)^{-(p_i+1)}.
\end{split}
\]
where Proposition \ref{prop4.3} is used in the last inequality. It is easy to see that $\{\tilde K_i\}$ satisfies $(*)_{n-2\sigma}$ with $L(\beta,i)=o(1)$.
Therefore, \eqref{5.4} follows from
Corollary \ref{cor4.1}.
\end{proof}

\begin{prop}\label{prop5.2}
Suppose the assumptions in Proposition \ref{prop5.1} except the $(*)_{n-2\sigma}$ condition for $K_i$. Then
\[
|\nabla K_i(y_i)|\to 0,\quad \mbox{as } i \to \infty.
\]
\end{prop}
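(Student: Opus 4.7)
The plan is to apply a $y_j$-directional Pohozaev-type identity at the bubble scale, using $K_i\in C^{1,1}$ uniformly to Taylor-expand the rescaled coefficient past the (constant) leading term. This sidesteps the loss of the $(*)_{n-2\sigma}$ assumption---which was used in Proposition~\ref{prop5.1} to upgrade isolated to isolated simple blow up---because at the bubble scale the rescaled $\tilde K_i$ is already approximately constant, and its next-order Taylor correction is precisely $r_i\nabla K_i(y_i)$. First I would set $m_i=u_i(y_i)$, $r_i=m_i^{-(p_i-1)/(2\sigma)}$, and $\phi_i(y)=m_i^{-1}u_i(y_i+r_iy)$. By Proposition~\ref{prop4.1}, along a subsequence $\phi_i\to\phi:=(1+k|y|^2)^{(2\sigma-n)/2}$ in $C^2_{\mathrm{loc}}(\R^n)$, and the extensions satisfy $\Phi_i\to\mathcal P_\sigma[\phi]=:\Phi$ in $C^\al_{\mathrm{loc}}(\overline{\R^{n+1}_+})$. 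The rescaled equation reads $(-\Delta)^\sigma\phi_i=c(n,\sigma)\tilde K_i\tilde H_i^{\tau_i}\phi_i^{p_i}$ with $\tilde K_i(y)=K_i(y_i+r_iy)$ and $\tilde H_i(y)=H_i(y_i+r_iy)$.

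Next, fix $j\in\{1,\dots,n\}$ and a cutoff $\eta_R\in C_c^\infty(\R^{n+1})$ with $\eta_R\equiv 1$ on $\B_{R/2}$, supported in $\B_R$, $|\nabla\eta_R|\leq C/R$. Multiplying $\mathrm{div}(t^{1-2\sigma}\nabla\Phi_i)=0$ by $\eta_R^2\,\pa_{y_j}\Phi_i$, integrating by parts over $\R^{n+1}_+$, and using the boundary condition yields
\[
\frac{c_0}{p_i+1}\int_{\R^n}\eta_R^2\,\pa_{y_j}(\tilde K_i\tilde H_i^{\tau_i})\,\phi_i^{p_i+1}\,\ud y\;=\;\mathcal E_R^{(i)},
\]
where $\mathcal E_R^{(i)}$ is a sum of three integrals supported in the annulus $\{R/2\leq|Y|\leq R\}$, each involving $\nabla\eta_R$ together with $|\nabla\Phi_i|^2$, $\nabla\Phi_i\,\pa_{y_j}\Phi_i$, or $\phi_i^{p_i+1}$. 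Since $\Phi$ and $\nabla\Phi$ decay like $|Y|^{-(n-2\sigma)}$ and $|Y|^{-(n-2\sigma+1)}$ respectively, the $t^{1-2\sigma}$-weighted measure of the annulus equals $O(R^{n+2-2\sigma})$, and $\Phi_i$ inherits these decay rates on the relevant annulus from the $C^2$ convergence on $B_{2R_i}$ given by Proposition~\ref{prop4.1} (with a diagonal choice of parameters), a direct computation gives $|\mathcal E_R^{(i)}|\leq C R^{-(n-2\sigma+1)}$.

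Now I would pick $R=R_i:=r_i^{-\al}$ with $\al\in(1/(n-2\sigma+1),\,1]$, which is non-empty for $n\geq 2$, $\sigma\in(0,1)$, noting that $R_i\leq\bar r/(2r_i)$ for $i$ large so the rescaled domain is admissible. Using $K_i,H_i\in C^{1,1}$ uniformly, Taylor expansion gives
\[
\pa_{y_j}(\tilde K_i\tilde H_i^{\tau_i})(y)=r_i\tilde H_i^{\tau_i}(y)\,\pa_{y_j}K_i(y_i)+O(r_i^2|y|)+O(r_i\tau_i).
\]
Substituting and dividing by $r_i$ yields
\[
\pa_{y_j}K_i(y_i)\!\int\!\eta_{R_i}^2\tilde H_i^{\tau_i}\phi_i^{p_i+1}+O(r_i)\!\!\int\!|y|\phi_i^{p_i+1}+O(\tau_i)\!\!\int\!\phi_i^{p_i+1}=O\!\left(r_i^{\al(n-2\sigma+1)-1}\right).
\]
Sending $i\to\infty$: the right-hand side vanishes by the choice of $\al$; the leading integral on the left converges to $\int_{\R^n}\phi^{p+1}>0$ by dominated convergence (using $\phi_i\to\phi$ in $C^2_{\mathrm{loc}}$, $\tilde H_i^{\tau_i}\to 1$ on compacta, and $\phi^{p+1}\sim|y|^{-2n}$ at infinity); the remaining integrals are bounded (in particular $\int|y|\phi^{p+1}<\infty$ for $n\geq 2$); and $r_i\to 0$, $\tau_i\to 0$ (the latter by Lemma~\ref{lem4.4}). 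Consequently $\pa_{y_j}K_i(y_i)\to 0$ for each $j$, hence $|\nabla K_i(y_i)|\to 0$.

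The main technical obstacle is the uniform annular decay $|\mathcal E_R^{(i)}|\leq CR^{-(n-2\sigma+1)}$ when $R=R_i\to\infty$ simultaneously with $i$; the $C^\al_{\mathrm{loc}}$ convergence of $\Phi_i$ to $\Phi$ does not by itself yield pointwise decay of $\nabla\Phi_i$ at a spatially growing scale. I would handle this by choosing $\va_i$ diagonally in Proposition~\ref{prop4.1} so that $\|\phi_i-\phi\|_{C^2(B_{2R_i})}=o(R_i^{-n-2})$, then invoking the gradient estimates of Proposition~\ref{thm2.4-1} for the degenerate elliptic equation on dyadic annuli, together with a barrier comparison between $\Phi_i-\Phi$ and a suitable Poisson decay profile, to transfer the sharp decay of $|\nabla\Phi|$ to $|\nabla\Phi_i|$ on $\{|Y|\sim R_i\}$.
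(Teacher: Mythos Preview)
Your strategy—a translational Pohozaev identity at the rescaled bubble scale with a cutoff at radius $R_i\to\infty$—is natural and would be more direct than the paper's route, but it contains a real gap at the error estimate $|\mathcal E_{R_i}^{(i)}|\le CR_i^{-(n-2\sigma+1)}$.

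That bound needs $\Phi_i\le CR_i^{-(n-2\sigma)}$ on the half-annulus $\{|Y|\sim R_i\}$. Via Lemma~\ref{lem4.1} this would follow from $\phi_i(y_*)\le CR_i^{-(n-2\sigma)}$ for some $|y_*|\sim R_i$, which in turn requires the Proposition~\ref{prop4.1} convergence $\|\phi_i-\phi\|_{C^2(B_{2R_i})}\to 0$ to hold for your specific choice $R_i=r_i^{-\al}$. But Proposition~\ref{prop4.1} is proved by soft compactness and carries no quantitative rate: the admissible $R_i$ grow at an unspecified speed, with no guaranteed coupling to $r_i$. Your proposed barrier/Poisson fix does not help either: the tail $\{|\xi|>2R_i\}$ in $\mathcal P_\sigma[\phi_i-\phi]$ only sees the weak isolated-blow-up bound $\phi_i(\xi)\le A_3|\xi|^{-(n-2\sigma)/2+o(1)}$ and contributes $O(R_i^{-(n-2\sigma)/2})$, not $O(R_i^{-(n-2\sigma)})$. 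With only the isolated-blow-up decay on the annulus, Harnack plus Caccioppoli give merely $|\mathcal E_{R_i}^{(i)}|=O(R_i^{-1+o(1)})$, and after dividing by $r_i$ the right-hand side diverges (Proposition~\ref{prop4.1} forces $R_ir_i\to 0$). This is exactly the obstruction that isolated \emph{simple} blow up removes (via Proposition~\ref{prop4.3}); without it your argument stalls.

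The paper sidesteps this by contradiction and a case split. If the blow up is isolated simple, the full machinery (Proposition~\ref{prop4.3}, Lemmas~\ref{lem4.4}--\ref{lem4.5}) applies and the argument of Lemma~\ref{lem4.7} at the \emph{original} scale yields $|\nabla K_i(y_i)|=o(1)$. If not, one rescales by the secondary critical radius $\mu_i$ (as in the proof of Proposition~\ref{prop5.1}) so that the rescaled problem \emph{is} isolated simple; the assumed lower bound $|\nabla K_i(y_i)|\ge d/2$ is then exploited to force $\mu_i,\tau_i\le C\phi_i(0)^{-2}$, which supplies \eqref{5.4} and contradicts the Pohozaev identity.
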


\begin{proof}
Suppose that contrary that
\be\label{5.6}
|\nabla K_i(y_i)|\to d>0.
\ee
Without loss of generality, we assume $y_i=0$. There are two cases.

\medskip

\noindent \textit{Case 1.} $0$ is an isolated simple blow up point.
\medskip

In this case, we argue as in the proof of Lemma \ref{lem4.7} and obtain
\[
 \left|\int_{B_1}\nabla K_iH_i^{\tau_i}u_i^{p_i+1}\right|\leq Cu_i^{-2}(0)+C\tau_i.
\]
It follows from the mean value theorem,  Lemma \ref{lem4.4} and Lemma \ref{lem4.5} that
\[
 |\nabla K_i(0)|\leq C\int_{B_1}|\nabla K_i(y)-\nabla K_i(0)|H_i^{\tau_i}u_i^{p_i+1}+o(1)=o(1).
\]

\medskip

\noindent\textit{Case 2.} $0$ is not an isolated simple blow up point.

\medskip

In this case we argue as the proof of Proposition \ref{prop5.1}. The only difference is that we cannot derive \eqref{5.4} from Corollary \ref{cor4.1},
since $(*)_{n-2\sigma}$ condition for $K_i$ is not assumed. Instead, we will use the condition \eqref{5.6} to show \eqref{5.4}.

Let $\mu_i, \phi_i, \Phi_i$, $\tilde{K_i}$ and $\tilde{H_i}$  be as in the proof of Proposition \ref{prop5.1}.
The computation at the end of the proof of Proposition \ref{prop5.1} gives
\[
\begin{split}
&\int_{\pa' \B_\delta^+}B'(Y,\Phi_i,\nabla \Phi_i,\delta,\sigma)\\&
\leq -\frac{1}{p_i+1} \int_{B_\delta}\langle y,\nabla(\tilde K_i\tilde H_i^{\tau_i})\rangle \phi_i^{p_i+1}+C\phi_i(0)^{-(p_i+1)}.
\end{split}
\]

Now we estimate the integral term $\int_{B_\delta}\langle y,\nabla(\tilde K_i\tilde H_i^{\tau_i})\rangle \phi_i^{p_i+1}$.
Using Lemma \ref{lem4.4} and arguing the same as in the proof of Lemma \ref{lem4.4}, we have
\[
\begin{split}
 \tau_i&\leq C\phi_i(0)^{-2}+C\int_{B_\delta}|y||\nabla \tilde{K}_i(y)|H_i^{\tau_i} \phi_i^{p_i+1}\\&
\leq C\phi_i(0)^{-2}+C\mu_i\phi_i(0)^{-2/(n-2\sigma)}.
\end{split}
\]
By \eqref{4.23},
\[
 \left|\int_{B_\delta}\nabla \tilde K_i \tilde H_i^{\tau_i}\phi_i^{p_i+1} \right|\leq C\phi_{i}(y_i)^{-2}+C\tau_i.
\]
It follows that
\[
\begin{split}
 |\nabla \tilde K_i(0)|&\leq C\int_{B_\delta}|\nabla \tilde K_i(y)-\nabla \tilde K_i(0)|\phi_i^{p_i+1}+C\phi_i(0)^{-2}+C\tau_i\\
&\leq C\mu_i \phi_{i}(0)^{-2/(n-2\sigma)}+C\phi_i(0)^{-2}+C\tau_i.
\end{split}
\]
Since $|\nabla \tilde K_i(0)|=\mu_i |\nabla K_i(0)|\geq (d/2)\mu_i$, we have
\[
 \mu_i\leq C\phi_i(0)^{-2}+C\tau_i.
\]
It follows that
\[
 \tau_i\leq C\phi_i(0)^{-2}\quad \mbox{and}\quad \mu_i\leq C\phi_i(0)^{-2}.
\]
Therefore,
\[
 \left|\int_{B_\delta}\langle y,\nabla(\tilde K_i\tilde H_i^{\tau_i})\rangle \phi_i^{p_i+1}\right|\leq C\phi_i(0)^{-2-2/(n-2\sigma)}
\]
and \eqref{5.4} follows immediately.

\end{proof}

\medskip

\section{Estimates on the sphere and proofs of main theorems}
\label{Estimates on the sphere and proof of main theorems}

Consider
\be\label{6.1}
 P_\sigma(v)=c(n,\sigma)K v^{p},\quad \mbox{on } \mathbb{S}^n,
\ee where $p\in (1,\frac{n+2\sigma}{n-2\sigma}]$ and $K$ satisfies
\be\label{6.2}
A_1^{-1}\leq K\leq A_1, \quad \mbox{on } \mathbb{S}^n,
\ee
and
\be\label{6.3}
\|K\|_{C^{1,1}(\Sn)}\leq A_2.
\ee

\begin{prop}\label{prop6.1}
Let $v\in C^2(\Sn)$ be a positive solution to \eqref{6.1}. For any $0<\va<1$ and $R>1$, there exist large positive constants $C_1$, $C_2$
depending on
$n,\sigma, A_1,A_2,\va$ and $R$ such that, if
\[
\max_{\mathbb{S}^n} v\geq C_1,
\]
then $\frac{n+2\sigma}{n-2\sigma}-p<\va$, and there exists a finite set $\wp(v)\subset \Sn$ such that\\
(i). If $P\in \wp(v)$, then it is a local maximum of $v$ and
in the stereographic projection  coordinate system $\{y_1,\cdots,y_n\}$ with $P$ as the south pole,
\be\label{eq:isolated blowup-1}
\|v^{-1}(P)v(v^{-\frac{(p-1)}{2\sigma}}(P)y)-(1+k|y|^2)^{(2\sigma-n)/2}\|_{C^2(B_{2R})}\leq \va,
\ee
where $k=K(P)^{1/\sigma}/4$.\\
(ii). If $P_1,P_2$ belonging to $\wp(v)$ are two different points, then
\[
B_{Rv(P_1)^{-(p-1)/2\sigma}}(P_1)\cap B_{Rv(P_2)^{-(p-1)/2\sigma}}(P_2)=\emptyset.
\]
(iii). $v(P)\leq C_2\{\mbox{dist}(P,\wp(v))\}^{-2\sigma/(p-1)}$ for all $P\in \mathbb{S}^n$.
\end{prop}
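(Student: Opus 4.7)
My plan is a proof by contradiction combined with an iterative bubble extraction, in the spirit of Schoen-Zhang \cite{SZ} and Li \cite{Li95}, driven by the Liouville theorem \ref{thm3.1} and the bubble-profile Proposition \ref{prop4.1} established above. Fix $\va\in(0,1)$ and $R>1$. If the conclusion fails, one obtains a sequence of solutions $v_i\in C^2(\Sn)$ to \eqref{6.1} (with exponents $p_i\in(1,(n+2\sigma)/(n-2\sigma)]$ and functions $K_i$ satisfying \eqref{6.2}, \eqref{6.3}) with $m_i:=\max_{\Sn}v_i\to\infty$, for which no finite set $\wp(v_i)$ with properties (i)--(iii) exists for the given $\va,R$ and arbitrary thresholds $C_1=C_2=i$.

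The first step is to produce the leading bubble and deduce $p_i\to(n+2\sigma)/(n-2\sigma)$. Let $P_i^{(1)}$ achieve $\max v_i$. Using stereographic projection centered at $P_i^{(1)}$ together with \eqref{eq:equ of projection}, transform the equation to one for $u_i$ on $\R^n$, and rescale
\[
\phi_i(y):=m_i^{-1}u_i\bigl(m_i^{-(p_i-1)/(2\sigma)}y\bigr),
\]
which satisfies a fractional Laplacian equation with coefficients uniformly bounded in $C^{1,1}$ on expanding balls, $\phi_i(0)=1$, and $0\leq\phi_i\leq 1+o(1)$ locally. Passing to the Caffarelli-Silvestre extension $\Phi_i$, Corollary \ref{thm2.3} and Theorem \ref{thm2.2} yield uniform local $C^{\alpha}$ up to $\{t=0\}$ and interior $C^{2\sigma+\alpha}$ estimates; extracting a subsequential limit $\Phi$ with trace $\phi\geq 0$ and $\phi(0)=1$, one obtains a weak solution of \eqref{eq} (if $p_i\to(n+2\sigma)/(n-2\sigma)$) or of the subcritical analog. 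The latter case contradicts Remark \ref{subcritical nonexistence}, so $p_i\to(n+2\sigma)/(n-2\sigma)$, and Theorem \ref{thm3.1} identifies $\phi$ with the standard bubble \eqref{standard bubble on sn} with parameter $k=K_i(P_i^{(1)})^{1/\sigma}/4$, establishing \eqref{eq:isolated blowup-1} at $P_i^{(1)}$ for large $i$.

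Next I iterate. Given selected points $P_i^{(1)},\ldots,P_i^{(k)}$ satisfying (i) and mutually (ii), set $d_k(P):=\min_{j\leq k}\mathrm{dist}_{g_{\Sn}}(P,P_i^{(j)})$ and
\[
M_k:=\sup_{P\in\Sn}v_i(P)\,d_k(P)^{2\sigma/(p_i-1)}.
\]
If $M_k\leq C_2$ for the constant $C_2$ to be chosen, condition (iii) holds and I stop. Otherwise pick $P_i^{(k+1)}$ nearly attaining this supremum. The definition of $M_k$ implies that on the ball of radius $\tfrac{1}{2}d_k(P_i^{(k+1)})$ around $P_i^{(k+1)}$, $v_i$ is bounded by $2^{2\sigma/(p_i-1)}v_i(P_i^{(k+1)})$ times the appropriate power of $|y-P_i^{(k+1)}|$, which after rescaling by $\lambda_{k+1}:=v_i(P_i^{(k+1)})^{-(p_i-1)/(2\sigma)}$ yields the isolated blow-up bound \eqref{4.7}. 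Since $d_k(P_i^{(k+1)})/\lambda_{k+1}\geq (C_2/2)^{(p_i-1)/(2\sigma)}\to\infty$ as $C_2\to\infty$ and $p_i\to(n+2\sigma)/(n-2\sigma)$, the previously selected points recede to infinity in rescaled coordinates. Hence Proposition \ref{prop4.1} applies and gives the bubble profile \eqref{eq:isolated blowup-1} at $P_i^{(k+1)}$, while choosing $C_2\geq 4R$ enforces (ii).

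The delicate step is ensuring the iteration terminates and that all constants depend only on $n,\sigma,A_1,A_2,\va,R$. Each bubble contributes, via \eqref{eq:isolated blowup-1}, a definite mass
\[
\int_{B_{R\lambda_j}(P_i^{(j)})}v_i^{p_i+1}\geq\delta_0=\delta_0(n,\sigma,A_1,R,\va)>0,
\]
and by disjointness (ii) these masses sum. Since for each fixed $v_i\in C^2(\Sn)$ the integral $\int_{\Sn}v_i^{p_i+1}$ is finite, the iteration terminates after finitely many steps for each $i$, producing the desired $\wp(v_i)$ and contradicting the choice of the sequence. The main bookkeeping obstacle is to verify that the absolute constants $C_1,C_2$ so produced depend only on the allowed data and not on $i$ or $k$; this ultimately traces back to the uniformity of Proposition \ref{prop4.1} and Theorem \ref{thm3.1} and to the fact that every parameter appearing in the selection is controlled by $n,\sigma,A_1,A_2,\va,R$.
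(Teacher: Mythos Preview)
Your approach is essentially the same as the one the paper invokes: the paper omits the proof and refers to Proposition~4.1 in \cite{Li95} and Lemma~3.1 in \cite{SZ}, citing precisely the ingredients you use---Theorem~\ref{thm3.1}, Remark~\ref{subcritical nonexistence}, and (the proof of) Proposition~\ref{prop4.1}---and your iterative weighted-maximum selection with termination via disjoint bubble mass is exactly the Schoen--Zhang/Li scheme. Two small points worth tightening: (a) the selected point nearly attaining $M_k$ need not be a local maximum of $v_i$, so one should replace it by the maximum of $v_i$ over $B_{d_k/2}$ before invoking (i); (b) since Proposition~\ref{prop4.1} is phrased for sequences, its application at each selected point should be understood as the uniform statement proved by the same contradiction argument---if (i) failed along any diagonal subsequence of selected points with values tending to infinity and isolated bound \eqref{4.7}, rescaling and the Liouville theorem would yield a contradiction.
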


\begin{proof}
Given Theorem \ref{thm3.1}, Remark \ref{subcritical nonexistence} and the proof of Proposition \ref{prop4.1}, the proof of Proposition \ref{prop6.1} is similar to that  of Proposition 4.1 in \cite{Li95}  and Lemma 3.1 in \cite{SZ}, and is omitted here. We refer to \cite{Li95} and \cite{SZ} for details.
\end{proof}

\begin{prop}\label{prop6.2} Assume the hypotheses in Proposition \ref{prop6.1}.
Suppose that there exists some constant
$d>0$ such that $K$ satisfies $(*)_{n-2\sigma}$ for some $L$ in $\om_d=\{P\in \mathbb{S}^n:|\nabla K(P)|<d\}$. Then, for
$\va>0$, $R>1$ and any solution $v$ of \eqref{6.1} with $\max_{\mathbb{S}^n}v>C_1$, we have
\[
 |P_1-P_2|\geq \delta^*>0,\quad \mbox{for any }P_1,P_2\in \wp(v)\mbox{ and } P_1\neq P_2,
\]
where $\delta^*$ depends only on $n,\sigma, \delta,\va, R,A_1,A_2,L_2,d$.
\end{prop}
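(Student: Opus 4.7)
The proof will go by contradiction, following the blow-up scheme of \cite{Li95} and \cite{SZ}. Suppose there is a sequence $v_i$ of solutions of \eqref{6.1} with $\max_{\Sn} v_i \to \infty$ and $P_1^i, P_2^i \in \wp(v_i)$ realizing the minimum distance between distinct points of $\wp(v_i)$, with $\sigma_i := |P_1^i - P_2^i| \to 0$. Use stereographic projection sending a point far from both $P_j^i$ to infinity (converting \eqref{6.1} into an equation on $\mathbb{R}^n$ of the form \eqref{4.20} with a conformal factor $H_i$), and rescale by $\sigma_i$ centered at the image $q_1^i$ of $P_1^i$: set
\[
w_i(y) = \sigma_i^{2\sigma/(p_i-1)} u_i(q_1^i + \sigma_i y), \quad \tilde K_i(y) = K_i(q_1^i + \sigma_i y), \quad \tilde H_i(y) = H_i(q_1^i + \sigma_i y).
\]
Then $w_i$ solves a rescaled \eqref{4.20} and has isolated blow-up points at $0$ and at some point converging to a unit vector $e$, while all other blow-up points lie at distance $\geq 1$ (by Proposition \ref{prop6.1}(ii) and minimality of $\sigma_i$).

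The key observation is that $\tilde K_i$ still satisfies the $(*)_{n-2\sigma}$ condition on the rescaled $\omega_d$-region, but now with a constant $\tilde L_i = o(1)$: under $y \mapsto \sigma_i y$ with $\sigma_i \to 0$, one has $|\nabla^s \tilde K_i| = \sigma_i^s |\nabla^s K|$ versus $|\nabla \tilde K_i|^{(n-2\sigma-s)/(n-2\sigma-1)} = \sigma_i^{(n-2\sigma-s)/(n-2\sigma-1)} |\nabla K|^{(n-2\sigma-s)/(n-2\sigma-1)}$, and the effective exponent $s - (n-2\sigma-s)/(n-2\sigma-1)$ is a positive power of $\sigma_i$ for every $s \geq 2$; the H\"older seminorm scales with $\sigma_i^{n-2\sigma}$. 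Hence Proposition \ref{prop5.1} applies to $w_i$ at both blow-up points and promotes each to an \emph{isolated simple} blow-up point.

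With isolated simple blow-up at $0$, Propositions \ref{prop4.2}, \ref{prop4.3} and Lemma \ref{thm3.2} (the B\^ocher-type decomposition applied to the fractional Poisson extension $W_i = \mathcal{P}_\sigma[w_i]$) yield, with $m_i = w_i(0) \to \infty$,
\[
m_i W_i(Y) \longrightarrow A_1 |Y|^{2\sigma-n} + H(Y) \quad \text{in } C^\alpha_{\mathrm{loc}}(\overline{\mathbb{R}^{n+1}_+} \setminus \{0, E\}),
\]
with $A_1 > 0$, $E = (e,0)$, and $H \geq 0$ satisfying \eqref{local fractional harmonic} in a neighborhood of the origin. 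Crucially $H(0) > 0$: indeed, the other isolated simple blow-up at $E$ produces, by a parallel B\^ocher decomposition at $E$, a singularity of the form $A_2 |Y - E|^{2\sigma-n}$ in $m_i W_i$ near $E$; the Harnack inequality in Lemma \ref{lem4.1} on the annular region $1/4 \leq |Y| \leq 2$ then forces $H$ to be strictly positive at $0$.

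The contradiction finally comes from the Pohozaev identity \eqref{eq:poh-id} of Proposition \ref{thm3.3} applied to $W_i$ on $\mathcal{B}_{1/4}^+$, multiplied by $m_i^2$. The $\partial''$-integrand converges, as in the proof of Proposition \ref{prop5.1}, to a strictly negative quantity (its leading contribution is proportional to $-A_1 H(0)$), while the $\partial'$-integrand is $o(m_i^{-2})$ by Corollary \ref{cor4.1}, whose hypothesis $L(n-2\sigma,i) = o(1)$ is exactly the rescaled smallness established in the second paragraph. The main obstacle I anticipate is cleanly justifying the strict positivity $H(0) > 0$, since $H(0) = 0$ would collapse the contradiction; this will be achieved by running the second B\^ocher decomposition at $E$ in parallel with the one at $0$ so as to extract the explicit $A_2 |Y - E|^{2\sigma - n}$ term and then Harnack-compare.
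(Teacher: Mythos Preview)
Your overall architecture---contradict, stereographically project, rescale by the minimal gap $\sigma_i$, promote both isolated blow-up points of $w_i$ to isolated simple, pass to the B\^ocher-type limit, and derive a Pohozaev contradiction---matches the paper's. The identification of $H(0)>0$ via a second B\^ocher decomposition at the other pole is also exactly how the paper obtains the strict sign, so that part is fine.

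There is, however, a real gap in your ``key observation''. You claim $\tilde K_i$ satisfies $(*)_{n-2\sigma}$ with $\tilde L_i=o(1)$ \emph{on the rescaled $\omega_d$-region}, and then invoke Proposition~\ref{prop5.1} (and later Corollary~\ref{cor4.1}) at $0$ and at $e$. But Proposition~\ref{prop5.1} and Corollary~\ref{cor4.1} require the $(*)_{n-2\sigma}$ hypothesis in a fixed ball around the blow-up point, and you have not verified that the blow-up points $q_1^i, q_2^i$ (equivalently $P_1^i,P_2^i$) lie in $\omega_d$. The hypothesis of Proposition~\ref{prop6.2} only places $(*)_{n-2\sigma}$ on $\omega_d=\{|\nabla K|<d\}$; if $|\nabla K_i(q_1^i)|\ge d/2$, the condition need not hold in any neighborhood of $q_1^i$, so your scaling computation---correct where it applies---is vacuous there. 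You cannot rescue this by applying Proposition~\ref{prop5.2} to the \emph{original} $u_i$, because the two merging peaks mean $P_0=\lim P_j^i$ is not an isolated blow-up point in the sense of Definition~\ref{def4.1}; and applying Proposition~\ref{prop5.2} to the \emph{rescaled} $w_i$ only gives $\sigma_i|\nabla K_i(q_1^i)|\to 0$, which is automatic and says nothing about $|\nabla K_i(q_1^i)|$.

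The paper closes this gap with an explicit dichotomy at each blow-up point: if $|\nabla K_i(0)|\le d/2$ then (by the $C^{1,1}$ bound and $\sigma_i\to 0$) a fixed ball in the rescaled coordinates lies in $\omega_d$, your scaling argument applies, and Proposition~\ref{prop5.1}/Corollary~\ref{cor4.1} go through; if instead $|\nabla K_i(0)|\ge d/2$, one argues as in Case~2 of the proof of Proposition~\ref{prop5.2}, using the lower bound $|\nabla K_i|\ge d/2$ in place of $(*)_{n-2\sigma}$ to force $\tau_i\le Cw_i(0)^{-2}$ and $\sigma_i$-type smallness, which simultaneously yields isolated-simpleness and \eqref{5.4}. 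You need to insert this case split both when promoting to isolated simple blow-up and when bounding the $\partial'$-term in the Pohozaev identity.
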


\begin{proof}

Suppose the contrary, then there exists sequences of $\{p_i\}$ and $\{K_i\}$ satisfying the above assumptions,
and a sequence of corresponding solutions $\{v_i\}$ such that
\be\label{6.7}
\lim_{i\to \infty}|P_{1i}-P_{2i}|=0,
\ee
where $P_{1i},P_{2i}\in \wp(v_i)$, and $|P_{1i}- P_{2i}|=\dmin_{\substack{P_1,P_2\in \wp(v_i)\\ P_1\neq P_2}}|P_1-P_2|$.

Since $B_{Rv_i(P_{1i})^{-(p_i-1)/2\sigma}}(P_{1i})$ and $B_{Rv_i(P_{2i})^{-(p_i-1)/2\sigma}}(P_{2i})$ have to be disjoint,
we have, because of \eqref{6.7}, that
$v_i(P_{1i})\to \infty$ and $v_i(P_{2i})\to \infty$. Therefore, we can pass to a subsequence (still denoted as $v_i$)
with $R_i\to \infty$, $\va_i\to 0$
as in Proposition \ref{prop4.1} ($\va_i$ depends on $R_i$ and can be chosen as small as we need in the following arguments) such that,
for $y$ being the stereographic projection coordinate with south pole at $P_{ji},~j=1,2$, we have
\be\label{6.8}
 \|m_i^{-1}v_i(m_i^{-(p_i-1)/2\sigma}y)-(1+k_{ji}|y|^2)^{(2\sigma-n)/2}\|_{C^2(B_{2R_i}(0))}\leq \va_i,
\ee
where $m_i=v_i(0)$, $k_{ji}=K_i(q_{ji})^{1/\sigma}$, $j=1,2; i=1,2,\cdots$

In the stereographic coordinates with $P_{1i}$ being the south pole,
the equation \eqref{6.1} is transformed into
\be\label{6.6}
(-\Delta)^{\sigma}u_i(y)=c(n,\sigma)K_i(y)H_i^{\tau_i}(y)u_i^{p_i}(y), \quad y\in \mathbb{R}^n,
\ee
where
\be\label{eq:projection}
\begin{split}
 u_i(y)&=\left(\frac{2}{1+|y|^2}\right)^{(n-2\sigma)/2}v_i(F(y)),\\
 H_i(y)&=\left(\frac{2}{1+|y|^2}\right)^{(n-2\sigma)/2},
 \end{split}
\ee
and $F$ is the inverse of the stereographic projection. Let us still use $P_{2i}\in \R^n$ to denote the stereographic coordinates of $P_{2i}\in \mathbb{S}^n$ and
set $\vartheta_i=|P_{2i}|\to 0$. For simplicity, we
assume $P_{2i}$ is a local maximum point of $u_i$. Since we can always reselect a sequence of points as in the proof of Proposition \ref{prop6.1} to substitute for $P_{2i}$.

From (ii) in Proposition \ref{prop6.1}, there exists some constant $C$ depending only on $n,\sigma,$ such that
\be\label{6.9}
 \vartheta_i>\frac{1}{C}\max\{R_iu_i(0)^{-(p_i-1)/2\sigma}, R_iu_i(P_{2i})^{-(p_i-1)/2\sigma}\}.
\ee
Set
\[
 w_i(y)=\vartheta_i^{2\sigma/(p_i-1)}u_i(\vartheta_i y),\quad \mbox{in } \R^n.
\]
It is easy to see that $w_i$ which is positive in $\R^n$, satisfies
\be\label{6.10}
(-\Delta)^{\sigma}w_i(y)=c(n,\sigma)\tilde K_i(y)\tilde H_i^{\tau_i}(y)w_i(y)^{p_i},\quad \mbox{in }\R^n
\ee
and
\[
 w_i(y)\in C^2(\R^n),\quad \liminf_{|y|\to \infty}w_i(y)<\infty,
\]
where $\tilde K_i(y)=K_i(\vartheta_i y)$, $\tilde H_i(y)=H_i(\vartheta_i y)$.

By Proposition \ref{prop6.1}, $u_i$ satisfies
\[
\begin{split}
 u_i(y)&\leq C_2|y|^{-2\sigma/(p_i-1)}\quad \mbox{for all }|y|\leq \vartheta_i/2 \\
 u_i(y)&\leq C_2|y-P_{2i}|^{-2\sigma/(p_i-1)}\quad \mbox{for all }|y-P_{2i}|\leq \vartheta_i/2.
\end{split}
\]
In view of \eqref{6.9}, we therefore have
\[
\begin{split}
 \dlim_{i\to \infty}w_i(0)=\infty,\quad &\dlim_{i\to \infty}w_i(|P_{2i}|^{-1}P_{2i})=\infty\\
|y|^{2\sigma/(p_i-1)}w_i(y)\leq C_2,\quad &|y|\leq 1/2,\\
|y- |P_{2i}|^{-1}P_{2i}|^{2\sigma/(p_i-1)}w_i(y)\leq C_2,\quad &|y-|P_{2i}|^{-1}P_{2i}|\leq 1/2.
\end{split}
\]
After passing a subsequence, if necessary,
there exists a point $\overline P\in \R^n$ with $|\overline P|=1$ such that
$|P_{2i}|^{-1}P_{2i}\to \overline P$ as $i\to \infty$. Hence $0$ and $\overline P$ are both isolated blow up points of
$w_i$.

If $|\nabla K_i(0)|\leq d/2$, then $0$ is an isolated simple blow up point of $w_i$ because of the $(*)_{n-2\sigma}$ condition and Proposition \ref{prop5.1}.
If $|\nabla K_i(0)|\geq d/2$, arguing as in the proof of Proposition \ref{prop5.2} we can conclude that $0$ is an isolated simple blow up point of $w_i$.
Similarly, $\overline P$ is also an isolated simple blow up point of $w_i$.

By Proposition \ref{prop4.3},
\[
 w_i(0)w_i(y)\leq C_\va, \mbox{for all } \va\leq |y|\leq 1/2,
\]
where $C_\va$ is independent of $i$. Let $W_i$ be the extension of $w_i$.
Due to Proposition \ref{prop6.1}, Harnack inequality Lemma \ref{lem4.1}, and the choice of $P_{1i}, P_{2i}$,
there exists an at most countable set $\wp\subset \R^n$ such that
\[
 \inf\{|x-y|:x,y\in \wp,~ x\neq y\}\geq 1,
\]
and
\[
\begin{split}
  \dlim_{i\to \infty}W_i(0)W_i(Y)=G(Y),\quad&\mbox{in } C^0_{loc}(\overline{\R^{n+1}_+}\setminus \wp)\\
 G(Y)>0,\quad &Y\in \overline{\R^{n+1}_+}\setminus \wp.
\end{split}
\]
Let $\wp_1\subset \wp$ contain those points near which $G$ is singular. Clearly, $0,\overline P\in \wp_1$. Since $p_i>1$,
it follows from \eqref{6.10} that
\[
 \begin{cases}
  \mathrm{div}(s^{1-2\sigma}\nabla G)=0,\quad &\mbox{in }\R^{n+1}_+,\\
  -\dlim_{s\to 0}s^{1-2\sigma} \pa_s G(y,s)=0,\quad &\mbox{for all }y\in \R^n\setminus \wp_1.
 \end{cases}
\]
By Lemma \ref{thm3.2} and maximum principle, there exist positive constants $N_1,N_2$ and some nonnegative function $H$ satisfying
\[
\begin{cases}
 \mathrm{div}(s^{1-2\sigma}\nabla H)=0,\quad &\mbox{in }\R^{n+1}_+,\\
  -\dlim_{s\to 0}s^{1-2\sigma} \pa_s H(y,s)=0,\quad &\mbox{for all }y\in \R^n\setminus \{\wp_1\setminus\{0,\overline P\}\}
\end{cases}
\]
such that
\[
 G(Y)=N_1|Y|^{2\sigma-n}+N_2|Y-\overline P|^{2\sigma-n}+H(Y),\quad Y\in \overline{\R^{n+1}_+}\setminus \{\wp_1\}.
\]
Applying Proposition \ref{thm2.4-1} to $H$, it is not difficult to verify \eqref{5.5} with $\Phi_i$ replaced by $W_i$.
On the other hand, we can establish \eqref{5.4} with $\Phi_i$ replaced by $W_i$
if $|\nabla K_i(0)|\leq d/2$, because $(*)_{n-2\sigma}$ condition with $L=o(1)$ holds for $\tilde K_i$
and thus Corollary \ref{cor4.1} holds. If $|\nabla K_i(0)|\geq d/2$, we can apply the
argument in the proof of Proposition \ref{prop5.2} to conclude that
$\vartheta_i, \tau_i\leq w_i(0)^{-2}$, and hence \eqref{5.4} also holds for $W_i$.

Proposition \ref{prop6.2} is established.
\end{proof}

Consider
\be\label{6.11}
\begin{split}
P_\sigma(v)=c(n,\sigma)K_i v_i^{p_i}\quad &\mbox{on } \mathbb{S}^n,\\
v_i>0,\quad &\mbox{on } \mathbb{S}^n,\\
p_i=\frac{n+2\sigma}{n-2\sigma}-\tau_i, \quad &\tau_i\geq 0,\tau_i\to 0.
\end{split}
\ee

\begin{thm}\label{thm6.1}
 Suppose $K_i$ satisfies the assumption of $K$ in Proposition \ref{prop6.2}. Let $v_i$ be solutions of
\eqref{6.11}, we have
\be\label{6.12}
\|v_i\|_{H^\sigma(\Sn)}\leq C,
\ee
where $C>0$ depends only on $n,\sigma,A_1,A_2,L,d$. Furthermore,
after passing to a subsequence, either $\{v_i\}$ stays bounded in $L^\infty(\mathbb{S}^n)$
or $\{v_i\}$ has only isolated simple blow up points and the distance between any two blow up points is bounded blow by some
positive constant depending only on $n,\sigma,A_1,A_2,L,d$.
\end{thm}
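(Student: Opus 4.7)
The plan is to derive both assertions from a careful dichotomy. Testing \eqref{6.11} against $v_i$ and using \eqref{6.2},
\[
\|v_i\|_{H^\sigma(\Sn)}^2 = \int_{\Sn} v_i\, P_\sigma(v_i)\,\ud vol_{g_0} = c(n,\sigma)\int_{\Sn} K_i v_i^{p_i+1}\,\ud vol_{g_0},
\]
so the $H^\sigma$ estimate reduces to bounding $\int_{\Sn} v_i^{p_i+1}\,\ud vol_{g_0}$ uniformly. If $\max_{\Sn} v_i$ stays bounded along our subsequence there is nothing more to show, so assume $\max_{\Sn} v_i\to\infty$.

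First, apply Proposition \ref{prop6.1} for a fixed small $\va$ and large $R$ to produce, for each large $i$, a finite isolated-blow-up set $\wp(v_i)\subset \Sn$ with concentration profile \eqref{eq:isolated blowup-1}. Proposition \ref{prop6.2} then yields a uniform lower bound $\delta^{\ast}>0$ on the pairwise distance between distinct elements of $\wp(v_i)$; together with the compactness of $\Sn$ this bounds $\#\wp(v_i)\le N=N(n,\sigma,A_1,A_2,L,d)$.

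Next, I would upgrade each $P_i\in\wp(v_i)$ to an isolated \emph{simple} blow-up point. Pull the equation \eqref{6.11} back to $\R^n$ via the stereographic projection centered at $P_i$, obtaining an equation of the form \eqref{4.20} with $H_i$ given by \eqref{eq:projection}. If $|\nabla K_i(P_i)|<d/2$ along a subsequence, then $K_i$ satisfies $(*)'_{n-2\sigma}$ on a fixed neighborhood of the image of $P_i$ and Proposition \ref{prop5.1} produces simplicity directly. If instead $|\nabla K_i(P_i)|\ge d/2$, I would reproduce the secondary-scaling Pohozaev argument from Case 2 of the proof of Proposition \ref{prop5.2}, already adapted inside the proof of Proposition \ref{prop6.2}: assuming a second critical point $\mu_i$ of $r\mapsto r^{2\sigma/(p_i-1)}\overline{u}_i(r)$ and rescaling by $\mu_i$, one controls the rescaled gradient of the coefficient and deduces $\mu_i\le Cv_i(P_i)^{-2}$; the Pohozaev identity of Proposition \ref{thm3.3} applied to the normalized limit, which by Lemma \ref{thm3.2} has the form $A|Y|^{2\sigma-n}+B|Y-\overline P|^{2\sigma-n}+H(Y)$, then yields a contradiction.

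Finally, I would bound $\int_{\Sn} v_i^{p_i+1}\,\ud vol_{g_0}$ by decomposing $\Sn$ into small geodesic balls around each $P_i\in\wp(v_i)$ and the complement. In each such ball, passing to the stereographic chart, where the conformal factor is uniformly bounded above and below, and applying Lemma \ref{lem4.5} with $s=0$ gives
\[
\int_{B_1(y_i)} u_i^{p_i+1}\,\ud y\le C,
\]
uniform in $i$. On the complement, property (iii) of Proposition \ref{prop6.1} together with Lemma \ref{lem4.1} furnishes a uniform $L^\infty$ bound on $v_i$ and hence a bounded contribution. Summing over the at most $N$ blow-up points closes the estimate. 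I expect the main obstacle to be the simplicity step when $|\nabla K|$ fails to vanish at the blow-up point, since then the $(*)'_{n-2\sigma}$ hypothesis is unavailable and the clean Pohozaev vanishing from Corollary \ref{cor4.1} must be replaced by the more delicate scale-comparison argument of Proposition \ref{prop5.2}.
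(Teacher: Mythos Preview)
Your proposal is correct and follows essentially the same route as the paper, which simply cites Proposition \ref{prop6.2}, Proposition \ref{prop5.2}, Proposition \ref{prop5.1}, Proposition \ref{prop4.1} and Lemma \ref{lem4.5}. Two small remarks are worth making.

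First, your simplicity dichotomy can be streamlined: rather than reproducing Case~2 of Proposition \ref{prop5.2} when $|\nabla K_i(P_i)|\ge d/2$, you may invoke Proposition \ref{prop5.2} directly to conclude $|\nabla K_i(P_i)|\to 0$ for any isolated blow-up point, so that for large $i$ every $P_i\in\wp(v_i)$ lies in $\om_{d,i}$ and Proposition \ref{prop5.1} applies uniformly. This is the intended use of Proposition \ref{prop5.2} in the paper's one-line proof.

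Second, in the contradiction step you describe for the case $|\nabla K_i(P_i)|\ge d/2$, the rescaled limit after blowing up by the second critical radius $\mu_i$ has the form $A|Y|^{2\sigma-n}+H$ with $H$ a nonnegative \emph{constant} and $A=H$ (as in the proof of Proposition \ref{prop5.1}), not the two-pole form $A|Y|^{2\sigma-n}+B|Y-\overline P|^{2\sigma-n}+H(Y)$; the latter arises only in Proposition \ref{prop6.2} when ruling out two nearby blow-up points. The Pohozaev contradiction goes through in either case, but keep the two arguments separate.
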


\begin{proof}
The theorem follows immediately from Proposition \ref{prop6.2}, Proposition \ref{prop5.2}, Proposition \ref{prop5.1}, Proposition \ref{prop4.1}
and Lemma \ref{lem4.5}.
\end{proof}

\begin{proof}[Proof of Theorem \ref{main thm B}] It follows immediately from Theorem \ref{thm6.1}.
\end{proof}

In the next theorem, we impose a stronger condition on $K_i$ such that $\{u_i\}$ has at most one blow up point.

\begin{thm}\label{thm6.2}
Suppose the assumptions in Theorem \ref{thm6.1}.
Suppose further that $\{K_i\}$ satisfies $(*)_{n-2\sigma}$
condition for some sequences $L(n-2\sigma,i)=o(1)$
in $\om_{d,i}=\{q\in \mathbb{S}^n: |\nabla_{g_0}K_i|<d\}$ or $\{K_i\}$ satisfies $(*)_{\beta}$
condition with $\beta>n-2\sigma$ in
$\om_{d,i}$. Then, after passing to a subsequence, either $\{v_i\}$ stays bounded in $L^\infty(\mathbb{S}^n)$
or $\{v_i\}$ has precisely one isolated simple blow up point.
\end{thm}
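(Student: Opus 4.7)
The plan is to argue by contradiction. Suppose, after passing to a subsequence, that $\{v_i\}$ is unbounded with $N\ge 2$ isolated simple blow-up points; by Theorem \ref{thm6.1} their pairwise distances are bounded below, so after a further subsequence $P_{ji}\to \overline P_j$ for $j=1,\dots,N$ with the $\overline P_j$ distinct. Choose a stereographic projection sending a point of $\Sn$ outside $\{\overline P_1,\dots,\overline P_N\}$ to infinity; in the resulting coordinates $u_i(y)=H_i(y)v_i(F(y))$ satisfies \eqref{6.6} on $\R^n$, with isolated simple blow-up points $y_{ji}\to \overline P_j\in\R^n$ (relabel so that $\overline P_1=0$). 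Write $m_i=u_i(y_{1i})=v_i(P_{1i})$ and $U_i=\mathcal P_\sigma[u_i]$.

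The first main step is to identify the blow-up profile $G:=\lim_i m_iU_i$. Applying Proposition \ref{prop4.3} in small disjoint neighborhoods of each blow-up point, together with the Harnack inequality Lemma \ref{lem4.1} outside those neighborhoods, gives a uniform bound on $\{m_iU_i\}$ over compact subsets of $\overline{\R^{n+1}_+}\setminus\{Y_1,\dots,Y_N\}$, where $Y_j=(\overline P_j,0)$; the Schauder theory of Section \ref{LE} upgrades this to $C^{1,\alpha}_{loc}$ compactness. Passing to a subsequence, $m_iU_i\to G$ in $C^{1,\alpha}_{loc}$ off the blow-up set, with $G\ge 0$ solving $\mathrm{div}(t^{1-2\sigma}\nabla G)=0$ and the homogeneous boundary condition $-\lim_{t\to 0}t^{1-2\sigma}\pa_tG=0$, since the rescaled nonlinear source $c_0K_iH_i^{\tau_i}u_i^{p_i-1}(m_iu_i)$ vanishes in the limit ($u_i\to 0$ uniformly off blow-ups and $p_i>1$). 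Proposition \ref{prop4.2} forces $G$ to be genuinely singular at each $Y_j$, and Lemma \ref{thm3.2} then yields the local Bocher decomposition
\[
G(Y)=A_j|Y-Y_j|^{2\sigma-n}+H_j(Y),\qquad A_j>0,
\]
with $H_j$ regular near $Y_j$. Because $v_i\sim m_i^{-1}$ off the blow-ups by Harnack, $G(Y)=O(|Y|^{2\sigma-n})$ as $|Y|\to\infty$. The function $\tilde G:=G-\sum_{j=1}^N A_j|Y-Y_j|^{2\sigma-n}$ is thus smooth on $\overline{\R^{n+1}_+}$, satisfies $\mathrm{div}(t^{1-2\sigma}\nabla\tilde G)=0$ with zero boundary condition on $\pa\R^{n+1}_+$, and decays at infinity; testing against $\tilde G$ on $\mathcal B_R^+$ gives $\int_{\mathcal B_R^+}t^{1-2\sigma}|\nabla\tilde G|^2=\int_{\pa''\mathcal B_R^+}t^{1-2\sigma}\tilde G\,\pa_\nu\tilde G=O(R^{2\sigma-n})\to 0$, hence $\tilde G\equiv 0$. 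In particular,
\[
H_1(Y_1)=\sum_{j\ge 2}A_j|Y_1-Y_j|^{2\sigma-n}>0.
\]

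The proof now closes by applying the Pohozaev identity Proposition \ref{thm3.3} to $U_i$ on $\mathcal B_\delta^+(Y_{1i})$ with $\delta>0$ fixed smaller than $\tfrac12\min_{j\ne 1}|Y_1-Y_j|$, and multiplying by $m_i^2$. For the $\pa'$ term, the integration-by-parts computation in the proof of Proposition \ref{prop5.1} yields
\[
m_i^2\!\int_{\pa'\mathcal B_\delta^+(Y_{1i})}\!\!B'\le -\frac{m_i^2}{p_i+1}\int_{B_\delta(y_{1i})}\langle y-y_{1i},\nabla(K_iH_i^{\tau_i})\rangle u_i^{p_i+1}+O(m_i^{1-p_i}),
\]
where the last error is $o(1)$ since $p_i>1$; here the strengthened hypothesis on $\{K_i\}$ enters, because Proposition \ref{prop5.2} yields $|\nabla K_i(P_{1i})|\to 0$, so $P_{1i}$ eventually lies in $\om_{d,i}$ and Corollary \ref{cor4.1} (valid precisely under either $(*)_{n-2\sigma}$ with $L(n-2\sigma,i)=o(1)$ or $(*)_\beta$ with $\beta>n-2\sigma$) forces $\limsup_i m_i^2\int_{\pa'}B'\le 0$. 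For the $\pa''$ term, uniform $C^1$ convergence of $m_iU_i$ on the fixed surface $\pa''\mathcal B_\delta^+(Y_1)$ (which lies at distance $\delta$ from $Y_1$ and avoids all blow-ups) gives
\[
\lim_i m_i^2\int_{\pa''\mathcal B_\delta^+(Y_{1i})}t^{1-2\sigma}B''=\int_{\pa''\mathcal B_\delta^+(Y_1)}t^{1-2\sigma}B''(G,\nabla G,\delta,\sigma),
\]
and inserting $G=A_1|Y-Y_1|^{2\sigma-n}+H_1$ one checks that the $\delta^{4\sigma-2n-1}$ contributions from the singular part cancel among the three pieces of $B''$, leaving
\[
-c(n,\sigma)\,A_1\,H_1(Y_1)+O(\delta),\qquad c(n,\sigma)>0.
\]
For $\delta$ sufficiently small this is strictly negative; summing with the $\pa'$ estimate contradicts $\pa'+\pa''=0$.

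The main obstacle is the rigidity of the limit profile $G$: one must prove $G$ is a pure sum of the extended fundamental singularities $A_j|Y-Y_j|^{2\sigma-n}$ with no harmonic remainder, so that $H_1(Y_1)=\sum_{j\ge 2}A_j|Y_1-Y_j|^{2\sigma-n}$ is genuinely positive. This rests on Lemma \ref{thm3.2}, a weighted Liouville argument, and the $O(|Y|^{2\sigma-n})$ decay of $G$ inherited from the Harnack-boundedness of $v_i$ off the blow-ups. The strengthened flatness hypothesis on $K_i$ enters solely through Corollary \ref{cor4.1}: it kills the $\pa'$ Pohozaev contribution so that the intrinsically negative cross-term $-c(n,\sigma)A_1H_1(Y_1)$ from $\pa''$ is what produces the contradiction.
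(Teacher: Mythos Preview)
Your proof is correct and follows essentially the same Pohozaev-contradiction strategy as the paper: reduce to two (or more) isolated simple blow-ups at bounded distance, pass to the limit profile $G$ via Proposition~\ref{prop4.3} and Lemma~\ref{thm3.2}, then show the $\pa''$ boundary term in Proposition~\ref{thm3.3} is strictly negative while Corollary~\ref{cor4.1} kills the $\pa'$ term.

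The one place you work harder than the paper is your global identification $\tilde G\equiv 0$. This Liouville step is valid, but unnecessary. The paper simply writes, near $Y_1$,
\[
G(Y)=A_1|Y-Y_1|^{2\sigma-n}+H_1(Y),\qquad H_1(Y)=A_2|Y-Y_2|^{2\sigma-n}+H(Y),
\]
with $H\ge 0$ coming directly from the nonnegativity of $G$ and the maximum principle (as in the proof of Proposition~\ref{prop6.2}); then $H_1(Y_1)\ge A_2|Y_1-Y_2|^{2\sigma-n}>0$ without any decay analysis at infinity. Your route yields the sharper statement $H\equiv 0$, but the contradiction only needs $H_1(Y_1)>0$.
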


\begin{proof}
The strategy is the same as the proof of Proposition \ref{prop6.2}. We assume there
are two isolated blow up points. After some transformation,
we can assume that they are in the same half sphere.
The condition of $\{K_i\}$ guarantees that Corollary \ref{lem4.1} holds for $u_i$, where $u_i$ is as in \eqref{eq:projection}. Hence \eqref{5.4} holds for $U_i$, which is the extension of $u_i$.
Meanwhile \eqref{5.5} for $U_i$ is also valid, since the distance between these blow up points is uniformly lower bounded which is due to Proposition \ref{prop6.2}.
\end{proof}

\begin{proof}[Proof of Theorem \ref{main thm C}] By Theorem \ref{thm6.2}, we only
need to show the latter case of theorem.  After passing a subsequence, $\xi_i\to \overline \xi$ is the only isolated simple blow up point of
$v_i$. For simplicity, assume that $\xi_i$ is identical to the south pole and $K(\xi_i)=1$. Let $F:\R^n\to \Sn$ be the inverse of stereographic projection defined
at the beginning of the paper.
Define, for any $\lda >0$,
\[
 \psi_{\lda}: x\mapsto \lda x, \quad \forall x\in \R^n.
\]
Set $\varphi_i=F\circ \psi_{\lda_i}\circ F^{-1}$ with $\lda_i=v_i(\xi_i)^{-\frac{2}{n-2\sigma}}$. Then
$T_{\varphi_i}v_i$ satisfies
\[
P_\sigma(T_{\varphi_i}v_i)=c(n,\sigma)K\circ\varphi_i T_{\varphi_i}v_i^{\frac{n+2\sigma}{n-2\sigma}},\quad \text{on } \Sn.
\]
Let
\[
u_i(x)=\Big(\frac{2}{1+|x|^2}\Big)^{\frac{n-2\sigma}{2}}v_i\circ F(x), \quad x\in \R^n
\]
and
\[
\tilde u_i(x)=\Big(\frac{2}{1+|x|^2}\Big)^{\frac{n-2\sigma}{2}}T_{\varphi_i}v_i\circ F(x), \quad x\in \R^n.
\]
Note that
\[
|\det \ud \varphi_i(F(x))|^{\frac{n-2\sigma}{2n}}=\left(\Big(\frac{2}{1+|\lda_ix|^2}\Big)^n
\lda_i^{n}\Big(\frac{2}{1+|x|^2}\Big)^{-n}\right)^{\frac{n-2\sigma}{2n}}.
\]
Hence, $\tilde u_i(x)=\lda^{\frac{n-2\sigma}{2}}u_{i}(\lda_i x)$ for any $x\in \R^n$ and $0<u_i\leq 2^{\frac{n-2\sigma}{2}}$. Arguing as
before, we see that
\[
\tilde u_i(x)\to \Big(\frac{2}{1+|x|^{2}}\Big)^{\frac{n-2\sigma}{2}}, \quad \text{in }C^2_{loc}(\R^n).
\]
Therefore, $v_i\to 1$ in $C^2_{loc}(\Sn\setminus \{N\})$, where $N$ is the north pole of $\Sn$.

Since $T_{\varphi_i} v_i$ is uniformly bounded near the north pole, it follows from H\"older estimates that there exists a constant
$\al\in (0,1)$ such that $T_{\varphi_i} v_i \to f$ in $C^\al(B_\delta(N))$ for small constant $\delta>0$ and some function $f\in C^\al(B_\delta(N))$.
It is clear that $f=1$. Therefore, we complete the proof.
\end{proof}

\begin{thm}\label{thm6.3} Let $v_i$ be positive solutions of \eqref{6.11}.
Suppose that $\{K_i\}\subset C^\infty(\Sn)$ satisfies \eqref{6.3},
and for some point $P_0\in \Sn$, $\va_0>0$, $A_1>0$ independent of $i$ and $1<\beta<n$, that
\[
\{K_i\} \text{ is bounded in } C^{[\beta],\beta-[\beta]}(B_{\va_0}(q_0)),\quad \quad K_i(P_0)\geq A_1
\]
and
\[
K_i(y)=K_i(0)+Q_i^{(\beta)}(y)+R_i(y), \quad |y|\leq \va_0,
\]
where $y$ is the stereographic projection coordinate with $P_0$ as the south pole,
$Q_i^{(\beta)}(y)$ satisfies $Q_i^{(\beta)}(\lda y)=\lda^{\beta} Q_i^{(\beta)}(y)$, $\forall\lda>0,\ y\in\R^n$,  and $R_i(y)$ satisfies
\[\sum_{s=0}^{[\beta]} |\nabla^s R_i(y)||y|^{-\beta+s}\to 0\] uniformly for $i$ as $y\to 0$.

Suppose also that $Q_i^{(\beta)}\to Q^{(\beta)}$ in $C^1(\mathbb{S}^{n-1})$ and for some positive constant $A_6$ that
\be\label{6.13}
A_6|y|^{\beta-1}\leq |\nabla Q^{(\beta)}(y)|,\quad |y|\leq \va_0,
\ee
and
\be\label{6.14}
\left(
\begin{array}{l}
 \int_{\R^n}\nabla Q^{(\beta)}(y+y_0)(1+|y|^2)^{-n}\,\ud y\\[2mm]
\int_{\R^n}Q^{(\beta)}(y+y_0)(1+|y|^2)^{-n}\,\ud y
\end{array} \right)\neq 0, \quad \forall\  y_0\in \R^n.
\ee
If $P_0$ is an isolated simple blow up point of $v_i$, then
$v_i$ has to have at least another blow up point.
\end{thm}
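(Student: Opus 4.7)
The plan is to argue by contradiction: suppose $P_0$ is the only blow up point of $\{v_i\}$. In stereographic coordinates with $P_0$ as south pole, let $u_i$ be the corresponding functions on $\R^n$ (as in \eqref{eq:projection}), $y_i\to 0$ the local maxima giving the blow up, $m_i=u_i(y_i)$, $\lda_i=m_i^{(p_i-1)/(2\sigma)}$, and $U_i$ the extension of $u_i$. Proposition \ref{prop4.3} and Lemma \ref{thm3.2} yield, up to a subsequence, $m_i U_i\to G$ in $C^2_{\mathrm{loc}}(\overline{\R^{n+1}_+}\setminus\{0\})$ with $G(Y)=A|Y|^{2\sigma-n}+H(Y)$, $A>0$, and $H$ a regular solution of \eqref{local fractional harmonic}. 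The nondegeneracy \eqref{6.13}, the $C^1$ convergence $Q_i^{(\beta)}\to Q^{(\beta)}$, and the decay hypothesis on $R_i$ combine to give $|\nabla K_i(y_i)|\ge (A_6/4)|y_i|^{\beta-1}$ for large $i$; paired with Lemma \ref{lem4.7}, this forces $\lda_i|y_i|=O(1)$, so along a further subsequence $\lda_i y_i\to y_0^*\in\R^n$.

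Next, apply Proposition \ref{thm3.3} for $U_i$ on $\B_\delta^+(Y_i)$ with $Y_i=(y_i,0)$ and small fixed $\delta$, together with the $n$ translation-type identities obtained by multiplying $\mathrm{div}(t^{1-2\sigma}\nabla U_i)=0$ by $\partial_{y_j}U_i$ and integrating by parts on $\B_\delta^+(Y_i)$. After the integration by parts from the end of Proposition \ref{prop5.1}, absorbing the $\tau_i$-term via Lemma \ref{lem4.8} (the theorem's hypotheses imply the $(*)_\beta$ condition for $K_i$), the $H_i^{\tau_i}$-factor via Lemma \ref{lem4.5}, and the $\partial B_\delta$-contributions via Proposition \ref{prop4.3}, and then rescaling the interior integrals via $z=\lda_i(y-y_i)$ using Proposition \ref{prop4.1}, the homogeneity $\nabla Q^{(\beta)}(\mu w)=\mu^{\beta-1}\nabla Q^{(\beta)}(w)$, Euler's identity, and dominated convergence, one obtains
\begin{align*}
\lda_i^\beta\!\int_{B_\delta(y_i)}\!(y-y_i)\cdot\nabla K_i\,u_i^{p_i+1}\,\ud y &\longrightarrow I_0:=\beta\!\int_{\R^n}\!Q^{(\beta)}(y_0^*+z)(1+k|z|^2)^{-n}\,\ud z - y_0^*\cdot J,\\
\lda_i^\beta\!\int_{B_\delta(y_i)}\!\partial_{y_j}K_i\,u_i^{p_i+1}\,\ud y &\longrightarrow J_j:=\!\int_{\R^n}\!\partial_{w_j}Q^{(\beta)}(y_0^*+z)(1+k|z|^2)^{-n}\,\ud z,
\end{align*}
where $k=K_i(0)^{1/\sigma}/4$ and $J=(J_1,\ldots,J_n)$. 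A rescaling $w=k^{1/2}z$ identifies $(J,\,I_0+y_0^*\cdot J)$ with a positive multiple of the vector in \eqref{6.14} at $y_0=k^{1/2}y_0^*$, so not all of $I_0,J_1,\ldots,J_n$ can vanish.

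On the boundary side, Proposition \ref{thm2.4-1} justifies $m_i^2\int_{\pa''\B_\delta^+(Y_i)}t^{1-2\sigma}B''\to \int_{\pa''\B_\delta^+}t^{1-2\sigma}B''(Y,G,\nabla G,\delta,\sigma)$ and analogously for its translation counterparts. Applying Proposition \ref{thm3.3} to $G$ on the annulus $\B_\delta^+\setminus\B_\epsilon^+$ (with $K\equiv 0$) shows these boundary limits are $\delta$-independent, so they coincide with their $\epsilon\to 0$ limits; a direct asymptotic expansion (using that the pure $A^2$-contribution from $A|Y|^{2\sigma-n}$ cancels, cf.\ the computation in the proof of Proposition \ref{prop5.1}) identifies them as explicit positive multiples of $-A\cdot H(0)$ and $-A\cdot\partial_{y_j}H(0)$ respectively.

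Comparing orders produces the contradiction. The LHS of each identity is of magnitude $O(\lda_i^{-\beta})=O(m_i^{-2\beta/(n-2\sigma)})$ while the RHS is $O(m_i^{-2})$. If $\beta<n-2\sigma$, dividing by $\lda_i^{-\beta}$ and letting $i\to\infty$ forces $I_0=J_1=\cdots=J_n=0$, contradicting \eqref{6.14}. If $\beta\ge n-2\sigma$, dividing instead by $m_i^{-2}$ forces $A\cdot H(0)=0$ and $A\cdot\nabla H(0)=0$; since $A>0$ and $G$ arises as the limit of the positive functions $m_iU_i$ with singular profile $A|Y|^{2\sigma-n}$ coming from the unique blow up at $P_0$, a positive-mass-type argument gives $H(0)>0$, a contradiction. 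The main obstacle will be the careful evaluation of the boundary limits in terms of $A\cdot H(0)$ and $A\cdot\nabla H(0)$ through the $\delta\to 0$ cancellation, together with establishing the required positivity $H(0)>0$ in the degenerate setting when $\beta\ge n-2\sigma$.
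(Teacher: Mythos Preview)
Your proposal has a genuine gap in the case $\beta\ge n-2\sigma$, which is precisely the regime of interest (cf.\ Theorem~\ref{main thm A} and Theorem~\ref{thm:maximum bound}). You assert that ``a positive-mass-type argument gives $H(0)>0$''. In fact, the opposite holds: when $P_0$ is the \emph{only} blow up point on $\Sn$, one has $H\equiv 0$. Indeed, since $u_i(y)=\bigl(\tfrac{2}{1+|y|^2}\bigr)^{(n-2\sigma)/2}v_i(F(y))$ and $v_i$ stays bounded away from $P_0$, the limit $G(y,0)=\lim m_iu_i(y)$ decays like $|y|^{2\sigma-n}$ as $|y|\to\infty$. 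The function $H(Y)=G(Y)-A|Y|^{2\sigma-n}$ is then a bounded solution of the homogeneous problem on all of $\R^{n+1}_+$ with zero Neumann data, hence constant by Liouville; the decay of $G$ at infinity forces this constant to be $0$. With $H\equiv 0$, your Pohozaev boundary term is exactly zero (as your own computation of the pure $A^2$-contribution shows), and the translation boundary terms vanish as well. Your identities reduce to $0=0$ and yield no contradiction. The sphere is conformally flat, so there is no hidden positive mass to exploit.

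The paper's proof circumvents this entirely by using the global Kazdan--Warner identity (Proposition~\ref{K-W prop}), which is specific to $\Sn$ and its conformal Killing fields. It gives the \emph{exact} relations
\[
\int_{\R^n}\nabla K_i\,u_i^{2n/(n-2\sigma)}=0,\qquad
\int_{\R^n}\langle y,\nabla K_i\rangle\,u_i^{2n/(n-2\sigma)}=0,
\]
with no $O(m_i^{-2})$ boundary error. After localizing to $B_\delta(y_i)$ (the tail is $O(m_i^{-2n/(n-2\sigma)})$ by Proposition~\ref{prop4.3}), inserting the expansion $K_i=K_i(0)+Q_i^{(\beta)}+R_i$, rescaling, and passing to the limit exactly as you outline, one obtains directly
\[
\int_{\R^n}\nabla Q^{(\beta)}(z+z_0)(1+k|z|^2)^{-n}\,\ud z=0
\quad\text{and}\quad
\int_{\R^n}Q^{(\beta)}(z+z_0)(1+k|z|^2)^{-n}\,\ud z=0,
\]
contradicting \eqref{6.14}. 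No case distinction on $\beta$ and no positive-mass input is needed; the global conformal structure of $\Sn$ does the work that your local Pohozaev approach cannot.
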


\begin{proof}
Suppose the contrary, $P_0$ is the only blow up point of $v_i$.

We make a stereographic projection with $P_0$ being the south pole to the equatorial plane of $\Sn$, with its inverse $\pi$.
Then the Eq. \eqref{6.11} is transformed to
\be\label{6.15}
(-\Delta)^\sigma u_i=c(n,\sigma)K_i(y)u_i^{\frac{n+2\sigma}{n-2\sigma}},\quad \mbox{in } \R^n,
\ee
with
\[
u_i(y)=\left(\frac{2}{1+|y|^2}\right)^{(n-2\sigma)/2}v_i(\pi(y)).
\]
Let $y_i\to 0$ be the local maximum point of $u_i$. It follows from Lemma \ref{lem4.7} that
\[
|\nabla K_i(y_i)|=O(u_i(y_i)^{-2}+u_i(y_i)^{-2(\beta-1)/(n-2\sigma)}).
\]
First we establish
\be\label{6.16}
|y_i|=O(u_i(y_i)^{-2/(n-2\sigma)}).
\ee
Since we have assumed that $v_i$ has no other blow up point other than $P_0$, it follows from Proposition \ref{prop4.3} and Harnack inequality that for $|y|\ge\va>0,\ u_i(y)\le C(\va)|y|^{2\sigma-n}u_i(y_i)^{-1}$.

By Proposition \ref{K-W prop} we have
\be \label{eq:equal to 0-1}
 \int_{\R^n}\nabla K_iu_i^{\frac{2n}{n-2\sigma}}=0.
\ee
It follows  that for $\va>0$ small we have
\[
\left|\int_{B_{\va}}\nabla K_i(y+y_i)u_i(y+y_i)^{\frac{2n}{n-2\sigma}}\right|\le C(\va)u_i(y_i)^{-2n/(n-2\sigma)}.
\]
Using our hypotheses on $\nabla Q^{(\beta)}$ and $R_i$
we have
\[
\left|\int_{B_{\va}}(1+o_{\va}(1))\nabla Q_i^{(\beta)}(y+y_i)u_i(y+y_i)^{\frac{2n}{n-2\sigma}}\right|\le C(\va)u_i(y_i)^{-2n/(n-2\sigma)}.
\]
Multiplying the above by $m_i^{(2/(n-2\sigma))(\beta-1)}$, where $m_i=u_i(y_i)$, we have
\[
\begin{split}
&\left|\int_{B_{\va}}(1+o_{\va}(1))\nabla Q_i^{(\beta)}(m_i^{2/(n-2\sigma)}y+\tilde y_i)u_i(y+y_i)^{\frac{2n}{n-2\sigma}}\right|\\
&\le C(\va)u_i(y_i)^{(2/(n-2\sigma))(\beta-1-n)}
\end{split}
\]
where $\tilde y_i=m_i^{2/(n-2\sigma)}y_i$.
Suppose \eqref{6.16} is false, namely, $\tilde y_i\to+\infty$ along a subsequence. Then it follows from Proposition \ref{prop4.1} (we may choose $R_i\leq |\tilde y_i|/4$) that
\[
\begin{split}
&\left|\int_{|y|\le R_i m_i^{-2/(n-2\sigma)}}(1+o_{\va}(1))\nabla Q_i^{(\beta)}(m_i^{2/(n-2\sigma)}y+\tilde y_i)u_i(y+y_i)^{\frac{2n}{n-2\sigma}}\right|\\
&=\left|\int_{|z|\le R_i}(1+o_{\va}(1))\nabla Q_i^{(\beta)}(z+\tilde y_i)\big(m_i^{-1}u_i(m_i^{-2/(n-2\sigma)}z+y_i)\big)^{\frac{2n}{n-2\sigma}}\right|\sim |\tilde y_i|^{\beta-1}.
\end{split}
\]
On the hand, it follows from Lemma \ref{lem4.5} that
\[
\begin{split}
&\left|\int_{R_i m_i^{-2/(n-2\sigma)}\le |y|\le \va}(1+o_{\va}(1))\nabla Q_i^{(\beta)}(m_i^{2/(n-2\sigma)}y+\tilde y_i)u_i(y+y_i)^{\frac{2n}{n-2\sigma}}\right|\\
&\le C\left|\int_{R_i m_i^{-2/(n-2\sigma)}\le |y|\le \va}\big(|m_i^{2/(n-2\sigma)}y|^{\beta-1}+|\tilde y_i|^{\beta-1}\big)u_i(y+y_i)^{\frac{2n}{n-2\sigma}}\right|\\
&\leq o(1)|\tilde y_i|^{\beta-1}.
\end{split}
\]
It follows that
\[
|\tilde y_i|^{\beta-1}\leq C(\va)m_i^{(2/(n-2\sigma))(\beta-1-n)},
\]
which implies that
\[
|y_i|\leq C(\va)m_i^{-(2/(n-2\sigma))(n/(\beta-1))}=o(m_i^{-2/(n-2\sigma)}).
\]
This contradicts to that $\tilde y_i\to\infty$. Thus \eqref{6.16} holds.

We are going to find some $y_0$ such that \eqref{6.14} fails.

It follows from Kazdan-Warner condition Proposition \ref{K-W prop} that
\be\label{6.17}
\int_{\R^n}\langle y,\nabla K_i(y+y_i)\rangle u_i(y+y_i)^{2n/(n-2\sigma)}=0.
\ee
Since $P_0$ is an isolated simple blow up point and the only blow up point of $v_i$, we have for any $\va>0$,
\[
\left|\int_{B_\va}\langle y,\nabla K_i(y+y_i)\rangle u_i(y+y_i)^{2n/(n-\sigma)}\right|\leq C(\va)u_i(y_i)^{-2n/(n-2\sigma)}.
\]
It follows from Lemma \eqref{lem4.5} and expression of $K_i$ that
\[
\begin{split}
&\left|\int_{B_\va}\langle y,\nabla Q^{(\beta)}_i(y+y_i)\rangle u_i(y+y_i)^{2n/(n-2\sigma)}\right|\\&
\leq C(\va)u_i(y_i)^{-2n/(n-2\sigma)}\\&
\quad +o_\va(1)\int_{B_\va}|y||y+y_i|^{\beta-1}u_i(y+y_i)^{-2n/(n-2\sigma)}\\&
\leq C(\va)u_i(y_i)^{-2n/(n-2\sigma)}\\&
\quad +o_\va(1)\int_{B_\va}(|y|^\beta+|y||y_i|^{\beta-1})u_i(y+y_i)^{-2n/(n-2\sigma)}\\&
\leq C(\va)u_i(y_i)^{-2n/(n-2\sigma)}+o_\va(1)u_i(y_i)^{-2\beta/(n-2\sigma)},
\end{split}
\]
where we used \eqref{6.16} in the last inequality.

Multiplying the above by $u_i(y_i)^{2\beta/(n-2\sigma)}$, due to $\beta<n$ we obtain
\be\label{6.18}
\lim_{i\to \infty}u_i(y_i)^{2\beta/(n-2\sigma)}\left|\int_{B_\va}\langle y,\nabla Q^{(\beta)}_i(y+y_i)\rangle u_i(y+y_i)^{2n/(n-2\sigma)}\right|
=o_\va(1).
\ee
Let $R_i\to \infty$ as $i \to \infty$. We assume that
$r_i:=R_i u_i(y_i)^{-\frac{2}{n-2\sigma}}\to 0$ as we did in Proposition \ref{prop4.1}.
By Lemma \ref{lem4.5}, we have
\be\label{6.19}
\begin{split}
&u_i(y_i)^{2\beta/(n-2\sigma)}\left|\int_{r_i\leq |y|\leq\va}
\langle y,\nabla Q^{(\beta)}_i(y+y_i)\rangle u_i(y+y_i)^{2n/(n-2\sigma)}\right|\\&
\leq \lim_{i\to \infty}u_i(y_i)^{2\beta/(n-2\sigma)}\left|\int_{r_i\leq |y|\leq\va}
(|y|^\beta+|y||y_i|^{\beta-1}) u_i(y+y_i)^{2n/(n-2\sigma)}\right|\to 0
\end{split}
\ee
as $i\to \infty$.
Combining \eqref{6.18} and \eqref{6.19}, we conclude that
\[
\lim_{i\to \infty}u_i(y_i)^{2\beta/(n-2\sigma)}\left|\int_{B_{r_i}}\langle y,\nabla Q^{(\beta)}_i(y+y_i)\rangle u_i(y+y_i)^{2n/(n-2\sigma)}\right|
=o_\va(1).
\]
It follows from changing variable $z=u_i(y_i)^{\frac{2}{n-2\sigma}}y$, applying Proposition \ref{prop4.1} and then letting $\va\to0 $ that
\be\label{6.20}
\int_{\R^n}\langle z,\nabla Q^{(\beta)}(z+z_0)\rangle (1+k|z|^2)^{-n}=0,
\ee
where $z_0=\lim_{i\to \infty}u_i(y_i)^{2/(n-2\sigma)}y_i$ and $k=\lim_{i\to \infty}K_i(y_i)^{1/\sigma}$.

On the other hand, from \eqref{eq:equal to 0-1}
\be\label{6.21}
\int_{\R^n}\nabla K_i(y+y_i)u_i(y+y_i)^{2n/(n-2\sigma)}=0.
\ee
Arguing as above, we will have
\be\label{6.22}
\int_{\R^n}\nabla Q^{(\beta)}(z+z_0) (1+k|z|^2)^{-n}=0.
\ee
It follows from \eqref{6.20} and \eqref{6.22} that
\be\label{6.23}
\begin{split}
\int_{\R^n}& Q^{(\beta)}(z+z_0) (1+k|z|^2)^{-n}\,\ud z\\&
= \beta^{-1}\int_{\R^n}\langle z+z_0,\nabla Q^{(\beta)}(z+z_0)\rangle (1+k|z|^2)^{-n}\,\ud z\\&
=0.
\end{split}
\ee

Therefore, \eqref{6.14} does not hold for $y_0=\sqrt{k}z_0$.
\end{proof}

\begin{thm}\label{thm:maximum bound}
Let $\sigma \in (0,1)$ and $n\geq 3$. Suppose that $K\in C^{1,1}(\Sn)$, for some constant $A_1>0$,
\[
 1/A_1\leq K_i(\xi)\leq A_1\quad  \mbox{for all }\xi\in \Sn.
\]
Suppose also that for any critical point $\xi_0$ of $K$,
under the stereographic projection coordinate system $\{y_1, \cdots, y_n\}$ with $\xi_0$ as south pole, there exist
some small neighborhood $\mathscr{O}$ of $0$, a positive constant $L$, and $\beta=\beta(\xi_0)\in(n-2\sigma, n)$ such that
\[
 \|\nabla^{[\beta]}K\|_{C^{\beta-[\beta]}(\mathscr{O})}\leq L
\]
and
\[
 K(y)=K(0)+Q_{(\xi_0)}^{(\beta)}(y)+R_{(\xi_0)}(y)\quad \mbox{in }\mathscr{O},
\]
where $Q_{\xi_0}^{(\beta)}(y)\in C^{[\beta]-1,1}(\mathbb{S}^{n-1})$ satisfies
$Q_{\xi_0}^{(\beta)}(\lda y)=\lda^{\beta}Q_{\xi_0}^{(\beta)}(y)$, $\forall \lda >0$, $y\in \R^n$, and for some positive constant $A_6$
\[
A_6|y|^{\beta-1}\leq |\nabla Q^{(\beta)}(y)|,\quad y\in \mathscr{O},
\]
and
\[
\left(
\begin{array}{l}
 \int_{\R^n}\nabla Q^{(\beta)}(y+y_0)(1+|y|^2)^{-n}\,\ud y\\[2mm]
\int_{\R^n}Q^{(\beta)}(y+y_0)(1+|y|^2)^{-n}\,\ud y
\end{array} \right)\neq 0, \quad \forall\  y_0\in \R^n,
\]
and $R_{\xi_0}(y)\in C^{[\beta]-1,1}(\mathscr{O})$ satisfies $\lim_{y\to 0}\sum_{s=0}^{[\beta]}|\nabla^sR|{\xi_0}(y)|y|^{-\beta+s}=0$.

Then there exists a positive constant $C\geq 1$ depending on $n,\sigma, K$ such that for any solution $v$ of \eqref{main equ}
\[
 1/C\leq v\leq C,\quad \mbox{on }\Sn.
\]
\end{thm}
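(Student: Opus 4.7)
The upper bound will be proved by contradiction, combining the one--point concentration result of Theorem~\ref{thm6.2} with the non-existence result Theorem~\ref{thm6.3}. The lower bound will then follow from an integral identity that forces $\max v$ to stay away from zero, together with the Harnack inequality of Proposition~\ref{lem harnack}.

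\medskip

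\noindent\emph{Step 1 (verifying the blow-up hypotheses).} I first check that, in a neighborhood of each critical point $\xi_0$, the function $K$ satisfies condition $(*)'_\beta$ in the sense of Definition~\ref{def*} with $\beta=\beta(\xi_0)>n-2\sigma$. In the local coordinates centered at $\xi_0$, the expansion $K(y)=K(0)+Q^{(\beta)}_{(\xi_0)}(y)+R_{(\xi_0)}(y)$ together with the non-degeneracy $A_6|y|^{\beta-1}\le|\nabla Q^{(\beta)}_{(\xi_0)}(y)|$ and the decay of $R_{(\xi_0)}$ give, for $|y|$ sufficiently small,
\[
|\nabla K(y)|\ge \tfrac{A_6}{2}|y|^{\beta-1},\qquad |\nabla^s K(y)|\le C|y|^{\beta-s}\quad (2\le s\le[\beta]).
\]
Eliminating $|y|$ via the first inequality yields $|\nabla^s K|\le C'|\nabla K|^{(\beta-s)/(\beta-1)}$, which is precisely $(*)'_\beta$. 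Since the critical set of $K$ (being the zero set of the continuous map $\nabla_{g_{\Sn}}K$) is compact and each critical point has such a neighborhood, for some $d>0$ the set $\omega_d=\{|\nabla_{g_{\Sn}}K|<d\}$ is covered by these neighborhoods and $K$ satisfies $(*)'_\beta$ in $\omega_d$ with a fixed constant $L$ and $\beta=\min_{\xi_0}\beta(\xi_0)>n-2\sigma$.

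\medskip

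\noindent\emph{Step 2 (upper bound by contradiction).} Suppose, for contradiction, that there is a sequence of positive $C^2$ solutions $v_i$ of \eqref{main equ} with $\max_{\Sn}v_i\to\infty$. Set $K_i\equiv K$ and $\tau_i\equiv 0$, so the hypotheses of Theorem~\ref{thm6.1} and Theorem~\ref{thm6.2} are satisfied by Step~1. Passing to a subsequence, Theorem~\ref{thm6.2} yields precisely one isolated simple blow-up point $\xi_\infty\in\Sn$. By Proposition~\ref{prop5.2} (applied via stereographic projection), $|\nabla_{g_{\Sn}}K(\xi_\infty)|=0$, so $\xi_\infty$ is a critical point of $K$. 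In the stereographic coordinates at $\xi_\infty$, the given expansion of $K$, the non-degeneracy $A_6|y|^{\beta-1}\le|\nabla Q^{(\beta)}|$, and the integral non-vanishing condition on $Q^{(\beta)}$ are exactly the hypotheses of Theorem~\ref{thm6.3} (taking $K_i\equiv K$, $Q_i^{(\beta)}\equiv Q^{(\beta)}$, so the convergence in $C^1(\mathbb S^{n-1})$ is trivial). Theorem~\ref{thm6.3} then forces $v_i$ to have at least one additional blow-up point, contradicting the uniqueness obtained from Theorem~\ref{thm6.2}. Hence $\|v\|_{L^\infty(\Sn)}\le C$ for some $C=C(n,\sigma,K)$.

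\medskip

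\noindent\emph{Step 3 (lower bound and $C^2$ estimate).} With the upper bound in hand, I integrate \eqref{main equ} against the constant $1$ and use the self-adjointness of $P_\sigma$ together with $P_\sigma(1)=c(n,\sigma)$ to obtain
\[
c(n,\sigma)\int_{\Sn}v\,\ud vol_{g_{\Sn}}=\int_{\Sn}P_\sigma(v)\,\ud vol_{g_{\Sn}}=c(n,\sigma)\int_{\Sn}Kv^{\frac{n+2\sigma}{n-2\sigma}}\,\ud vol_{g_{\Sn}}.
\]
Bounding the right side by $A_1(\max v)^{\frac{4\sigma}{n-2\sigma}}\int v$ and dividing by $\int v>0$ gives $\max_{\Sn}v\ge A_1^{-(n-2\sigma)/(4\sigma)}>0$. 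Passing to the stereographic coordinates and the Caffarelli--Silvestre extension, the extension $V$ of $v$ satisfies a degenerate elliptic equation whose boundary term is $c(n,\sigma)K V^{(n+2\sigma)/(n-2\sigma)-1}\cdot V$, and $KV^{4\sigma/(n-2\sigma)}$ is bounded in $L^\infty$ (hence in any $L^p$, $p>n/(2\sigma)$) by the upper bound. The Harnack inequality of Proposition~\ref{lem harnack}(iii), applied on a finite cover of $\Sn$, then gives $\max v\le C\min v$ with $C$ depending only on $n,\sigma,A_1$ and the upper bound, so $\min v\ge 1/C$. Finally, the $C^2$ estimate $\|v\|_{C^2(\Sn)}\le C$ is an immediate consequence of the Schauder estimate Theorem~\ref{thm2.2} applied in local charts, since now $1/C\le v\le C$ and $K\in C^{1,1}$.

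\medskip

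\noindent\emph{Main obstacle.} The genuinely delicate point is not in this proof itself but in ensuring that its two ingredients (Theorems~\ref{thm6.2} and~\ref{thm6.3}) are applicable; concretely, one must verify carefully that the flatness expansion of $K$ at each critical point produces the $(*)'_\beta$ condition with a single $\beta>n-2\sigma$ uniformly on all of $\omega_d$, and that the blow-up profile at an isolated simple blow-up point allows one to extract the limiting integral identities \eqref{6.20} and \eqref{6.22} on the model bubble. Both of these have essentially been established in the preceding sections, so the remaining work here is bookkeeping.
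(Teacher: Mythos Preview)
Your proof is correct and follows the same route as the paper, which simply states that the result ``follows directly from Theorem~\ref{thm6.2} and Theorem~\ref{thm6.3}.'' You have filled in the details the paper leaves implicit: the verification in Step~1 that the local expansion of $K$ at critical points yields the $(*)'_\beta$ condition on $\omega_d$ (so that Theorem~\ref{thm6.2} applies), the use of Proposition~\ref{prop5.2} to pin the unique blow-up point at a critical point of $K$ before invoking Theorem~\ref{thm6.3}, and the lower bound argument via integrating \eqref{main equ} against $1$ and applying Harnack, which the paper does not spell out at all.
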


\begin{proof}
It follows directly from Theorem \ref{thm6.2} and Theorem \ref{thm6.3}.
\end{proof}

\begin{proof}[Proof of the compactness part of Theorem \ref{main thm A}]
It is easy to check that, if $K$ satisfies the condition in Theorem \ref{main thm A}, then it must satisfy the condition in the above theorem.
Therefore, we have the lower and upper bounds of $v$. The $C^2$ norm bound of $v$ follows immediately.
\end{proof}

\appendix
\section{Appendix}
\subsection{A Kazdan-Warner identity}\label{A Kazdan-Warner identity}
In this section we are going to show \eqref{K-W condition}, which is a consequence of the following

\begin{prop}\label{K-W prop}\  Let $K>0$ be a $C^{1}$ function on $S^n$, and let $v$ be
a positive function in $C^2(S^n)$ satisfying
\begin{equation}\label{eq:a1}
P_\sigma(v)=Kv^{ \frac{n+2\sigma}{n-2\sigma}}, \quad\mbox{on}\ S^n.
\end{equation}
Then, for any conformal Killing vector field $X$ on $S^n$, we have
\begin{equation}\label{eq:a2}
\int_{S^n} (\nabla_XK)v^{ \frac{2n}{n-2\sigma} }\,\ud V_{g_{\Sn}}=0.
\end{equation}
\end{prop}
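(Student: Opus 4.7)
The plan is to exploit the conformal covariance of $P_\sigma$ together with the fact that a conformal Killing vector field $X$ on $\Sn$ generates a one-parameter family $\{\varphi_t\}$ of conformal diffeomorphisms of $\Sn$ with $\varphi_0=\mathrm{id}$ and $\frac{d}{dt}\big|_{t=0}\varphi_t=X$. In the notation of Theorem~\ref{main thm C}, I will set $v_t:=T_{\varphi_t}v=|\det\ud\varphi_t|^{(n-2\sigma)/(2n)}(v\circ\varphi_t)$. The intertwining property of $P_\sigma$ (which is built into the Graham--Zworski and Chang--Gonz\'alez constructions, since conformal diffeomorphisms of $\Sn$ lift to isometries of $\mathbb{H}^{n+1}$) gives
\[
P_\sigma(T_\varphi u)=|\det\ud\varphi|^{(n+2\sigma)/(2n)}(P_\sigma u)\circ\varphi
\]
for any conformal $\varphi:\Sn\to\Sn$ and smooth $u$. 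Combining this with \eqref{eq:a1} immediately yields $P_\sigma(v_t)=(K\circ\varphi_t)\,v_t^{(n+2\sigma)/(n-2\sigma)}$ on $\Sn$ for every $t$.

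The main step is to multiply this equation by $v_t$ and integrate over $\Sn$, producing
\[
\int_{\Sn}v_t\, P_\sigma(v_t)\,\ud V_{g_{\Sn}}=\int_{\Sn}(K\circ\varphi_t)\,v_t^{2n/(n-2\sigma)}\,\ud V_{g_{\Sn}}.
\]
The left-hand side is independent of $t$: inserting the intertwining identity and changing variables by $\varphi_t$ (with Jacobian $|\det\ud\varphi_t|$) collapses the exponent sum $(n-2\sigma)/(2n)+(n+2\sigma)/(2n)=1$ so that the integral reduces to $\int_{\Sn}v\,P_\sigma v\,\ud V_{g_{\Sn}}$. Hence $t\mapsto \int_{\Sn}(K\circ\varphi_t)\,v_t^{2n/(n-2\sigma)}\,\ud V_{g_{\Sn}}$ is constant in $t$.

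Differentiating this constant quantity at $t=0$ produces
\[
0=\int_{\Sn}(\nabla_X K)\,v^{2n/(n-2\sigma)}\,\ud V_{g_{\Sn}}+\frac{2n}{n-2\sigma}\int_{\Sn}K\,v^{(n+2\sigma)/(n-2\sigma)}\,\dot v_0\,\ud V_{g_{\Sn}},
\]
where $\dot v_0=\frac{d}{dt}\big|_{t=0}v_t$. To conclude \eqref{eq:a2} I only need to show that the second integral vanishes, and this is handled by differentiating the other invariant quantity: by self-adjointness of $P_\sigma$,
\[
0=\frac{d}{dt}\bigg|_{t=0}\int_{\Sn}v_t\,P_\sigma(v_t)\,\ud V_{g_{\Sn}}=2\int_{\Sn}(P_\sigma v)\,\dot v_0\,\ud V_{g_{\Sn}}=2\int_{\Sn}K\,v^{(n+2\sigma)/(n-2\sigma)}\,\dot v_0\,\ud V_{g_{\Sn}},
\]
using \eqref{eq:a1} in the last step. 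The main technical point is justifying the intertwining identity (and the self-adjointness of $P_\sigma$) for the full conformal group of $\Sn$; both follow from the construction of $P_\sigma$ via the extension to hyperbolic space, and once they are in hand the rest is a clean two-line differentiation argument, with the regularity $v\in C^2(\Sn)$ and $K\in C^1(\Sn)$ more than sufficient to differentiate under the integral.
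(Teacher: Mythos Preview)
Your proof is correct and follows essentially the same route as the paper: both exploit the conformal covariance of $P_\sigma$ to define $v_t=T_{\varphi_t}v$, use that $\int_{\Sn} v_t\,P_\sigma v_t\,\ud V_{g_{\Sn}}$ is independent of $t$, and differentiate at $t=0$. The only cosmetic difference is that the paper packages the computation through the Euler--Lagrange functional $I[v]=\tfrac12\int v P_\sigma v-\tfrac{n-2\sigma}{2n}\int K v^{2n/(n-2\sigma)}$ and invokes $I'[v]=0$ directly, whereas you handle the two integrals separately; the underlying ingredients (conformal covariance, self-adjointness, and the equation \eqref{eq:a1}) are identical.
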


 Let $\varphi_t: S^n\to S^n$ be a one parameter family of conformal
diffeomorphism (in this case they are M\"obius transformations),
depending on $t$ smoothly, $|t|<1$, and $\varphi_0=identity$. Then
\begin{equation}\label{one-parameter-mobius}
X:=\frac {d}{dt}(\varphi_t)^{-1}\bigg|_{t=0}\ \ \mbox{is a conformal
Killing vector field on}\ S^n.
\end{equation}

\begin{proof}
The proof is standard (see, e.g., \cite{BE} for a Kazdan-Warner identity for prescribed scalar curvature problems) and we include it here for completeness.
Since $P_{\sigma}$ is a self-adjoint operator, \eqref{eq:a1} has a variational formulation:
\[
I[v]:=  \frac 12 \int_{S^n} v P_{\sigma}(v) \,\ud V_{g_{\Sn}}-\frac {n-2\sigma}{2n} \int_{S^n} Kv^{ \frac{2n}{n-2\sigma}}\,\ud V_{g_{\Sn}}.
\]
Let $X$ be a conformal Killing vector field, then there exists
$\{\varphi_t\}$ satisfying \eqref{one-parameter-mobius}.
Let
$$
v_t:=(v\circ\varphi_t)w_t
$$
where $w_t$ is given by $$ g_t:= \varphi_t^*g_{\Sn}=w_t^{ \frac{4}{n-2\sigma}} g_{\Sn}.$$
Then
$$
I[v_t] =\frac 12 \int_{S^n} v P_{\sigma}(v) \,\ud V_{g_{\Sn}}-\frac {n-2\sigma}{2n} \int_{S^n} K(\varphi_t^{-1}(x))v^{\frac{2n}{n-2\sigma}}\,\ud V_{g_{\Sn}}.
$$
It follows from \eqref{eq:a1} that
$$
0= I'[v]\left(\frac d{dt}\bigg|_{t=0}v_t\right)=\frac d{dt}I[v_t]\bigg|_{t=0}= -\frac {n-2\sigma}{2n} \int_{S^n}
(\nabla_XK)v^{ \frac{2n}{n-2\sigma}}\,\ud V_{g_{\Sn}}.
$$
\end{proof}

\subsection{A proof of Lemma \ref{thm3.2}} \label{A Bocher type theorem}

The classical B\^ocher theorem in harmonic function theory states that a positive harmonic
function $u$ in a punctured ball $B_1\setminus \{0\}$ must be of the form
\[
u(x)=\begin{cases}
-a\log|x|+h(x),\quad & n=2,\\
a|x|^{2-n}+h(x), \quad & n\geq 3,
\end{cases}
\]
where $a$ is a nonnegative constant and $h$ is a harmonic function in $B_1$.

We are going to establish a similar result, Lemma \ref{thm3.2}, in our setting.
Denote $\mathcal{B}_R^+=\{X: |X|<R, t>0 \}$, $\pa'\mathcal{B}_R^+=\{(x,t):|x|<R\}$ and
$\pa''\mathcal{B}=\pa\mathcal{B}_R^+ \setminus  \pa'\mathcal{B}_R^+$.

\begin{proof}[Proof of Lemma \ref{thm3.2}] We adapt the proof of the B\^ocher theorem given in \cite{abr}.

Define
\[
A[U](r)=\frac{\int_{\partial''\B_{r}^+}t^{1-2\sigma}U(x,t)dS_r}{\int_{\partial''\B_{r}^+}t^{1-2\sigma}dS_r}
\]
where $r=|(x,t)|>0$ and $dS_r$ is the volume element of $\partial '' \B_r$.

By direct computation we have
\[
\frac{d}{dr}A[U](r)=\frac{\int_{\partial''\B_{r}^+}t^{1-2\sigma}\nabla U(x,t)\cdot\frac{(x,t)}{r}dS_r}{\int_{\partial''\B_{r}^+}t^{1-2\sigma}dS_r}.
\]
Let
\[
f(r)=\int_{\partial''\B_{r}^+}t^{1-2\sigma}\nabla U(x,t)\cdot\frac{(x,t)}{r}dS_r.
\]
Since $U$ satisfies \eqref{eq:bocher}, by integration by parts we have
\[
f(r_1)=f(r_2), \  \forall \ 0<r_1, r_2<1.
\]
Notice that
\[
\int_{\partial''\B_{r}^+}t^{1-2\sigma}dS_r=r^{n+1-2\sigma}\int_{\partial''\B_{1}^+}t^{1-2\sigma}dS_1.
\]
Thus there exists a constant $b$ such that
\[
\frac{d}{dr}A[U](r)=b r^{-n-1+2\sigma}.
\]
So there exist constants $a$ and $b$ such that
\[
A[U](r)=a+br^{2\sigma-n}.
\]

Since we have the Harnack inequalities for $U$ as in the proof of Lemma \ref{lem4.1}, the rest of the arguments are rather similar to those in \cite{abr} and are omitted here.
We refer to \cite{abr} for details.
\end{proof}

\subsection{Two lemmas on maximum principles}\label{section of two maximum principle}
\begin{lem}
Let $Q_1=B_1\times (0,1)\subset \R^{n+1}_+$, then there exists $\va=\va(n,\sigma)$ such that for all $|a(x)|\leq \va |x|^{-2\sigma}$, if $U\in H(t^{1-2\sigma}, Q_1)$, $U\geq 0$ on $\partial'' Q_1$, and
\[
\int_{Q_1}t^{1-2\sigma} \nabla U \nabla \varphi\geq \int_{B_1} aU(\cdot,0) \varphi\quad\mbox{for all }0\leq\varphi\in C^{\infty}_c(Q_1).
\]
Then
\[
U\geq 0\quad\mbox{in } Q_1.
\]
\end{lem}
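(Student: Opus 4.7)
The plan is to test the weak supersolution inequality against $\varphi = U^{-} := \max(-U, 0)$ and close the estimate via a fractional Hardy inequality, absorbing the potential term into the Dirichlet integral by choice of $\va$. First I would observe that since $U \geq 0$ on $\partial'' Q_1$ in the trace sense, $U^{-} \in H(t^{1-2\sigma}, Q_1)$ has vanishing trace on $\partial'' Q_1$; extending it by zero to $\R^{n+1}_+$ yields a function still in $H(t^{1-2\sigma}, \R^{n+1}_+)$. A routine density argument (approximate $U^{-}$ by functions in $C^\infty_c(Q_1 \cup \partial' Q_1)$) permits its use as a nonnegative test function in the supersolution inequality.

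Next, using $\nabla U^{+} \cdot \nabla U^{-} = 0$ and $U^{+} U^{-} = 0$ a.e., one computes $\nabla U \cdot \nabla U^{-} = -|\nabla U^{-}|^2$ and $U \cdot U^{-}(\cdot, 0) = -(U^{-}(\cdot, 0))^2$. Substituting into the defining inequality and multiplying by $-1$ gives
\[
\int_{Q_1} t^{1-2\sigma} |\nabla U^{-}|^2 \, \ud X \leq \int_{B_1} a(x) (U^{-}(x, 0))^2 \, \ud x \leq \va \int_{B_1} \frac{(U^{-}(x, 0))^2}{|x|^{2\sigma}} \, \ud x.
\]
The remaining step is to dominate the weighted $L^2$ trace integral on the right by the Dirichlet energy on the left.

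This is where the fractional Hardy inequality enters: for any $v \in \dot H^\sigma(\R^n)$ one has
\[
\int_{\R^n} \frac{v(x)^2}{|x|^{2\sigma}} \, \ud x \leq C_H(n, \sigma) \, \|v\|_{\dot H^\sigma(\R^n)}^2,
\]
and by the minimization property of the Poisson extension $\mathcal P_\sigma$ together with \eqref{norms in two spaces are same}, $\|v\|_{\dot H^\sigma(\R^n)}^2 \leq N_\sigma^{-1} \|\nabla \tilde V\|_{L^2(t^{1-2\sigma}, \R^{n+1}_+)}^2$ for any extension $\tilde V$ of $v$. Applied to $\tilde V = U^{-}$ extended by zero (whose trace agrees with $U^{-}(\cdot, 0)$ supported in $\overline{B_1}$), this yields
\[
\int_{B_1} \frac{(U^{-}(x, 0))^2}{|x|^{2\sigma}} \, \ud x \leq C(n, \sigma) \int_{Q_1} t^{1-2\sigma} |\nabla U^{-}|^2 \, \ud X.
\]
Combining with the previous display and choosing $\va = \va(n, \sigma) < 1/C(n, \sigma)$ forces $\nabla U^{-} \equiv 0$ in $Q_1$; since $U^{-}$ has zero trace on $\partial'' Q_1$, we conclude $U^{-} \equiv 0$, i.e. $U \geq 0$ in $Q_1$.

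The main obstacle is verifying the fractional Hardy inequality in its extended (weighted $(n+1)$-dimensional) form, and more pedantically, ensuring the zero-extension of $U^{-}$ across $\partial'' Q_1$ genuinely lives in $H(t^{1-2\sigma}, \R^{n+1}_+)$; the latter is standard given the vanishing trace on the Lipschitz boundary $\partial'' Q_1$, and the former reduces to the classical fractional Hardy inequality via the extension identity, so both are routine but must be invoked carefully.
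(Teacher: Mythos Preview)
Your proof is correct and follows essentially the same route as the paper: test with $U^{-}$, extend by zero to $\R^{n+1}_+$, combine the fractional Hardy inequality with the minimization property of the Poisson extension to bound the boundary term by the weighted Dirichlet energy, and absorb by choosing $\va$ small. The paper phrases the conclusion by first deducing $U^{-}(\cdot,0)\equiv 0$ and then using \eqref{1} to get $\nabla U^{-}\equiv 0$, whereas you go directly to $\nabla U^{-}\equiv 0$, but this is a cosmetic difference.
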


\begin{proof}
By a density argument, we can use $U^-$ as a test function. Hence we have
\begin{equation}\label{1}
\int_{Q_1}t^{1-2\sigma} |\nabla U^-|^2\leq \int_{B_1}|a|(U^-(\cdot, 0))^2.
\end{equation}
We extend $U^-$ to be zero outside of $Q_1$ and still denote it as $U^-$. Then the trace
\[
U^-(\cdot, 0)\in \dot H^{\sigma}(\R^n).
\]
Since
\[
\|U^-(\cdot, 0)\|^2_{\dot H^{\sigma}(\R^n)}=\int_{\R^{n+1}_+}t^{1-2\sigma}|\nabla \mathcal P_{\sigma}*U^-(\cdot, 0)|^2 \leq \int_{\R^{n+1}_+}t^{1-2\sigma} |\nabla U^-|^2,
\]
we have
\[
\|U^-(\cdot, 0)\|^2_{\dot H^{\sigma}(\R^n)}\leq \int_{B_1}|a|(U^-(\cdot, 0))^2.
\]
By Hardy's inequality (see, e.g., \cite{Ya})
\[
C(n,\sigma)\int_{\R^n}|x|^{-2\sigma} (U^-(\cdot, 0))^2\leq \|U^-(\cdot, 0)\|^2_{\dot H^{\sigma}(\R^n)}
\]
where $C(n,\sigma)=2^{2\sigma}\frac{\Gamma((n+2\sigma)/4)}{\Gamma((n-2\sigma)/4)}$ is the best constant. Hence if $\va<C(n,\sigma)$, $U^-(\cdot,0)\equiv 0$ and hence by \eqref{1}, $U^-\equiv 0$ in $Q_1$.
\end{proof}

\begin{lem}
Let $a(x)\in L^{\infty}(B_1)$. Let $W\in C(\overline{Q_1})\cap C^2(Q_1)$ satisfying $\nabla_x W\in C(\overline{Q_1})$, $t^{1-2\sigma}\partial_t W\in C(\overline{Q_1})$, and
\begin{equation}
\begin{cases}
-\mathrm{div}(t^{1-2\sigma}\nabla W)&\geq 0\quad\mbox{in }Q_1\\
-\lim\limits_{t\to 0}t^{1-2\sigma}\partial_t W(x,t)&\geq a(x)W(x,0)\quad\mbox{on }\partial ' Q_1\\
\hspace{2cm} W&>0\quad\mbox{in }\overline{Q_1}.
\end{cases}
\end{equation}
If $U\in C(\overline{Q_1})\cap C^2(Q_1)$ satisfying $\nabla_x U \in C(\overline{Q_1})$,  $t^{1-2\sigma}\partial_t U\in C(\overline{Q_1})$, and
\begin{equation}
\begin{cases}
-\mathrm{div}(t^{1-2\sigma}\nabla U)&\geq 0\quad\mbox{in }Q_1\\
-\lim\limits_{t\to 0}t^{1-2\sigma}\partial_t U(x,t)&\geq a(x)U(x,0)\quad\mbox{on }\partial ' Q_1\\
\hspace{2cm} U&\geq 0\quad\mbox{in }\partial '' Q_1.
\end{cases}
\end{equation}
Then $U\geq 0$ in $Q_1$.
\end{lem}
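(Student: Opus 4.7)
The plan is to reduce matters to a nonnegative supersolution by working with the ratio $V:=U/W$, which is continuous on $\overline{Q_1}$ and of class $C^2$ in $Q_1$ since $W>0$ there. Starting from $\nabla U=V\nabla W+W\nabla V$ and expanding the divergence one step at a time, one obtains the identity
\[
\mathrm{div}(W^2 t^{1-2\sigma}\nabla V)=W\,\mathrm{div}(t^{1-2\sigma}\nabla U)-WV\,\mathrm{div}(t^{1-2\sigma}\nabla W)\quad\text{in }Q_1.
\]
Setting $f:=-\mathrm{div}(t^{1-2\sigma}\nabla W)\geq 0$ and using $-\mathrm{div}(t^{1-2\sigma}\nabla U)\geq 0$, this yields the pointwise bound $\mathrm{div}(W^2 t^{1-2\sigma}\nabla V)\leq WVf$, so the left-hand side is nonpositive wherever $V\leq 0$.

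The next step is to test this inequality against $V^-:=\max(-V,0)$. Because $W$ is bounded below by a positive constant on $\overline{Q_1}$ and $t^{1-2\sigma}\partial_t U,\,t^{1-2\sigma}\partial_t W\in C(\overline{Q_1})$, the quotient formula shows that $t^{1-2\sigma}\partial_t V\in C(\overline{Q_1})$ as well. I would integrate by parts on the truncated domain $Q_1\cap\{t>\varepsilon\}$, use that $V^-=0$ on $\partial''Q_1$ (since $U\geq 0$ and $W>0$ there) to kill the top and lateral boundary contributions, and then send $\varepsilon\to 0$ to obtain
\[
\int_{\partial' Q_1}W^2\bigl(-\lim_{t\to 0}t^{1-2\sigma}\partial_t V\bigr)V^-\,dx+\int_{Q_1}W^2 t^{1-2\sigma}|\nabla V^-|^2\,dX\leq -\int_{Q_1}W(V^-)^2 f\,dX\leq 0.
\]

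The key step is to verify that the boundary integral is nonnegative. Writing $A:=-\lim t^{1-2\sigma}\partial_t U\geq a(x)U$ and $B:=-\lim t^{1-2\sigma}\partial_t W\geq a(x)W$, the explicit formula gives $-\lim t^{1-2\sigma}\partial_t V=(A-VB)/W$, so on $\{V\leq 0\}$, where $V^-=-V\geq 0$, one computes
\[
W^2\cdot\frac{A-VB}{W}\cdot V^-=W(-V)A+WV^2 B\geq a U\,W(-V)+aW^2 V^2=-aV^2W^2+aV^2W^2=0,
\]
using $A\geq aU=aVW$ together with $(-V)\geq 0$, and $B\geq aW$ together with $V^2\geq 0$; note the sign of $a$ is irrelevant because the two contributions cancel exactly. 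Combining this with the displayed inequality forces both the boundary term and the weighted gradient integral to vanish, so $\nabla V^-=0$ a.e.\ in $Q_1$. By continuity and connectedness of $Q_1$, $V^-$ is constant on $\overline{Q_1}$, and since $V^-=0$ on the nonempty set $\partial''Q_1$ the constant is zero; hence $V\geq 0$ and $U=VW\geq 0$ in $Q_1$.

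The main technical point to watch is the passage to the limit in the integration by parts in the degenerate weighted setting. This is handled by truncating to $\{t>\varepsilon\}$ and exploiting the explicit formula $t^{1-2\sigma}\partial_t V=(t^{1-2\sigma}\partial_t U)/W-(U/W^2)\,t^{1-2\sigma}\partial_t W$, which belongs to $C(\overline{Q_1})$ under the regularity hypotheses on $U$ and $W$, so that the bottom boundary term passes to its pointwise limit at $t=0$.
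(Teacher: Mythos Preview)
Your proof is correct and takes a genuinely cleaner route than the paper's. Both arguments begin with the ratio $V=U/W$, but the paper writes the resulting inequality in non-divergence form
\[
-\mathrm{div}(t^{1-2\sigma}\nabla V)-2t^{1-2\sigma}W^{-1}\nabla V\cdot\nabla W-W^{-1}V\,\mathrm{div}(t^{1-2\sigma}\nabla W)\geq 0,
\]
tests with $(V-k)^-$ for $k$ near $\inf V$, and is forced to split into the cases $1-2\sigma\leq 0$ and $1-2\sigma>0$, invoking weighted Sobolev and Poincar\'e-type inequalities together with a measure-theoretic contradiction as $k\to\inf V$. Your choice to pass instead to the self-adjoint form $\mathrm{div}(W^{2}t^{1-2\sigma}\nabla V)\leq WVf$ absorbs the drift term and lets you test directly with $V^{-}$; the boundary algebra then collapses the $a$-dependence and yields $\int W^{2}t^{1-2\sigma}|\nabla V^{-}|^{2}\leq 0$ in one stroke, with no case distinction and no auxiliary functional inequalities. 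The only point worth recording explicitly is that the limit $\varepsilon\to 0$ is justified by monotonicity of $\int_{\{t>\varepsilon\}}W^{2}t^{1-2\sigma}|\nabla V^{-}|^{2}$ together with the uniform continuity of $t^{1-2\sigma}\partial_t V$ up to $t=0$, exactly as you indicate.
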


\begin{proof}
Let $V=U/W$. Then
\begin{equation}\label{4}
\begin{cases}
-\mathrm{div}(t^{1-2\sigma}\nabla V)-2t^{1-2\sigma}\frac{\nabla V\nabla W}{W}-\frac{\mathrm{div}(t^{1-2\sigma}\nabla W) V}{W}&\geq 0\quad\mbox{in }Q_1\\
-\lim\limits_{t\to 0}t^{1-2\sigma}\partial_t V+\frac{V}{W}\big(-\lim\limits_{t\to 0}t^{1-2\sigma}\partial_t W(x,t)- a(x)W(x,0)\big)&\geq 0\quad\mbox{on }\partial ' Q_1\\
\hspace{8cm} V&\geq 0\quad\mbox{in }\partial '' Q_1.
\end{cases}
\end{equation}
We are going to show that $V\geq 0$ in $Q_1$. If not, then we choose $k$ such that $\inf_{Q_1}v <k\leq 0$. Let
\[
V_k=V-k\quad\mbox{and }V_k^-=\max(-V_k,0).
\]
Multiplying $V_k^-$ to \eqref{4}, we have
\begin{equation}\label{5}
\int_{Q_1}t^{1-2\sigma}|\nabla V_k^-|^2\leq 2\int_{Q_1}t^{1-2\sigma}W^{-1}V_k^-\nabla V_k^-\nabla W.
\end{equation}

\emph{Case 1:} Suppose $1-2\sigma\leq 0$. Denote $\Gamma_k=Supp(\nabla V_k^-)$. Then by the H\"older inequality and the bounds of $\nabla_x W$, $t^{1-2\sigma}\partial_t W$,
\[
2\int_{Q_1}t^{1-2\sigma}W^{-1}V_k^-\nabla V_k^-\nabla W\leq C \left(\int_{Q_1}t^{1-2\sigma}|\nabla V_k^-|^2\right)^{\frac{1}{2}}\left(\int_{\Gamma_k}t^{1-2\sigma}|V_k^-|^2\right)^{\frac{1}{2}}.
\]
Hence it follows from \eqref{5} that
\begin{equation}\label{6}
\int_{Q_1}t^{1-2\sigma}|\nabla V_k^-|^2\leq C\int_{\Gamma_k}t^{1-2\sigma}|V_k^-|^2.
\end{equation}
Since $V_k^-=0$ on $\partial '' Q_1$, by Lemma 2.1 in \cite{TX}, 
\begin{equation}\label{7}
\left(\int_{Q_1}t^{1-2\sigma}|V_k^-|^{2(n+1)/n}\right)^{\frac{n}{n+1}}\leq C \int_{Q_1}t^{1-2\sigma}|\nabla V_k^-|^2.
\end{equation}
By \eqref{6}, \eqref{7} and H\"older inequality,
\[
\int_{\Gamma_k}t^{1-2\sigma}\geq C.
\]
This yields a contradiction when $k\to\inf_{Q_1}v$, since $\nabla V=0$ on the set of $V\equiv \inf_{Q_1}V$.

\emph{Case 2:} Suppose $1-2\sigma>0$. Denote $\Gamma_k=Supp(V_k^-)$. Then by H\"older inequality and the bounds of $\nabla_x W$, $t^{1-2\sigma}\partial_t W$,
\[
\begin{split}
\int_{Q_1}t^{1-2\sigma}|\nabla V_k^-|^2&\leq 2\int_{Q_1}t^{1-2\sigma}W^{-1}V_k^-\nabla V_k^-\nabla W\\
&\leq C\int_{Q_1}V_k^-\nabla V_k^-\\
&\leq C(\int_{Q_1}t^{1-2\sigma}|\nabla V_k^-|^2)^{1/2}(\int_{Q_1}t^{2\sigma-1}|V_k^-|^2)^{1/2}.
\end{split}
\]
Hence
\[
\int_{Q_1}t^{1-2\sigma}|\nabla V_k^-|^2\int_{Q_1}t^{1-2\sigma}|\nabla V_k^-|^2\leq C\int_{Q_1}t^{1-2\sigma}|\nabla V_k^-|^2\int_{Q_1}t^{2\sigma-1}|V_k^-|^2.
\]
Since $V_k^-=0$ on $\partial'' Q_1$, by the proof of Lemma 2.3 in \cite{TX}, 
for any $\beta>-1$,
\[
\int_{Q_1}t^{\beta}|V_k^-|^2\leq C(\beta)\int_{Q_1}t^{1-2\sigma}|\nabla V_k^-|^2.
\]
In the following we choose $\beta=\sigma-1$. Hence,
\[
\int_{Q_1}t^{1-2\sigma}|\nabla V_k^-|^2\int_{Q_1}t^{\sigma-1}|V_k^-|^2\leq C\int_{Q_1}t^{1-2\sigma}|\nabla V_k^-|^2\int_{Q_1}t^{2\sigma-1}|V_k^-|^2,
\]
i.e.
\[
\int_{\Gamma_k}t^{1-2\sigma}|\nabla V_k^-|^2\int_{\Gamma_k}t^{\sigma-1}|V_k^-|^2\leq C\int_{\Gamma_k}t^{1-2\sigma}|\nabla V_k^-|^2\int_{\Gamma_k}t^{2\sigma-1}|V_k^-|^2.
\]
Fixed $\va>0$ sufficiently small which will be chosen later. By the strong maximum principle $\inf_{Q_1}V$ has to be attained only on $\partial ' Q_1$, then we can choose $k$ sufficiently closed to $\inf_{Q_1}V$ such that $\Gamma_k\subset B_1\times [0,\va]$. Then
\[
\va^{-\sigma}\int_{\Gamma_k}t^{2\sigma-1}|V_k^-|^2\leq C\int_{\Gamma_k}t^{\sigma-1}|V_k^-|^2.
\]
Choose $\va$ small enough such that $\va^{-\sigma}>C+1$. It follows that
\[
\int_{\Gamma_k}t^{1-2\sigma}|\nabla V_k^-|^2\int_{\Gamma_k}t^{2\sigma-1}|V_k^-|^2=0.
\]
Hence one of them has to be zero, which reaches a contradiction immediately.
\end{proof}

\subsection{Complementarities}
\begin{lem}\label{lemma used in weighted trace embedding inequality}
Let $u(x)\in C^{\infty}_c(\mathbb{R}^n)$ and $V(\cdot,t)=\mathcal{P}_\sigma(\cdot, t)*u(\cdot)$. 
For any $U\in C^\infty_c(\mathbb{R}^{n+1}_+ \cup \partial \mathbb{R}^{n+1}_+)$ with $U(x,0)=u(x)$,
\[
 \int_{\mathbb{R}^{n+1}_+} t^{1-2\sigma} |\nabla V|^2\leq  \int_{\mathbb{R}^{n+1}_+} t^{1-2\sigma} |\nabla U|^2.
\]
\end{lem}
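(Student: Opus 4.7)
The statement is that $V = \mathcal{P}_\sigma[u]$ minimizes the weighted Dirichlet energy $E[\Phi] := \int_{\R^{n+1}_+} t^{1-2\sigma}|\nabla \Phi|^2\,\ud X$ among admissible $\Phi$ with trace $u$ at $\{t=0\}$. Setting $W = U - V$, which has vanishing trace, the expansion
\[
E[U] = E[V] + E[W] + 2\int_{\R^{n+1}_+} t^{1-2\sigma}\,\nabla V \cdot \nabla W\,\ud X
\]
reduces the claim to showing the cross term vanishes. Both $\nabla U$ (since $U \in C_c^\infty$) and $\nabla V$ (by \eqref{norms in two spaces are same}, since $u \in C_c^\infty(\R^n) \subset \dot H^\sigma(\R^n)$) belong to $L^2(t^{1-2\sigma}, \R^{n+1}_+)$, so $\nabla W$ does as well and the cross integral is absolutely convergent.

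To show the cross term is zero, I would introduce a vertical cutoff $\psi_\epsilon \in C^\infty([0,\infty))$ with $\psi_\epsilon \equiv 0$ on $[0,\epsilon]$, $\psi_\epsilon \equiv 1$ on $[2\epsilon,\infty)$, $|\psi_\epsilon'|\le C\epsilon^{-1}$, and a spatial cutoff $\phi_R \in C_c^\infty(\R^{n+1})$ with $\phi_R \equiv 1$ on $\{|X|\le R\}$, $\phi_R \equiv 0$ outside $\{|X|\le 2R\}$, $|\nabla\phi_R|\le CR^{-1}$. Then $\eta := \phi_R(X)\psi_\epsilon(t)$ makes $\eta W$ compactly supported in $\R^{n+1}_+$, so testing \eqref{2.1} against $\eta W$ yields
\[
\int_{\R^{n+1}_+} t^{1-2\sigma}\eta\,\nabla V\cdot\nabla W\,\ud X = -\int_{\R^{n+1}_+} t^{1-2\sigma} W\,\nabla V\cdot\nabla\eta\,\ud X.
\]
The plan is to send $R\to\infty$ and then $\epsilon\to 0$: dominated convergence shows the left side converges to the full cross term, and the conclusion follows once the right side is shown to tend to zero.

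The main obstacle is controlling this boundary term, which involves $W$ itself rather than $\nabla W$, so finiteness of $\nabla W$ in $L^2(t^{1-2\sigma})$ does not directly help; pointwise bounds on $V$ and $W$ are required. For the $\nabla\phi_R$ contribution, on the annulus $R<|X|<2R$ with $R$ large, $U$ vanishes so $W = -V$, and the explicit bound $|V(x,t)|\lesssim t^{2\sigma}(|x|^2+t^2)^{-(n+2\sigma)/2}$ (obtained from the Poisson kernel $\mathcal{P}_\sigma$ and compactness of $\mathrm{supp}\,u$) gives $\int_{R<|X|<2R} t^{1-2\sigma}V^2\,\ud X = O(R^{2-n-2\sigma})\to 0$ since $n+2\sigma>2$; pairing with $\|\nabla V\|_{L^2(t^{1-2\sigma},\,R<|X|<2R)}\to 0$ via Cauchy--Schwarz then handles this piece. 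For the $\partial_t\psi_\epsilon$ contribution, the expansion $V(x,t)-V(x,0) = O(t^{2\sigma})$ (from $t^{1-2\sigma}\partial_t V \to -N_\sigma(-\Delta)^\sigma u$, which is bounded with spatial decay since $u\in C_c^\infty$) together with $W(x,0)=0$ and Lipschitz smoothness of $U$ yield $|W(x,t)| \lesssim t^{\min(1,2\sigma)}$ on $\mathrm{supp}\,U$ and $|W(x,t)|\lesssim t^{2\sigma}(1+|x|)^{-(n+2\sigma)}$ outside it, while $|\partial_tV(x,t)|\lesssim t^{2\sigma-1}(1+|x|)^{-(n+2\sigma)}$ uniformly; integrating first in $x$ and then over the slab $\epsilon<t<2\epsilon$ produces a bound $O(\epsilon^{\min(1,2\sigma)})\to 0$, completing the argument.
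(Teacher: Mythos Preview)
Your proof is correct and follows the same overall strategy as the paper---show that the cross term $\int_{\R^{n+1}_+} t^{1-2\sigma}\nabla V\cdot\nabla W\,\ud X$ vanishes, where $W=U-V$---but you work harder than necessary. The paper uses only a single spatial cutoff $\eta$ supported in $\mathcal{B}_{2R}^+$ with $\eta\equiv 1$ on $\mathcal{B}_R^+$ (taking $R$ large enough that $\mathrm{supp}\,U\subset\mathcal{B}_{R/2}^+$): since $\eta W$ vanishes on \emph{all} of $\partial\mathcal{B}_{2R}^+$ (on $\{t=0\}$ because $W=0$ there, and on $\partial''\mathcal{B}_{2R}^+$ because $\eta=0$), testing the equation for $V$ against $\eta W$ produces no boundary term at $t=0$, and the only error term lives on the annulus $R<|X|<2R$, where the pointwise Poisson-kernel bound $|V(X)|\lesssim |X|^{-n}$ and $\nabla V\in L^2(t^{1-2\sigma})$ dispose of it as $R\to\infty$. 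Your additional vertical cutoff $\psi_\epsilon$ forces you to estimate $W$ and $\partial_t V$ pointwise on the slab $\{\epsilon<t<2\epsilon\}$; these estimates are valid, but the whole step is avoidable once you observe that $\eta W$ is an admissible test function (it lies in $H(t^{1-2\sigma},\mathcal{B}_{2R}^+)$ with zero trace on the full boundary). The paper then concludes via $\int t^{1-2\sigma}|\nabla V|^2=\int t^{1-2\sigma}\nabla U\cdot\nabla V$ and Cauchy--Schwarz, which is equivalent to your energy expansion.
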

\begin{proof}
Let $0\leq \eta(x,t)\leq 1$, $Supp(\eta)\subset \mathcal{B}_{2R}^+$, $\eta=1$ in $\mathcal{B}_R^+$ and $|\nabla \eta|\leq 2/R$. In the end we will let $R\to\infty$ and hence we may assume that $U$ is supported in $\overline{\mathcal{B}_{R/2}^+}$. Since $\mathrm{div}(t^{1-2\sigma}\nabla V)=0$, then
\[
 \begin{split}
  0&=\int_{\mathbb{R}^{n+1}_+}t^{1-2\sigma}\nabla V \nabla(\eta(U-V))\\&
=\int_{\mathbb{R}^{n+1}_+}t^{1-2\sigma}\eta\nabla U\nabla V-\int_{\mathbb{R}^{n+1}_+}t^{1-2\sigma}\eta|\nabla V|^2-\int_{\mathcal{B}_{2R}^+\backslash\mathcal{B}_R^+}t^{1-2\sigma}V \nabla\eta\nabla V
 \end{split}
\]
where we used $\eta(U-V)=0$ on the boundary of $\mathcal{B}_{2R}^+$ in the first equality.

Note that for $(x,t) \in \mathcal{B}_{2R}^+\backslash\mathcal{B}_R^+$
\[
\begin{split}
|V(x,t)|&=\beta(n,\sigma)\left|\int_{\mathbb{R}^{n}}\frac{t^{2\sigma}}{(|x-\xi|^2+t^2)^{\frac{n+2\sigma}{2}}} u(\xi)\,d\xi\right|\\&
\leq \beta(n,\sigma)\int_{\mathbb{R}^{n}}\frac{(|x|^2+t^2)^{\sigma}}{(|x|^2/4+t^2)^{\frac{n+2\sigma}{2}}} |u(\xi)|\,d\xi\\&
\leq C(n,\sigma)(|x|^2+t^2)^{-\frac{n}{2}} \|u\|_{L^1}
\end{split}
\]
where in the first inequality we have used that $U$ is supported in $\overline{\mathcal{B}_{R/2}^+}$.

Direct computations yield
\[
 \begin{split}
  &\left|\int_{\mathcal{B}_{2R}^+\backslash\mathcal{B}_R^+}t^{1-2\sigma}V\nabla\eta\nabla V\right|\\
  &\leq \left(\int_{\mathcal{B}_{2R}^+\backslash\mathcal{B}_R^+}t^{1-2\sigma}|\nabla V|^2\right)^{1/2}\left(\int_{\mathcal{B}_{2R}^+\backslash\mathcal{B}_R^+}t^{1-2\sigma}V^2|\nabla \eta|^2\right)^{1/2}\\
  &\leq\left(\int_{\mathcal{B}_{2R}^+\backslash\mathcal{B}_R^+}t^{1-2\sigma}|\nabla V|^2\right)^{1/2}\\
&\quad\cdot C(n,\sigma)|u|_{L^{1}(\mathbb{R}^n)}(R^{n+2-2\sigma-2-2n})^{1/2}\to 0 \mbox{ as }R\to\infty
 \end{split}
\]
where we used \eqref{norms in two spaces are same} that $\int_{\R^{n+1}_+}t^{1-2\sigma}|\nabla V|^2<\infty$.
Therefore, we have
\[
 \int_{\mathbb{R}^{n+1}_+}t^{1-2\sigma}|\nabla V|^2\leq\left|\int_{\mathbb{R}^{n+1}_+}t^{1-2\sigma}\nabla U\nabla V\right|.
\]
Finally, by H\"older inequality,
\[
\int_{\mathbb{R}^{n+1}_+}t^{1-2\sigma}|\nabla V|^2\leq\int_{\mathbb{R}^{n+1}_+}t^{1-2\sigma}|\nabla U|^2.
\]
\end{proof}

\small

\noindent Tianling Jin

\noindent Department of Mathematics, Rutgers University\\
110 Frelinghuysen Road, Piscataway, NJ 08854, USA\\
Email: \textsf{kingbull@math.rutgers.edu}

\medskip

\noindent YanYan Li

\noindent Department of Mathematics, Rutgers University\\
110 Frelinghuysen Road, Piscataway, NJ 08854, USA\\
Email: \textsf{yyli@math.rutgers.edu}

\medskip

\noindent Jingang Xiong

\noindent School of Mathematical Sciences, Beijing Normal University\\
Beijing 100875, China\\[1mm]
\noindent and\\[1mm]
\noindent Department of Mathematics, Rutgers University\\
110 Frelinghuysen Road, Piscataway, NJ 08854, USA\\[0.8mm]
Email: \textsf{jxiong@mail.bnu.edu.cn/jxiong@math.rutgers.edu}

\end{document}